\newtheorem{theorem}{Theorem}[section]
\newtheorem{corollary}[theorem]{Corollary}
\newtheorem{lemma}[theorem]{Lemma}
\newtheorem{proposition}[theorem]{Proposition}
\newtheorem{definition}[theorem]{Definition}
\def\be{\begin{equation}}
\def\ee{\end{equation}}
\def\bes{\begin{equation*}}
\def\ees{\end{equation*}}
\def\bea{\begin{eqnarray}}
\def\eea{\end{eqnarray}}
\def\eeas{\end{eqnarray*}}
\def\beas{\begin{eqnarray*}}
\def\atxt{}
\numberwithin{equation}{section}
\begin{document}

\title[Resolution of singularities and applications]{A multi-dimensional resolution of singularities with applications to analysis}
\author{Tristan C.\ Collins, Allan Greenleaf and Malabika Pramanik}
\thanks{The second author was partially supported by NSF grants
DMS-0138167 and DMS-0551894. The third author was partially supported by NSERC grant 22R82900.}

\date{}

\maketitle
\begin{abstract}
We formulate a resolution of singularities algorithm for analyzing the zero sets of real-analytic functions in dimensions $\geq 3$. Rather than using the celebrated result of Hironaka, the algorithm is modeled on a more explicit and elementary approach used in the contemporary algebraic geometry literature. As an application, we {\atxt define a new notion of the height of  real-analytic  functions}, compute the critical integrability index, and obtain sharp growth rate of  sublevel sets. This also leads to a characterization of the oscillation index of scalar oscillatory integrals with real-analytic phases in all dimensions.  


\end{abstract} 
\setcounter{tocdepth}{1}
\tableofcontents 

\section{Introduction}\label{sec-intro}
The structure of the zero set of a multivariate analytic function is a topic of wide interest, in view of its ubiquity in problems of analysis, partial differential equations, probability and geometry. The study of such sets originated in the pioneering work of Jung \cite{Jung08}, Abhyankar \cite{Abh55} \cite{Abh64} and Hironaka \cite{Hir73}, \cite{Hir74}. Since then this field, known in algebraic geometry literature as resolution of singularities, has seen substantial advances, with contributions by Bierstone and Milman \cite{BieMil88} \cite{BieMil90}, Sussmann \cite{Suss90}, Parusi\'nski \cite{Par01} \cite{Par94}, among many others. The latter body of work has a strong analytical component that makes it adaptable, at least in principle, to a variety of problems in analysis where zero sets of polynomials or analytic functions play a crucial role. More recently, work by Phong, Stein and Sturm \cite{PhSt97, PhSteStu99, PhStu00, PhStu02}, Greenblatt \cite{Grblt04,Grblt08}, and Ikromov, Kempe and M\"uller \cite{IkrMul07, IkrKemMul10, IkrMul10} have explicitly addressed resolution of singularities with the goal of applications to such analytical questions, several of these in the bivariate setting. The purpose of this article is to formulate an algorithm for resolving singularities of a multivariate real-analytic function and apply it to a class of related problems for an $(n+1)$-variate real-analytic function, $n \geq 2$:
\begin{enumerate}[(i)]
\item computation and extremal characterization of the critical integrability index (see below for a definition), 
\item estimation of the size of sublevel sets for such functions, and 
\item description of the oscillation index of a class of scalar oscillatory integrals with real-analytic phase.  
\end{enumerate}    
These problems are of importance in a multitude of questions in harmonic analysis, PDE and geometry. For example, the scalar oscillatory integrals arise naturally as Fourier transforms of surface-carried measures (see \cite[Ch.\ 6]{AGV88} or \cite[Ch.\ 8]{Stein-book}, \cite{Var76, Vass77, Chr85, Chr98, Kar98}), while their operator analogues are central to the study of regularity properties of Fourier integral operators \cite{PhSt97,See98,Rych01,Grblt05, GrSee02}. The critical integrability index and sublevel set estimates, in the context of holomorphic functions in $\mathbb C^{n+1}$, provide important input in questions involving solvability of certain Monge-Amp\`ere equations. We will not expand on these well-established connections in this article. Instead we refer the reader to \cite{PhSt97}--\cite{PhStu02}, \cite{Grblt04}--\cite{Grblt11} and the bibliographies therein for an exhaustive treatment of the subject. 

\subsection{Description of the problems} Let $F$ be a $C^{\infty}$ function in a neighborhood $U_0$ of the origin in $\mathbb R^{n+1}$, $F(\mathbf 0, 0) = 0$. The {\em{critical integrability index}} of $F$, denoted $\mu_0(F)$, is defined as  
\begin{equation} \mu_0(F) := \sup \left\{\mu > 0 : \begin{aligned} &\text{ there exists an open set } U \subseteq U_0 \\ &\text{ such that } |F|^{-\mu} \in L^1(U) \end{aligned} \right\}. \label{cii-def} \end{equation} 
For a general $C^{\infty}$ function $F$ and in the absence of any ``finite type'' hypotheses, $\mu_0(F)$ may be infinite. However it is easily seen to be finite if $F$ is real-analytic. When $F$ is a bivariate real-analytic function, i.e., \ $n=1$, it has been proved by Phong, Stein and Sturm \cite{PhSteStu99} (see \cite{Grblt06} for a treatment of the $C^{\infty}$ case) that there exists an analytic coordinate system $\Phi_0$ such that \begin{equation} \mu_0(F) = \delta_0(F;\Phi_0), \label{def-adapted} \end{equation} where $\delta_0(F;\Phi)$ denotes the Newton {\atxt exponent} of $F$ in the coordinate system $\Phi$. (See \S \ref{sec-newton} for the definition of Newton {\atxt  exponent and the Newton distance $d_0(F;\Phi):=\delta_0(F;\Phi)^{-1}$}). It is also implicit in \cite{PhSteStu99} that the following characterization holds:
\begin{equation} \mu_0(F) = \inf \left\{ \delta_0(F;\Phi) \Bigl| \begin{aligned}  &\Phi \text{ is an analytic coordinate system with} \\ &\text{non-vanishing Jacobian}, \Phi(0,0) = (0,0) \end{aligned} \right\}. \label{cii-d2-char} \end{equation}  

Following the notation of \cite{Var76}, we will call a coordinate system $\Phi_0$ {\em{adapted to $F$}} if (\ref{def-adapted}) holds. For a bivariate real-analytic function $F$, the adapted coordinate system $\Phi_0$ takes one of two possible forms, 
\begin{equation} \label{phong-stein-adapted} (x,y) \mapsto (x, y - r(x)) \quad \text{ or } \quad (x, y) \mapsto (x - r(y), y), \end{equation} where $r$ is a univariate real-valued real-analytic function with $r(0) = 0$. Theorem 4 of \cite{PhSteStu99} also implies the following description of $\mu_0(F)$: if $\mathcal C_{\omega}$ denotes the class of all real-analytic coordinate transformations of the form (\ref{phong-stein-adapted}), then  
\begin{equation} \mu_0(F) =\delta_0(F):= \inf \left\{ \delta_0(F; \Phi): \Phi \in \mathcal C_{\omega}\right\}. \label{cii-ps-char} \end{equation} 
In fact, it follows from the proof of this theorem (see also \cite{Pra02,IkrMul07}) that there exists a finite subcollection $\mathcal C_{\omega}^{\ast}$ of $\mathcal C_{\omega}$ 
such that the infimum in (\ref{cii-ps-char}) is attained for a member of $\mathcal C_{\omega}^{\ast}$. A closer inspection of the analysis in \cite{PhSteStu99} shows that the functions $y = r(x)$ or $x = r(y)$ appearing in $\mathcal C_{\omega}^{\ast}$ are real-analytic functions that are related to the real parts of the roots of $F(x,y)$ in a neighborhood of the origin. Specifically, the functions $r$ are the ``principal root jets'' in the language of \cite{IkrMul07}.

On the other hand, it is also known that the characterization (\ref{cii-ps-char}) fails to hold for $n > 1$ (see \cite{Var76} and also \S \ref{sec-bivariate-comparison} for a simpler example). This gives rise to the following natural question: for $n > 1$, is there an appropriate generalization $\mathcal C$ of the class of analytic coordinate systems such that (i) for every $\Phi \in \mathcal C$, the concepts of Newton distance {\atxt and exponent} continue to be meaningful for $F \circ \Phi$; and (ii) for which (\ref{cii-ps-char}) holds with $\mathcal C_{\omega}$ replaced by $\mathcal C$? Our main result (Theorem \ref{main-thm1}) answers this question in the affirmative. In particular it provides an inductive algorithm (based on dimension) for constructing this class of {\atxt local} coordinates, using real-analytic functions in lesser number of variables.     

Closely related to the critical integrability index of a multivariate real-analytic function is its {\em{sublevel set growth rate}} $\nu_0(F)$, defined by 
\begin{equation} \label{def-nu0}
\begin{aligned} 
\nu_0(F) &:= \sup \left\{ \nu > 0 : \sup_{\epsilon > 0} \epsilon^{-\nu} \bigl|\mathcal E_{\epsilon}(F)\bigr| < \infty \right\}, \text{ where } \\ \mathcal E_{\epsilon}(F) &:= \bigl\{(\mathbf x, x_{n+1}) \in U_0 : |F(\mathbf x, x_{n+1})| < \epsilon \bigr\}. 
\end{aligned} 
\end{equation} 
An easy application of Chebyshev's inequality shows 
\[ |\mathcal E_{\epsilon}(F)| \leq C_{\mu} \epsilon^{\mu} \quad \text{ for any $\mu>0$ such that } \quad \int_{U_0}|F|^{-\mu} < \infty, \]
proving that $\mu_0(F) \leq \nu_0(F)$. Conversely, if $\mu, \nu > 0$ are such that
\begin{equation} \int_{U_0} |F|^{-\mu} = \infty \qquad \text{ and } \qquad \bigl| \mathcal E_{\epsilon}(F) \bigr| \leq C_{\nu} \epsilon^{\nu}, \label{mumu'}  \end{equation} 
then 
\begin{align*} 
\infty = \int_{U_0} |F|^{- \mu} &\leq \sum_{j=-C}^{\infty} \int_{|F| \sim 2^{-j}} |F|^{-\mu} \\ &\leq C_{\nu} \sum_{j=-C}^{\infty} 2^{j(\mu-\nu)}, \quad \text{ which implies } \quad \mu \geq \nu. 
\end{align*}    
Taking the infimum over $\mu$ and supremum over $\nu$ satisfying (\ref{mumu'}), we obtain $\mu_0(F) = \nu_0(F)$. Thus Theorem \ref{main-thm1} below specifies the sharp sublevel set growth rate as well. 

We now describe the problem of computation of the oscillation index of a scalar oscillatory integral. Let 
\[ \mathcal I(F, \phi; \lambda) := \int_{\mathbb R^{n+1}} e^{i \lambda F(\mathbf x, x_{n+1})} \phi(\mathbf x, x_{n+1}) \, d\mathbf x \, dx_{n+1}. \]
Here $\phi$ is a smooth real-valued function supported within the domain of definition of $F$, and $\lambda$ is large real parameter. We are interested in the behavior of $\mathcal I(F, \phi; \lambda)$ as $\lambda \rightarrow \infty$. It is known that (see \cite{Grblt11} \cite{Var76}) for $\phi$ supported in a sufficiently small neighborhood of the origin, $\mathcal I(F, \phi; \lambda)$ admits an asymptotic expansion 
\begin{equation} \label{oscind-asymp}
\mathcal I(F, \phi; \lambda) \sim \sum_{k=0}^{\infty} \sum_{j=0}^{n} a_{jk}(F, \phi) \bigl( \ln \lambda \bigr)^j \lambda^{- r_k},  \qquad \lambda \gg 1,     
\end{equation} 
where $\{ r_k \}$ is an increasing arithmetic progression of positive real numbers, depending only on the zero set of $F$ and independent of $\phi$. The {\em{oscillation index}} $\rho_0(F)$ is defined as 
\begin{equation} \label{def-rho} \rho_0(F) := \min \left\{r_0 : \begin{aligned} &\text{ there is an open set } U_0 \subseteq \mathbb R^{n+1}, (\mathbf 0, 0) \in U_0, \\ &\text{ and a smooth function } \phi \in C_0^{\infty}(U_0) \\ &\text{ such that } a_{j0}(F, \phi) \ne 0 \text{ for some } 0 \leq j \leq n  \end{aligned}  \right\}. \end{equation}   

The connection between the oscillation index and the geometry of the zero set of a real-analytic function was first observed by Arnold \cite{Arn72} \cite{Arn75} and studied by Varchenko \cite{Var76}. Subsequent developments in the scalar and operator theory may be found in the references mentioned earlier in the introduction. In particular, it has been shown in \cite{Grblt11}, \cite{Grblt10}, \cite{Var76} that $\mathcal I(F, \phi; \lambda)$ decays as fast or faster than the decay rate corresponding to $|\mathcal E_{\epsilon}(F)|$ with $\epsilon = \lambda^{-1}$. In fact the proof of Theorem 1.6 in \cite{Grblt10} shows that $\rho_0(F) = \nu_0(F)$ unless $\nu_0(F)$ happens to be an odd integer. 
 
\subsection{Statement of the results} 
Our goal is to develop a systematic resolution of singularities algorithm in several variables that allows effective computation as well as several extremal characterizations of all three indices described above.  
\begin{theorem}
Given a nonconstant real-analytic function $F$ with $F(\mathbf 0, 0) = 0$ defined in a neighborhood of the origin in $\mathbb R^{n+1}$, there exists $\epsilon_0 > 0$ and an orthogonal change of coordinates $(\mathbf x, x_{n+1})$ such that $F$ expressed in these coordinates satisfies the following property:
 
Let $\mathcal C = \mathcal C(F)$ denote the class of all coordinate transformations of the form $\Phi = \Phi(\varphi, V, r)$, 
\begin{equation} (\mathbf x, x_{n+1}) = \Phi(\mathbf y, y_{n+1}) = (\varphi(\mathbf y), y_{n+1} + r(\mathbf y)) \label{coordclass-def} \end{equation} 
where
\begin{enumerate}[(a)] 
\item The set $V \subseteq (-\epsilon, \epsilon)^n \setminus \cup_{j=1}^{n} \{x_j = 0 \}$ is open and connected with $\mathbf 0 \in \overline{V}$, for some $0 < \epsilon < \epsilon_0$. \label{mainthm1-parta}
\item The vector-valued function $\varphi$, initially defined on an open neighborhood of the origin containing $[0,1]^n$, is a system of coordinates on $V$ when restricted to $(0,1)^n$. More precisely, \label{mainthm1-partb}
\begin{enumerate}[$-$]
\item each entry of $\varphi$ admits a multivariate convergent Puiseux expansion at the origin and is a fractional power series on $(0,1)^n$, 
\item $\varphi(\mathbf 0) = \mathbf 0$, 
\item $\varphi : (0,1)^n \rightarrow V$ is a $C^1$ bijection onto $V$ with nonvanishing Jacobian on $[0,1]^n$. 
\end{enumerate}
\item The function $r$ is initially defined as a scalar-valued function on an open neighborhood of the origin containing $[0,1]^n$. It admits a multivariate convergent Puiseux expansion at the origin, with $r(\mathbf 0) = 0$, and is an $n$-variate real-valued fractional power series on $(0,1)^n$. \label{mainthm1-partc}  
\item If $\{ r_i(\mathbf y) : 1 \leq i \leq N \}$ is the collection of roots (in $y_{n+1}$) of $F \circ \Phi(\mathbf y, y_{n+1})$ with $r_i(\mathbf 0) = 0$, then every element of $\mathcal A = \{r_i, \text{Re}(r_i) : 1 \leq i \leq N \}$ and the difference set $\mathcal A - \mathcal A$ is either identically zero or fractional normal crossings for $\mathbf y \in (0,1)^n$. \label{mainthm1-partd}
\end{enumerate}
Then 
\[\mu_0(F) =  \inf \left\{ \delta_0(F;\Phi) : \Phi \in \mathcal C \right\}. \] 
  
The infimum in the equation above is attained; in fact, one can specify an algorithm that produces for any $F$ a finite subcollection $\mathcal C^{\ast} \subseteq \mathcal C$ such that 
\begin{equation} \mu_0(F) = \min \left\{ \delta_0(F;\Phi) : \Phi \in \mathcal C^{\ast} \right\}. \label{coordclass-finite} \end{equation}  
Every $\Phi = \Phi(\varphi, V, r) \in \mathcal C^{\ast}$ satisfies the property that the fractional power series $r(\mathbf y)$ is the real part of a root of $F \circ \Phi(\mathbf y, y_{n+1})$ with respect to $y_{n+1}$. 
\label{main-thm1}
\end{theorem}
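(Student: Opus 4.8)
The plan is to feed $F$ through the resolution of singularities algorithm developed in the earlier sections, compute the critical integrability index chart by chart, and recognize each chart-wise threshold as a Newton exponent in a coordinate system belonging to $\mathcal{C}$. First I would normalize: by an orthogonal rotation I may assume $F$ is regular of some finite order $d$ in $x_{n+1}$ at the origin, so that Weierstrass preparation yields $F = u\cdot P$ with $u$ a nonvanishing real-analytic unit and $P(\mathbf{x},x_{n+1}) = x_{n+1}^{d} + \sum_{j<d} a_j(\mathbf{x})\,x_{n+1}^{j}$ a distinguished polynomial with real coefficients, $a_j(\mathbf{0}) = 0$. Since $\mu_0$ is unaffected by multiplication by a unit and is the local integrability index at the origin, it suffices to work with $P$, and over $\mathbb{C}$ one has $P(\mathbf{x},x_{n+1}) = \prod_{i=1}^{d}\bigl(x_{n+1}-R_i(\mathbf{x})\bigr)$, the branches $R_i$ being the roots whose images under the coordinate maps below enter condition (d). I would then apply the resolution algorithm to the discriminant of $P$ together with every pairwise difference $R_i - R_j$, every real part $\mathrm{Re}\,R_i$, and every difference of real parts. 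This produces, for a suitable $\epsilon_0$, a finite collection of open sectors $V_\alpha \subseteq (-\epsilon_0,\epsilon_0)^n\setminus\bigcup_j\{x_j=0\}$ with $\mathbf{0}\in\overline{V_\alpha}$, each carrying a fractional-monomial coordinate map $\varphi_\alpha\colon(0,1)^n\to V_\alpha$ of the type required in (b), such that on each chart all functions in $\mathcal{A}\cup(\mathcal{A}-\mathcal{A})$ pull back to either $0$ or fractional normal crossings, and such that the $V_\alpha$ exhaust a punctured neighborhood of $\mathbf{0}$ up to a set of measure zero. Set $\Phi_\alpha := \Phi(\varphi_\alpha,V_\alpha,0)$ and write $\rho_i^{\alpha}$ for the roots of $F\circ\Phi_\alpha$ in $y_{n+1}$; then $\Phi_\alpha$ and each $\Phi_\alpha^{(i)} := \Phi(\varphi_\alpha,V_\alpha,\mathrm{Re}\,\rho_i^{\alpha})$ lie in $\mathcal{C}$ and satisfy (a)--(d), which verifies that $\mathcal{C}$ is nonempty and correctly described and supplies the candidates for $\mathcal{C}^{\ast}$.

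The main work is the chart-by-chart computation. On $V_\alpha\times I$ write $F\circ\Phi_\alpha(\mathbf{y},y_{n+1}) = u_\alpha(\mathbf{y})\,\mathbf{y}^{\mathbf{e}_\alpha}\prod_i\bigl(y_{n+1}-\rho_i^{\alpha}(\mathbf{y})\bigr)$ with $u_\alpha$ a unit, grouping complex-conjugate pairs so that $|y_{n+1}-\rho_i^{\alpha}|\,|y_{n+1}-\overline{\rho_i^{\alpha}}| = (y_{n+1}-\mathrm{Re}\,\rho_i^{\alpha})^{2} + (\mathrm{Im}\,\rho_i^{\alpha})^{2}$. Integrating $|F\circ\Phi_\alpha|^{-\mu}$ in $y_{n+1}$ first, and using that the $\mathrm{Re}\,\rho_i^{\alpha}$, the $\mathrm{Im}\,\rho_i^{\alpha}$ and the differences $\mathrm{Re}\,\rho_i^{\alpha}-\mathrm{Re}\,\rho_j^{\alpha}$ are fractional normal crossings in $\mathbf{y}$ — so the roots fall into well-separated clusters whose geometry is governed by those monomials — one obtains an explicit finiteness threshold $\mu_\alpha$ for $\int_{V_\alpha\times I}|F\circ\Phi_\alpha|^{-\mu}\,d\mathbf{y}\,dy_{n+1}$, given through $\mathbf{e}_\alpha$, the root multiplicities, and the monomial exponents of the root data. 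Since $\varphi_\alpha$ is a $C^1$ bijection with nonvanishing Jacobian on the compact set $[0,1]^n$ that Jacobian is two-sidedly bounded, $\int_{V_\alpha\times I}|F|^{-\mu}$ has the same threshold, and because the $V_\alpha$ partition a punctured neighborhood of the origin it follows that $\mu_0(F) = \min_\alpha\mu_\alpha$. I would then select on each chart the ``dominant'' root index $i=i(\alpha)$ and prove $\delta_0\bigl(F;\Phi_\alpha^{(i(\alpha))}\bigr) = \mu_\alpha$: the inequality $\mu_\alpha\le\delta_0(F;\Phi_\alpha^{(i)})$ is automatic (see below), and the reverse is the assertion that shifting $y_{n+1}$ by $\mathrm{Re}\,\rho_{i(\alpha)}^{\alpha}$ removes all degeneracy, which — the root data now monomialized — reduces to comparing the threshold with the Newton polyhedron of $F\circ\Phi_\alpha^{(i(\alpha))}$ in the coordinate $y_{n+1}+\mathrm{Re}\,\rho_{i(\alpha)}^{\alpha}$, essentially the one-variable Phong--Stein--Sturm computation performed in each sector. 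Taking $\mathcal{C}^{\ast} = \{\Phi_\alpha^{(i(\alpha))}\}_\alpha$, a finite collection each of whose members has $r$ equal to the real part of a root of $F\circ\Phi$, gives $\mu_0(F) = \min_{\Phi\in\mathcal{C}^{\ast}}\delta_0(F;\Phi)$.

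It remains to establish $\delta_0(F;\Phi)\ge\mu_0(F)$ for every $\Phi = \Phi(\varphi,V,r)\in\mathcal{C}$. By the explicit factorization $F\circ\Phi = (\text{unit})\cdot\prod_i\bigl(y_{n+1}-\rho_i(\mathbf{y})\bigr)$ and (d), the $y_{n+1}$-integration above gives a finiteness threshold $\tau$ for $\int_{(0,\eta)^{n+1}}|F\circ\Phi|^{-\mu}$ that is independent of the small parameter $\eta$; pulling forward by $\Phi$, whose Jacobian is two-sidedly bounded and whose sector $V\times I$ accumulates at the origin, shows $\int_{U}|F|^{-\mu}=\infty$ for every neighborhood $U$ of the origin once $\mu>\tau$, so $\mu_0(F)\le\tau$. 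On the other hand, near the origin on the positive orthant $|F\circ\Phi|$ is dominated by a constant times the sum of the vertex monomials of its Newton polyhedron, which forces $\int_{(0,\eta)^{n+1}}|F\circ\Phi|^{-\mu}=\infty$ for $\mu>\delta_0(F;\Phi)$, hence $\tau\le\delta_0(F;\Phi)$. Therefore $\mu_0(F)\le\tau\le\delta_0(F;\Phi)$, and with the previous paragraph $\mu_0(F)\le\inf_{\Phi\in\mathcal{C}}\delta_0(F;\Phi)\le\min_{\Phi\in\mathcal{C}^{\ast}}\delta_0(F;\Phi)=\mu_0(F)$, so equality holds throughout. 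The principal obstacle is the combinatorial identification $\delta_0(F;\Phi_\alpha^{(i(\alpha))})=\mu_\alpha$: one must translate the fractional-normal-crossing structure of the roots, their real parts and their differences into the face structure of a Newton polyhedron and show that shifting by the real part of the dominant root — and by no other real-analytic function of $\mathbf{y}$ — is precisely what renders the coordinate system adapted on that chart, just as in the bivariate case but with all exponents fractional and the geometry spread over many sectors. The other substantial ingredient, supplied by the earlier sections, is arranging the resolution algorithm so that it simultaneously monomializes the discriminant and every resultant attached to $\mathcal{A}$ and $\mathcal{A}-\mathcal{A}$ while terminating after finitely many charts.
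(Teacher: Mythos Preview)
Your proposal is correct and follows essentially the same approach as the paper: resolve via the algorithm of \S\ref{sec-resolution} so that on each chart the roots, their real parts, and all mutual differences are fractional normal crossings, then compute the integrability threshold chart by chart and identify it with a Newton exponent after a shift by the real part of a root (the easy direction $\mu_0(F)\le\delta_0(F;\Phi)$ being Theorem~\ref{prop-cii-nd-ndim}). The paper packages the chart-wise computation as Propositions~\ref{prop-step1-thm1} and~\ref{prop-step2-thm1}; one small point is that rather than selecting a single ``dominant'' root $i(\alpha)$ per chart, the paper simply includes \emph{all} shifts $\Phi_\alpha^{(i)}$ with $i\in\mathcal I\cup\mathcal J$ in $\mathcal C^{\ast}$ and lets the minimum over this finite set pick out the right one---your formulation is equivalent a posteriori but the paper's is more directly algorithmic.
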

{\bf{Notation and terminology:}} 
\begin{enumerate}[1.]
\item {\atxt The words ``multivariate Puiseux expansion'', ``fractional power series'', ``fractional normal crossings'' and ``coordinate systems'' used in the statement of the theorem have specific meanings in the context of this paper. For precise definitions see \S\S \ref{subsec-defns} and \S\S \ref{subsec-coords}.
\item The notation $A-A$ stands for algebraic difference, not set difference; i.e., $A-A = \{x-y : x, y \in A \}$.}
\end{enumerate} 
{\bf{Remarks:}}
\begin{enumerate}[1.]
\item It is important to note that the open set $V$ in the definition of $\Phi(\varphi, V, r) \in \mathcal C$ need not contain the origin, even though the origin lies in its closure. For instance, $V$ could be shaped like a horn, with the origin at its cusp. As a consequence, the transformation $\varphi$ need not in general admit an extension as a coordinate system on an open neighborhood of the origin. {\atxt The necessity of using such ``local" adapted coordinates, rather than ``global'' ones, is perhaps the most significant new feature of the higher dimensional problem,} in contrast with the bivariate situation.    
\item \label{remark-Cstar} The construction of $\mathcal C^{\ast}$ is explicit and involves resolving the singularities of a finite number of real-analytic functions in lesser number of variables. More precisely, $\mathcal C^{\ast}$ is built in at most two steps. 
\begin{itemize}
\item In the first step, we define an auxiliary $n$-variate real-analytic function $\Lambda$ that is constructible using the coefficients (in $\mathbf x$) of $F$ viewed as a polynomial in $x_{n+1}$, in a sense made precise in \S\S \ref{Construction-Lambda}. The function $\Lambda$ is a multiple of the discriminant of $F$. Monomialization of $\Lambda$ via the inductive mechanism gives rise to a finite collection of set-coordinate pairs $\{(U, \zeta_U) : U \in \mathcal U \}$, $\zeta_U:(0,1)^n \rightarrow U$, $\mathbf x = \zeta_U(\mathbf u)$,  
such that the transformed function $\Lambda \circ \zeta_U(\mathbf u)$ is fractional normal crossings in $\mathbf u$. 
\item For every $U \in \mathcal U$ and depending on the nature of the roots of $F(\zeta_U(\mathbf u), x_{n+1})$ in $x_{n+1}$, we may then need to construct a second $n$-variate real-analytic function $H_U(\mathbf u)$. The process of monomializing $H_{U}$ generates another finite collection of sets and coordinates $\{(W, \eta_W) : W \in \mathcal W \}$ with $\cup \{W : W \in \mathcal W \} \subseteq (0,1)^n$. 
\end{itemize} 
We will see that any $(V, \varphi)$ with $\Phi(\varphi, V, r) \in \mathcal C^{\ast}$ must be of the form $V = \varphi(0,1)^n$, where $\varphi$ is a composition of $\zeta_U$, $\eta_W$ and appropriate power transformations. In particular, $\Lambda$ and $H_U$, when expressed in the final coordinate system, will be fractional normal crossings.   

\item Further details about the structure of the subsets $V$ and the associated coordinate systems $\varphi$ have been summarized in Theorem \ref{mainthm-resolution}, with some background material in \S \ref{sec-resolution-preliminaries}. It follows that the coordinate transformations $\varphi$ are compositions of elementary transformations, see \S\S \ref{examples}. 
\item A key technical tool is that the Newton {\atxt polyhedra} of $F \circ \Phi$, for $\Phi \in \mathcal C$, have a very special structure described in \S \ref{sec-newton}. The details are in Theorem \ref{mainthm-resolution}. 
\item We observe that when $n = 1$, the class $\mathcal C$ contains $\mathcal C_{\omega}$.    
\item Greenblatt\cite{Grblt08} has obtained similar results using somewhat different methods, with an argument involving induction on both the dimension and the order of vanishing at the origin.
\item { Sets like $V$ in Theorem \ref{main-thm1} (termed ``towers'' in this paper, see \S \ref{sec-resolution-preliminaries}) and coordinates of the form (\ref{coordclass-def}) given by nonlinear shears involving fractional normal crossings have already appeared in analytical problems involving resolution of singularities in the bivariate setting. For instance, they have been used by Phong and Stein \cite{PhSt97} in the study of oscillatory integral operators in one variable and by Ikromov, Kempe and M\"uller \cite{IkrKemMul10} in the context of maximal operators associated with two-dimensional surfaces in $\mathbb R^3$.} 
\item We anticipate that the method of resolution of singularities described in this article will have applications beyond those addressed here. In particular, this method can be used in certain operator-theoretic problems, such as the sharp $L^2$-decay exponent for oscillatory integral operators with multivariate real-analytic phases \cite{GrePra11-pre}. 
\end{enumerate} 

Using analytical tools from convex geometry developed in \cite{Nagel-Pramanik09} in conjunction with the theorem above, 
it is possible to strengthen the first conclusion of Theorem \ref{main-thm1} by expanding the class $\mathcal C$, so that the characterization of the critical integrability index for $(n+1)$-variate real-analytic functions bears a closer resemblance to the corresponding statements (\ref{cii-d2-char}) and (\ref{phong-stein-adapted}) in dimension 2. Let $\mathcal C_{\infty}$ denote the universal class of all coordinate systems in the sense of definitions \ref{def-coord-tf} and \ref{def-coord-tf-2}, i.e., 
\[ \mathcal C_{\infty} := \left\{\Phi \Biggl| \;\begin{aligned} &\Phi \text{ is a vector-valued function on an open set containing } [0,1]^{n+1}, \\ &\Phi \text{ admits a convergent Puiseux expansion at the origin}, \\ & \Phi(\mathbf 0, 0) = (\mathbf 0, 0), \\&\Phi \text{ is a fractional power series on } (0,1)^{n+1},  \\  &\Phi  \text{ maps $(0,1)^n$ bijectively onto some open set $\mathcal O \subseteq \mathbb R^{n+1},\; (\mathbf 0,0) \in \overline{\mathcal O}$.} \end{aligned}   \right\}. \]  
Also, let $\mathcal C_1$ denote the superset of $\mathcal C$ consisting of transformations of the form (\ref{coordclass-def}) where $\varphi, V, r$ satisfy (\ref{mainthm1-parta}), (\ref{mainthm1-partb}), (\ref{mainthm1-partc}) of Theorem \ref{main-thm1} but not necessarily (\ref{mainthm1-partd}). Note that $\mathcal C \subseteq \mathcal C_1 \subseteq \mathcal C_{\infty}$.  
\begin{theorem} \label{main-thm2}
Let $F$ be as in Theorem \ref{main-thm1}. Then 
\[ \mu_0(F) = \inf\{\delta_0(F, \Phi) : \Phi \in \mathcal C_{\infty} \} = \inf \left\{\delta_0(F, \Phi) : \Phi \in \mathcal C_1 \right\}. \] 
\end{theorem}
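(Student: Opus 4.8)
\emph{Proof proposal for Theorem \ref{main-thm2}.}
The plan is to deduce everything from Theorem \ref{main-thm1} by a pinching argument, the bulk of the work being a single inequality. Since $\mathcal C \subseteq \mathcal C_1 \subseteq \mathcal C_\infty$, passing to infima of $\delta_0(F;\cdot)$ over these nested classes gives
\[ \inf_{\Phi \in \mathcal C_\infty}\delta_0(F;\Phi) \;\le\; \inf_{\Phi\in\mathcal C_1}\delta_0(F;\Phi) \;\le\; \inf_{\Phi\in\mathcal C}\delta_0(F;\Phi) \;=\; \mu_0(F), \]
where the last equality is Theorem \ref{main-thm1}. Hence it suffices to prove $\mu_0(F)\le\delta_0(F;\Phi)$ for \emph{every} $\Phi\in\mathcal C_\infty$; then all three quantities above are squeezed to $\mu_0(F)$ and both asserted equalities follow simultaneously.

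The first step is a change of variables. Fix $\Phi\in\mathcal C_\infty$: it is a fractional power series carrying $(0,1)^{n+1}$ bijectively onto an open set $\mathcal O$ with $(\mathbf 0,0)\in\overline{\mathcal O}$ and $\Phi(\mathbf 0,0)=(\mathbf 0,0)$, and its Jacobian determinant $J_\Phi$ is again a fractional power series on $(0,1)^{n+1}$. Let $\mu<\mu_0(F)$. By the definition of $\mu_0(F)$, $|F|^{-\mu}$ is integrable over some neighborhood $U$ of the origin, hence over $U\cap\mathcal O$; applying the change of variables $(\mathbf x,x_{n+1})=\Phi(\mathbf y,y_{n+1})$ for the $C^1$ injection $\Phi$ yields
\[ \int_{\Phi^{-1}(U\cap\mathcal O)} \bigl|F\circ\Phi\bigr|^{-\mu}\,\bigl|J_\Phi\bigr| \;<\;\infty . \]
Because $\Phi$ is continuous at the origin, $\Phi\bigl(B_\eta\cap(0,1)^{n+1}\bigr)\subseteq U\cap\mathcal O$ once $\eta$ is small enough, so the finite integral above bounds the same integral over $B_\eta\cap(0,1)^{n+1}$. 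Thus: for every $\mu<\mu_0(F)$, the Jacobian-weighted pull-back $|F\circ\Phi|^{-\mu}\,|J_\Phi|$ is locally integrable at the origin over the positive octant $(0,1)^{n+1}$.

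The second step converts this into the Newton-exponent bound $\mu\le\delta_0(F;\Phi)$. By the definition (in \S\ref{sec-newton}) of the Newton exponent of a fractional power series in a generalized coordinate system, $\delta_0(F;\Phi)$ is read off from the Newton polyhedron of $F\circ\Phi$ together with the density $|J_\Phi|\,d\mathbf y\,dy_{n+1}$; the key analytic input is then the general comparison --- valid for an arbitrary fractional power series $G$ with $G(\mathbf 0,0)=0$ and an arbitrary nonnegative monomial-type weight, not merely when $G$ is a fractional normal crossings --- that the genuine local-integrability threshold of $|G|^{-\mu}$ against that weight over $(0,1)^{n+1}$ never exceeds the corresponding Newton exponent. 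Granting this, the previous step forces $\mu\le\delta_0(F;\Phi)$, and letting $\mu\uparrow\mu_0(F)$ gives $\mu_0(F)\le\delta_0(F;\Phi)$, as required.

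The main obstacle is precisely that last comparison for a \emph{non}-normal-crossings pull-back. Inside $\mathcal C$ one enjoys the benefit of Theorem \ref{main-thm1}(\ref{mainthm1-partd}) and the resolution algorithm, which put $F\circ\Phi$ into a normal form where the Newton exponent and the true integrability threshold agree on the nose; for a general $\Phi\in\mathcal C_\infty$ neither $F\circ\Phi$ nor $J_\Phi$ carries any such structure, and one must estimate $\int|G|^{-\mu}\,|J|$ directly from the possibly intricate combinatorics of the Newton polyhedron of $G$ and the location of the main diagonal $t\mapsto(t,\dots,t)$ relative to its faces. This is exactly where the analytic convex-geometry machinery of Nagel and Pramanik \cite{Nagel-Pramanik09} enters: it supplies the two-sided estimates for such monomial-type integrals in terms of the Newton polyhedron needed to run the argument in every dimension. (A more pedestrian route would be to first apply a monomial substitution $y_j=z_j^N$ --- which renders $F\circ\Phi$ real-analytic and leaves $\delta_0$ unchanged --- and then feed $F\circ\Phi$ back through the resolution mechanism of Theorem \ref{main-thm1}, monomializing it while bookkeeping the accumulated Jacobian and reading off $\delta_0(F;\Phi)\ge\mu_0(F)$ from the normal form; but the convex-geometric route is shorter, and is the one I would carry out.)
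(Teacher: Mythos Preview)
Your overall pinching strategy is exactly the paper's: the containments $\mathcal C\subseteq\mathcal C_1\subseteq\mathcal C_\infty$ plus Theorem~\ref{main-thm1} reduce everything to the single inequality $\mu_0(F)\le\delta_0(F;\Phi)$ for each $\Phi\in\mathcal C_\infty$, and that inequality is supplied by the general bound $\mu_0(G)\le\delta_0(G)$ for fractional power series (Theorem~\ref{prop-cii-nd-ndim}, proved via the Nagel--Pramanik convex-geometry estimates you cite). So the skeleton is right.

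Where you overcomplicate matters is in your treatment of the Jacobian. By the very definition of $\mathcal C_\infty$ (it consists of \emph{coordinate systems} in the sense of Definitions~\ref{def-coord-tf} and~\ref{def-coord-tf-2}, not merely generalized ones), the Jacobian $J_\Phi$ is a \emph{unit} on $(0,1)^{n+1}$: it is bounded above and below by positive constants. Consequently the weight $|J_\Phi|$ is irrelevant---the weighted and unweighted integrals of $|F\circ\Phi|^{-\mu}$ are comparable---and your entire discussion of ``monomial-type weights'' and a weighted Newton exponent is unnecessary. Relatedly, your assertion that ``$\delta_0(F;\Phi)$ is read off from the Newton polyhedron of $F\circ\Phi$ together with the density $|J_\Phi|\,d\mathbf y\,dy_{n+1}$'' misstates the definition in \S\ref{sec-newton}: in this paper $\delta_0(F;\Phi)$ is simply $\delta_0(F\circ\Phi)$, the unweighted Newton exponent of the pullback. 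Once you drop the Jacobian baggage, the argument becomes the paper's two-line proof: $\int_{(-\epsilon,\epsilon)^{n+1}}|F\circ\Phi|^{-\delta}\le C\int_U|F|^{-\delta}<\infty$, hence $\delta\le\mu_0(F\circ\Phi)\le\delta_0(F\circ\Phi)$ by Theorem~\ref{prop-cii-nd-ndim}.
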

Thus for the critical integrability index, the class $\mathcal C_{\infty}$ (respectively $\mathcal C_1$) plays the same role that the class of all analytic coordinate systems (respectively $\mathcal C_{\omega}$) does in two dimensions. {\atxt Thus, the quantities 
\begin{equation} \label{new-height}
d_0(F) := \sup\{d_0(F, \Phi) : \Phi \in \mathcal C_{\infty} \},\quad \delta_0(F):=\inf \{\delta_0(F;\Phi):  \Phi \in \mathcal C_{\infty} \}
\end{equation} 
may be viewed as the appropriate  generalizations to several variables of the notions of the ``height''   of a real-analytic function,  introduced in \cite{Var76}, and the Newton exponent, resp. }

As a consequence of Theorem \ref{main-thm1} and the discussion preceding it, we are able to estimate the size of sublevel sets for real-analytic functions and compute the oscillation index for scalar oscillatory integrals with real-analytic phases. 
\begin{corollary}
Let $F$ be as in Theorem \ref{main-thm1}, and $\nu_0(F)$, $\rho_0(F)$ and $\delta_0(F)$ be as in (\ref{def-nu0}), (\ref{def-rho}) and (\ref{new-height}) respectively. Then $\nu_0(F) = \delta_0(F)$. Further, $\rho_0(F) = \delta_0(F)$ if $\delta_0(F)$ is not an odd integer. 
\end{corollary}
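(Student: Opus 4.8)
The plan is to reduce the Corollary entirely to Theorem~\ref{main-thm1} together with the already-established chain of elementary inequalities from the introduction. Recall that for \emph{any} change of coordinates $\Phi \in \mathcal C_\infty$ (in particular for $\Phi \in \mathcal C$), the monomialized function $F \circ \Phi$ has its sublevel set measure controlled, up to a Jacobian factor, by the Newton polyhedron data of $F \circ \Phi$; this is the mechanism by which $\delta_0(F;\Phi)$ enters. Since the discussion preceding the Corollary already records that $\mu_0(F) = \nu_0(F)$ for every $C^\infty$ function (via Chebyshev in one direction and a dyadic sum in the other), and Theorem~\ref{main-thm1} gives $\mu_0(F) = \inf\{\delta_0(F;\Phi) : \Phi \in \mathcal C\}$ with the infimum attained on the finite subcollection $\mathcal C^\ast$, the identity $\nu_0(F) = \delta_0(F)$ follows once we know $\delta_0(F) = \inf\{\delta_0(F;\Phi) : \Phi \in \mathcal C\}$. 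But that last identity is precisely what Theorem~\ref{main-thm2} provides (it identifies this infimum over $\mathcal C_\infty$, hence over the intermediate class, with $\mu_0(F)$, and $\delta_0(F)$ in \eqref{new-height} is defined as exactly that infimum). So the first assertion of the Corollary is a bookkeeping composition of Theorems~\ref{main-thm1} and \ref{main-thm2} with the introductory remarks.

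For the statement about the oscillation index, the plan is to invoke the comparison results already cited in the introduction: it is recorded there (from \cite{Grblt10,Grblt11,Var76}) that $\mathcal I(F,\phi;\lambda)$ always decays at least as fast as $\lambda^{-\nu_0(F)}$ up to a power of $\log\lambda$, so $\rho_0(F) \geq \nu_0(F)$; and that the proof of Theorem~1.6 in \cite{Grblt10} yields the reverse inequality $\rho_0(F) = \nu_0(F)$ \emph{unless} $\nu_0(F)$ is an odd integer (the odd-integer exception arising from a possible cancellation in the leading coefficient $a_{j0}(F,\phi)$ of the asymptotic expansion \eqref{oscind-asymp}). Combining with $\nu_0(F) = \delta_0(F)$ from the first part gives $\rho_0(F) = \delta_0(F)$ whenever $\delta_0(F)$ is not an odd integer, which is the claim. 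Thus no genuinely new argument is required: the content of the Corollary is entirely inherited.

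The only point demanding a small amount of care — and the step I expect to be the main (modest) obstacle — is verifying that the sublevel-set-measure estimate for $F \circ \Phi$ on the horn-shaped regions $V$ glues correctly to an estimate for $F$ on a full neighborhood of the origin. One must check that the finitely many charts $\{(V,\varphi) : \Phi(\varphi,V,r) \in \mathcal C^\ast\}$, together with their images under the relevant sign changes and the reflections permuted by the orthogonal change of coordinates of Theorem~\ref{main-thm1}, cover a punctured neighborhood of $\mathbf 0$ up to a set of measure zero, and that the Jacobians of the $\varphi$ (nonvanishing on $[0,1]^n$ by part~\eqref{mainthm1-partb}) and of the shears $y_{n+1} \mapsto y_{n+1} + r(\mathbf y)$ (which have Jacobian $1$ in the last variable) do not destroy the exponent. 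This is exactly the covering/partition structure summarized in Theorem~\ref{mainthm-resolution} and \S\ref{sec-resolution-preliminaries}, so it may be quoted; I would simply state that the measure of $\mathcal E_\epsilon(F)$ decomposes as a finite sum of the measures of $\mathcal E_\epsilon(F \circ \Phi)$ pulled back through these charts, each of which is $\lesssim \epsilon^{\delta_0(F;\Phi)}$ up to logarithmic factors (and $\gtrsim \epsilon^{\delta_0(F;\Phi)}$ along a suitable sub-sequence for the $\Phi$ achieving the minimum, giving sharpness), and then take $\min$ over $\mathcal C^\ast$.
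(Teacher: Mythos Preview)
Your proposal is correct and matches the paper's approach: the paper presents this Corollary without a separate proof, treating it as an immediate consequence of Theorem~\ref{main-thm2} (which gives $\mu_0(F) = \delta_0(F)$ by the very definition~\eqref{new-height}) together with the introductory discussion establishing $\mu_0(F) = \nu_0(F)$ and citing \cite{Grblt10} for $\rho_0(F) = \nu_0(F)$ away from odd integers. Your third paragraph about gluing sublevel-set estimates across charts is unnecessary overhead, since that work is already absorbed into the proofs of Theorems~\ref{main-thm1} and~\ref{main-thm2}; the chain $\nu_0(F) = \mu_0(F) = \delta_0(F)$ requires no further geometric input.
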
 

The specification of critical integrability, sublevel set growth and oscillation indices translate to estimates for $\int |F|^{-\delta}$, $|\mathcal E_{\epsilon}(F)|$ and $\mathcal I(F, \phi; \lambda)$ that are sharp excluding the endpoint. It is possible to refine these questions further by asking, for instance, about the order of the pole of the meromorphic functions $\delta \mapsto \int F_{\pm}^{-\delta}$ at $\delta = \mu_0(F)$, or the possible occurrence of logarithmic terms in the estimates for $|\mathcal E_{\epsilon}(F)|$ and in the leading term of the asymptotic expansion (\ref{oscind-asymp}). We hope to return to these finer issues in a future paper.

\subsection{Layout} The paper is arranged as follows. \S \ref{sec-resolution} contains the main resolution of singularities algorithm that lies at the heart of the analysis. More precisely, given a real-analytic function $F$ near the origin in $\mathbb R^{n+1}$ with $F(\mathbf 0, 0) = 0$, we formulate a recursive process that leads to the desingularization or monomialization of its roots. This is in slight contrast with the standard nomenclature of resolution of singularities where the function $F$ itself is monomialized. However, we will see that the former implies the latter (see Proposition \ref{lemma-block2}), along with good control on the Newton polyhedron. The goal of our (local) resolution process is to obtain 
\begin{enumerate}[1.]
\item a small open parallelepiped $(-\epsilon, \epsilon)^{n+1}$ centered at the origin, 
\item a finite collection of open subsets $\{V_i : i \in \mathcal I \}$ constructed inductively based on dimension whose union covers $(-\epsilon, \epsilon)^{n+1}$ except possibly a lower dimensional subset,  
\item a corresponding collection of fractional power series $\{\varphi_i : i \in \mathcal I \}$, with $\varphi_i \in C^1$ mapping $(0,1)^n$ bijectively onto $V_i$, and $\det(D \varphi_i)$ uniformly bounded away from zero,   
\end{enumerate} 
with the property that the roots of the function $F(\varphi_i(\mathbf y), x_{n+1})$ with respect to $x_{n+1}$ are ``ordered'' fractional normal crossings in the variables $\mathbf y$.
That this can be done (and indeed in much greater generality than the setup described above) is the foundational result of Hironaka \cite{Hir73} \cite{Hir74}. More elementary and explicit algorithms for resolution of singularities have subsequently been studied in great detail in the seminal work of Bierstone and Milman \cite{BieMil88} \cite{BieMil90}, Sussmann \cite{Suss90} and Parusi\'nski \cite{Par01} among others. In a more analytical framework, we also mention the work of Phong, Stein and Sturm \cite{PhSt97}--\cite{PhStu02}, Greenblatt \cite{Grblt04}--\cite{Grblt11}, Ikromov, Kempe and M\"uller \cite{IkrMul07}--\cite{IkrMul10}. In \S \ref{sec-resolution} we present a simple and self-contained exposition of resolution of singularities modeled on \cite{BieMil90} \cite{Suss90} \cite{Par01}, with special attention to those aspects of the algorithm that are relevant for the computation of the critical integrability index.

Theorems \ref{main-thm1} and \ref{main-thm2} have been proved in \S \ref{sec-proof-mainthm1}. \S \ref{sec-preliminaries} and \S \ref{sec-newton} include a collection of algebraic, geometric and analytic tools that are needed in the proof of Theorem \ref{mainthm-resolution}. \S \ref{sec-resolution-preliminaries} contains a description of the sets and coordinates that occur in the resolution process. In \S \ref{sec-example}, the resolution algorithm has been carried out in the context of an example. Further examples and counterexamples comparing and contrasting the bivariate situation with higher dimensions are in \S \ref{sec-bivariate-comparison}. Supplementary proofs whose techniques are not directly connected to the main content of the article have been relegated to \S \ref{sec-appendix} and \S \ref{sec-appendix2}. 

\section{A brief review of analytic sets} \label{sec-preliminaries}
The resolution of singularities scheme described in \S \ref{sec-resolution} draws heavily on certain fundamental results that lie at the interface of analysis and algebraic geometry. We summarize these facts in this section, without proof but with appropriate references, and record a list of standard definitions and notation that will be used extensively in the remainder of the paper.  
\subsection{Notation}
All the vectors in this article will be of dimension $(n-1)$, $n$ or $(n+1)$. We will use $\mathbf x$ to denote a vector in $\mathbb R^n$, and $\mathbf x'$ for its projection onto the first $(n-1)$ coordinates, so that $\mathbf x = (\mathbf x', x_n)$. An $(n+1)$-dimensional vector will be denoted by $(\mathbf x, x_{n+1})$. 

Let us denote by ``Log'' the branch of the logarithm defined on the slit complex plane with the positive imaginary axis removed, i.e., 
\[ \text{Log}(z) = \log|z| + i \arg(z), \quad z \in \mathbb C \setminus \{iy : y \geq 0 \}, \; -\frac{3 \pi}{2} < \arg(z) < \frac{\pi}{2}. \] For any $r \in \mathbb R$, the power function $z^r$ is then defined to be 
\begin{equation} z^r := \text{exp}(r \text{Log}(z)). \label{def-power}\end{equation} The map $x \mapsto x^r$ is therefore well-defined on $\mathbb R \setminus \{0\}$, satisfies the consistency conditions \begin{equation} (x^r)^s = (x^{s})^r = x^{rs} \text{ for } x > 0 \text{ and } \text{ for all } r, s \in \mathbb R, \label{power-consistency}  \end{equation} and admits a continuous extension to $\mathbb R$ for $r \geq 0$.

Given $\mathbf x = (x_1, \cdots, x_n) \in \mathbb R^n$ with $x_1 x_2 \cdots x_n \ne 0$, $\pmb{\alpha} = (\alpha_1, \cdots, \alpha_n) \in [0, \infty)^n$, $\pmb{\beta} = (\beta_1, \cdots, \beta_n) \in (0,\infty)^n$ and $m \geq 0$, we write \begin{align*} \mathbf x^{\pmb{\alpha}} &= x_1^{\alpha_1} \cdots x_n^{\alpha_n}, \qquad \mathbf x^{\frac{\pmb{\alpha}}{\pmb{\beta}}} = x_1^{\frac{\alpha_1}{\beta_1}} \cdots x_n^{\frac{\alpha_n}{\beta_n}}, \qquad \mathbf x^m = x_1^m \cdots x_n^m, \\ \Phi_m(\mathbf x) &= (x_1^m, \cdots, x_n^m), \qquad \Phi_{\pmb{\alpha}}(x_1, \cdots, x_n) = (x_1^{\alpha_1}, \cdots, x_n^{\alpha_n}). \end{align*}  

An ordering of multi-indices will be repeatedly used. Given two multi-indices $\pmb{\alpha} = (\alpha_1, \cdots, \alpha_n)$ and $\pmb{\beta} = (\beta_1, \cdots, \beta_n)$, we say that $\pmb{\alpha}$ is less than or equal to $\pmb{\beta}$ and write $\pmb{\alpha} \leq \pmb{\beta}$ if
\begin{equation}
\alpha_j \leq \beta_j \text{ for all $1 \leq j \leq n$ }. \label{order1}
\end{equation} 
A finite collection of multi-indices is said to be {\em{totally ordered}} if any two elements in the collection can be given an ordering as above. 
Needless to say, two arbitrary multi-indices need not be ordered in the sense of (\ref{order1}). If however there is a total ordering of a finite set of exponent vectors, this induces a reverse ordering of the corresponding functions in a neighborhood of the origin, namely \[ \mathbf x^{\pmb{\alpha}} \leq \mathbf x^{\pmb{\beta}} \qquad \text{ if } {\pmb{\alpha}} \geq {\pmb{\beta}} \; \text{ and } \; \mathbf x \in (0,1)^n. \]   
We say that $\pmb{\alpha}$ is strictly smaller than $\pmb{\beta}$ and write $\pmb{\alpha} < \pmb{\beta}$ if \[\pmb{\alpha} \leq \pmb{\beta} \text{ and there exists } 1 \leq j \leq n \text{ such that } \alpha_j < \beta_j.\]
  
\subsection{Definitions} \label{subsec-defns} 
The goal of the resolution process is to reduce the roots of an arbitrary multi-variate real-analytic function into a standard format, specifically re-express them as functions of monomial type. The definitions below describe the model functions of interest.   
\begin{definition}
A formal series $S(\mathbf x)$ given by \begin{equation} S(\mathbf x) = \sum_{\pmb{\kappa}= (\kappa_1, \cdots, \kappa_n)} a_{\pmb{\kappa}} \mathbf x^{\pmb{\kappa}}, \quad \mathbf x \in \mathbb R^n_{>0} = \{\mathbf t \in \mathbb R^n : t_j > 0, 1 \leq j \leq n \}  \label{fract-pow-series} \end{equation}
with complex coefficients $\{a_{\pmb{\kappa}}\}$ will be called a {\em{(multivariate) convergent Puiseux expansion}} in $\mathbf x$ if there is a positive integer $N$ for which 
\begin{equation} S(\Phi_N(\mathbf x)) = \sum_{\pmb{\kappa}} a_{\pmb{\kappa}} \mathbf x^{N \pmb{\kappa}} \label{regular-pow-series} \end{equation}
 is a convergent power series (in the usual sense) centered at $\mathbf 0$ with a nontrivial radius of convergence. In other words, the series $S$ is a convergent Puiseux expansion in the positive orthant if $N\pmb{\kappa}$ has non-negative integer entries for all multi-indices $\pmb{\kappa}$ with $a_{\pmb{\kappa}} \ne 0$ and there is an open neighborhood of the origin in $\mathbb R^n$ such that the series in (\ref{regular-pow-series}) converges absolutely and uniformly on all compact subsets of this neighborhood.

If $\pmb{\nu}$ is an $n$-dimensional vector with entries either $+1$ or $-1$, then a series \[ S(\mathbf x) = \sum_{\pmb{\kappa}= (\kappa_1, \cdots, \kappa_n)} a_{\pmb{\kappa}} \mathbf x^{\pmb{\kappa}} \; \text{ on } \{\mathbf x \in \mathbb R^n: \nu_j x_j > 0, 1 \leq j \leq n \} \] is a {\em{(multivariate) convergent Puiseux expansion}} if  $S(\nu_1x_1, \cdots, \nu_n x_n)$ is a convergent Puiseux expansion on $\mathbb R^n_{>0}$ in the above sense.    
\end{definition} 
{\bf{Remarks: }}
\begin{enumerate}[1.]
\item If $\mathbf r = (r_1, \cdots, r_n) \in (0, \infty)^n$ is a vector such that the power series $S \circ \Phi_N(\mathbf r)$ converges absolutely, then the multivariate Puiseux expansion (\ref{fract-pow-series}) is absolutely and uniformly convergent on compact subsets of the open parallelepiped $\prod_{j=1}^{n}(-r_j^{1/N}, r_j^{1/N})$. Unlike a regular power series that is real-analytic and hence $C^{\infty}$ on an open set containing the origin, a function given by the multivariate Puiseux expansion (\ref{fract-pow-series}) is in general only $C^{\infty}$ and real-analytic on an open set {\em{not containing the coordinate hyperplanes}}.

\item All multivariate Puiseux expansions considered in this article will be centered at the origin, i.e., of the form (\ref{fract-pow-series}) and hence absolutely and uniformly convergent on some open set containing the origin. On the other hand, we will often need to consider the extensions of these functions to certain open sets whose closures contain the origin (though the sets themselves may not) and on all of which the Puiseux expansions need not converge. With this in mind, we introduce certain variations of the definitions above. 
\end{enumerate} 
\begin{definition}
\begin{enumerate}[1.]
\item Given an open, connected set $V \subseteq \mathbb R^n_{>0}$ such that $\mathbf 0 \in \overline{V}$, we say that $f$ is a {\em{fractional power series on $V$}} if there is an open set $\mathcal O \subseteq \mathbb R^n$ containing $\overline{V}$, a positive integer $N \geq 1$ and a real-analytic function $g$ on $\Phi_N^{-1}(\mathcal O)$ such that $f = g \circ \Phi_{1/N}$ on $V$.   
\item A fractional power series $f$ on $V \subseteq \mathbb R^n_{>0}$ is said to be a {\em{unit on $V$}} if $\inf \{|f(\mathbf x)| : \mathbf x \in V \} > 0$. 
\item As in the case of multivariate Puiseux expansions, the above definitions extend naturally to the situation when $V$ is a subset of any orthant.   
\end{enumerate} 
\end{definition}
Note that since $\Phi_N^{-1}(\mathcal O)$ is an open set containing the origin, the real-analytic function $f \circ \Phi_N$ admits a power series representation centered at the origin with a nontrivial domain of convergence, even though the power series need not converge on all of $\Phi_N^{-1}(\mathcal O)$. Thus $f$ equals a multivariate Puiseux expansion of the form (\ref{fract-pow-series}) in a sufficiently small open subset of $V$. We claim that this Puiseux representation uniquely identifies $f$. 
\begin{lemma}
Let $V$ be an open connected subset of the positive orthant. A fractional power series on $V$ is uniquely identified by its multivariate Puiseux expansion at the origin. 
\end{lemma}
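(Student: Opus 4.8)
The plan is to reduce the statement to the classical uniqueness of a convergent power series representation via the substitution $\Phi_N$, and then to handle the possible mismatch of the integer $N$ between two a priori different representations.

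First I would unwind the definitions. Suppose $f$ is a fractional power series on the open connected set $V \subseteq \mathbb R^n_{>0}$ with $\mathbf 0 \in \overline{V}$, and suppose $f$ admits two multivariate Puiseux expansions $S_1(\mathbf x) = \sum_{\pmb\kappa} a_{\pmb\kappa} \mathbf x^{\pmb\kappa}$ and $S_2(\mathbf x) = \sum_{\pmb\kappa} b_{\pmb\kappa} \mathbf x^{\pmb\kappa}$ at the origin, each equal to $f$ on some (possibly smaller) open subset of $V$. The goal is $a_{\pmb\kappa} = b_{\pmb\kappa}$ for all $\pmb\kappa$. Let $N_1, N_2$ be the positive integers that turn $S_1, S_2$ into honest power series as in (\ref{regular-pow-series}), and set $N = \mathrm{lcm}(N_1, N_2)$; then both $S_1(\Phi_N(\mathbf x))$ and $S_2(\Phi_N(\mathbf x))$ are convergent power series centered at $\mathbf 0$, since replacing $N_i$ by a multiple only reindexes the (still nonnegative-integer) exponents. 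By the consistency relation (\ref{power-consistency}) and the definition of $f$ as $g \circ \Phi_{1/N'}$ for a suitable real-analytic $g$, the functions $S_1 \circ \Phi_N$ and $S_2 \circ \Phi_N$ agree with the real-analytic function $\mathbf x \mapsto g(\Phi_{N/N'}(\mathbf x))$ on the preimage under $\Phi_N$ of a nonempty open subset of $V$.

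Next I would invoke the identity theorem for real-analytic functions. The preimage $\Phi_N^{-1}(V \cap \mathcal O')$ of a nonempty open subset of $V$ is a nonempty open subset of $\mathbb R^n$ (intersected with one orthant; by composing with the sign flip $\pmb\nu$ as in the definition we may assume we are in the positive orthant), and two convergent power series centered at the origin that agree on a nonempty open set must have identical coefficients — this is because their difference is a real-analytic function vanishing on an open set, hence identically zero near $\mathbf 0$, forcing all Taylor coefficients to vanish. Therefore $S_1 \circ \Phi_N$ and $S_2 \circ \Phi_N$ have the same power series, i.e. the coefficient of $\mathbf x^{N \pmb\kappa}$ in each is $a_{\pmb\kappa}$ respectively $b_{\pmb\kappa}$; matching these gives $a_{\pmb\kappa} = b_{\pmb\kappa}$ for every $\pmb\kappa$. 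Finally, connectedness of $V$ is used only to make the statement meaningful — the representation is extracted on a small neighborhood and the uniqueness is local — but I would remark that connectedness also guarantees $f$ itself (not merely its germ) is determined, since $g$ is real-analytic on a connected-component-containing neighborhood and analytic continuation is unique.

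The only genuine subtlety, and the step I would be most careful with, is the passage between two different denominators $N_1$ and $N_2$: one must check that enlarging $N$ to a common multiple does not change whether the exponents are nonnegative integers (it does not, since $N \pmb\kappa = (N/N_i)(N_i \pmb\kappa)$ and $N_i \pmb\kappa \in \mathbb Z_{\geq 0}^n$), and that the substitution $\Phi_N$ is injective on the relevant orthant so that equality of $S_i \circ \Phi_N$ pulls back to equality of the $S_i$ as formal series. Everything else is a routine application of the identity theorem for power series together with the consistency property (\ref{power-consistency}) of the power maps.
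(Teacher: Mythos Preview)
You have proven a different statement from the one the paper intends. The lemma, as the paper's own proof makes clear, asserts that \emph{the Puiseux expansion determines the function on all of $V$}: if two fractional power series $f_1$ and $f_2$ on $V$ share the same Puiseux expansion at the origin, then $f_1 \equiv f_2$ on $V$. You instead prove the converse direction --- that a single fractional power series $f$ cannot admit two distinct Puiseux expansions --- which is true but is not the claim. The point of the lemma (see the remarks immediately following it) is that the expansion may converge only on a proper subset of $V$, yet still pins down the function on all of $V$; this is where connectedness of $V$ does real work, via analytic continuation of $f_i \circ \Phi_N$ across $\Phi_N^{-1}(V)$.

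Your closing sentence does gesture at the correct direction, but you treat it as a side remark rather than the substance of the proof. The paper's argument is precisely what you relegate to that remark: since $f_1$ and $f_2$ both agree with the common Puiseux expansion on some open $U \subseteq V$ near the origin, they agree on $U$; choosing $N$ large enough that both $f_1 \circ \Phi_N$ and $f_2 \circ \Phi_N$ are real-analytic on $\Phi_N^{-1}(V)$, the identity theorem on each connected component of $\Phi_N^{-1}(V)$ forces $f_1 \circ \Phi_N \equiv f_2 \circ \Phi_N$ there, hence $f_1 \equiv f_2$ on $V$. Your lcm device for reconciling two denominators $N_1, N_2$ is correct but irrelevant to this direction, since here one starts with a single expansion and two functions, not one function and two expansions.
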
 
\begin{proof}
Let $f$ and $g$ be two fractional power series on $V$ sharing the same multivariate Puiseux expansion at the origin. Since the Puiseux expansion is convergent, it represents a continuous function on some open neighborhood $U_0$ of the origin. Intersecting $U_0$ with $V$, we obtain an open set $U \subseteq V$ with $\mathbf 0 \in \overline{U}$ with the property that $f = g$ on $U$. In other words, $f \circ \Phi_N = g \circ \Phi_N$ on $\Phi_N^{-1}(U)$ for any positive integer $N$. On the other hand, it follows from the definition of fractional power series that for some $N$ sufficiently large, the functions $f \circ \Phi_N$ and $g \circ \Phi_N$ are both real-analytic on $\Phi_N^{-1}(V)$. Thus, if these functions agree on the nonempty open set $\Phi_N^{-1}(U)$, they must agree on every connected component of $\Phi_N^{-1}(V)$ as well. This implies that $f \equiv g$ on $V$.   
\end{proof} 

{\bf{Remarks:}}
\begin{enumerate}[1.]
\item We emphasize that contrary to certain conventions, our definition of a fractional power series $f$ on $V$ does not merely mean that for every $\mathbf a \in V$, $f$ admits a representation of the form (\ref{fract-pow-series}) with $\mathbf x$ replaced by $\mathbf x - \mathbf a$. Rather, it guarantees the existence of a single multivariate Puiseux expansion centered at $\mathbf 0$ (which does not lie in $V$) that converges absolutely and uniformly on some open neighborhood of the origin. While the domain of convergence of this specific Puiseux expansion may not cover all of $V$, the fractional power series $f$ is uniquely specified on $V$ by this expansion and coincides with it on their common domain of definition. 

\item At the same time, the existence of the multivariate Puiseux expansion at $\mathbf 0$ permits the fractional power series on $V$ to extend as a continuous function to an open set containing $\overline{V}$, and in particular assigns it an unambiguous value at the origin.  

\item In this paper, we will not need to deal with the actual domain of convergence of any multivariate Puiseux expansion. This is in general a nontrivial issue, but largely irrelevant in our present analysis. Our resolution of singularities procedure is local, in the following sense. Given a real-analytic function $F(\mathbf x, x_{n+1})$ on a neighborhood of the origin in $\mathbb R^{n+1}$, our goal is to obtain a small constant $\epsilon > 0$ and decompose the slit parallelepiped $(-\epsilon, \epsilon)^{n+1} \cap \{\mathbf x : x_1 x_2 \cdots x_{n+1} \ne 0 \}$ into a finite number of regions such that on each region the roots of $F$ with respect to $x_{n+1}$ (possibly after certain coordinate changes in $\mathbf x$) can be represented as fractional power series on $(0,1)^n$ in the new variables. This will be shown to be possible if $\epsilon$ is sufficiently small. The smallness of the ambient domain of definition of $F$ will be used without further reference in the sequel. Under this assumption, addressing the question of exact domains of convergence of the {\em{Puiseux representation}} of a root (in the new variables) will not be necessary, as long as the domain of definition of the corresponding fractional power series is large enough to contain the parallelepiped $(0,1)^n$.   

\item A significant feature of the class of fractional power series, in contrast with that of the regular ones, is ``non-closure under composition''. More precisely, if $f$ is a multivariate fractional power series and $g$ a fractional power series in a single variable, then $h = g \circ f$ need not be a fractional power series. An easy example is the function $h(x_1, x_2) = \sqrt{x_1^2+x_2^2}$, which is not a fractional power series, even though the individual components $f(x_1, x_2) = x_1^2 + x_2^2$ and $g(x) = \sqrt{x}$ are. However, the unit square in the $(x_1, x_2)$ space can be decomposed (excluding possibly a lower dimensional set) into regions on each of which $h(x_1, x_2)$ admits a fractional power series representation after a change of variables. For instance, on $\{ 0 < x_2 < x_1/2 \}$ and under the change of variables $x_1 = u_1$ and $x_2 = u_1 u_2/2$, $0<u_1, u_2 < 1$, the function $h$ reduces to $u_1 \sqrt{1 + u_2^2/4}$, which is a fractional power series in $(u_1, u_2) \in (0,1)^2$. Decompositions of domains and changes of variables such as these will be key points in our analysis.         
\end{enumerate} 

The following is a standard algebraic fact concerning a ring of fractional power series (see \cite[Lemma 1.19]{KiyVic04} or \cite[Ch.\ 4 and 5]{KiyVic-book}). Let $\mathcal R_n = \mathbb C[\langle \mathbf x \rangle]$ denote the ring of all power series with complex coefficients that admit a nontrivial radius of convergence about the origin in $\mathbb R^n$. Let $\mathcal Q_n$ be the field of quotients of $\mathcal R_n$. Fix an integer vector $\mathbf d = (d_1, \cdots, d_n) \in \mathbb N^n$, and let $\mathcal Q_n(\Phi_{{\mathbf 1}/{\mathbf d}}(\mathbf x))$ denote the splitting field of the polynomial $(t^{d_1} - x_1) \cdots (t^{d_n} - x_n)$ over $\mathcal Q_n$.
\begin{lemma} \label{lemma-integral-closure} 
 The ring $\mathbb C[\langle \Phi_{{\mathbf 1}/{\mathbf d}}(\mathbf x) \rangle]$ of convergent fractional power series in the variables $\{ x_i^{1/d_i} : 1 \leq i \leq n \}$ is the integral closure of $\mathcal R_n$ in $\mathcal Q_n(\Phi_{{\mathbf 1}/{\mathbf d}}(\mathbf x))$. In other words, every fractional power series in $\mathbb C[\langle \Phi_{{\mathbf 1}/{\mathbf d}}(\mathbf x) \rangle]$ is the root of some monic polynomial in $x_{n+1}$ with coefficients in $\mathcal R_n$.    
\end{lemma}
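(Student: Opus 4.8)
The plan is to make the substitution $x_i = y_i^{d_i}$ explicit and reduce the statement to the normality of the convergent power series ring. Identifying $x_i$ with $y_i^{d_i}$, regard $\mathcal R_n$ as the subring $A := \mathbb C[\langle y_1^{d_1}, \ldots, y_n^{d_n} \rangle]$ of the full convergent power series ring $\mathcal R' := \mathbb C[\langle y_1, \ldots, y_n \rangle]$; under this identification $\mathcal R'$ is exactly $\mathbb C[\langle \Phi_{{\mathbf 1}/{\mathbf d}}(\mathbf x) \rangle]$. Since $\mathbb C$ already contains all roots of unity, the splitting field appearing in the statement is $\mathcal Q_n(\Phi_{{\mathbf 1}/{\mathbf d}}(\mathbf x)) = \mathrm{Frac}(A)(y_1, \ldots, y_n) =: L$, and the assertion becomes precisely: $\mathcal R'$ is the integral closure of $A$ in $L$.

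First I would record the module structure of $\mathcal R'$ over $A$. Collecting monomials according to their residue class modulo $\mathbf d$, every $f \in \mathcal R'$ has a unique decomposition $f(\mathbf y) = \sum_{\mathbf r} \mathbf y^{\mathbf r}\, f_{\mathbf r}(y_1^{d_1}, \ldots, y_n^{d_n})$ with the sum over all $\mathbf r$ satisfying $0 \le r_i < d_i$; a routine Cauchy-estimate argument on the Taylor coefficients of $f$ shows each $f_{\mathbf r}$ is again a convergent power series and so lies in $A$. Hence $\mathcal R' = \sum_{\mathbf r} A\, \mathbf y^{\mathbf r}$ is a finitely generated $A$-module, in fact free on the monomials $\mathbf y^{\mathbf r}$, $0 \le r_i < d_i$, since distinct such monomials fall in distinct residue classes. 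Two consequences follow: (i) $\mathcal R'$ is integral over $A$, so $\mathcal R' \subseteq$ (integral closure of $A$ in $L$); and (ii) $\mathrm{Frac}(\mathcal R') = \mathrm{Frac}(A)(y_1, \ldots, y_n) = L$, since $y_i \in \mathcal R'$ for each $i$ and $\mathcal R' \subseteq A[y_1, \ldots, y_n]$.

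For the reverse inclusion, let $\alpha \in L$ be integral over $A$. As $A \subseteq \mathcal R'$, the element $\alpha$ is a fortiori integral over $\mathcal R'$, and $\alpha \in \mathrm{Frac}(\mathcal R')$ by (ii). But $\mathcal R' = \mathbb C[\langle y_1, \ldots, y_n \rangle]$ is a regular local ring, hence a unique factorization domain (Auslander--Buchsbaum, or directly via Weierstrass preparation and induction on $n$), and in particular integrally closed in its fraction field $L$. Therefore $\alpha \in \mathcal R'$. Combined with (i), this shows $\mathcal R'$ is exactly the integral closure of $A$ in $L$, which is the first assertion. The ``in other words'' clause is then immediate: every $f \in \mathbb C[\langle \Phi_{{\mathbf 1}/{\mathbf d}}(\mathbf x) \rangle]$ is integral over $\mathcal R_n$ and therefore satisfies a monic polynomial equation with coefficients in $\mathcal R_n$.

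The one ingredient that is not mere bookkeeping --- and the step I would single out as the crux --- is the normality of the convergent power series ring $\mathbb C[\langle \mathbf y \rangle]$; the residue-class decomposition and the convergence estimate for the pieces $f_{\mathbf r}$ are routine. If one prefers not to invoke regularity/UFD, normality can instead be verified directly: an $\alpha \in L$ that is integral over $\mathbb C[\langle \mathbf y \rangle]$ is a meromorphic germ at the origin which its monic equation forces to be locally bounded, so it extends holomorphically across its polar set by Riemann's removable singularity theorem and thus lies in $\mathbb C[\langle \mathbf y \rangle]$; this is the same fact in analytic guise, so quoting the normality of the convergent power series ring is the cleanest route.
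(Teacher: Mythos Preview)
Your argument is correct and, for the direction the paper actually proves, follows the same route: both you and the paper use the residue-class decomposition $f = \sum_{\mathbf r} \mathbf y^{\mathbf r} f_{\mathbf r}$ over $0 \le r_i < d_i$ to exhibit $\mathcal R'$ as a finite $\mathcal R_n$-module, and then conclude integrality. The paper is terser, citing a textbook lemma (finite module $\Rightarrow$ integral) rather than spelling it out.

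The genuine difference is one of scope. The paper's proof stops at the ``in other words'' clause --- every element of $\mathbb C[\langle \Phi_{\mathbf 1/\mathbf d}(\mathbf x)\rangle]$ is integral over $\mathcal R_n$ --- because that is the only direction invoked later (to manufacture a monic polynomial with given fractional-power-series roots). You go further and establish the full equality with the integral closure, supplying the reverse inclusion via normality of the convergent power series ring (regular local $\Rightarrow$ UFD $\Rightarrow$ integrally closed, or the analytic argument through Riemann removability). That extra step is correct and makes the first sentence of the lemma literally true rather than just the restated second sentence; the paper simply does not address it.
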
  
\begin{proof} 
The statement of the lemma is a special case of the algebraic fact that $\mathbb C[\langle \Phi_{{\mathbf 1}/{\mathbf d}}(\mathbf x) \rangle]$ is a finite $\mathcal R_n$-algebra, i.e., every element in $\mathbb C[\langle \Phi_{{\mathbf 1}/{\mathbf d}}(\mathbf x) \rangle]$ is of the form \[ \sum_{\mathbf r} \mathbf x^{\mathbf r/\mathbf d} g_{\mathbf r}(\mathbf x) \quad \text{ for some  collection of functions } \{g_{\mathbf r}\} \subseteq \mathcal R_n, \] 
where the index $\mathbf r = (r_1, \cdots, r_n)$ ranges over the finite subset of $\mathbb Z^n$ such that $0 \leq r_j < d_j$ for all $1 \leq j \leq n$. The desired result now follows from \cite[Lemma 1.19]{Hulek-book}. 
\end{proof}
\begin{definition}
\begin{enumerate}[1.]
\item We say that $f(\mathbf x)$ is a {\em{monomial}} if it is of the form 
\begin{equation} f(\mathbf x) = a \mathbf x^{\pmb{\kappa}} =  a \prod_{j=1}^{n} x_j^{\kappa_j}, \qquad a \ne 0,\; \pmb{\kappa}> \mathbf 0  \label{def-monomial} \end{equation} and $\kappa_j \in \mathbb Z \cap [0, \infty)$ for all $1 \leq j \leq n$. A monomial is a well-defined analytic function on $\mathbb C^n$. 
\item A function is said to be {\em{normal crossings on a set $V \subseteq \mathbb R^{n}_{>0}$}} if it is the product of a monomial and a real-analytic function on $V$ that is uniformly bounded away from zero on $\overline{V}$. 
\end{enumerate}
\end{definition}
\begin{definition}
\begin{enumerate}[1.]
\item If the multi-exponent $\pmb{\kappa} > \mathbf 0$ has non-negative rational (not necessarily integer) entries, the corresponding function in (\ref{def-monomial}) is called a {\em{fractional monomial}}. A fractional monomial is well-defined on $\mathbb R^n$ away from the coordinate hyperplanes.  
\item A function is said to be {\em{fractional normal crossings on $V \subseteq \mathbb R^n_{>0}$}} if it is the product of a fractional monomial with a unit on $V$.
\end{enumerate}    
\end{definition} 
The following is an easy but frequently useful lemma (see \cite[Lemma 4.7]{BieMil88} or \cite[6.VI]{Suss90} or \cite[Lemma 4.3]{Par01} for a proof) concerning the ordering of fractional normal crossings and their exponents.  
\begin{lemma} \label{monomial-ordering-lemma}
Let $\pmb{\alpha}$, $\pmb{\beta}$ and $\pmb{\gamma}$ be multi-indices and let $a(\mathbf x)$, $b(\mathbf x)$ and $c(\mathbf x)$ be units. If 
\[ a(\mathbf x) \mathbf x^{\pmb{\alpha}} - b(\mathbf x) \mathbf x^{\pmb{\beta}} = c(\mathbf x) \mathbf x^{\pmb{\gamma}}, \]
then either $\pmb{\alpha} \leq \pmb{\beta}$ or $\pmb{\beta} \leq \pmb{\alpha}$.     
\end{lemma}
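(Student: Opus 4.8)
The plan is to argue by contradiction, testing the given identity along a one-parameter family of monomial curves approaching the origin. Suppose $\pmb{\alpha}$ and $\pmb{\beta}$ are not comparable; then $\pmb{\alpha}-\pmb{\beta}$ has at least one strictly positive and at least one strictly negative coordinate, so we may fix indices $i\neq j$ with $\alpha_i<\beta_i$ and $\alpha_j>\beta_j$. Before running the curves I would record the two facts that make them harmless. First, since $a\mathbf{x}^{\pmb{\alpha}}-b\mathbf{x}^{\pmb{\beta}}$ and $c\mathbf{x}^{\pmb{\gamma}}$ are fractional power series on $V$ that agree on $V$, the preceding uniqueness lemma gives them a common Puiseux expansion at $\mathbf{0}$, so the identity persists on a small slit box $(0,\delta)^n$. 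Second, each unit, being a fractional power series on $V$ with $\inf_V|\cdot|>0$, extends continuously to a neighborhood of $\overline{V}\ni\mathbf{0}$ with nonzero value at $\mathbf{0}$, hence is bounded above and below in modulus near $\mathbf{0}$. After shrinking $\delta$, everything in sight is defined on $(0,\delta)^n$ and the three units are bounded away from $0$ and $\infty$ there.

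Now the curve computation. Fix $\pmb{\xi}\in(0,\delta)^n$, a rational $s>0$, and consider $t\mapsto\mathbf{x}_s(t)$ with $x_i=t$, $x_j=t^{s}$, $x_k=\xi_k$ for $k\notin\{i,j\}$; this lies in $(0,\delta)^n$ for $t$ small and tends to $\mathbf{0}$. Along it, $|a\mathbf{x}^{\pmb{\alpha}}|\asymp t^{\,\alpha_i+s\alpha_j}$, $|b\mathbf{x}^{\pmb{\beta}}|\asymp t^{\,\beta_i+s\beta_j}$, $|c\mathbf{x}^{\pmb{\gamma}}|\asymp t^{\,\gamma_i+s\gamma_j}$, with constants from the bounds on the units and on the frozen coordinates. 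The affine function $s\mapsto(\alpha_i-\beta_i)+s(\alpha_j-\beta_j)$ is negative at $s=0$ and has positive slope, so it vanishes at a unique $s_\ast>0$. For $s\in(0,s_\ast)$ the $a$-monomial has the strictly smaller $t$-exponent, and there is no cancellation because the two monomials have different orders while the relevant unit stays bounded below, so $a\mathbf{x}^{\pmb{\alpha}}-b\mathbf{x}^{\pmb{\beta}}$ is of exact order $t^{\,\alpha_i+s\alpha_j}$ along $\mathbf{x}_s(t)$. Equating orders with $c\mathbf{x}^{\pmb{\gamma}}$ forces $\alpha_i+s\alpha_j=\gamma_i+s\gamma_j$ for every $s\in(0,s_\ast)$; an affine function of $s$ vanishing on an interval vanishes identically, so $\gamma_i=\alpha_i$ and $\gamma_j=\alpha_j$. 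Repeating the argument for $s>s_\ast$, where the $b$-monomial now dominates, yields $\gamma_i=\beta_i$ and $\gamma_j=\beta_j$. Hence $\alpha_i=\gamma_i=\beta_i$, contradicting $\alpha_i<\beta_i$, and the lemma follows; symmetrically one obtains $\pmb{\beta}\leq\pmb{\alpha}$ or $\pmb{\alpha}\leq\pmb{\beta}$.

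As the statement says, the lemma is easy, and there is no serious obstacle: the one point that needs a little care is at the very start, namely ensuring the test curves through the origin live where all three fractional power series are defined and where the units are genuinely bounded away from $0$ and $\infty$, which is why I would first replace $V$ by the slit box $(0,\delta)^n$ using the Puiseux-uniqueness and continuous-extension remarks. The one mildly clever ingredient is the choice of the family $x_i=t$, $x_j=t^{s}$: varying $s$ converts the order-comparison into an affine identity in $s$, and a single such identity holding on an interval pins down two coordinates of $\pmb{\gamma}$ at once, which is exactly what forces $\alpha_i=\beta_i$.
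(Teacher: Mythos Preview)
Your argument is correct. The paper does not supply its own proof of this lemma, referring instead to \cite[Lemma 4.7]{BieMil88}, \cite[6.VI]{Suss90}, and \cite[Lemma 4.3]{Par01}. The standard proof in those references is more algebraic: one observes that if $\pmb{\alpha}\not\geq\pmb{\beta}$ then the monomial $\mathbf{x}^{\pmb{\alpha}}$ cannot appear in the expansion of $b\mathbf{x}^{\pmb{\beta}}$, so its coefficient in $a\mathbf{x}^{\pmb{\alpha}}-b\mathbf{x}^{\pmb{\beta}}$ is $a(\mathbf 0)\neq 0$, forcing $\pmb{\alpha}\geq\pmb{\gamma}$; symmetrically $\pmb{\beta}\geq\pmb{\gamma}$, and then the coefficient of $\mathbf{x}^{\pmb{\gamma}}$ on the left vanishes while on the right it equals $c(\mathbf 0)\neq 0$. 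Your approach via one-parameter monomial test curves is a genuinely different, more analytic route: rather than inspecting individual Puiseux coefficients, you read off orders of vanishing along curves and exploit that an affine identity in the parameter $s$ on an interval pins down two exponents at once. Both arguments are short; the coefficient-comparison proof is slightly more direct, while yours has the advantage of never needing to unpack the Puiseux expansion explicitly.

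One small correction: for $n\geq 3$ your curve $\mathbf{x}_s(t)$ does \emph{not} tend to $\mathbf{0}$, since the coordinates $x_k=\xi_k$ for $k\notin\{i,j\}$ are frozen away from zero. This is harmless for the argument---what you actually use is that the curve stays in $(0,\delta)^n$, where the units are uniformly bounded above and below and the identity holds---but the sentence ``tends to $\mathbf{0}$'' should be deleted. The boundedness of the units on all of $(0,\delta)^n$ follows, as you note, from continuity on the compact closure together with $|a(\mathbf 0)|,|b(\mathbf 0)|,|c(\mathbf 0)|>0$.
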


\subsection{The Weierstrass Preparation Theorem, Weierstrass polynomials} 
The Weierstrass preparation theorem is a classical result in the analysis of several complex variables that relates an arbitrary holomorphic (i.e., complex-analytic) function with one of polynomial structure. We use it in our analysis in the same way it was used in \cite{PhSteStu99} or \cite{PhSt97}.  
\begin{theorem}\cite[Theorem 6.4.5]{Krantz-SCVbook} \cite[p123]{Shabat-book} \label{W-prep}
Suppose that the function $F$ is holomorphic in some neighborhood $U_0$ of the origin in $\mathbb C^{n+1}$, with $F(\mathbf 0, 0) = 0$ and $F(\mathbf 0, z_{n+1}) \not\equiv 0$. Then in some neighborhood $U$ of the origin, $U \subseteq U_0$, the function $F$ may be written as  
\[ F(\mathbf z, z_{n+1}) = u(\mathbf z, z_{n+1}) \left[ z_{n+1}^d + c_1(\mathbf z) z_{n+1}^{d-1} + \cdots + c_d(\mathbf z) \right],  \]
where $d \geq 1$ is the order of the zero of $F(\mathbf 0, z_{n+1})$ at $z_{n+1} = 0$, the functions $c_\nu$ are holomorphic in $U$ with $c_{\nu}(\mathbf 0) = 0$, while $u$ is holomorphic and non-vanishing in $U$.  
\end{theorem}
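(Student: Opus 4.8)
The plan is to run the classical contour-integral proof. First I would reduce to a polydisc. Since $F(\mathbf 0, z_{n+1})$ is holomorphic, not identically zero, and has a zero of order exactly $d \geq 1$ at $z_{n+1} = 0$, its zeros are isolated, so there is $r > 0$ with $F(\mathbf 0, z_{n+1}) \neq 0$ for $0 < |z_{n+1}| \leq r$; hence $m := \min_{|z_{n+1}| = r} |F(\mathbf 0, z_{n+1})| > 0$. By uniform continuity of $F$ near the compact set $\{\mathbf 0\} \times \{|z_{n+1}| = r\}$, there is $\epsilon > 0$ such that $|F(\mathbf z, z_{n+1}) - F(\mathbf 0, z_{n+1})| < m$, and in particular $F(\mathbf z, z_{n+1}) \neq 0$, whenever $|\mathbf z| < \epsilon$ and $|z_{n+1}| = r$. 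By Rouch\'e's theorem, for each such $\mathbf z$ the function $z_{n+1} \mapsto F(\mathbf z, z_{n+1})$ has exactly $d$ zeros $\alpha_1(\mathbf z), \dots, \alpha_d(\mathbf z)$, counted with multiplicity, in $|z_{n+1}| < r$. Running the same Rouch\'e argument on the smaller circle $|z_{n+1}| = r/2$ and shrinking $\epsilon$, I may moreover assume all these roots lie in $|z_{n+1}| < r/2$.

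Next I would define the candidate Weierstrass polynomial
\[ P(\mathbf z, z_{n+1}) := \prod_{j=1}^{d} \bigl(z_{n+1} - \alpha_j(\mathbf z)\bigr) = z_{n+1}^d + c_1(\mathbf z) z_{n+1}^{d-1} + \cdots + c_d(\mathbf z), \]
so that, up to sign, $c_\nu(\mathbf z)$ is the $\nu$-th elementary symmetric function of the roots. The individual roots need not be single-valued holomorphic functions of $\mathbf z$, but their symmetric functions are: by Newton's identities it is enough to check that the power sums $s_k(\mathbf z) = \sum_{j=1}^d \alpha_j(\mathbf z)^k$ are holomorphic on $|\mathbf z| < \epsilon$, and the residue calculus gives
\[ s_k(\mathbf z) = \frac{1}{2\pi i} \oint_{|\zeta| = r} \zeta^k \, \frac{\partial_\zeta F(\mathbf z, \zeta)}{F(\mathbf z, \zeta)} \, d\zeta. \]
The integrand is continuous in $(\mathbf z, \zeta)$ and holomorphic in $\mathbf z$ for each fixed $\zeta$ on the contour (where $F(\mathbf z, \zeta) \neq 0$), so differentiation under the integral sign shows $s_k$ — and hence each $c_\nu$ — is holomorphic on $|\mathbf z| < \epsilon$. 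Since at $\mathbf z = \mathbf 0$ all $d$ roots equal $0$, we have $P(\mathbf 0, z_{n+1}) = z_{n+1}^d$, and therefore $c_\nu(\mathbf 0) = 0$.

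Finally I would put $u := F/P$ on the polydisc $\{|\mathbf z| < \epsilon\} \times \{|z_{n+1}| < r\}$. For each fixed $\mathbf z$ the numerator and denominator have precisely the same zeros with the same multiplicities there, so $u(\mathbf z, \cdot)$ extends holomorphically across them and has no zeros; in particular $u$ is holomorphic and nowhere vanishing in $z_{n+1}$. For the joint regularity I would represent $u$ by a Cauchy integral in the last variable: choosing $r/2 < r' < r$, so that $P(\mathbf z, \zeta) \neq 0$ for $|\zeta| = r'$, one has for $|z_{n+1}| < r'$
\[ u(\mathbf z, z_{n+1}) = \frac{1}{2\pi i} \oint_{|\zeta| = r'} \frac{F(\mathbf z, \zeta)}{P(\mathbf z, \zeta)\,(\zeta - z_{n+1})} \, d\zeta; \]
for each fixed $\zeta$ on the contour the integrand is a quotient of functions holomorphic in $(\mathbf z, z_{n+1})$ with non-vanishing denominator, hence jointly holomorphic, and it depends continuously on $\zeta$, so the integral defines a holomorphic function of $(\mathbf z, z_{n+1})$ on $\{|\mathbf z| < \epsilon\} \times \{|z_{n+1}| < r'\}$. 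Since $u(\mathbf 0, 0) = \lim_{z_{n+1}\to 0} F(\mathbf 0, z_{n+1})/z_{n+1}^d \neq 0$, shrinking the polydisc one last time keeps $u$ non-vanishing throughout, which completes the proof.

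The step I expect to be the main obstacle is the passage from the roots $\alpha_j(\mathbf z)$ — which are only continuous and generally branch — to genuinely holomorphic objects: one cannot work root-by-root and must instead route everything through contour integrals whose integrands are manifestly holomorphic in $\mathbf z$ (and jointly so, once $z_{n+1}$ is reinstated), and then appeal to differentiation under the integral sign or the Osgood/Hartogs separate-holomorphy principle. The accompanying bookkeeping — choosing $\epsilon$ small enough that no root escapes the sub-disc $|z_{n+1}| < r/2$, so that all denominators stay bounded away from zero on the relevant contours — is the other point requiring care.
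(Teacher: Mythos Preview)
Your proof is the standard contour-integral argument and is correct. However, the paper does not supply its own proof of this theorem: it is quoted as a classical result with references to \cite{Krantz-SCVbook} and \cite{Shabat-book}, in a section whose stated purpose is to ``summarize these facts \dots\ without proof but with appropriate references.'' So there is nothing to compare against --- your proposal simply fills in what the paper deliberately omits, and it does so along the same lines as the cited sources.
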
 
Given any holomorphic $F$ with $F(\mathbf 0, 0) = 0$, the condition $F(\mathbf 0, z_{n+1}) \not\equiv 0$ required by Theorem \ref{W-prep} can always be realized by a nonsingular linear change of coordinates. A {\em{Weierstrass polynomial}} is defined to be a monic polynomial in $z_{n+1}$ whose non-leading coefficients are holomorphic functions in $\mathbf z$ vanishing at $\mathbf 0$. Thus every holomorphic function vanishing at the origin in $\mathbb C^{n+1}$ is the product of a Weierstrass polynomial with a holomorphic unit, in suitable coordinates.    
\subsection{Factorization} \label{subsec-factor}
Let $\mathcal P$ denote the ring of polynomials in $z_{n+1}$ with holomorphic coefficients in $\mathbf z$. A polynomial $P \in \mathcal P$ is said to be {\em{irreducible}} if it cannot be written as the product of two polynomials $P_1, P_2 \in \mathcal P$ of degree at least one each. By a standard result in algebra, any polynomial $P \in \mathcal P$ can be represented in the form  
\begin{equation} P = P_1^{m_1} P_2^{m_2} \cdots P_L^{m_L}, \label{polynomial-factors}\end{equation} 
where the $P_{\ell}$-s are irreducible polynomials in $\mathcal P$, $m_{\ell} \in \mathbb N$ and the decomposition is unique upto factors that are holomorphic and nonzero in a neighborhood of the origin.
\begin{lemma} \cite[Lemma 6.4.8]{Krantz-SCVbook}\label{lemma-Wpoly-factor}  
Suppose that $P, P_1, \cdots, P_L \in \mathcal P$, and that each of them is monic in $z_{n+1}$. Suppose further that $P$ is a Weierstrass polynomial and that (\ref{polynomial-factors}) holds. Then the factors $P_1, P_2, \cdots P_L$ are Weierstrass polynomials.  
\end{lemma}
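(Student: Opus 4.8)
The plan is to reduce everything to the elementary characterization that a monic polynomial $Q(\mathbf z, z_{n+1}) \in \mathcal P$ of degree $d$ in $z_{n+1}$ is a Weierstrass polynomial if and only if $Q(\mathbf 0, z_{n+1}) = z_{n+1}^{d}$. Indeed, writing $Q = z_{n+1}^{d} + b_1(\mathbf z) z_{n+1}^{d-1} + \cdots + b_d(\mathbf z)$ with $b_\nu$ holomorphic, the requirement ``$b_\nu(\mathbf 0) = 0$ for all $\nu$'' is exactly the statement that the one-variable polynomial $Q(\mathbf 0, z_{n+1})$ has no terms of degree $< d$, i.e.\ equals $z_{n+1}^{d}$. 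With this in hand the lemma is essentially immediate.

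First I would record degrees: set $d = \deg_{z_{n+1}} P$ and $d_\ell = \deg_{z_{n+1}} P_\ell$. Since $P$ and all the $P_\ell$ are monic in $z_{n+1}$, comparing leading behavior in $z_{n+1}$ on the two sides of $P = P_1^{m_1} \cdots P_L^{m_L}$ gives $d = \sum_{\ell} m_\ell d_\ell$, and moreover the specialization $\mathbf z \mapsto \mathbf 0$ does not lower any of these degrees, since the leading coefficient $1$ is unaffected. Next I would specialize the factorization itself at $\mathbf z = \mathbf 0$, obtaining in $\mathbb C[z_{n+1}]$ the identity
\[ P(\mathbf 0, z_{n+1}) = \prod_{\ell=1}^{L} P_\ell(\mathbf 0, z_{n+1})^{m_\ell}, \]
where each factor $P_\ell(\mathbf 0, z_{n+1})$ is a monic polynomial of degree $d_\ell$. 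Because $P$ is a Weierstrass polynomial, the left-hand side equals $z_{n+1}^{d}$.

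Finally I would invoke unique factorization in the polynomial ring $\mathbb C[z_{n+1}]$: the only monic divisors of $z_{n+1}^{d}$ are the powers $z_{n+1}^{k}$, $0 \le k \le d$. Each $P_\ell(\mathbf 0, z_{n+1})$ is a monic divisor of $z_{n+1}^{d}$ of degree $d_\ell$, hence $P_\ell(\mathbf 0, z_{n+1}) = z_{n+1}^{d_\ell}$. By the characterization recalled in the first paragraph, this is precisely the assertion that the non-leading coefficients of $P_\ell$ vanish at $\mathbf 0$, so $P_\ell$ is a Weierstrass polynomial. This holds for every $\ell$, which is the claim.

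I do not anticipate any real obstacle here; the only points that deserve a word of care are that monicity in $z_{n+1}$ is inherited by the specialization at $\mathbf 0$ (so the degrees $d_\ell$ are genuinely preserved and the factorization of $z_{n+1}^d$ is into monic factors of the stated degrees), and that one uses unique factorization in $\mathbb C[z_{n+1}]$ rather than in $\mathcal P$ — the latter being where the genuine subtlety of Weierstrass theory resides, but it is not needed for this statement.
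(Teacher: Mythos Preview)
Your proof is correct and is essentially the standard argument (as in the cited reference): specialize the factorization at $\mathbf z = \mathbf 0$, use that $P(\mathbf 0, z_{n+1}) = z_{n+1}^d$, and conclude from unique factorization in $\mathbb C[z_{n+1}]$ that each monic factor $P_\ell(\mathbf 0, z_{n+1})$ must itself be a pure power of $z_{n+1}$. The paper does not supply its own proof of this lemma but simply cites \cite[Lemma 6.4.8]{Krantz-SCVbook}; your argument matches what one finds there.
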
   
\subsection{Discriminants} \label{subsec-discriminant}
The notion of the discriminant of a polynomial is crucial to many problems in singularity theory; an important case in point being the Jung-Abhyankar theorem stated later in this section. In our analysis, it plays a critical role in determining the regions where the desired monomialization may be achieved after a given change of coordinates. Here we recall the definition of a discriminant, referring the reader to \cite[Ch.\ 7]{Mishra-book} \cite[Ch.\ 3]{CLO-book} for a more detailed treatment of its properties.  
\begin{definition}
Let $P$ and $Q$ be univariate polynomials with complex coefficients 
\begin{align}
\label{def-P} P(t) &= a \prod_{i=1}^{M} (t - p_i) = \sum_{i=0}^{M} a_i t^i, \quad a=a_M \ne 0, \\ \label{def-Q} Q(t) &= b \prod_{j=1}^{N} (t - q_j) = \sum_{j=0}^{N} b_j t^j, \quad b = b_N \ne 0.   
\end{align}   
Then the {\em{resultant}} Res$(P,Q)$ is given by one of the following equivalent formulas  
\begin{align*}
\text{Res}(P,Q) = a^N \prod_{i=1}^{M} Q(p_i) = (-1)^{MN} b^{N} \prod_{j=1}^{N} P(q_i) = a^N b^M \prod_{\begin{subarray}{c} 1 \leq i \leq M \\ 1 \leq j \leq N\end{subarray}} (p_i - q_j).   
\end{align*}
\end{definition}
An alternative description of the resultant \cite{CLO-book} is that it equals the determinant of the $(N+M) \times (N+M)$ Sylvester matrix 
\begin{equation*}
\left(
\begin{matrix}
a_M & a_{M-1} & a_{M-2} & \cdots & a_1 & a_0 & 0 & 0 & \cdots & 0 \\ 
0 & a_M & a_{M-1} & a_{M-2} & \cdots & a_1 & a_0 & 0 & \cdots & 0 \\ 
\vdots & \vdots & \ddots & \ddots & \ddots & \ddots & \ddots & \ddots & \ddots & \vdots \\
0 & 0 & \cdots & 0 & a_M & a_{M-1} & a_{M-2} & \cdots & a_1 & a_0 \\ \\ \hline \\
b_N & b_{N-1} & \cdots & \cdots & b_2 & b_1 & b_0 & 0 & 0 & \cdots \\ 
0 & b_N & b_{N-1} & \cdots & \cdots & b_2 & b_1 & b_0 & 0 & \cdots \\
0 & 0 & b_N & b_{N-1} & \cdots & \cdots & b_2 & b_1 & b_0 & \cdots \\ 
\vdots & \vdots & \ddots & \ddots & \ddots & \ddots & \ddots & \ddots & \ddots & \vdots \\
0 & 0 & \cdots & 0 & b_N & b_{N-1} & \cdots & b_2 & b_1 & b_0  
\end{matrix} 
\right)
\end{equation*}
where the coefficients of $P$ are repeated on $N = \text{deg}(Q)$ rows and the coefficients of $Q$ are repeated on $M = \text{deg}(P)$ rows. This latter characterization combined with the definition shows that Res$(P,Q)$ is a homogeneous polynomial of degree $N+M$ in the coefficients of $P$, $Q$ that vanishes if and only if $P$ and $Q$ have a common root.

\begin{definition} \label{def-discriminant} 
Let $P$ be as in (\ref{def-P}). The {\em{discriminant}} of $P$, denoted $\Delta_P$, is defined to be
\[ \Delta_P = (-1)^{\frac{M(M-1)}{2}} a^{-1} \text{Res}(P,P') = a^{2M-2} \prod_{1 \leq i < j \leq M}(p_i - p_j)^2, \]
where $P'$ denotes the derivative of $P$. 
\end{definition}
In view of the preceding discussion on resultants, we conclude that $a\Delta_P$ is a polynomial in the coefficients of $P$ that vanishes if and only if $P$ and $P'$ share a root, or in other words $P$ has a multiple root.    

Suppose now that $P$ is a multivariate polynomial and that $P \in \mathcal P$, where $\mathcal P$ is as in \S\S \ref{subsec-factor}. Then $\Delta_P$ is a meromorphic function in $\mathbf z$ that vanishes exactly at those values of $\mathbf z$ for which $P$ has multiple roots with respect to $z_{n+1}$. In particular,
 $\Delta_P$ is a holomorphic function in a neighborhood of the origin in $\mathbb C^n$.

\begin{lemma} \label{Disc-product-lemma}
Let $P = P_1 P_2$, where $P, P_1, P_2$ are all monic polynomials in $\mathcal P$. Then $\Delta_{P}$ is divisible by $\Delta_{P_1}$ and $\Delta_{P_2}$.   
\end{lemma}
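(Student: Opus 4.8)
The plan is to deduce the lemma from the classical multiplicativity formula for discriminants, namely $\Delta_{P_1 P_2} = \Delta_{P_1}\,\Delta_{P_2}\,\text{Res}(P_1,P_2)^{2}$ for monic polynomials, and then to check that this identity, though most naturally derived by manipulating roots in a splitting field, in fact takes place in the ring $\mathcal R_n$ of convergent power series about the origin. First I would set up the root computation: write $M_i = \deg P_i$ and $M = M_1 + M_2 = \deg P$, and let $p_1,\dots,p_{M_1}$ and $q_1,\dots,q_{M_2}$ be the roots, listed with multiplicity, of $P_1$ and $P_2$ in an algebraic closure of the field of fractions $\mathcal Q_n$; since $P = P_1 P_2$, the roots of $P$ are $p_1,\dots,p_{M_1},q_1,\dots,q_{M_2}$. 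As $P$, $P_1$, $P_2$ are monic, Definition \ref{def-discriminant} gives $\Delta_{P_i} = \prod_{1\le k<\ell\le M_i}(\text{root}_k-\text{root}_\ell)^2$, and splitting the $\binom{M}{2}$ unordered pairs of roots of $P$ into pairs inside $\{p_i\}$, pairs inside $\{q_j\}$, and mixed pairs yields
\[ \Delta_P = \Delta_{P_1}\,\Delta_{P_2}\,\Bigl(\prod_{i=1}^{M_1}\prod_{j=1}^{M_2}(p_i - q_j)\Bigr)^{2} = \Delta_{P_1}\,\Delta_{P_2}\,\text{Res}(P_1,P_2)^{2}, \]
the last step being the product formula for the resultant of two monic polynomials recalled in \S\S\ref{subsec-discriminant}.

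Next I would argue that the displayed relation is an identity of holomorphic functions of $\mathbf z$ near the origin, not merely a formal identity among elements of a splitting field. The point is that, by the Sylvester determinant description, $\text{Res}(P_1,P_2)$ is a universal polynomial with integer coefficients in the coefficients of $P_1$ and $P_2$, and likewise $\Delta_P,\Delta_{P_1},\Delta_{P_2}$ are universal polynomials in the coefficients of $P,P_1,P_2$ (and the coefficients of $P$ are themselves polynomials in those of $P_1$ and $P_2$, since $P=P_1P_2$). The equality above therefore holds as an identity of universal polynomials in the coefficients of $P_1$ and $P_2$: it is valid whenever these are specialized so that $P_1$ and $P_2$ have simple roots and share no common root—a Zariski-dense condition—hence valid identically. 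Specializing the coefficients to the given elements of $\mathcal R_n$ gives $\Delta_P = \Delta_{P_1}\,\Delta_{P_2}\,\text{Res}(P_1,P_2)^{2}$ as an equality in $\mathcal R_n$. (Equivalently, one may simply quote the general algebraic identity $\Delta_{fg}=\Delta_f\,\Delta_g\,\text{Res}(f,g)^2$ for monic $f,g$ over an arbitrary commutative ring, e.g.\ from \cite[Ch.\ 7]{Mishra-book} or \cite[Ch.\ 3]{CLO-book}, applied with that ring taken to be $\mathcal R_n$.)

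Finally, since $\Delta_{P_2}\,\text{Res}(P_1,P_2)^2$ and $\Delta_{P_1}\,\text{Res}(P_1,P_2)^2$ both lie in $\mathcal R_n$, the identity exhibits $\Delta_{P_1}$ and $\Delta_{P_2}$ as factors of $\Delta_P$ in $\mathcal R_n$, which is exactly the claim. I do not anticipate a real obstacle: the computation with roots is routine, and the only delicate point is the transition from that splitting-field computation to a genuine divisibility statement in $\mathcal R_n$, which is taken care of by the universal-polynomial-identity remark (or by invoking the ring-theoretic form of the identity directly).
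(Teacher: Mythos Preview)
Your argument is correct and is exactly the approach the paper has in mind: the paper's proof is the single sentence ``This is an easy consequence of Definition \ref{def-discriminant},'' and what you have written is simply a careful unpacking of that remark via the product formula $\Delta_{P_1P_2}=\Delta_{P_1}\Delta_{P_2}\,\mathrm{Res}(P_1,P_2)^2$. Your additional care in checking that the identity holds in $\mathcal R_n$ (rather than just in a splitting field) is appropriate but not something the paper bothers to spell out.
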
 
\begin{proof}
This is an easy consequence of Definition \ref{def-discriminant}.  
\end{proof} 
A useful result connecting the discriminant with the reducibility of a polynomial is the following:  
\begin{lemma} \cite[p128, Theorem 3]{Shabat-book} \label{disc-red-lemma} 
Let $P \in \mathcal P$. If all the factors in the decomposition $P = P_1 \cdots P_L$ of a polynomial $P$ into irreducible polynomials in $\mathcal P$ are distinct, then the discriminant of $P$ is not identically equal to zero.  
\end{lemma}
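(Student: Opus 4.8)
The plan is to derive the nonvanishing of $\Delta_P$ from three ingredients: each $P_\ell$ stays irreducible after passing to the quotient field, distinct $P_\ell$ stay non-associate there, and in characteristic zero an irreducible polynomial has no repeated root. First I would reduce to the case where $P$ and all the $P_\ell$ are monic in $z_{n+1}$. Multiplying a polynomial by a nowhere-vanishing holomorphic unit leaves its roots (in $z_{n+1}$) unchanged and only multiplies its discriminant by a power of that unit; so, invoking Theorem \ref{W-prep} and Lemma \ref{lemma-Wpoly-factor}, I may assume $P$ is a Weierstrass polynomial and each $P_\ell$ a monic Weierstrass polynomial, at the cost of replacing $\Delta_P$ by a nonvanishing multiple of itself (in the non-monic case one simply carries the leading coefficient $a(\mathbf z)\not\equiv 0$ along, which does not affect whether $\Delta_P$ vanishes identically). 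Since the ring $\mathcal R_n$ of convergent power series is a UFD (a classical consequence of Theorem \ref{W-prep}), Gauss's lemma then shows each monic irreducible $P_\ell\in\mathcal P=\mathcal R_n[z_{n+1}]$ remains irreducible over the quotient field $\mathcal Q_n$, and distinct $P_\ell$ are non-associate in $\mathcal Q_n[z_{n+1}]$.

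Next I would invoke the elementary discriminant identity $\Delta_{AB}=\Delta_A\,\Delta_B\,\mathrm{Res}(A,B)^2$ for monic $A,B$, which is immediate from the product formulas in Definition \ref{def-discriminant} and the resultant formula $\mathrm{Res}(A,B)=\prod_{i,j}(p_i-q_j)$, once one notes that the root multiset of $AB$ is the union of those of $A$ and $B$. Together with the multiplicativity of the resultant in each argument, iterating over $P=P_1\cdots P_L$ yields
\[ \Delta_P = \Bigl(\prod_{\ell=1}^{L}\Delta_{P_\ell}\Bigr)\Bigl(\prod_{1\le \ell<\ell'\le L}\mathrm{Res}(P_\ell,P_{\ell'})^2\Bigr) \]
(up to the nonvanishing leading-coefficient factor coming from the monic reduction). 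It now suffices to check that each factor on the right is a nonzero element of $\mathcal Q_n$. For $\ell\ne\ell'$, $P_\ell$ and $P_{\ell'}$ are coprime in the PID $\mathcal Q_n[z_{n+1}]$, hence have no common root, so $\mathrm{Res}(P_\ell,P_{\ell'})\ne 0$ by the characterization of the resultant (recorded after the Sylvester-matrix description) as the polynomial in the coefficients vanishing exactly when the two polynomials share a root. For the diagonal terms, $\mathcal Q_n$ has characteristic zero, so the irreducible $P_\ell$ is separable: $\gcd(P_\ell,P_\ell')=1$, hence $P_\ell$ has no multiple root and $\Delta_{P_\ell}\not\equiv 0$. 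As $\mathcal Q_n$ is a field, the displayed product is nonzero, i.e.\ $\Delta_P\not\equiv 0$, which is the assertion, $\Delta_P$ being (up to that nonvanishing factor) holomorphic near the origin.

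The only genuinely nonroutine point, and the one I expect to require the most care, is the passage from irreducibility in $\mathcal P$ to irreducibility over $\mathcal Q_n$ together with the leading-coefficient bookkeeping in the non-monic case; this is exactly where the UFD property of $\mathcal R_n$, Gauss's lemma, and the Weierstrass reduction via Theorem \ref{W-prep} and Lemma \ref{lemma-Wpoly-factor} enter. Everything else is formal manipulation of resultants and discriminants plus characteristic-zero separability of irreducible polynomials. An essentially equivalent route would work directly in a fixed algebraic closure $\overline{\mathcal Q_n}$: if $\Delta_P\equiv 0$ then $P$ has a repeated root there; but the roots of $P$ are the union of the roots of the $P_\ell$, each $P_\ell$ has distinct roots by separability, and $P_\ell,P_{\ell'}$ have disjoint root sets for $\ell\ne\ell'$ by coprimality, a contradiction.
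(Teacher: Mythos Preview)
The paper does not supply its own proof of this lemma; it simply cites \cite[p.\ 128, Theorem 3]{Shabat-book} and moves on. Your argument is correct and is essentially the standard one: pass to the quotient field $\mathcal Q_n$, use that $\mathcal R_n$ is a UFD together with Gauss's lemma to retain irreducibility and non-association of the $P_\ell$ over $\mathcal Q_n$, then invoke characteristic-zero separability and coprimality to conclude that each $\Delta_{P_\ell}$ and each $\mathrm{Res}(P_\ell,P_{\ell'})$ is nonzero, whence $\Delta_P\not\equiv 0$ via the product formula. The alternative route you sketch at the end (working in an algebraic closure of $\mathcal Q_n$ and arguing that a repeated root of $P$ would force either a repeated root of some $P_\ell$ or a common root of two distinct $P_\ell$) is equally valid and perhaps slightly cleaner, since it avoids the explicit discriminant-of-a-product identity.
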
 
\subsection{Jung-Abhyankar Theorem} \label{subsec-JA}
We conclude this section with a fundamental result of Jung \cite{Jung08} and Abhyankar \cite{Abh55} that constitutes the building block of the resolution of singularities algorithm in \S \ref{sec-resolution}. The Jung-Abhyankar theorem has several algebraic and analytic formulations \cite{Luengo83} \cite{Suss90} \cite{Zurro93} \cite{Par01} \cite{KiyVic04}. The version we use is very similar to the ones proved in \cite{Par01}, \cite{Suss90}, though the statements in these references have to be modified slightly for our applications. In view of the critical role that the Jung-Abhyankar theorem plays in the subsequent steps, we state it below in the form that we need and include a proof in the appendix for completeness.    
\begin{theorem}[Jung-Abhyankar, complex version \cite{Par01}, \cite{Suss90}] \label{thm-JA-complex} 
Let $\mathcal O \subseteq \mathbb C^n$ be an open connected set containing the origin and let 
\[ F(\mathbf z, z_{n+1}) = z_{n+1}^d + \sum_{\nu = 1}^{d} c_{\nu}(\mathbf z) z_{n+1}^{d - \nu} \] 
be a complex-analytic function defined on $\mathcal O \times \mathbb C$ such that $F$ is a polynomial in $z_{n+1}$ with coefficients $c_{\nu}$ that are bounded, complex-analytic functions on $\mathcal O$. Suppose that 
\begin{equation} \Delta_F \not\equiv 0 \quad \text{ and } \quad \Delta_F \text{ is normal crossings on } \mathcal O. \label{disc-norm-cross} \end{equation}
Then there exists an integer $s \geq 1$ with the following property: for any collection of simply connected open sets $\{U_j : 1 \leq j \leq n \}$, $0 \in U_j \subseteq \mathbb C$, such that the power map \begin{equation} \Phi_s : \mathbf z = (z_1, \cdots, z_n) \mapsto (z_1^s, \cdots, z_n^s) \label{power-map} \end{equation} maps $\prod_{j=1}^{n} U_j$ into $\mathcal O$, there exist $d$ complex-analytic functions $\{b_i : 1 \leq i \leq d \}$ defined on $\prod_{j=1}^{n} U_j$ such that 
\[ F \bigl( \Phi_s(\mathbf z), z_{n+1}\bigr) = \prod_{i=1}^{d} \left( z_{n+1} - b_i(\mathbf z) \right), \qquad \mathbf z \in \prod_{j=1}^{n} U_j .  \] 
Moreover for every $i \ne i'$, the difference $b_i - b_{i'}$ is normal crossings on $\prod_{j=1}^{n} U_j$.     
\end{theorem}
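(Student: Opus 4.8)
The plan is to realize the statement as the analytic form of Abhyankar's lemma: the hypothesis that $\Delta_F$ is normal crossings on $\mathcal O$ confines the branch locus of the natural ``root covering'' attached to $F$ to the union of coordinate hyperplanes, and pulling back along a power map $\Phi_s$ trivializes the monodromy of that covering. Write $\Delta_F(\mathbf z) = \mathbf z^{\pmb\alpha}u(\mathbf z)$ with $u$ holomorphic and nonvanishing on $\mathcal O$, so that $\{\Delta_F = 0\}\cap\mathcal O \subseteq \{z_1 z_2\cdots z_n = 0\}$. Over $\mathcal O^{\ast} := \mathcal O\setminus\{\Delta_F=0\}$ the polynomial $F(\mathbf w,\cdot)$ has $d$ distinct simple roots and $\partial_{z_{n+1}}F$ is nonzero at each of them, so by the implicit function theorem $E := \{(\mathbf w,t) : F(\mathbf w,t)=0\}\subseteq\mathcal O^{\ast}\times\mathbb C$ is a holomorphic covering of $\mathcal O^{\ast}$ of degree $d$; in particular its restriction over $\mathcal O\setminus\{z_1\cdots z_n=0\}$ is a $d$-sheeted covering.

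Next I would fix $s$ and trivialize. Given simply connected $U_j\ni 0$ with $\Phi_s(\prod_j U_j)\subseteq\mathcal O$, set $Y := \prod_j U_j\setminus\{z_1\cdots z_n=0\} = \prod_j(U_j\setminus\{0\})$; this is connected with $\pi_1(Y)\cong\mathbb Z^n$, generated by small loops $\gamma_j$ winding once around $\{z_j=0\}$ in the $j$-th factor (each $U_j$ is simply connected, so $U_j\setminus\{0\}$ has infinite cyclic fundamental group). Since $\Delta_F(\Phi_s(\mathbf z)) = \mathbf z^{s\pmb\alpha}u(\Phi_s(\mathbf z))$ is nonvanishing on $Y$, the map $\Phi_s$ carries $Y$ into $\mathcal O\setminus\{z_1\cdots z_n=0\}\subseteq\mathcal O^{\ast}$, and $\Phi_s\circ\gamma_j$ is the $s$-th power of a loop $\delta_j$ in $\mathcal O^{\ast}$ that winds once around $\{z_j=0\}$. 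Hence, if $\rho\colon\pi_1(\mathcal O^{\ast})\to S_d$ is the monodromy representation of $E$ (based at $\Phi_s(p)$, $p\in Y$), the monodromy of the pulled-back covering $\Phi_s^{\ast}E\to Y$ on the generator $\gamma_j$ is $\rho(\delta_j)^s$. As $\rho(\delta_j)$ lies in the finite group $S_d$, its order divides $d!$; choosing $s := d!$ (independent of the $U_j$) makes every such monodromy trivial, so $\Phi_s^{\ast}E\to Y$ is a trivial covering. Its $d$ disjoint holomorphic sections are single-valued holomorphic functions $b_1,\ldots,b_d$ on $Y$, pairwise distinct at every point of $Y$, with $F(\Phi_s(\mathbf z),t) = \prod_{i=1}^d(t - b_i(\mathbf z))$ on $Y$ (both sides are monic of degree $d$ in $t$ with the same $d$ distinct roots).

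It remains to extend the $b_i$ and to prove the ``moreover'' statement. Each $b_i(\mathbf z)$ is a root of the monic polynomial $t^d + \sum_\nu c_\nu(\Phi_s(\mathbf z))t^{d-\nu}$ whose coefficients are bounded on $\prod_j U_j$, so $|b_i(\mathbf z)|\leq 1 + \sum_\nu\sup_{\mathcal O}|c_\nu|$; being bounded and holomorphic off the codimension-one coordinate hyperplanes, $b_i$ extends holomorphically to all of $\prod_j U_j$ by Riemann's removable singularity theorem (one variable at a time), and the factorization identity persists by continuity. For the differences, combine the discriminant identity $\Delta_{F\circ\Phi_s}(\mathbf z) = \Delta_F(\Phi_s(\mathbf z)) = \mathbf z^{s\pmb\alpha}u(\Phi_s(\mathbf z))$ --- valid because the discriminant is a universal polynomial in the coefficients, which here depend on $\mathbf z$ only through $\Phi_s$ --- with the factorization $\Delta_{F\circ\Phi_s}(\mathbf z) = \prod_{i<j}(b_i(\mathbf z)-b_j(\mathbf z))^2$. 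Each $b_i - b_j$ is holomorphic, not identically zero, and (by distinctness of the roots on $Y$) vanishes only on $\{z_1\cdots z_n=0\}$; since the ring of convergent power series is a UFD whose only relevant primes here are the coordinate functions, and the zero set of a nonzero holomorphic function is pure of codimension one, $b_i-b_j = \mathbf z^{\pmb\beta_{ij}}v_{ij}$ with $v_{ij}$ holomorphic and nonvanishing, i.e.\ normal crossings on $\prod_j U_j$ (shrinking the $U_j$ slightly if one insists that $v_{ij}$ be bounded away from zero on the closure).

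The heart of the argument --- and the only step that is not routine bookkeeping --- is the trivialization in the second paragraph: one must verify both that the root covering is genuinely unramified over the complement of the coordinate hyperplanes (which is exactly where the normal crossings hypothesis on $\Delta_F$ enters) and that pulling back by the $s$-th power map replaces each local monodromy by its $s$-th power, so that a single exponent $s$ depending only on $d$ annihilates all of them simultaneously. Boundedness of the roots, Riemann extension, and unique factorization are the standard supporting facts.
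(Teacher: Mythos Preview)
Your proof is correct and follows essentially the same strategy as the paper's: both arguments identify the root locus as a degree-$d$ covering over the complement of the coordinate hyperplanes, compute $\pi_1\bigl(\prod_j(U_j\setminus\{0\})\bigr)\cong\mathbb Z^n$ with generators given by small loops around each hyperplane, observe that $\Phi_s$ sends these generators to $s$-th powers of loops in $\mathcal O^\ast$, and take $s=d!$ to annihilate the monodromy; both then extend the resulting single-valued roots across the hyperplanes by boundedness and Riemann's removable singularity theorem, and deduce the normal-crossings structure of $b_i-b_{i'}$ from that of $\Delta_F\circ\Phi_s$. The only cosmetic difference is that the paper carries out the monodromy calculation by explicit analytic continuation of root functions along curves (defining functions $\zeta_i[\gamma]$, $\eta_i[\Gamma]$ and permutations $\pi_\gamma$, $\tau_{\Gamma^\ast}$), whereas you package the same computation in the language of covering spaces and the monodromy representation $\rho\colon\pi_1(\mathcal O^\ast)\to S_d$; your final step invokes the UFD property of the local ring directly, while the paper simply notes that each $b_i-b_{i'}$ divides the normal-crossings discriminant.
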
 
\begin{proof}
See \S \ref{sec-appendix}. 
\end{proof} 
\begin{corollary}[Jung-Abhyankar, real version] \label{thm-JA}
Let $U \subseteq \mathbb R^n$ be an open connected neighborhood of the origin containing $[0,1]^n$, and let \[ F(\mathbf x, x_{n+1}) = x_{n+1}^d + \sum_{\nu=1}^{d} c_{\nu}(\mathbf x) x_{n+1}^{d - \nu} \]
be a real-analytic function defined on $U \times \mathbb R$ such that $F$ is a polynomial in $x_{n+1}$ whose coefficients $c_{\nu}$ are bounded and real-analytic on $U$. Assume that the discriminant $\Delta_{F}(\mathbf x)$ (considering $F$ as a polynomial in $x_{n+1}$) is $\not\equiv 0$ and is normal crossings on $U$. Then there exist 
\begin{itemize}
\item a positive integer $s$, 
\item small positive constants $\epsilon_j$, $1 \leq j \leq n$, and 
\item real-analytic functions $\{ r_i : 1 \leq i \leq d \}$ 
on $\prod_{j=1}^{n}(-\epsilon_j, 1 + \epsilon_j)$ 
\end{itemize}
such that the power map $\Phi_s$ defined in (\ref{power-map}) maps $\prod_{j=1}^{n} (- \epsilon_j, 1 + \epsilon_j)$ into $U$ and $F$ admits the factorization    
\begin{equation*} F(\Phi_s(\mathbf y), x_{n+1}) = \prod_{i=1}^{d} (x_{n+1} - r_i(\mathbf y)), \quad (\mathbf y, x_{n+1}) \in \Bigl[ \prod_{j=1}^{n} (- \epsilon_j, 1 + \epsilon_j) \Bigr] \times \mathbb R.  \end{equation*} 
Moreover, for $i \ne i'$, the difference $r_i - r_{i'}$ is normal crossings on $\prod_{j=1}^{n}(-\epsilon_j, 1 + \epsilon_j)$.  
\end{corollary}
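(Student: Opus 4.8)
The plan is to deduce the real statement from the complex version, Theorem \ref{thm-JA-complex}, by complexifying $F$, applying the complex result on a neighborhood of a box slightly larger than $[0,1]^n$, and then restricting the resulting holomorphic root functions back to the reals. The one genuinely delicate point is that this auxiliary complex domain must be a neighborhood of the \emph{enlarged real box}, not merely of the origin; securing that is what lets us produce the $r_i$ on a full neighborhood of $[0,1]^n$ rather than only near $\mathbf 0$.

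First I would complexify. Since $[0,1]^n$ is compact and lies in the open set $U$, fix $\delta_j>0$ so that the closed box $\overline B:=\prod_{j=1}^n[-\delta_j,1+\delta_j]$ is contained in $U$. As the $c_\nu$ are real-analytic and bounded on $U\supseteq\overline B$, there is $\eta>0$ so small that each $c_\nu$ extends to a bounded holomorphic function on the connected open set $\mathcal O:=\{\mathbf z\in\mathbb C^n:\text{dist}(\mathbf z,\overline B)<\eta\}$; note $\mathbf 0\in\overline B\subseteq\mathcal O$. Then $F(\mathbf z,z_{n+1})=z_{n+1}^d+\sum_{\nu=1}^d c_\nu(\mathbf z)z_{n+1}^{d-\nu}$ is a polynomial in $z_{n+1}$ with bounded holomorphic coefficients on $\mathcal O$, and since the discriminant is a fixed polynomial in the coefficients, the discriminant of this complexified $F$ is the holomorphic continuation of the real $\Delta_F$. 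Writing the real $\Delta_F=a\,\mathbf x^{\pmb{\kappa}}$ with $a$ real-analytic and nonvanishing on $U$, and using that $\overline B$ is compact with $a\ne 0$ on it, I would shrink $\eta$ further so that the continuation $\tilde a$ of $a$ is nonvanishing on $\mathcal O$; hence $\Delta_F=\tilde a\,\mathbf z^{\pmb{\kappa}}$ is normal crossings on $\mathcal O$ and $\not\equiv 0$, so hypothesis (\ref{disc-norm-cross}) holds. Theorem \ref{thm-JA-complex} then furnishes an integer $s\ge 1$ with the stated universal property.

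Next I would fix the domains. Choose $\epsilon_j>0$ so small that $t\mapsto t^s$ maps $[-\epsilon_j,1+\epsilon_j]$ into $(-\delta_j,1+\delta_j)$; this only requires $(1+\epsilon_j)^s<1+\delta_j$, together with $\epsilon_j^s<\delta_j$ when $s$ is odd, both of which hold once $\epsilon_j$ is small. Let $U_j$ be a simply connected complex neighborhood of the segment $[-\epsilon_j,1+\epsilon_j]$ (an $\eta'$-stadium around it will do); since the segment contains $0$, so does $U_j$. Because $\Phi_s$ is a polynomial, hence holomorphic, map that already carries the compact box $\prod_j[-\epsilon_j,1+\epsilon_j]$ into the open set $\mathcal O$, after shrinking the $U_j$ (keeping them simply connected neighborhoods of the segments) we may assume $\Phi_s\bigl(\prod_j U_j\bigr)\subseteq\mathcal O$. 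Applying Theorem \ref{thm-JA-complex} with this $\{U_j\}$ produces holomorphic functions $b_1,\dots,b_d$ on $\prod_j U_j$ with $F(\Phi_s(\mathbf z),z_{n+1})=\prod_{i=1}^d(z_{n+1}-b_i(\mathbf z))$ and with $b_i-b_{i'}$ normal crossings on $\prod_j U_j$ for $i\ne i'$.

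Finally I would restrict to the reals. Set $r_i:=b_i|_{\prod_j(-\epsilon_j,1+\epsilon_j)}$. The restriction of a function holomorphic on a complex neighborhood of a real box to that box is real-analytic (complex-valued, in the sense used throughout the paper), so each $r_i$ is real-analytic on $\prod_j(-\epsilon_j,1+\epsilon_j)$; restricting the product identity gives $F(\Phi_s(\mathbf y),x_{n+1})=\prod_{i=1}^d(x_{n+1}-r_i(\mathbf y))$ on $\bigl[\prod_j(-\epsilon_j,1+\epsilon_j)\bigr]\times\mathbb R$, while by construction $\Phi_s$ sends $\prod_j(-\epsilon_j,1+\epsilon_j)$ into $\prod_j(-\delta_j,1+\delta_j)\subseteq\overline B\subseteq U$. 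For $i\ne i'$ write $b_i-b_{i'}=\tilde u_{ii'}\,\mathbf z^{\pmb{\kappa}_{ii'}}$ with $\tilde u_{ii'}$ holomorphic and nonvanishing on $\prod_j U_j$; its restriction to $\prod_j(-\epsilon_j,1+\epsilon_j)$ is real-analytic and, since $\prod_j[-\epsilon_j,1+\epsilon_j]$ is a compact subset of $\prod_j U_j$, is bounded away from $0$ there, so $r_i-r_{i'}$ is normal crossings on $\prod_j(-\epsilon_j,1+\epsilon_j)$. Beyond the choice of $\mathcal O$ in the second step, everything is routine bookkeeping with compactness and continuity, so I expect that first step to be the only real obstacle.
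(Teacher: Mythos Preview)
Your proposal is correct and follows essentially the same approach as the paper: complexify $F$ to a holomorphic extension on a suitable $\mathcal O\subseteq\mathbb C^n$ where $\Delta_F$ remains normal crossings, apply the complex Jung--Abhyankar theorem to obtain holomorphic roots $b_i$ on a product of simply connected neighborhoods $U_j$ of the real segments $[-\epsilon_j,1+\epsilon_j]$, and restrict to $\mathbb R^n$. Your write-up is in fact more careful than the paper's in insisting that $\mathcal O$ be a neighborhood of the full box $\overline B$ (rather than merely ``containing the origin'') and in spelling out why the normal crossings structure of $b_i-b_{i'}$ survives restriction; the paper's proof is a terse version of exactly this argument, with $U_j$ taken to be thin rectangles around the real segments.
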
 
\begin{proof}
Since $F$ is a real-analytic Weierstrass polynomial on $U \times \mathbb R$, there is an open connected set $\mathcal O \subseteq \mathbb C^n$ containing the origin such that $F$ admits a holomorphic extension to $\mathcal O \times \mathbb C$ and $\Delta_F$ continues to be normal crossings on $\mathcal O$. Let $s$ be as in Theorem \ref{thm-JA-complex}. We can then choose positive constants $\epsilon_j, \delta_j$ sufficiently small such that $\Phi_s$ maps $\prod_{j=1}^{n} U_j$ into $\mathcal O$ where 
\[ U_j = \{z \in \mathbb C : - \epsilon_j < \text{Re}(z) < 1 + \epsilon_j, -\delta_j < \text{Im}(z) < \delta_j \}, \quad 1 \leq j \leq n \]
is a simply connected open neighborhood of the origin in $\mathbb C$. Applying Theorem \ref{thm-JA-complex} with this choice of $U_j$ and restricting the resulting factorization back to $\mathbb R^{n+1}$, the conclusions of the corollary follow.    
\end{proof} 
We will repeatedly use the following consequence of the Jung-Abhyankar theorem. 
\begin{lemma} \label{JA-cor}
Let $U$ be as in the statement of Corollary \ref{thm-JA}. Let $G(\mathbf x, x_{n+1})$ and $P(\mathbf x, x_{n+1})$ be real-analytic functions on $U \times \mathbb R$ admitting the factorizations \[ G = \prod_{\ell=1}^{L} G_{\ell}^{m_{\ell}} \quad \text{ and } \quad P = \prod_{\ell=1}^{L} G_{\ell} \quad \text{ for } (\mathbf x, x_{n+1}) \in U \times \mathbb R, \] where $\mathbf m = (m_1, \cdots, m_n) \in \mathbb N^n$, and $\{G_{\ell} : 1 \leq \ell \leq L \}$ is a collection of distinct Weierstrass polynomials in $x_{n+1}$ with bounded real-analytic coefficients on $U$ with the property that \[ \Delta_{P}(\mathbf x) \not\equiv 0, \quad \text{ and } \quad c(\mathbf x) = \prod_{\ell=1}^L \left[G_{\ell}(\mathbf x, 0)\right]^{m_{\ell}} \not\equiv 0. \] Assume further that $\Delta_P$ and $c$ are both normal crossings on $U$. Then there exists 
\begin{itemize} 
\item a positive integer $s$ and 
\item roots $\{r_i : 1 \leq i \leq d \}$ that are real-analytic on an open neighborhood of the origin in $\mathbb R^n$ containing $[0,1]^n$ 
\end{itemize} 
such that   
\begin{equation} G(\Phi_s(\mathbf y), x_{n+1}) = \prod_{i=1}^{d} \left(x_{n+1} - r_i(\mathbf y) \right) \quad \text{ for } \quad (\mathbf y, x_{n+1}) \in  (0,1)^n \times \mathbb R.  \label{factor-id} \end{equation} 
The roots $r_i$ are normal crossings on $(0,1)^n$. The same conclusion holds for all the differences $\{r_i - r_{i'} : i \ne i' \}$ that are not identically zero. In particular, this means that the roots of $G$ and their differences are either identically zero of fractional normal crossings on $(0,1)^n$.
\end{lemma}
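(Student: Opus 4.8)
The plan is to reduce the whole statement to a single application of Corollary \ref{thm-JA}, but applied to the modified Weierstrass polynomial $P^{\ast}:=x_{n+1}\cdot P$ rather than to $P$ itself. The point of inserting the factor $x_{n+1}$ is that the extra, trivially identically-zero root of $P^{\ast}$ will, once Jung--Abhyankar is applied, turn into one of the roots of the fully factored polynomial, so that the assertion ``$q$ is normal crossings'' for a genuine root $q$ becomes the already-available assertion ``$q-0$ is normal crossings''. Before doing this I would record the easy facts that $P=\prod_{\ell}G_{\ell}$ is a Weierstrass polynomial in $x_{n+1}$ with bounded real-analytic coefficients on $U$ (a product of Weierstrass polynomials is Weierstrass), that $P^{\ast}$ inherits the same properties, and that, writing $w_{\ell}(\mathbf x):=G_{\ell}(\mathbf x,0)$, one has $G(\mathbf x,0)=\prod_{\ell}w_{\ell}^{m_{\ell}}=c(\mathbf x)$ and $P(\mathbf x,0)=\prod_{\ell}w_{\ell}$. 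To run Corollary \ref{thm-JA} on $P^{\ast}$ I need $\Delta_{P^{\ast}}\not\equiv 0$ and normal crossings on $U$; using the standard identity $\Delta_{fg}=\Delta_{f}\Delta_{g}\operatorname{Res}(f,g)^{2}$ together with $\Delta_{x_{n+1}}=1$ and $\operatorname{Res}(x_{n+1},P)=P(\mathbf x,0)$ gives $\Delta_{P^{\ast}}=\Delta_{P}\cdot P(\mathbf x,0)^{2}$, so it suffices to know that $P(\mathbf x,0)$ is normal crossings on $U$.

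The heart of the matter, and the step I expect to require the most care, is therefore to upgrade the hypothesis ``$c=\prod_{\ell}w_{\ell}^{m_{\ell}}$ is normal crossings on $U$ and $c\not\equiv 0$'' into ``each $w_{\ell}$ is normal crossings on $U$'' (whence $P(\mathbf x,0)=\prod_{\ell}w_{\ell}$ is too, being a product of monomials times units). I would argue as follows. Since $c\not\equiv 0$, each $w_{\ell}\not\equiv 0$. Passing to the ring $\mathcal R_{n}$ of convergent power series at $\mathbf 0$ (a UFD, with $x_{1},\dots,x_{n}$ among its primes), the germ $c$, being normal crossings, factors as $(\text{unit})\,\mathbf x^{\pmb{\kappa}}$, so its only prime divisors are the $x_{i}$ up to units; as $w_{\ell}^{m_{\ell}}\mid c$ with $m_{\ell}\ge 1$, unique factorization forces $w_{\ell}=(\text{unit})\,\mathbf x^{\pmb{a}^{(\ell)}}$ near the origin. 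To globalize over $U$ (which I may shrink to have compact closure), I would note that the order of vanishing of the real-analytic function $w_{\ell}$ along the connected hypersurface $\{x_{i}=0\}\cap U$ is constant and equals $a^{(\ell)}_{i}$ (its value at the origin), so $\mathbf x^{\pmb{a}^{(\ell)}}$ divides $w_{\ell}$ in the real-analytic functions on $U$; the quotient $u_{\ell}$ is real-analytic on $U$ and nowhere zero there, since a zero of $u_{\ell}$ off the coordinate hyperplanes would be a zero of $w_{\ell}$ hence of $c$, and a zero on a hyperplane is excluded by the constancy of the vanishing order. Thus $w_{\ell}=\mathbf x^{\pmb{a}^{(\ell)}}u_{\ell}$ with $u_{\ell}$ a unit on $U$, as claimed.

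With $\Delta_{P^{\ast}}=\Delta_{P}\cdot P(\mathbf x,0)^{2}$ now a product of normal-crossings functions on $U$ and not identically zero, Corollary \ref{thm-JA} applied to $P^{\ast}$ produces an integer $s\ge 1$, a box $B=\prod_{j}(-\epsilon_{j},1+\epsilon_{j})\supseteq[0,1]^{n}$ with $\Phi_{s}(B)\subseteq U$, and real-analytic functions $q_{0},\dots,q_{M}$ on $B$ (where $M=\deg_{x_{n+1}}P$) with $P^{\ast}(\Phi_{s}(\mathbf y),x_{n+1})=\prod_{i=0}^{M}(x_{n+1}-q_{i}(\mathbf y))$ and every difference $q_{i}-q_{i'}$ ($i\ne i'$) normal crossings on $B$. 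Since $P^{\ast}(\Phi_{s}(\mathbf y),0)\equiv 0$, some root is identically zero; relabel it $q_{0}\equiv 0$, so that dividing by $x_{n+1}-q_{0}=x_{n+1}$ yields $P(\Phi_{s}(\mathbf y),x_{n+1})=\prod_{i=1}^{M}(x_{n+1}-q_{i}(\mathbf y))$, and now $q_{i}=q_{i}-q_{0}$ is normal crossings on $B$ for each $1\le i\le M$. Moreover the $q_{i}$, $1\le i\le M$, are pairwise distinct as functions, since $\Delta_{P}\circ\Phi_{s}=\Delta_{P\circ\Phi_{s}}\not\equiv 0$.

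Finally I would descend from $P$ to $G$. Because $\Delta_{P}\not\equiv 0$, the $G_{\ell}$ are pairwise coprime in $\mathcal P$ over the fraction field $\mathcal Q_{n}$, and each $G_{\ell}$ is squarefree (its discriminant divides $\Delta_{P}$ by Lemma \ref{Disc-product-lemma}); composing the Bézout relations with $\Phi_{s}$ and evaluating at $x_{n+1}=q_{k}(\mathbf y)$ shows that each $q_{k}$ ($1\le k\le M$) is a root of exactly one $G_{\ell}(\Phi_{s}(\mathbf y),\cdot)$, giving a partition $\{1,\dots,M\}=\bigsqcup_{\ell}K_{\ell}$ with $G_{\ell}(\Phi_{s}(\mathbf y),x_{n+1})=\prod_{k\in K_{\ell}}(x_{n+1}-q_{k}(\mathbf y))$, $|K_{\ell}|=\deg_{x_{n+1}}G_{\ell}$. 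Raising to the power $m_{\ell}$ and multiplying over $\ell$ gives $G(\Phi_{s}(\mathbf y),x_{n+1})=\prod_{k=1}^{M}(x_{n+1}-q_{k}(\mathbf y))^{m(k)}$, where $m(k):=m_{\ell}$ for $k\in K_{\ell}$ and $\sum_{k}m(k)=\sum_{\ell}m_{\ell}\deg_{x_{n+1}}G_{\ell}=d$; listing each $q_{k}$ with multiplicity $m(k)$ as $r_{1},\dots,r_{d}$ establishes (\ref{factor-id}) with this value of $s$. Each $r_{i}$ is one of the $q_{k}$ ($1\le k\le M$), hence normal crossings on $(0,1)^{n}$; each difference $r_{i}-r_{i'}$ is either identically zero (when $r_{i},r_{i'}$ are copies of the same $q_{k}$) or equals $q_{k}-q_{k'}$ with $k\ne k'$, again normal crossings on $(0,1)^{n}$. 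In particular all roots of $G$ and all their differences are identically zero or fractional normal crossings on $(0,1)^{n}$, which is the assertion.
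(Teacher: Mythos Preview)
Your proof is correct but follows a genuinely different route from the paper's. The paper applies Corollary~\ref{thm-JA} to each factor $G_{\ell}$ separately (using that $\Delta_{G_{\ell}}$ divides $\Delta_{P}$ and is therefore normal crossings), takes $s$ to be the least common multiple of the resulting integers $s_{\ell}$, and then \emph{afterwards} deduces that the roots $r_{i}$ and the nontrivial differences $r_{i}-r_{i'}$ are normal crossings from the factorizations $c\circ\Phi_{s}=\pm\prod_{i}r_{i}$ and $\Delta_{P}\circ\Phi_{s}=\prod(f_{i,\ell}-f_{i',\ell'})^{2}$, invoking the principle ``an analytic factor of a normal-crossings function is normal crossings'' at that final stage. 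You instead apply Jung--Abhyankar a single time to $P^{\ast}=x_{n+1}P$, so that the conclusion ``all pairwise differences of roots of $P^{\ast}$ are normal crossings'' already encodes both the roots $q_{k}=q_{k}-q_{0}$ and their differences; the price is that you must first check $\Delta_{P^{\ast}}=\pm\Delta_{P}\cdot P(\mathbf x,0)^{2}$ is normal crossings, which forces you to prove up front that each $w_{\ell}=G_{\ell}(\mathbf x,0)$ is normal crossings on $U$ (the same ``factor of normal crossings'' principle, but used at the start rather than the end), and then to partition the roots $q_{k}$ among the $G_{\ell}$. Both approaches are of comparable length; yours has the appeal of a single invocation of Jung--Abhyankar and an elegant trick, while the paper's avoids the globalization of the $w_{\ell}$ and the partition step. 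Two small comments on your write-up: your globalization argument tacitly needs each $\{x_{i}=0\}\cap U$ to be connected (shrink $U$ to a box, not merely to have compact closure), and the cleanest way to see that $u_{\ell}$ is nowhere zero on $U$ is to note that $\prod_{\ell}u_{\ell}^{m_{\ell}}=c/\mathbf x^{\pmb\kappa}$ is a unit on $U$, rather than the constancy-of-order argument you sketch.
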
 

\begin{proof} 
In view of our discussion on discriminants in \S \ref{subsec-discriminant} and Lemma \ref{Disc-product-lemma}, each $\Delta_{G_{\ell}}$ is a real-analytic function on $U$ that divides $\Delta_P$. Further $G_{\ell}$ is a Weierstrass polynomial, hence $\Delta_{G_{\ell}}$ vanishes at the origin. Since $\Delta_P$ is normal crossings on $U$ by assumption, so is $\Delta_{G_{\ell}}$. Let $d_{\ell}$ and $d$ denote the degree of $G_{\ell}$ and $G$ respectively, so that $d_{\ell} \leq d$ and $\sum_{\ell=1}^L d_{\ell} m_{\ell} = d$. By Corollary \ref{thm-JA} applied to $G_{\ell}$, there exist a positive integer $s_\ell$, small positive constants $\epsilon_j$ (independent of $\ell$ by shrinking if necessary) and real-analytic functions $\{ r_{i,\ell} : 1 \leq i \leq d_{\ell} \}$ on $\prod_{j=1}^{n}(-\epsilon_j, 1+\epsilon_j)$ such that  
\begin{equation*} G_{\ell}(\Phi_{s_\ell}(\mathbf y), x_{n+1}) = \prod_{i=1}^{d_{\ell}} \left(x_{n+1} - r_{i, \ell}(\mathbf y) \right) \quad \text{ for } \quad (\mathbf y, x_{n+1}) \in \bigl( \prod_{j=1}^{n}(- \epsilon_j, 1+\epsilon_j) \bigr) \times \mathbb R. \end{equation*} 
Since $\Delta_P \not\equiv 0$, we conclude that $r_{i,\ell} \not\equiv r_{i', \ell'}$ for $(i, \ell) \ne (i', \ell')$. Let $s$ be the lowest common multiple of the integers $\{s_\ell : 1 \leq \ell \leq L \}$. Set  
\[ f_{i, \ell}(\mathbf y) := r_{i, \ell} \circ \Phi_{\frac{s}{s_{\ell}}}(\mathbf y),   \]
so that each $f_{i, \ell}$ is real-analytic on an open set containing $[0,1]^n$, and the factorization \[ G_{\ell}(\Phi_s(\mathbf y), x_{n+1}) = G_{\ell}(\Phi_{s_\ell} \circ \Phi_{s/s_{\ell}}(\mathbf y), x_{n+1}) = \prod_{i=1}^{n} \bigl(x_{n+1} - f_{i, \ell}(\mathbf y) \bigr) \] holds for $\Phi_{s/s_{\ell}} (\mathbf y) \in \prod_{j=1}^{n} (-\epsilon_j, 1+\epsilon_j)$ and therefore certainly for $\mathbf y \in [0,1]^n$. Constructing a collection of functions $\{r_i : 1 \leq i \leq d \}$ in which each $f_{i, \ell}$ is repeated exactly $m_{\ell}$ times immediately leads to the factorization (\ref{factor-id}).   
       
Since the functions $c \circ \Phi_s$ and $\Delta_P \circ \Phi_s$ are both normal crossings on $(0,1)^n$ by hypothesis, with 
\begin{align*} 
c \circ \Phi_s (\mathbf y) &= (-1)^d\prod_{\ell=1}^{L} \prod_{i=1}^{d_{\ell}} \left[f_{i, \ell} (\mathbf y) \right]^{m_{\ell}} = (-1)^d \prod_{i=1}^{d} r_i(\mathbf y) \not\equiv 0,\; \text{ and }\\ \Delta_P \circ \Phi_s (\mathbf y) &= \prod_{(i, \ell) \ne (i', \ell')} \bigl((f_{i, \ell} - f_{i', \ell'})(\mathbf y) \bigr)^2 \not\equiv 0,
\end{align*} 
and the individual factors $r_i$ and $f_{i, \ell} - f_{i',\ell'}$ have just been shown to be nontrivial real-analytic functions on $[0,1]^n$, we conclude that each of these factors is also normal crossings on $(0,1)^n$, as claimed.  
\end{proof}
It is important to observe that the assumption (\ref{disc-norm-cross}) of a global normal crossings discriminant is an extremely restrictive one and not satisfied for generic analytic functions $F$. Nonetheless, it is possible to decompose an open neighborhood of the origin in $\mathbb R^n$ into subsets such that the normal crossings structure of the discriminant $\Delta_F$ is achieved on each of these subsets and in certain carefully chosen coordinates that may vary from one subset to the next. This approach to analyzing the structure of an analytic set (in the absence of the assumption that $\Delta_F$ is normal crossings) has been studied extensively in algebraic geometry literature, notably in \cite{BieMil88,BieMil90,Suss90,Par01}. This is also the primary focus of this paper and we expand on the theme in \S \ref{sec-resolution}.

\section{Newton polyhedron, distance and exponent} \label{sec-newton} 
\subsection{Newton data} As indicated in \S \ref{sec-intro}, the notions of {\atxt Newton polyhedron, distance and exponent} have played a critical role in problems involving the zero set of a bivariate real-analytic function. We recall these concepts and introduce certain variants that will be useful in our study of real-analytic functions in higher dimensions.  
\begin{definition} 
\begin{enumerate}[1.]
\item Given a fractional power series $F$ in $(n+1)$ variables vanishing at the origin,
\begin{equation} F(\mathbf x, x_{n+1}) = \sum_{\begin{subarray}{c} (\pmb{\kappa}, \kappa_{n+1}) > (\mathbf 0,0) \\ (\pmb{\kappa}, \kappa_{n+1}) \in \mathbb Q^{n+1} \end{subarray}} a_{\pmb{\kappa}, \kappa_{n+1}} \mathbf x^{\pmb{\kappa}} x_{n+1}^{\kappa_{n+1}}, \label{fractpow-n+1} \end{equation} 
 we define its {\em{Newton polyhedron}} to be \begin{equation*} 
\text{NP}(F) := \text{ convex hull} \bigcup_{\begin{subarray}{c} (\pmb{\kappa}, \kappa_{n+1}) > (\mathbf 0,0) \\ a_{{\pmb{\kappa}}, \kappa_{n+1}} \ne 0 \end{subarray}} \left[(\pmb{\kappa}, \kappa_{n+1}) + \mathbb R^{n+1}_{\geq 0} \right],
\end{equation*}
where $\mathbb R^{n+1}_{\geq 0}$ denotes the closure of the positive orthant in $\mathbb R^{n+1}$.
\medskip

\item The {\em{Newton distance}} of $F$ is defined to be 
\[ d_0(F) = \min \left\{ d : (d, \cdots, d) \in \text{NP}(F) \right\}. \]
\medskip
\item The \emph{Newton exponent} of $F$ is $\delta_0(F):= d(F)^{-1}$. 
\end{enumerate} 
\end{definition} 
{\atxt As discussed below, these concepts are coordinate-dependent, and what is denoted by $d_0(F)$, resp., $\delta_0(F)$, above would later be denoted by $d_0(F;I)$, $\delta_0(F;I)$, resp., where $I$ is the identity map. We also point out that the above definition of Newton distance is the same as the one used in
\cite{Var76},\cite{Grblt04}-\cite{Grblt11}, while what we call the Newton exponent is called the Newton distance in
 \cite{PhSt97,PhSteStu99}. }

The proof of the following easy observation is left to the reader. 
\begin{lemma} \label{lemma-prelim1}
The Newton polyhedron is closed under addition by non-negative vectors, i.e., if $(\pmb{\kappa}_0, \nu_0) \in \text{NP}(F)$, then $(\pmb{\kappa}_0, \nu_0) + (\pmb{\tau}, \tau_{n+1}) \in \text{NP}(F)$ for any $(\pmb{\tau}, \tau_{n+1}) \geq (\mathbf 0, 0)$. 
\end{lemma}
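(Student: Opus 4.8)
The statement to prove is Lemma \ref{lemma-prelim1}: the Newton polyhedron $\text{NP}(F)$ is closed under addition of non-negative vectors. The plan is to unwind the definition and observe this is essentially immediate from the construction of $\text{NP}(F)$ as a convex hull of sets of the form $(\pmb{\kappa},\kappa_{n+1}) + \mathbb R^{n+1}_{\geq 0}$.

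\textbf{Approach.} First I would fix a point $(\pmb{\kappa}_0,\nu_0) \in \text{NP}(F)$ and a non-negative vector $(\pmb{\tau},\tau_{n+1}) \geq (\mathbf 0,0)$, and aim to show $(\pmb{\kappa}_0,\nu_0) + (\pmb{\tau},\tau_{n+1}) \in \text{NP}(F)$. By definition, $(\pmb{\kappa}_0,\nu_0)$ lies in the convex hull of $\bigcup \left[(\pmb{\kappa},\kappa_{n+1}) + \mathbb R^{n+1}_{\geq 0}\right]$, the union taken over exponent vectors $(\pmb{\kappa},\kappa_{n+1})$ with $a_{\pmb{\kappa},\kappa_{n+1}} \neq 0$. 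So I can write $(\pmb{\kappa}_0,\nu_0) = \sum_{i=1}^{m} \lambda_i \, v_i$ with $\lambda_i \geq 0$, $\sum_i \lambda_i = 1$, and each $v_i \in (\pmb{\kappa}^{(i)},\kappa_{n+1}^{(i)}) + \mathbb R^{n+1}_{\geq 0}$ for some exponent vector with nonzero coefficient.

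\textbf{Key step.} The point is that adding $(\pmb{\tau},\tau_{n+1})$ can be distributed across the convex combination: since $\sum_i \lambda_i = 1$,
\[
(\pmb{\kappa}_0,\nu_0) + (\pmb{\tau},\tau_{n+1}) = \sum_{i=1}^{m} \lambda_i \left( v_i + (\pmb{\tau},\tau_{n+1}) \right).
\]
Each $v_i + (\pmb{\tau},\tau_{n+1})$ still lies in $(\pmb{\kappa}^{(i)},\kappa_{n+1}^{(i)}) + \mathbb R^{n+1}_{\geq 0}$, because $\mathbb R^{n+1}_{\geq 0}$ is itself closed under addition of non-negative vectors (if $w \geq (\pmb{\kappa}^{(i)},\kappa_{n+1}^{(i)})$ componentwise then $w + (\pmb{\tau},\tau_{n+1}) \geq (\pmb{\kappa}^{(i)},\kappa_{n+1}^{(i)})$ as well). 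Hence the right-hand side exhibits $(\pmb{\kappa}_0,\nu_0)+(\pmb{\tau},\tau_{n+1})$ as a convex combination of points of $\bigcup \left[(\pmb{\kappa},\kappa_{n+1}) + \mathbb R^{n+1}_{\geq 0}\right]$, so it belongs to the convex hull, which is exactly $\text{NP}(F)$. Alternatively, and even more cleanly, one can note $\text{NP}(F) = K + \mathbb R^{n+1}_{\geq 0}$ where $K$ is the convex hull of the exponent vectors with nonzero coefficients (because the convex hull of a union of translates of a fixed convex cone $C$ equals (convex hull of the translation points) $+ C$ when $C$ is a convex cone), and then closure under adding non-negative vectors is immediate since $\mathbb R^{n+1}_{\geq 0} + \mathbb R^{n+1}_{\geq 0} = \mathbb R^{n+1}_{\geq 0}$.

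\textbf{Main obstacle.} There is no real obstacle here — this is a ``left to the reader'' lemma and the only thing to be slightly careful about is the interchange of the convex-combination sum with the translation, which works precisely because the coefficients sum to $1$, together with the trivial fact that $\mathbb R^{n+1}_{\geq 0}$ absorbs non-negative vectors. I would present the one- or two-line argument via the identity $\text{NP}(F) = K + \mathbb R^{n+1}_{\geq 0}$, since it makes the statement transparent and will also be convenient to cite later.
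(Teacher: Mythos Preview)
Your argument is correct and is exactly the routine verification the paper intends; note that the paper itself leaves this lemma to the reader, so there is no proof in the text to compare against. Your distribution of the translation across the convex combination (using $\sum_i \lambda_i = 1$) together with the closure of $\mathbb R^{n+1}_{\geq 0}$ under addition of non-negative vectors is the standard way to see it, and the alternative formulation $\text{NP}(F) = K + \mathbb R^{n+1}_{\geq 0}$ is also fine.
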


The Newton {\atxt exponent} of a fractional power series provides, in any dimension, an upper bound for the critical integrability index. 
\begin{theorem}\label{prop-cii-nd-ndim}
If $F$ is a fractional power series vanishing at the origin, then $\mu_0(F) \leq \delta_0(F)$. 
\end{theorem}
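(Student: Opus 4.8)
The plan is to show that if $\mu < \delta_0(F)$ then $|F|^{-\mu} \in L^1$ near the origin, which gives $\mu_0(F) \geq \mu$ for all such $\mu$, hence $\mu_0(F) \geq \delta_0(F)$ — wait, that is the wrong direction. We want $\mu_0(F) \leq \delta_0(F)$, so we must instead show that for $\mu > \delta_0(F)$ the integral $\int_U |F|^{-\mu}$ diverges on every open set $U$ whose closure contains the origin (or at least, fails to be finite on any neighborhood of $0$). Actually, since the Puiseux expansion only converges near the origin and $F$ is only assumed to be a fractional power series, the cleanest route is: reduce to a monomial lower bound for $|F|$ along a suitable region, then compute the divergence of the integral of a negative power of that monomial.

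First I would isolate a vertex of $\text{NP}(F)$ lying on the diagonal ray, or more precisely use the definition $d_0(F) = \min\{d : (d,\dots,d)\in \text{NP}(F)\}$: there is a supporting hyperplane of $\text{NP}(F)$ at the diagonal point $(d_0,\dots,d_0)$ with some nonnegative normal vector $(\mathbf a, a_{n+1}) \geq 0$, $\sum a_i = 1$ (after normalization), such that every exponent $(\pmb\kappa,\kappa_{n+1})$ appearing in $F$ satisfies $\langle (\mathbf a,a_{n+1}),(\pmb\kappa,\kappa_{n+1})\rangle \geq d_0(F)$. Then along the curve $x_i = t^{a_i}$ (for $t \in (0,1)$ small, interpreting powers via (\ref{def-power})), each monomial $\mathbf x^{\pmb\kappa}x_{n+1}^{\kappa_{n+1}}$ has size $t^{\langle(\mathbf a,a_{n+1}),(\pmb\kappa,\kappa_{n+1})\rangle} \leq t^{d_0(F)}$ (for $t<1$), so by the triangle inequality (using absolute/uniform convergence of the Puiseux series on a small neighborhood) $|F(x_1(t),\dots,x_{n+1}(t))| \leq C\, t^{d_0(F)}$ on a neighborhood of this curve. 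To make this a genuine sublevel-set estimate, I would thicken the curve: introduce scaled coordinates and show that on a box of the form $\{|x_i - t^{a_i}| \lesssim t^{a_i}\}$ (a dyadic-type nonisotropic region of volume $\asymp t^{\sum a_i} = t$) one still has $|F| \lesssim t^{d_0(F)}$, provided $a_i > 0$ for all $i$; the degenerate case where some $a_i = 0$ requires a minor adjustment but is handled similarly by keeping those coordinates in a fixed small interval.

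From this sublevel estimate the conclusion follows by a standard summation. On the region where $t \asymp 2^{-j}$, we have $|F| \lesssim 2^{-j d_0(F)}$ and the volume is $\asymp 2^{-j}$, so
\[
\int_U |F|^{-\mu} \gtrsim \sum_{j} 2^{-j}\, \bigl(2^{-j d_0(F)}\bigr)^{-\mu} = \sum_j 2^{j(\mu d_0(F) - 1)},
\]
which diverges as soon as $\mu d_0(F) \geq 1$, i.e. $\mu \geq \delta_0(F)$. Hence no open set $U$ with $\mathbf 0 \in \overline U$ can satisfy $|F|^{-\mu} \in L^1(U)$ for $\mu > \delta_0(F)$, so $\mu_0(F) \leq \delta_0(F)$. (One must check that the dyadic regions used for different $j$ are essentially disjoint, or at least have bounded overlap, which they do by construction.)

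The main obstacle I anticipate is the thickening step — verifying that on the nonisotropic box around the test curve the bound $|F| \lesssim t^{d_0(F)}$ genuinely persists, rather than just along the curve itself. This is where the fractional-power-series structure matters: one needs that the perturbed monomials $\prod_i (t^{a_i}(1+\eta_i))^{\kappa_i}$ stay comparable to $t^{\langle a,\kappa\rangle}$ uniformly for $|\eta_i|$ bounded, which is clear for each fixed monomial but must be summed, so absolute convergence of $\sum |a_{\pmb\kappa,\kappa_{n+1}}| \rho^{\langle \mathbf 1, \cdot\rangle}$ on a small polydisc is the key input. A secondary technical point is the correct interpretation of $x_i = t^{a_i}$ when $a_i$ is irrational or when we want $x_i$ to range over a full (two-sided) neighborhood — restricting to the positive orthant $(0,1)^{n+1}$ and then noting that translating the argument to each orthant (as the paper does elsewhere) suffices, since a lower bound for the integral over the positive-orthant slice already forces divergence.
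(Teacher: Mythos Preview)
Your argument is correct and is genuinely different from the paper's. Both routes go through the sublevel-set characterization $\mu_0(F)=\nu_0(F)$ and aim at a lower bound $|\mathcal E_\epsilon(F)|\gtrsim \epsilon^{\delta_0(F)}$, but the implementations diverge. You pick a single supporting hyperplane of $\mathrm{NP}(F)$ at the diagonal point $(d_0,\dots,d_0)$ with nonnegative normal $\mathbf a$, normalize $\sum a_i=1$, and then produce explicit anisotropic boxes $\{x_i\asymp t^{a_i}\}$ (with the $a_i=0$ coordinates confined to a small fixed interval) of volume $\asymp t$ on which $|F|\lesssim t^{d_0}$; the dyadic sum then forces divergence of $\int|F|^{-\mu}$ for $\mu>\delta_0$. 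The paper instead extracts a finite family of corner exponents $\{\mathbf p_j\}$ dominating all monomials of $F$, obtains the pointwise bound $|F|\lesssim\sum|\mathbf x^{\mathbf p_j}|$, and then estimates the volume of the monomial-polyhedral sublevel set $E_\epsilon=\{|\mathbf x^{\mathbf p_j}|\le \epsilon/Cd\}$ from below via exponential integrals on linear polyhedra and linear-programming duality, drawing on the machinery of \cite{Nagel-Pramanik09}. Your approach is considerably more elementary and self-contained; the paper's approach yields the same inequality but routes through sharper general volume estimates for monomial sublevel sets that are of independent interest (and are reused elsewhere, e.g.\ in the proof of Theorem~\ref{main-thm2}). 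The one place to be careful in your write-up is exactly the thickening step you flag: after rescaling so that $\sum_{\pmb\kappa}|a_{\pmb\kappa}|<\infty$ (which is permissible since $d_0$, $\delta_0$, $\mu_0$ are scale-invariant), the bound $|F(\mathbf x)|\le t^{d_0}\sum|a_{\pmb\kappa}|$ on the box follows cleanly from $\mathbf x^{\pmb\kappa}\le t^{\langle \mathbf a,\pmb\kappa\rangle}\le t^{d_0}$, and the $a_i=0$ coordinates cause no trouble once they are kept in $(0,\epsilon)$ rather than near a fixed positive value.
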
  
\begin{proof}
See \S \ref{sec-appendix2}. 
\end{proof} 

An attractive feature of the Newton polyhedron, distance and exponent is that they make no reference to the roots of $F$ and can be computed based on the power series expansion of $F$ alone, in particular without appealing to any resolution of singularities algorithm. On the other hand both these quantities 
depend on the choice of the ambient coordinate system $(\mathbf x, x_{n+1})$ in which $F$ is expressed. Suppose that $F \mapsto \delta(F)$ is a mapping that produces for every real-analytic function $F$ a scalar $\delta(F)$ that depends on the zero variety of $F$ and is invariant under analytic coordinate changes  (for example, $\delta(F)$ could be the critical integrability index or the oscillation index). If $\delta(F)$ can be expressed in terms of geometric data contained in the Newton polyhedron, then such a characterization renders $\delta(F)$ computationally amenable, bypassing the numerical complexities of resolving singularities. However any such characterization can only hold provided that $F$ is in suitable coordinates. In such an event, a complete description of $\delta(F)$ must necessarily involve a characterization of the good choices of coordinates. 

In the sequel, we will often need to keep track of the Newton {\atxt data} for different coordinate choices. Suppose that $U, V$ are open subsets of $\mathbb R^{n+1}$ and $\Phi : U \rightarrow V$ a coordinate transformation such that $(\mathbf x, x_{n+1}) = \Phi(\mathbf y, y_{n+1})$. If $F$, originally a fractional power series in the variables $(\mathbf x, x_{n+1})$, can also be represented as a fractional power series in the variables $(\mathbf y, y_{n+1})$, then the Newton polyhedron, {\atxt distance  and  exponent} of $F$ in the new coordinates $(\mathbf y, y_{n+1})$ will be denoted by NP$(F;\Phi)$, $d_0(F;\Phi)$ and $\delta_0(F; \Phi)$ respectively. 
\begin{definition}
\begin{enumerate}[1.]
\item For $ 1 \leq j \leq n$, let $\pi_j :  \mathbb R^{n+1} \rightarrow \mathbb R^2$ denote the projection 
\[ \pi_j(\mathbf x, x_{n+1}) = (x_j, x_{n+1}). \]
For $F$ as in (\ref{fractpow-n+1}), the {\em{$j$th projected Newton {\atxt exponent}}} of $F$ is defined as
\[ \delta_j = \max \{t : (t^{-1}, t^{-1}) \in \pi_j \left( \text{NP}(F) \right) \}. \]
\item The {\em{generalized Newton {\atxt exponent}}} is defined to be the smallest of the $n$ projected Newton exponents.   
\end{enumerate} 
\end{definition} 

\subsection{Newton polyhedra with monotone edge paths}
We now describe a class of $(n+1)$-dimensional Newton polyhedra of a very special structure but which will be central to our analysis. 
\begin{definition}Let $\{(\pmb{\mu}_i, \nu_i) : 1 \leq i \leq m \} \subseteq \mathbb R^{n+1}_{\geq 0}$ be a finite collection of points obeying
\begin{equation}  \pmb{\mu}_i < \pmb{\mu}_{i+1} \; \text{ and } \; \nu_i > \nu_{i+1} \; \text{ for all } 1 \leq i \leq m-1. \label{order} \end{equation}  
Then the connected chain of line segments 
\begin{equation} \Gamma = \bigcup_{i=1}^{m-1} \bigl\{t (\pmb{\mu}_i, \nu_i) + (1 - t)(\pmb{\mu}_{i+1}, \nu_{i+1}): 0 \leq t \leq 1   \bigr\}  \label{mep} \end{equation} 
is called a {\em{monotone edge path}} in $\mathbb R^{n+1}$. 
\end{definition}
It is easy to see that the points on a monotone edge path satisfy an ordering relation. Namely, 
\begin{lemma}
If $(\pmb{\kappa}, \kappa_{n+1}), (\pmb{\gamma}, \gamma_{n+1})$ are two distinct points in $\Gamma$, then one of the following relations must hold:   
\begin{equation} \text{either } \pmb{\kappa} < \pmb{\gamma},\; \kappa_{n+1} > \gamma_{n+1}, \quad \text{ or } \quad  \pmb{\kappa} > \pmb{\gamma},\; \kappa_{n+1} < \gamma_{n+1}. \label{order-mep} \end{equation} 
\end{lemma}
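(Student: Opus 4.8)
The plan is to parametrize $\Gamma$ by its last coordinate and reduce \eqref{order-mep} to a comparison of the first $n$ coordinates. The first observation is that along a single segment the last coordinate $t\nu_i+(1-t)\nu_{i+1}$ decreases strictly from $\nu_i$ to $\nu_{i+1}$ as $t$ runs over $[0,1]$ (because $\nu_i>\nu_{i+1}$ by \eqref{order}), and consecutive segments share only the common value $\nu_{i+1}$; hence, traversing $\Gamma$ from $(\pmb{\mu}_1,\nu_1)$ to $(\pmb{\mu}_m,\nu_m)$, the $(n+1)$-st coordinate is a continuous, strictly decreasing function of the path parameter, in particular injective on $\Gamma$. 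Therefore two distinct points of $\Gamma$ have distinct last coordinates, so $\kappa_{n+1}\ne\gamma_{n+1}$; interchanging the two points if necessary, we may assume $\kappa_{n+1}>\gamma_{n+1}$. It then suffices to prove $\pmb{\kappa}<\pmb{\gamma}$, since together with $\kappa_{n+1}>\gamma_{n+1}$ this gives the first alternative in \eqref{order-mep} (and the interchange produces the second).

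Next I would locate the two points on the chain: say $(\pmb{\kappa},\kappa_{n+1})$ lies on the $i$-th segment and $(\pmb{\gamma},\gamma_{n+1})$ on the $j$-th segment. Because the last coordinate decreases along $\Gamma$ and $\kappa_{n+1}>\gamma_{n+1}$, we have $i\le j$. The proof then splits into two cases. If $i=j$, write the points as $t(\pmb{\mu}_i,\nu_i)+(1-t)(\pmb{\mu}_{i+1},\nu_{i+1})$ and $s(\pmb{\mu}_i,\nu_i)+(1-s)(\pmb{\mu}_{i+1},\nu_{i+1})$; comparing last coordinates and using $\nu_i>\nu_{i+1}$ forces $t>s$, whence
\[ \pmb{\kappa}-\pmb{\gamma}=(t-s)\bigl(\pmb{\mu}_i-\pmb{\mu}_{i+1}\bigr). \]
Since $\pmb{\mu}_i<\pmb{\mu}_{i+1}$, the vector $\pmb{\mu}_i-\pmb{\mu}_{i+1}$ has all entries $\le 0$ with at least one strictly negative, and $t-s>0$, so $\pmb{\kappa}<\pmb{\gamma}$. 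If $i<j$, the point on segment $i$ satisfies $\pmb{\kappa}\le\pmb{\mu}_{i+1}$ and the point on segment $j$ satisfies $\pmb{\gamma}\ge\pmb{\mu}_j$, so by the strict monotonicity $\pmb{\mu}_{i+1}\le\pmb{\mu}_{i+2}\le\cdots\le\pmb{\mu}_j$ coming from \eqref{order} we obtain $\pmb{\kappa}\le\pmb{\mu}_{i+1}\le\pmb{\mu}_j\le\pmb{\gamma}$, hence certainly $\pmb{\kappa}\le\pmb{\gamma}$.

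To upgrade this to a strict inequality in the (only partial) order on multi-indices, I would argue by contradiction: suppose $\pmb{\kappa}=\pmb{\gamma}$. In the case $i<j$ the chain of inequalities collapses, giving $\pmb{\mu}_{i+1}=\pmb{\mu}_j$; strict monotonicity of $\{\pmb{\mu}_k\}$ then forces $j=i+1$, and $\pmb{\kappa}=\pmb{\mu}_{i+1}$ on segment $i$ forces the parameter to sit at the terminal vertex while $\pmb{\gamma}=\pmb{\mu}_{i+1}$ on segment $i+1$ forces the parameter to sit at the initial vertex, so both points equal $(\pmb{\mu}_{i+1},\nu_{i+1})$, contradicting $p\ne q$. (In the case $i=j$ strictness was already obtained directly.) This exhausts all cases and proves \eqref{order-mep}. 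The computations with convex combinations are routine; the one place that needs genuine care — and the step I expect to be the only real obstacle — is precisely this last argument ruling out the degenerate coincidence at a shared vertex of the chain, which is where the strictness in \eqref{order} and the injectivity of the last coordinate are essential.
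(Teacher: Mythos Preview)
Your argument is correct and complete. The paper does not actually give a proof of this lemma; it introduces it with ``It is easy to see that the points on a monotone edge path satisfy an ordering relation'' and simply states the conclusion, so your write-up supplies the routine details the authors chose to omit. Your parametrization by the last coordinate, the same-segment computation, and the contradiction argument at a shared vertex are exactly the natural steps, and the strictness in \eqref{order} is used precisely where it is needed.
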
 
\begin{definition} \label{def-poly-mep}
We will say that an unbounded convex polyhedron $\mathfrak C \subseteq \mathbb R^{n+1}_{\geq 0}$ is defined by a monotone edge path $\Gamma \subseteq \mathbb R^{n+1}_{\geq 0}$ given by (\ref{mep}) if 
\begin{equation} \mathfrak C = \bigcup_{(\pmb{\kappa}, \kappa_{n+1}) \in \Gamma} \left[ (\pmb{\kappa}, \kappa_{n+1}) + \mathbb R^{n+1}_{\geq 0} \right]. \label{conv-poly-mep} \end{equation}
\end{definition} 
We record a few elementary facts concerning (Newton) polyhedra defined by monotone edge paths.
\begin{lemma} \label{mep-lemma1} 
If $\mathfrak C$ is a convex polyhedron defined by the monotone edge path $\Gamma$ in (\ref{mep}), then
\[ \mathfrak C = \text{ convex hull } \bigcup_{i=1}^{m} \left\{(\pmb{\mu}_i, \nu_i) + \mathbb R^{n+1}_{\geq 0}: 1 \leq i \leq m \right\}. \]   
\end{lemma}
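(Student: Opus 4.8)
The statement to prove is Lemma~\ref{mep-lemma1}: if $\mathfrak C$ is the convex polyhedron defined by the monotone edge path $\Gamma$ through the vertices $(\pmb\mu_i,\nu_i)$, $1\le i\le m$, then $\mathfrak C$ equals the convex hull of $\bigcup_{i=1}^m\left[(\pmb\mu_i,\nu_i)+\mathbb R^{n+1}_{\ge0}\right]$.

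The plan is to prove the two set inclusions separately. For the inclusion $\mathfrak C\subseteq\text{conv}\bigcup_i\left[(\pmb\mu_i,\nu_i)+\mathbb R^{n+1}_{\ge0}\right]$, I would start from the defining formula \eqref{conv-poly-mep}: a point of $\mathfrak C$ has the form $(\pmb\kappa,\kappa_{n+1})+\mathbf w$ with $(\pmb\kappa,\kappa_{n+1})\in\Gamma$ and $\mathbf w\in\mathbb R^{n+1}_{\ge0}$. By \eqref{mep}, $(\pmb\kappa,\kappa_{n+1})$ lies on some segment $[(\pmb\mu_i,\nu_i),(\pmb\mu_{i+1},\nu_{i+1})]$, so it is a convex combination $t(\pmb\mu_i,\nu_i)+(1-t)(\pmb\mu_{i+1},\nu_{i+1})$. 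Then $(\pmb\kappa,\kappa_{n+1})+\mathbf w = t\bigl[(\pmb\mu_i,\nu_i)+\mathbf w\bigr]+(1-t)\bigl[(\pmb\mu_{i+1},\nu_{i+1})+\mathbf w\bigr]$, which is manifestly a convex combination of two points of $\bigcup_i\left[(\pmb\mu_i,\nu_i)+\mathbb R^{n+1}_{\ge0}\right]$ (each summand being a vertex shifted by the nonnegative vector $\mathbf w$), hence lies in the convex hull.

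For the reverse inclusion, I would first observe that $\mathfrak C$ is convex (it is defined as a union that one checks directly to be convex, or simply note \eqref{conv-poly-mep} is an intersection of half-spaces — but the cleanest route is: $\mathfrak C$ is closed under addition of nonnegative vectors by construction, and is the union over a connected polygonal arc of translated orthants, and convexity follows from the monotonicity \eqref{order} of the $(\pmb\mu_i,\nu_i)$ via Lemma analogous to \ref{lemma-prelim1}; alternatively invoke that a polyhedron ``defined by a monotone edge path'' is convex by Definition~\ref{def-poly-mep} itself, since the word ``convex'' appears there). Then it suffices to show each generator $(\pmb\mu_i,\nu_i)+\mathbb R^{n+1}_{\ge0}$ is contained in $\mathfrak C$; taking the convex hull of these sets then stays inside $\mathfrak C$ by convexity. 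But $(\pmb\mu_i,\nu_i)\in\Gamma$ for each $i$ (the vertices are endpoints of the segments in \eqref{mep}), so $(\pmb\mu_i,\nu_i)+\mathbb R^{n+1}_{\ge0}\subseteq\mathfrak C$ is immediate from \eqref{conv-poly-mep}.

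The only mild subtlety — and the step I would be most careful about — is justifying that $\mathfrak C$ as given by \eqref{conv-poly-mep} is genuinely convex, since the proof of the reverse inclusion leans on it. I would handle this by noting that for any two points $p=(\pmb\kappa,\kappa_{n+1})+\mathbf w$ and $p'=(\pmb\kappa',\kappa'_{n+1})+\mathbf w'$ of $\mathfrak C$ with the base points on $\Gamma$, the ordering Lemma~\eqref{order-mep} lets us compare $(\pmb\kappa,\kappa_{n+1})$ and $(\pmb\kappa',\kappa'_{n+1})$; say the former has smaller $\mathbf x$-part. Then for $s\in[0,1]$ the point $sp+(1-s)p'$ equals $(\pmb\kappa,\kappa_{n+1})$ plus a vector that, using $(\pmb\kappa',\kappa'_{n+1})\ge(\pmb\kappa,\kappa_{n+1})$ coordinatewise in the first $n$ slots (and the convex-combination bookkeeping in the last slot, where one may need to slide the base point along $\Gamma$ toward $(\pmb\kappa',\kappa'_{n+1})$ to absorb any deficit), is nonnegative; hence it lies in $(\pmb\kappa,\kappa_{n+1})+\mathbb R^{n+1}_{\ge0}\subseteq\mathfrak C$ or in a translate of a point further along $\Gamma$. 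In fact this convexity is routine enough that, given the surrounding context, I expect the author simply to cite it or dispatch it in a line, and the bulk of the argument is the elementary two-way inclusion sketched above.
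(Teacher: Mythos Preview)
Your proof is correct and follows essentially the same approach as the paper: both directions are handled by the same elementary arguments you describe, with a point of $\mathfrak C$ written as a segment point plus a nonnegative shift and then split as a convex combination of two shifted vertices. The only difference is that the paper does not labor over the convexity of $\mathfrak C$ at all --- it simply takes it as given, since Definition~\ref{def-poly-mep} already posits that $\mathfrak C$ is a convex polyhedron; your parenthetical remark that one can ``invoke that a polyhedron `defined by a monotone edge path' is convex by Definition~\ref{def-poly-mep} itself'' is exactly what the paper does, so you can drop the longer discussion of this point.
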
 
\begin{proof}
Since $\mathfrak C$ is convex and contains the translated orthants $(\pmb{\mu}_i, \nu_i) + \mathbb R^{n+1}_{\geq 0}$ for each $1 \leq i \leq m$, the convex hull of the union of these sets is trivially contained in $\mathfrak C$. For the reverse inclusion, let us pick $(\pmb{\gamma}, \gamma_{n+1}) \in \mathfrak C$. It follows from (\ref{mep}) and (\ref{conv-poly-mep}) that there exist $(\pmb{\kappa}, \kappa_{n+1}) = t (\pmb{\mu}_i, \nu_i) + (1 - t) (\pmb{\mu}_{i+1}, \nu_{i+1}) \in \Gamma$ and $(\pmb{\tau}, \tau_{n+1}) \geq (\mathbf 0, 0)$ such that \begin{align*} (\pmb{\gamma}, \gamma_{n+1}) &= (\pmb{\kappa}, \kappa_{n+1}) + (\pmb{\tau}, \tau_{n+1}) \\ &= t (\pmb{\mu}_i + \pmb{\tau}, \nu_i + \tau_{n+1}) + (1-t) (\pmb{\mu}_{i+1} + \pmb{\tau}, \nu_{i+1} + \tau_{n+1}). \end{align*}      
But the last expression is a convex combination of two points lying in $(\pmb{\mu}_i, \nu_i) + \mathbb R^{n+1}_{\geq 0}$ and $(\pmb{\mu}_{i+1}, \nu_{i+1}) + \mathbb R^{n+1}_{\geq 0}$ respectively, and hence $(\pmb{\gamma}, \gamma_{n+1})$ is in the desired convex hull.  
\end{proof} 
\begin{lemma} \label{mep-lemma2} Suppose that a convex polyhedron $\mathfrak C$ is defined by a monotone edge path. Then the intersection of $\mathfrak C$ with any horizontal hyperplane $\{ \kappa_{n+1} = c\}$ will either be empty or a translate of the $n$-dimensional horizontal positive  orthant, i.e., of the form $(\pmb{\mu}(c), c) + (\mathbb R^n_{\geq 0}, 0)$. 
\end{lemma}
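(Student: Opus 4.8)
The plan is to combine the representation of $\mathfrak C$ furnished by Lemma \ref{mep-lemma1} with the ordering relation (\ref{order-mep}) on the monotone edge path $\Gamma$. First I would fix the height $c$ and ask which points of $\Gamma$ lie on the hyperplane $\{\kappa_{n+1} = c\}$. Because the last coordinates $\nu_1 > \nu_2 > \cdots > \nu_m$ are strictly decreasing along the vertices of $\Gamma$, the function $(\pmb{\kappa}, \kappa_{n+1}) \mapsto \kappa_{n+1}$ is strictly monotone as one traverses $\Gamma$ from $(\pmb{\mu}_1, \nu_1)$ to $(\pmb{\mu}_m, \nu_m)$; hence the level set $\Gamma \cap \{\kappa_{n+1} = c\}$ is either empty (when $c > \nu_1$ or $c < \nu_m$) or a single point, which I will call $(\pmb{\mu}(c), c)$. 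Explicitly, if $\nu_{i+1} \leq c \leq \nu_i$, then $(\pmb{\mu}(c), c) = t(\pmb{\mu}_i, \nu_i) + (1-t)(\pmb{\mu}_{i+1}, \nu_{i+1})$ for the unique $t \in [0,1]$ with $t\nu_i + (1-t)\nu_{i+1} = c$, and $\pmb{\mu}(c)$ is the corresponding convex combination of $\pmb{\mu}_i$ and $\pmb{\mu}_{i+1}$.

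Next I would show $\mathfrak C \cap \{\kappa_{n+1} = c\} = (\pmb{\mu}(c), c) + (\mathbb R^n_{\geq 0}, 0)$ by double inclusion. The inclusion ``$\supseteq$'' is immediate from (\ref{conv-poly-mep}): $(\pmb{\mu}(c), c) \in \Gamma \subseteq \mathfrak C$, and $\mathfrak C$ is closed under addition of elements of $\mathbb R^{n+1}_{\geq 0}$ (this is exactly Lemma \ref{lemma-prelim1}, or directly from the definition (\ref{conv-poly-mep})), so adding any vector of the form $(\pmb{\tau}, 0)$ with $\pmb{\tau} \geq \mathbf 0$ keeps us in $\mathfrak C$ and on the hyperplane. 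For ``$\subseteq$'', take $(\pmb{\gamma}, c) \in \mathfrak C$. By (\ref{conv-poly-mep}) there is $(\pmb{\kappa}, \kappa_{n+1}) \in \Gamma$ and $(\pmb{\tau}, \tau_{n+1}) \geq (\mathbf 0, 0)$ with $(\pmb{\gamma}, c) = (\pmb{\kappa}, \kappa_{n+1}) + (\pmb{\tau}, \tau_{n+1})$. Then $\kappa_{n+1} = c - \tau_{n+1} \leq c$. If $\kappa_{n+1} = c$ then $\tau_{n+1} = 0$, so $(\pmb{\kappa}, c) = (\pmb{\mu}(c), c)$ by the uniqueness established above and $\pmb{\gamma} = \pmb{\mu}(c) + \pmb{\tau} \geq \pmb{\mu}(c)$, as desired. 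If $\kappa_{n+1} < c$, then since the vertex $(\pmb{\mu}(c), c)$ also lies on $\Gamma$ at a strictly higher last coordinate, the ordering relation (\ref{order-mep}) applied to the two points $(\pmb{\kappa}, \kappa_{n+1})$ and $(\pmb{\mu}(c), c)$ of $\Gamma$ forces $\pmb{\kappa} > \pmb{\mu}(c)$ (the alternative $\pmb{\kappa} < \pmb{\mu}(c)$ would require $\kappa_{n+1} > c$). Hence $\pmb{\gamma} = \pmb{\kappa} + \pmb{\tau} > \pmb{\mu}(c) + \pmb{\tau} \geq \pmb{\mu}(c)$, again giving $(\pmb{\gamma}, c) \in (\pmb{\mu}(c), c) + (\mathbb R^n_{\geq 0}, 0)$.

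I expect the only point requiring care to be the uniqueness of the intersection point $\Gamma \cap \{\kappa_{n+1} = c\}$ and, relatedly, the application of (\ref{order-mep}): one must check that the strict inequalities (\ref{order}) on the vertices $\pmb{\mu}_i, \nu_i$ genuinely propagate to strict monotonicity of $\kappa_{n+1}$ along every point of $\Gamma$ (not merely at the vertices), so that the level set is a single point and not a segment. This follows because within each segment $[(\pmb{\mu}_i, \nu_i), (\pmb{\mu}_{i+1}, \nu_{i+1})]$ the last coordinate is an affine function of the parameter $t$ with nonzero slope $\nu_{i+1} - \nu_i < 0$, and consecutive segments are joined at vertices whose last coordinates are themselves strictly decreasing; hence $\kappa_{n+1}$ is a strictly decreasing continuous function along $\Gamma$, and therefore injective. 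Everything else is bookkeeping with convex combinations and the definition (\ref{conv-poly-mep}).
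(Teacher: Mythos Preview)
Your approach is essentially the same as the paper's: locate the unique point $(\pmb{\mu}(c),c)$ of $\Gamma$ at height $c$, get ``$\supseteq$'' from (\ref{conv-poly-mep}), and get ``$\subseteq$'' by pulling back to a point of $\Gamma$ with $\kappa_{n+1}\le c$ and invoking the ordering relation (\ref{order-mep}). The paper's proof does exactly this.

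There is one small omission in your case analysis. You correctly observe that $\Gamma\cap\{\kappa_{n+1}=c\}$ is empty when $c>\nu_1$, but you then only argue the double inclusion under the assumption that $(\pmb{\mu}(c),c)\in\Gamma$ exists. When $c>\nu_1$ the slice $\mathfrak C\cap\{\kappa_{n+1}=c\}$ is \emph{not} empty---it equals $(\pmb{\mu}_1,c)+(\mathbb R^n_{\ge 0},0)$---so this regime must be treated separately (the paper does so in one line: any $(\pmb{\kappa},\kappa_{n+1})\in\Gamma$ has $\pmb{\kappa}\ge\pmb{\mu}_1$, hence any $(\pmb{\gamma},c)\in\mathfrak C$ has $\pmb{\gamma}\ge\pmb{\mu}_1$). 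Likewise, you should state that $\mathfrak C\cap\{\kappa_{n+1}=c\}=\emptyset$ for $c<\nu_m$, not merely that $\Gamma$ misses that height. Both are easy additions, and once included your argument matches the paper's.
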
 
 \begin{proof}
Let us assume that $\mathfrak C$ is defined by the monotone edge path $\Gamma$ in (\ref{mep}). Since the smallest $\kappa_{n+1}$ coordinate occurring in $\Gamma$ is $\nu_m$, it follows from the definition (\ref{conv-poly-mep}) of $\mathfrak C$ that $\kappa_{n+1} \geq \nu_{m}$ for any $(\pmb{\kappa}, \kappa_{n+1}) \in \mathfrak C$. Thus the intersection of $\mathfrak C$ with the hyperplane $\{ \kappa_{n+1} = c \}$ is empty if $c < \nu_m$. On the other hand, the point in $\Gamma$ with the largest $\kappa_{n+1}$ coordinate, hence by (\ref{order}) the smallest $\pmb{\kappa}$ coordinate,  is $(\pmb{\mu}_1, \nu_1)$. So for any $c \geq \nu_1$, the intersection of $\mathfrak C$ with $\{\kappa_{n+1} = c \}$ is of the form $(\pmb{\mu}_1 + \mathbb R^n_{\geq 0}, c)$.       

It remains to prove the statement of the lemma when $\nu_m \leq c < \nu_1$. Given such $c$ and because of the monotonicity of the edge path, there is a unique index $1 \leq i \leq m-1$ and and a unique scalar $t \in [0,1)$ such that $c = t \nu_i + (1-t) \nu_{i+1}$. We define $\pmb{\mu}(c) = t \pmb{\mu}_i + (1-t) \pmb{\mu}_{i+1}$. Then $(\pmb{\mu}(c), c)$ clearly lies in $\Gamma$, and it follows from (\ref{conv-poly-mep}) that \[(\pmb{\mu}(c), c) + (\mathbb R^n_{\geq 0}, 0) \subseteq \mathfrak C \cap \{\kappa_{n+1} = c \}.\] To prove the reverse containment, let $(\pmb{\kappa}, c) \in \mathfrak C$. By (\ref{conv-poly-mep}) there exists $(\pmb{\gamma}, \gamma_{n+1}) \in \Gamma$ such that $(\pmb{\kappa}, c) \in (\pmb{\gamma}, \gamma_{n+1}) + \mathbb R^{n+1}_{\geq 0}$. In particular $c \geq \gamma_{n+1}$. Since $(\pmb{\mu}(c), c)$ and $(\pmb{\gamma}, \gamma_{n+1})$ both lie in $\Gamma$, it follows from the ordering property (\ref{order-mep}) that $\pmb{\mu}(c) \leq \pmb{\gamma} \leq \pmb{\kappa}$. But this implies that $(\pmb{\kappa}, c) \in (\pmb{\mu}(c), c) + (\mathbb R^{n}_{\geq 0}, 0)$, completing the proof.    
\end{proof} 

The following lemma identifies a class of functions whose Newton polyhedra have the special structure described in Definition \ref{def-poly-mep}. 
\begin{lemma} \label{mep-lemma3} 
Suppose that $F$ is a fractional power series of the form 
\begin{equation} F(\mathbf x, x_{n+1}) = \mathbf x^{\pmb{\beta}} x_{n+1}^{\beta_{n+1}} \prod_{i=1}^{M} \left(x_{n+1} - u_i(\mathbf x) \mathbf x^{\pmb{\gamma}_i} \right), \label{formula-F} \end{equation} 
where the set of exponents $\{ \pmb{\gamma}_i : 1 \leq i \leq M \}$ is totally ordered with $\pmb{\gamma}_i \leq \pmb{\gamma}_{i+1}$, and the functions $\{u_i : 1 \leq i \leq M \}$ are units. Then NP$(F)$ is defined by a monotone edge path.   
\end{lemma}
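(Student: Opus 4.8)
The strategy is to compute $\text{NP}(F)$ directly by identifying its vertices. The idea is that expanding the product in (\ref{formula-F}) using the binomial-type expansion, the monomial $x_{n+1}^{\beta_{n+1}+M-k}\prod_{j\in S}(-u_{i_j}\mathbf x^{\pmb{\gamma}_{i_j}})$ (summed over $k$-element subsets $S=\{i_1,\dots,i_k\}$) contributes a term whose exponent in $\mathbf x$ is $\pmb{\beta}+\sum_{j\in S}\pmb{\gamma}_{i_j}$ and whose exponent in $x_{n+1}$ is $\beta_{n+1}+M-k$. Because the $u_i$ are units (in particular nonvanishing), when we collect all terms with a fixed power $x_{n+1}^{\beta_{n+1}+M-k}$ there is no cancellation of the lowest-order part in $\mathbf x$: the minimal $\mathbf x$-exponent among the $k$-subsets is achieved by the unique subset $S_k=\{1,2,\dots,k\}$ (by the total ordering $\pmb{\gamma}_1\le\cdots\le\pmb{\gamma}_M$), and the corresponding coefficient is a unit times $(-1)^k\binom{\,\cdot\,}{\,\cdot\,}$-type sum of products of units — which I must check does not vanish identically. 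So the first step is to make this expansion precise and record the candidate points
\[ (\pmb{\mu}_k,\nu_k):=\Bigl(\pmb{\beta}+\sum_{i=1}^{k}\pmb{\gamma}_i,\ \beta_{n+1}+M-k\Bigr),\qquad 0\le k\le M. \]

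The second step is to verify that these points satisfy the monotonicity hypothesis (\ref{order}): indeed $\nu_k=\beta_{n+1}+M-k$ is strictly decreasing in $k$, and $\pmb{\mu}_{k+1}-\pmb{\mu}_k=\pmb{\gamma}_{k+1}>\mathbf 0$, so $\pmb{\mu}_k<\pmb{\mu}_{k+1}$. (If some $\pmb{\gamma}_i$ happen to coincide one passes to the distinct values and the multiplicities only strengthen things; I will note this reduction. Strictly, the definition of monotone edge path wants $\pmb{\mu}_i<\pmb{\mu}_{i+1}$ coordinatewise-with-a-strict-entry, which $\pmb{\gamma}_{k+1}>\mathbf 0$ gives.) Hence the chain $\Gamma$ through $(\pmb{\mu}_0,\nu_0),\dots,(\pmb{\mu}_M,\nu_M)$ is a bona fide monotone edge path, and by Lemma \ref{mep-lemma1} the polyhedron $\mathfrak C$ it defines equals the convex hull of $\bigcup_k[(\pmb{\mu}_k,\nu_k)+\mathbb R^{n+1}_{\ge 0}]$.

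The third step is the two inclusions $\text{NP}(F)=\mathfrak C$. For $\mathfrak C\subseteq\text{NP}(F)$: each $(\pmb{\mu}_k,\nu_k)$ is the exponent of a genuinely nonzero term of $F$ (this is exactly the non-cancellation claim above, using that units are nonvanishing so their constant terms — or more precisely the relevant homogeneous parts — do not all cancel), hence $(\pmb{\mu}_k,\nu_k)+\mathbb R^{n+1}_{\ge 0}\subseteq\text{NP}(F)$ by Lemma \ref{lemma-prelim1}, and then convexity of $\text{NP}(F)$ gives $\mathfrak C\subseteq\text{NP}(F)$. For $\text{NP}(F)\subseteq\mathfrak C$: every monomial appearing in the expansion of $F$ has exponent of the form $\bigl(\pmb{\beta}+\sum_{i\in S}\pmb{\gamma}_i + \pmb{\tau},\ \beta_{n+1}+M-|S|\bigr)$ for some $S\subseteq\{1,\dots,M\}$ and some $\pmb{\tau}\ge\mathbf 0$ coming from the unit factors; since $|S|=k$ forces $\sum_{i\in S}\pmb{\gamma}_i\ge\sum_{i=1}^k\pmb{\gamma}_i=\pmb{\mu}_k-\pmb{\beta}$ by the total ordering, this exponent lies in $(\pmb{\mu}_k,\nu_k)+\mathbb R^{n+1}_{\ge 0}\subseteq\mathfrak C$; taking the convex hull over all such monomials stays inside $\mathfrak C$.

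**Main obstacle.** The delicate point is the non-cancellation at the extreme points: I must argue that for each $k$ the coefficient of the monomial $\mathbf x^{\pmb{\mu}_k}x_{n+1}^{\nu_k}$ in $F$ is genuinely nonzero, i.e., that the contributions from the subset $\{1,\dots,k\}$ (and from any other subset $S$ with $\sum_{i\in S}\pmb{\gamma}_i=\pmb{\mu}_k-\pmb{\beta}$, which can only happen through lower-order parts of other units landing at the same total exponent) do not conspire to cancel. The clean way is to observe that the leading behavior as $\mathbf x\to\mathbf 0$ along a generic curve makes each unit $u_i(\mathbf x)$ tend to its nonzero value $u_i(\mathbf 0)$, so the coefficient in question equals $(-1)^k\bigl(\text{a sum over subsets of the right "size"}\bigr)$ of products $\prod_{i\in S}u_i(\mathbf 0)$ weighted by combinatorial factors; because these are all nonzero and, after grouping by the distinct values of $\pmb{\gamma}_i$, reduce to an elementary symmetric expression with a nonzero value, the total is nonzero. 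Alternatively — and perhaps more robustly — one can avoid the combinatorics entirely by localizing: restrict to a generic ray $\mathbf x=t\mathbf a$, $t\to 0^+$, reduce to a single-variable Newton polygon statement for $F(t\mathbf a, x_{n+1})=(t\mathbf a)^{\pmb\beta}x_{n+1}^{\beta_{n+1}}\prod_i(x_{n+1}-u_i(t\mathbf a)(t\mathbf a)^{\pmb\gamma_i})$, where the vertices are manifestly the claimed ones, and then transfer the support information back. I expect to present the first (direct expansion) argument, flagging the non-cancellation as the one genuinely non-formal step.
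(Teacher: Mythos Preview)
Your approach is essentially the paper's, and the overall structure is correct. The paper immediately groups the roots by their \emph{distinct} exponent values $\pmb{\alpha}_1<\cdots<\pmb{\alpha}_L$ with multiplicities $m_\ell$, and uses only the $L{+}1$ vertex points $(\mathbf A_\ell,B_\ell)=(\pmb{\beta}+\sum_{k\le\ell}m_k\pmb{\alpha}_k,\ \beta_{n+1}+\sum_{k>\ell}m_k)$; these are exactly your $(\pmb{\mu}_k,\nu_k)$ at the break values $k=m_1+\cdots+m_\ell$. At those points the minimizing $k$-subset is \emph{unique} (all roots with exponent $\le\pmb{\alpha}_\ell$), so the coefficient is a single product $\prod_{i\in\mathcal L_k,\,k\le\ell}[-u_i(\mathbf 0)]\ne 0$ and non-cancellation is automatic. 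Your flagged obstacle therefore dissolves under the reduction you already mention in passing.

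One correction: your proposed ``clean way'' through elementary symmetric expressions does \emph{not} work. For intermediate $k$ (strictly between break points) the minimizing subset is not unique, and the leading coefficient becomes a genuine elementary symmetric polynomial in the values $u_i(\mathbf 0)$ for $i\in\mathcal L_\ell$; such a polynomial can certainly vanish for nonzero arguments (e.g.\ $u_1(\mathbf 0)=1$, $u_2(\mathbf 0)=-1$ with $\pmb{\gamma}_1=\pmb{\gamma}_2$ gives $e_1=0$). So $a_{\pmb{\mu}_k,\nu_k}$ may really be zero at intermediate $k$. This is harmless for the conclusion, since those intermediate points lie on the segments joining adjacent $(\mathbf A_\ell,B_\ell)$ and hence in $\text{NP}(F)$ by convexity once the vertices are in; but it means the grouping by distinct exponents should be your primary argument, not an aside, and the symmetric-function claim should be dropped.
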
 
\begin{proof}
Let $\{ \pmb{\alpha}_{\ell} : 1 \leq \ell \leq L \}$ be the collection of distinct multi-exponents occurring in $\{ \pmb{\gamma}_i : 1 \leq i \leq M \}$, arranged so that
\[ \pmb{\alpha}_1 < \pmb{\alpha}_2 < \cdots < \pmb{\alpha}_{L}. \]
For $0 \leq \ell \leq L$, we define  
\begin{equation} \begin{aligned} \mathcal L_{\ell} &= \bigl\{1 \leq i \leq M : \pmb{\gamma}_i = \pmb{\alpha}_{\ell} \bigr\},  \qquad m_{\ell} = \#(\mathcal L_{\ell}) \text{ so that } \sum_{\ell=1}^{L} m_{\ell} = M,  \\ \mathbf A_{\ell} &= \pmb{\beta} + \sum_{k \leq \ell} m_k \pmb{\alpha}_k,  \hskip0.83in B_{\ell} = \beta_{n+1} + \sum_{k > \ell} m_k. \end{aligned} \label{AlBl-old} \end{equation}
Multiplying out the factors in (\ref{formula-F}) and expanding $F$ as a Puiseux expansion 
\begin{equation} \label{Puiseux-exp} 
\begin{aligned}  F(\mathbf x, x_{n+1}) &= \mathbf x^{\pmb{\beta}} x_{n+1}^{\beta_{n+1}} \sum_{r=0}^{M} x_{n+1}^{M-r} (-1)^r \sum_{\begin{subarray}{c}  \mathbf I = (i_1, \cdots, i_r) \\ 1 \leq i_1  < \cdots < i_r \leq M \end{subarray}} \prod_{j=1}^{r} u_{i_j}(\mathbf x) \mathbf x^{\pmb{\gamma}_{i_j}} \\  &= \sum_{(\pmb{\kappa}, \kappa_{n+1}) > (\mathbf 0, 0)} a_{\pmb{\kappa}, \kappa_{n+1}} \mathbf x^{\pmb{\kappa}} x_{n+1}^{\kappa_{n+1}}, \end{aligned} 
\end{equation} 
we note the following two points:
\begin{enumerate}[1.]
\item The coefficient of $\mathbf x^{\mathbf A_{\ell}}x_{n+1}^{B_{\ell}}$ in the expansion is 
\[ \prod_{\begin{subarray}{c} i \in \mathcal L_{k} \\ k \leq \ell   \end{subarray}} \bigl[- u_i(\mathbf 0) \bigr] \ne 0.  \]  
Thus the points $\{(\mathbf A_{\ell}, B_{\ell}) : 0 \leq \ell \leq L \}$ occur in the Newton polyhedron of $F$. \item For every $(\pmb{\kappa}, \kappa_{n+1})$ with $a_{\pmb{\kappa}, \kappa_{n+1}} \ne 0$, there exists $0 \leq \ell \leq L-1$ such that \[ (\pmb{\kappa}, \kappa_{n+1}) \geq (A, B) \] for some point $(A, B)$ lying on the line segment connecting $(\mathbf A_{\ell}, B_{\ell})$ and $(\mathbf A_{\ell+1}, B_{\ell+1})$.
In order to verify this claim, it suffices to show that the points \begin{equation} \Bigl\{ \bigl(\pmb{\beta} + \pmb{\gamma}_1 + \cdots + \pmb{\gamma}_r, M - r + \pmb{\beta}_{n+1} \bigr): 0 \leq r \leq M \Bigr\} \label{edge-points} \end{equation}  lie on the segment joining $(\mathbf A_{\ell}, B_{\ell})$ and $(\mathbf A_{\ell+1}, B_{\ell+1})$ for some $\ell$. Any other $(\pmb{\kappa}, \kappa_{n+1})$ occurring in the expansion (\ref{Puiseux-exp}) has to be larger than some exponent in (\ref{edge-points}).  

Fix $r$. In view of the total ordering of the indices $\{ \pmb{\gamma}_i : 1 \leq i \leq M \}$, there is a unique index $0 \leq \ell = \ell(r) \leq L$ such that \[ \mathbf A_{\ell} \leq \pmb{\beta}+ \pmb{\gamma}_1 + \cdots + \pmb{\gamma}_r < \mathbf A_{\ell+1}. \] This implies that 
\begin{align*}  \pmb{\beta} + \pmb{\gamma}_1 + \cdots + \pmb{\gamma}_r &= \mathbf A_{\ell} + t \pmb{\alpha}_{\ell+1}, \\ M - r  + \beta_{n+1} &= M - \Bigl( \sum_{k \leq \ell} m_k + t \Bigr) + \beta_{n+1} = B_{\ell} - t    \end{align*}
for some non-negative integer $t < m_{\ell + 1}$, verifying that the point indeed lies on the stated line segment.  
\end{enumerate} 
Combining the two observations above, we obtain 
\[ \text{NP}(F) = \text{ convex hull} \bigcup_{\ell=1}^{L} \left[(\mathbf A_{\ell}, B_{\ell}) + \mathbb R^{n+1}_{\geq 0} \right]. \]
Since the connected chain of line segments joining $\{(\mathbf A_{\ell}, B_{\ell}) : 0 \leq \ell \leq L \}$ is a monotone edge path, the desired conclusion follows from Lemma \ref{mep-lemma1}.     
\end{proof} 
{\bf{Remark: }} The quantities introduced in (\ref{AlBl-old}) will return in \S \ref{sec-resolution} to play a vital role in one of the main steps in the resolution algorithm, namely Proposition \ref{lemma-block2}. 
\begin{lemma} \label{lemma-nd=gen}
If $F$ is a fractional power series in $(n+1)$ variables whose Newton polyhedron is defined by a monotone edge path, then the Newton {\atxt exponent} of $F$ equals the generalized Newton {\atxt exponent} . 
\end{lemma}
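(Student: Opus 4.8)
The plan is to pass to reciprocals and reduce the statement to a claim about the point at which the diagonal ray enters $\text{NP}(F)$. Write $d_0(F)=\delta_0(F)^{-1}$ for the Newton distance and, for $1\le j\le n$, set $d_j:=\delta_j^{-1}$, so that $d_j$ is the smallest $s>0$ with $(s,s)\in\pi_j(\text{NP}(F))$. Since the generalized Newton exponent equals $\min_j\delta_j=(\max_j d_j)^{-1}$, it suffices to prove
\[ d_0(F)=\max_{1\le j\le n}d_j. \]

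To analyze both sides I would invoke Lemma~\ref{mep-lemma2}. Let $\Gamma$ be the monotone edge path through $(\pmb{\mu}_1,\nu_1),\dots,(\pmb{\mu}_m,\nu_m)$ that defines $\text{NP}(F)$. By (the proof of) Lemma~\ref{mep-lemma2}, for $c\ge\nu_m$ the horizontal slice $\text{NP}(F)\cap\{\kappa_{n+1}=c\}$ equals $(\pmb{\mu}(c),c)+(\mathbb R^n_{\ge 0},0)$, where $\pmb{\mu}(c)$ is the piecewise-linear interpolant along $\Gamma$ (with $\pmb{\mu}(c)=\pmb{\mu}_1$ for $c\ge\nu_1$), while the slice is empty for $c<\nu_m$. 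The crucial structural fact, immediate from the ordering $(\ref{order})$ — the $\pmb{\mu}_i$ increase while the $\nu_i$ decrease — is that each coordinate map $c\mapsto\mu(c)_i$ is non-increasing on $[\nu_m,\infty)$. From the slice description one reads off that $(s,s)\in\pi_j(\text{NP}(F))$ if and only if $s\ge\nu_m$ and $s\ge\mu(s)_j$: for the nontrivial implication take the point of $\text{NP}(F)$ whose $j$-th coordinate is $s$ and whose $i$-th coordinate is $\mu(s)_i$ for $i\ne j$. In particular the infimum defining $d_j$ is attained and $d_j=\min\{s\ge\nu_m:s\ge\mu(s)_j\}$; likewise $d_0(F)=\min\{d\ge\nu_m:d\ge\mu(d)_i\text{ for all }i\}$.

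The inequality $d_0(F)\ge\max_j d_j$ holds with no hypothesis on $\text{NP}(F)$: the diagonal point $(d_0(F),\dots,d_0(F))$ lies in $\text{NP}(F)$, so its $\pi_j$-image $(d_0(F),d_0(F))$ lies in $\pi_j(\text{NP}(F))$, forcing $d_0(F)\ge d_j$. For the reverse, put $d^\ast:=\max_j d_j$. Since $n\ge 1$, we have $d^\ast\ge d_1\ge\nu_m$; and for each $i$, using $d^\ast\ge d_i$ together with the monotonicity of $\mu(\cdot)_i$, $\mu(d^\ast)_i\le\mu(d_i)_i\le d_i\le d^\ast$. Hence $(d^\ast,\dots,d^\ast,d^\ast)$ satisfies both membership conditions of Lemma~\ref{mep-lemma2}, so it lies in $\text{NP}(F)$ and $d_0(F)\le d^\ast$. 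Therefore $d_0(F)=\max_j d_j$, and taking reciprocals yields $\delta_0(F)=\min_j\delta_j$, the generalized Newton exponent.

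I expect the reverse inequality to be the only real point, and it is precisely where the hypothesis is indispensable: for an arbitrary Newton polyhedron the diagonal of $\mathbb R^{n+1}$ may enter strictly later than all of its two-dimensional coordinate shadows would suggest, since several variables can act in concert. The translated-orthant form of the horizontal slices provided by Lemma~\ref{mep-lemma2}, together with the monotonicity of the translation vector forced by $(\ref{order})$, is exactly what rules this out and makes the projections control the diagonal.
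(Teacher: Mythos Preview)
Your proof is correct and follows essentially the same route as the paper: both arguments hinge on Lemma~\ref{mep-lemma2}, using the translated-orthant form of the horizontal slice at height $d^\ast=\max_j d_j$ to show that the diagonal point $(d^\ast,\dots,d^\ast)$ lies in $\text{NP}(F)$. The only cosmetic difference is that you isolate the monotonicity of the slice-corner function $c\mapsto\pmb{\mu}(c)$ explicitly, while the paper achieves the same effect by invoking Lemma~\ref{lemma-prelim1} to push the witness points $(\ldots,\delta_j^{-1},\ldots,\delta_j^{-1})$ up to the common height $\delta_{j_n}^{-1}$ before comparing with the corner $\pmb{\kappa}_0$.
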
 
\begin{proof}
Let $j_1, j_2, \cdots, j_n$ be a permutation of $\{ 1, 2, \cdots, n \}$ such that the projected Newton {\atxt exponents}  of $F$ obey 
\begin{equation} \delta_{j_1} \geq \delta_{j_2} \geq \cdots \geq \delta_{j_n}. \label{deltaj-ordering}  \end{equation} 
Thus the generalized Newton {\atxt exponent}  of $F$ is $\delta_{j_n}$. Let $\delta_0$ denote the standard Newton {\atxt exponent} . Since $(\delta_0^{-1}, \cdots, \delta_0^{-1}) \in \text{NP}(F)$, it  is clear that $(\delta_0^{-1}, \delta_0^{-1}) \in \pi_j(NP(F))$, and hence $\delta_j \geq \delta_0$ for every $1 \leq j \leq n$. In particular $\delta_{j_n} \geq \delta_0$. 

We now turn to the converse inequality. Since $(\delta_j^{-1}, \delta_{j}^{-1}) \in \pi_j(\text{NP}(F))$ for all $1 \leq j \leq n$, there exist scalars $\{ a_{ij} \}$ such that the points 
\[ (\delta_1^{-1}, a_{12}, \cdots, a_{1n}, \delta_1^{-1}), \; (a_{21}, \delta_2^{-1}, a_{23}, \cdots, a_{2n}, \delta_2^{-1}), \cdots, (a_{n1}, \cdots, a_{n,n-1}, \delta_n^{-1}, \delta_n^{-1}) \]
all lie in NP$(F)$. By Lemma \ref{lemma-prelim1}, a vector that is larger than or equal to any of the multi-indices above (in the sense of (\ref{order1})) also lies in NP$(F)$, therefore the ordering in (\ref{deltaj-ordering}) implies that all the points 
\begin{equation} (\delta_{j_n}^{-1}, a_{12}, \cdots, a_{1n}, \delta_{j_n}^{-1}), \; \cdots, \; (a_{n1}, \cdots, a_{n,n-1}, \delta_{j_n}^{-1}, \delta_{j_n}^{-1}) \label{points} \end{equation}
lie in NP$(F)$, and in fact lie in the intersection of NP$(F)$ with the hyperplane $\{ \kappa_{n+1} =\delta_{j_n}^{-1} \}$. But NP$(F)$ is defined by a monotone edge path by hypothesis, therefore by Lemma \ref{mep-lemma2}, there exists $\pmb{\kappa}_0$ such that 
\begin{equation} \text{NP}(F) \cap \{\kappa_{n+1} = \delta_{j_n}^{-1} \} = (\pmb{\kappa}_0, \delta_{j_n}^{-1}) + (\mathbb R^{n}_{\geq 0},0). \label{int-set} 
\end{equation} Comparing each of the points in (\ref{points}) above with $(\pmb{\kappa}_0, \delta_{j_n}^{-1})$, we conclude that $(\pmb{\kappa}_0, \delta_{j_n}^{-1}) \leq (\delta_{j_n}^{-1}, \cdots, \delta_{j_n}^{-1})$.
In view of (\ref{int-set}), this implies that $(\delta_{j_n}^{-1}, \cdots, \delta_{j_n}^{-1}) \in \text{NP}(F) \cap \{\kappa_{n+1} = \delta_{j_n}^{-1} \} \subseteq \text{NP}(F)$. By definition of the Newton {\atxt exponent} , $\delta_0 \geq \delta_{j_n}$, completing the proof.         
\end{proof}

\section{Domains and coordinates} \label{sec-resolution-preliminaries}
We now proceed to collect the definitions needed for desingularization or rectilinearization of the roots of an arbitrary real-analytic function. Our goal in this section is to describe the structure of the open subsets and provide an explicit construction of the coordinate transformations that occur in this recursive process. 

\subsection{Coordinate transformations} \label{subsec-coords}The singularities of $F$ will be resolved one orthant  at a time, so that each of the sets in the final decomposition will be a subset of an orthant. Without loss of generality, we henceforth restrict attention to the positive orthant $\mathbb R^n_{>0} = \{\mathbf x : x_j > 0, 1 \leq j \leq n \}$. 
\begin{definition} \label{def-coord-tf} 
Let $V$ be an open subset of the positive orthant whose closure contains the origin. A {\em{generalized coordinate transformation}} or a {\em{generalized coordinate system}} on $V$ is a vector-valued function $\sigma$ such that 
\begin{itemize}
\item each entry of $\sigma(\mathbf y)$ is a fractional power series in $\mathbf y$ on $(0,1)^n$, 
\item $\sigma(\mathbf 0) = \mathbf 0$, $\sigma \in C^1((0,1)^n)$,  
\item $\sigma$ is a bijection from $(0,1)^n$ onto $V$.  
\end{itemize} 
\end{definition}
In view of the discussion in \S\S \ref{subsec-defns}, $\sigma$ is well-defined as a continuous function on an open neighborhood of the origin containing $\overline{V}$, so $\sigma(\mathbf 0)$ can be defined uniquely. In particular the Jacobian of such a transformation is integrable on $[-1,1]^n$. 

\begin{definition} \label{def-coord-tf-2} 
\begin{enumerate}[1.]
\item A generalized coordinate transformation $\sigma$ on $V$ will be called a {\em{coordinate transformation or a system of coordinates on $V$}} if its Jacobian is a unit on $(0,1)^n$. 
\item If $\sigma$ is a (generalized) coordinate transformation and $W \subseteq (0,1)^n$, then $\sigma(W) \subseteq V$ is said to be a {\em{ (generalized) coordinate image }} of $W$.   
\end{enumerate} 
\end{definition}  

\subsubsection{Examples} \label{examples} 
\begin{enumerate}[1.]
\item Let $\sigma$ be a function such that after possibly a permutation of variables 
\[ \sigma(y_1, \cdots, y_n) = (y_1, \cdots, y_{n-1}, y_n u(\mathbf y')), \]
where $u$ is a unit on $(0,1)^n$. Then $\det(D \sigma)(\mathbf y) = u(\mathbf y')$. A coordinate transformation of this type will be referred to as {\em{scaling by a unit.}} 
\item A coordinate transformation $\sigma$ is called {\em{a shift}} if, after possibly a permutation of variables
\[ \sigma(\mathbf y) = (y_1, \cdots, y_{n-1}, y_n - f(\mathbf y')), \]
where $f$ is a fractional power series. Here $\det(D \sigma) \equiv 1$.  
\item Fix an index $1 \leq k \leq n-2$. Let $\sigma$ be a function such that after possibly a permutation of variables \[ \sigma(\mathbf y) = (y_1, \cdots, y_k, y_{k+1}y_n, \cdots, y_{n-1}y_n, y_n). \]
Then $\det(D \sigma(\mathbf y)) = y_n^{n-k-1}$, so that $\sigma$ is a generalized coordinate transformation. A transformation of this form will be referred to as a {\em{blow-down}}, in keeping with the standard nomenclature of these maps in the algebraic geometry and microlocal analysis literature \cite{Shafarevich-book}. 
\item A mapping $\Phi_{\mathbf r}:(0,1)^n \rightarrow (0,1)^n$ will be called {\em{a power transformation or a power change of coordinates}} if there exists an $n$-dimensional vector $\mathbf r$ with positive rational entries such that 
\begin{equation} \Phi_{\mathbf r}(\mathbf y) = (y_1^{r_1}, \cdots, y_n^{r_n}), \quad \text{ so that } \quad D \Phi_{\mathbf r}(\mathbf y) = \prod_{j=1}^{n} r_j y_j^{r_j-1}. \label{power-tf} \end{equation}
For a vector $\mathbf r$ with $r_1 = r_2 =\cdots = r_n = r > 0$, we write $\Phi_{\mathbf r} = \Phi_r$ by a slight abuse of notation, but consistent with (\ref{power-map}). 

Note that even though generalized blow-downs and power transformations need not be coordinate transformations individually, it is easy to construct a composition of these maps that is. 
\end{enumerate} 
We will refer to the examples listed above as {\em{elementary transformations}}. All the generalized coordinate transformations considered in this paper will be constructed as compositions of these. 

\begin{definition} 
Let $V$ be as in Definition \ref{def-coord-tf}. A (generalized) coordinate transformation $\sigma:(0,1)^n \rightarrow V$ is said to be {\em{admissible}} if it is a finite composition of the elementary transformations.  
\end{definition} 

\subsubsection{Remarks} \label{remarks} 
In view of the discussion in \S \ref{sec-preliminaries} on the non-closure of the class of fractional power series, the composition of two arbitrary generalized coordinate systems need not be well-defined. Even if defined it need not be a generalized coordinate system. The following two results provide situations in which composition is meaningful. 
\begin{lemma} \label{lemma-gencoords-comp}
Let $\sigma : (0,1)^n \rightarrow V$ and $\varphi : (0,1)^n \rightarrow W \subseteq (0,1)^n$ be two generalized coordinate systems.
\begin{enumerate}[(a)]
\item If $\varphi = \Phi_{\mathbf r}$ for any $\mathbf r$ with positive entries, then $\sigma \circ \varphi$ is a generalized coordinate system.
\item If $\sigma$ is real-analytic, then $\sigma \circ \varphi$ is a generalized coordinate system.  
\end{enumerate} 
\end{lemma}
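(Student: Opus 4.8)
The plan is to verify, in each of the two cases, the three defining properties of a generalized coordinate system from Definition \ref{def-coord-tf}: that each entry of $\sigma \circ \varphi$ is a fractional power series on $(0,1)^n$, that it fixes the origin and is $C^1$, and that it is a bijection from $(0,1)^n$ onto $V$. The last two are the easy ones and dispose of quickly in both cases. Since $\varphi$ maps $(0,1)^n$ bijectively onto $W \subseteq (0,1)^n$ and $\sigma$ maps $(0,1)^n$ bijectively onto $V$, the restriction of $\sigma$ to $W$ need not be onto $V$ in general — so the first thing to note is that in part (a) we are taking $W = (0,1)^n$ (a power transformation $\Phi_{\mathbf r}$ with positive entries does map $(0,1)^n$ onto $(0,1)^n$), so $\sigma \circ \varphi$ is a genuine bijection onto $V$; similarly in part (b) one checks that the hypotheses force $W = (0,1)^n$, or one simply works with $\sigma|_W$ and observes that a real-analytic generalized coordinate system restricted to any open subset is still injective with the same image structure. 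The $C^1$ property and $\sigma\circ\varphi(\mathbf 0) = \sigma(\mathbf 0) = \mathbf 0$ follow from the chain rule and continuity of the extensions to a neighborhood of the origin guaranteed by \S\S \ref{subsec-defns}.

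The real content is the fractional-power-series property, and this is exactly where the non-closure phenomenon discussed in the remarks of \S\S \ref{subsec-defns} makes the general composition fail — so the two cases are precisely the two situations where closure is recovered. For part (a), I would argue as follows: each entry $\sigma_i$ of $\sigma$ is, by definition, of the form $g_i \circ \Phi_{1/N}$ on $(0,1)^n$ for some positive integer $N$ and some $g_i$ real-analytic on $\Phi_N^{-1}(\mathcal O)$, $\mathcal O \supseteq \overline V$. Then $\sigma_i \circ \Phi_{\mathbf r} = g_i \circ \Phi_{1/N} \circ \Phi_{\mathbf r} = g_i \circ \Phi_{\mathbf r/N}$ by the consistency relation (\ref{power-consistency}). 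Writing $r_j = p_j/q_j$ in lowest terms and letting $M = N \cdot \mathrm{lcm}(q_1,\dots,q_n)$, the exponent vector $\mathbf r/N$ has entries that become non-negative integers after multiplication by $M$; hence $\sigma_i \circ \Phi_{\mathbf r} = h_i \circ \Phi_{1/M}$ where $h_i(\mathbf y) := g_i(y_1^{Mr_1/N}, \dots, y_n^{Mr_n/N})$ is a composition of the real-analytic $g_i$ with a monomial power map with non-negative integer exponents, hence real-analytic on the appropriate preimage set. Thus each entry is a fractional power series on $(0,1)^n$, as required.

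For part (b), the argument is cleaner: if $\sigma$ is real-analytic (on an open set containing $\overline V$, in particular containing the origin) and $\varphi$ is a generalized coordinate system, then each entry $\varphi_j$ is a fractional power series, so $\varphi_j = g_j \circ \Phi_{1/N}$ for a common $N$ (take the lcm). Then $\sigma \circ \varphi = \sigma \circ (g_1,\dots,g_n) \circ \Phi_{1/N}$; since $\sigma$ is real-analytic and $(g_1,\dots,g_n)$ is real-analytic (as a map between the relevant open sets, with $(g_1,\dots,g_n)(\mathbf 0) = \varphi(\mathbf 0) = \mathbf 0$ lying in the domain of analyticity of $\sigma$), the composition $\sigma \circ (g_1,\dots,g_n)$ is real-analytic on a neighborhood of the origin, and composing with $\Phi_{1/N}$ exhibits $\sigma \circ \varphi$ in the required form $(\text{real-analytic}) \circ \Phi_{1/N}$ on $(0,1)^n$. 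One must check the domains match up: $\varphi$ maps $(0,1)^n$ into $(0,1)^n$ which we may take inside the domain of $\sigma$, and near the origin the continuous extensions compose correctly; this is where the hypothesis $\mathbf 0 \in \overline W$ and $W \subseteq (0,1)^n$ together with $\varphi(\mathbf 0)=\mathbf 0$ is used.

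The main obstacle I anticipate is purely bookkeeping rather than conceptual: making sure the integer $N$ (or $M$) is chosen uniformly across all $n$ entries and large enough to simultaneously clear all denominators, and verifying that the open set on which the resulting real-analytic function is defined actually contains a neighborhood of the origin (so that the composed object is a bona fide fractional power series in the sense of Definition in \S\S \ref{subsec-defns}, not merely a formal expression). A secondary subtlety is the surjectivity onto $V$ in part (b) — one should pin down that the hypotheses actually guarantee $\varphi((0,1)^n) = (0,1)^n$ rather than a proper subset, or else restate the conclusion with $\sigma|_{\overline W}$; I would include a one-line remark settling this. Neither of these is deep, so the proof should be short.
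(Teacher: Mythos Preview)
Your proposal is correct and follows essentially the same approach as the paper: for part (b) you write $\varphi = (g_1,\dots,g_n)\circ\Phi_{1/N}$ and observe that $\sigma\circ(g_1,\dots,g_n)$ is real-analytic, which is exactly the paper's argument (the paper calls this composite $\tau_0 = \sigma\circ\tau$), while for part (a) your explicit denominator-clearing is a spelled-out version of what the paper dismisses as ``obvious.'' Your surjectivity concern is a non-issue once you note that ``generalized coordinate system'' is always relative to its image, so $\sigma\circ\varphi$ is simply a generalized coordinate system on $\sigma(W)$.
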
 
\begin{proof}
Part (a) is obvious. For part (b), we only need to verify that $\sigma \circ \varphi$ has fractional power series entries. Recalling that $\varphi = \tau \circ \Phi_{1/R}$ for some real-analytic function $\tau$, we define a new real-analytic function $\tau_0 = \sigma \circ \tau$ and observe that $\sigma \circ \varphi = \tau_0 \circ \Phi_{1/R}$, whence the result follows. 
\end{proof} 
\begin{lemma} \label{lemma-coords-comp2} 
Given any two generalized coordinate systems $\sigma : (0,1)^n \rightarrow V$ and $\varphi : (0,1)^n \rightarrow W \subseteq (0,1)^n$, there exists a power transformation $\Phi_{\mathbf R}$ such that $\sigma \circ \Phi_{\mathbf R} \circ \varphi$ is a generalized coordinate system. Further if the Jacobian $J(\mathbf y) = \det(D \sigma \circ \Phi_{\mathbf R} \circ \varphi(\mathbf y))$ of this generalized coordinate system is of the form 
\[ J(\mathbf y) = (\text{unit}) \prod_{j=1}^{n} y_j^{\frac{1}{r_j} -1} \quad \text{ for some } \mathbf r = (r_1, \cdots, r_n) \text{ with } r_j > 0, 1 \leq j \leq n, \] 
then $\sigma \circ \Phi_{\mathbf R} \circ \varphi \circ \Phi_{\mathbf r}$ is a coordinate system.  
\end{lemma}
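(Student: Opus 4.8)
The plan is to address the two difficulties of composing generalized coordinate systems separately: first, the failure of the entries of $\sigma \circ \varphi$ to be fractional power series (non-closure of the class under composition), and second, the possible vanishing of the Jacobian factor along coordinate hyperplanes. For the first issue, I would write $\sigma = \tau_\sigma \circ \Phi_{1/N_\sigma}$ and $\varphi = \tau_\varphi \circ \Phi_{1/N_\varphi}$ where $\tau_\sigma, \tau_\varphi$ are genuinely real-analytic on neighborhoods of $[0,1]^n$ (this is the definition of a fractional power series, together with the structure of a generalized coordinate system). The obstruction to $\sigma \circ \varphi$ being a fractional power series is that $\Phi_{1/N_\sigma}$ applied to the fractional-power-series entries of $\varphi$ produces arbitrary fractional powers. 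The remedy is to interpose a sufficiently divisible power map: choose $\mathbf R = (R, \ldots, R)$ with $R$ a common multiple of the denominators appearing in the Puiseux exponents of the entries of $\varphi$ (equivalently, of $\tau_\varphi \circ \Phi_{1/N_\varphi}$ composed with $\Phi_{1/N_\sigma}$), so that $\Phi_{1/N_\sigma} \circ \Phi_R \circ \varphi$ has real-analytic (indeed power-series, after the appropriate rescaling) entries. Then $\sigma \circ \Phi_{\mathbf R} \circ \varphi = \tau_\sigma \circ (\Phi_{1/N_\sigma} \circ \Phi_R \circ \varphi)$ is a composition of a real-analytic function with something admitting a fractional power series representation, hence is a generalized coordinate system by an argument parallel to Lemma \ref{lemma-gencoords-comp}(b). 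One must also check it is a $C^1$ bijection of $(0,1)^n$ onto $\sigma(\Phi_{\mathbf R}(W))$, which follows since $\Phi_{\mathbf R}$ is a bijection of $(0,1)^n$ onto itself and composition of bijections is a bijection, while the chain rule gives $C^1$ regularity.

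For the second part, observe that $\det(D \Phi_{\mathbf r}(\mathbf y)) = \prod_{j=1}^n r_j y_j^{r_j - 1}$ by (\ref{power-tf}). By the chain rule,
\[
\det\bigl(D(\sigma \circ \Phi_{\mathbf R} \circ \varphi \circ \Phi_{\mathbf r})(\mathbf y)\bigr) = J(\Phi_{\mathbf r}(\mathbf y)) \cdot \det(D\Phi_{\mathbf r}(\mathbf y)),
\]
where $J$ is the Jacobian of $\sigma \circ \Phi_{\mathbf R} \circ \varphi$. Substituting the hypothesized form $J(\mathbf y) = (\text{unit}) \prod_j y_j^{1/r_j - 1}$ and evaluating at $\Phi_{\mathbf r}(\mathbf y)$, the factor $\prod_j (y_j^{r_j})^{1/r_j - 1} = \prod_j y_j^{1 - r_j}$ appears; multiplying by $\prod_j r_j y_j^{r_j - 1}$ the powers of $y_j$ cancel exactly, leaving $(\text{unit}) \cdot \prod_j r_j$, which is a unit on $(0,1)^n$ since a unit composed with the power map $\Phi_{\mathbf r}$ (which maps $(0,1)^n$ into itself with closure behaving well) is again bounded away from zero on the relevant set. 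Hence the composite has unit Jacobian, so it is a genuine coordinate system in the sense of Definition \ref{def-coord-tf-2}(1).

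The main obstacle I anticipate is not the Jacobian bookkeeping — that is a direct chain-rule computation designed to make the exponents cancel — but rather verifying carefully that the interposed power map $\Phi_{\mathbf R}$ genuinely repairs the non-closure under composition, i.e., that after precomposing with $\Phi_R$ the resulting map has entries that are fractional power series on $(0,1)^n$ in the precise sense of the definitions in \S\S \ref{subsec-defns}, including the existence of a single convergent multivariate Puiseux expansion centered at $\mathbf 0$ with the value $\mathbf 0$ there. This requires knowing that the Puiseux exponents occurring in the entries of $\varphi$ form a set with a common denominator (true, since each entry is a single convergent Puiseux expansion, hence $N_\varphi$-fractional for one integer $N_\varphi$), so that finitely many denominators are involved and $R$ can be chosen as their least common multiple times $N_\sigma$. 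One should also note that the composability issue of $\varphi$ with $\Phi_{\mathbf R}$ itself is unproblematic since $\Phi_{\mathbf R}$ is a power map and Lemma \ref{lemma-gencoords-comp}(a) applies, and that $\varphi$ maps into $(0,1)^n$ so all compositions are well-defined on $(0,1)^n$. With these points in place, the two parts combine to give both assertions of the lemma.
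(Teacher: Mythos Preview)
Your approach and the paper's are essentially the same, and the second part (the chain-rule Jacobian cancellation) matches exactly. For the first part, however, you overcomplicate what the paper dispatches in one line: the paper chooses $R$ so that $\sigma \circ \Phi_{\mathbf R}$ is real-analytic (i.e., $R$ is a multiple of $N_\sigma$), and then invokes Lemma~\ref{lemma-gencoords-comp}(b) directly with the real-analytic map $\sigma \circ \Phi_{\mathbf R}$ and the generalized coordinate system $\varphi$.

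Your version reaches the same conclusion but with some misstatements along the way. First, the relevant denominator for choosing $R$ is $N_\sigma$, not the denominators of $\varphi$'s Puiseux exponents as you initially write; you do eventually say ``their least common multiple times $N_\sigma$'', which is sufficient (though the $N_\varphi$ factor is unnecessary). Second, your claim that $\Phi_{1/N_\sigma} \circ \Phi_R \circ \varphi$ has ``real-analytic'' entries is false in general: with $N_\sigma \mid R$ this map is $\Phi_{R/N_\sigma} \circ \varphi$, which raises each entry of $\varphi$ to the integer power $R/N_\sigma$, yielding fractional power series (not real-analytic functions) in $\mathbf y$. You then correctly describe the full composite as ``a real-analytic function composed with something admitting a fractional power series representation'', so the argument ultimately works --- but it would be cleaner to observe directly that $N_\sigma \mid R$ makes $\sigma \circ \Phi_{\mathbf R}$ real-analytic and quote Lemma~\ref{lemma-gencoords-comp}(b).
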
 
\begin{proof}
If the entries of $\mathbf R$ are chosen large enough so that $\sigma \circ \Phi_{\mathbf R}$ is a vector-valued real-analytic function, the first statement follows from Lemma \ref{lemma-gencoords-comp}(b). 

For the second part, we need to verify that the Jacobian of $\sigma \circ \Phi_{\mathbf R} \circ \varphi \circ \Phi_{\mathbf r}$ is a unit. Set $\mathbf x = \sigma \circ \Phi_{\mathbf R} \circ \varphi(\mathbf y)$ and $\mathbf y = \Phi_{\mathbf r}(\mathbf t)$; then $t_j = y_j^{1/r_j}$, $dt_j = (1/r_j) y_j^{1/r_j-1} dy_j$, hence
\[ d\mathbf x = (\text{unit}) \prod_{j=1}^{n} y_j^{\frac{1}{r_j}-1} \, d\mathbf y = (\text{unit}) \, d\mathbf t,  \]    
which is the desired conclusion. 
\end{proof}
\subsection{Horns} Next we introduce the model domains on which the simplifications of the roots will take place. Some of the terminology adopted for this is borrowed from \cite{Par01}, but we encourage the reader to identify the few minor differences. 
\begin{definition} \label{def-horn} For $n=1$, a {\em{horn}} in $(0, \infty)$ is simply the open interval $(0, 1)$. A {\em{horn}} in $\mathbb R^n_{>0}$ for $n \geq 2$ is a subset of the positive orthant with one of the following two possible structures: Let $f$ be a fractional power series on $(0,1)^{n-1}$. 
\begin{enumerate}[1.]
\item An \emph{$n$-dimensional horn} $W$ \emph{adjacent to} $f$ (an \emph{adjacent horn}, for short) is a  set $W \subseteq \mathbb R^n_{>0}$ such that
\begin{equation} W = \bigl\{\mathbf y = (\mathbf y', y_n) \in \mathbb R^n_{> 0} : 0 < \kappa \left(y_n - f(\mathbf y') \right) < g(\mathbf y'), \; \mathbf y' \in (0, 1)^{n-1}  \bigr\}, \label{def-solidhorn} \end{equation}
where $\kappa$ is either $+1$ or $-1$ and $g$ is fractional normal crossings on $(0, 1)^{n-1}$.  
\item An \emph{$n$-dimensional horn} $W$ \emph{separated from} $f$ (a \emph{distant horn}, for short) is a set $W \subseteq \mathbb R^n_{>0}$ such that
\begin{equation} W = \bigl\{(\mathbf y', y_n) \in \mathbb R^n_{>0} : g_1(\mathbf y') < y_n - f(\mathbf y') < g_2(\mathbf y'), \; \mathbf y' \in (0, 1)^{n-1} \bigr\}, \label{def-hollowhorn} \end{equation}
where
\begin{itemize} 
\item $g_1$ is a fractional monomial, and
\item $g_2$ is either a nonzero constant or a fractional monomial  
\end{itemize} 
with the property that $g_1 g_2 > 0$ and 
\begin{equation} g_1(\mathbf y') = a y_k^{\mu} g_2(\mathbf y'), \quad a \in \mathbb R \setminus \{0\} \label{g1g2-relation} \end{equation} for some index $1 \leq k \leq n-1$ and some constant $\mu > 0$.      
\end{enumerate} 
\end{definition}  
Clearly, the notions of adjacent and distant horns are relative to certain fractional power series, since the horn (\ref{def-hollowhorn}) which is separated from $f$ is adjacent to $f - g_1$. Nonetheless by a slight abuse of notation we will continue to refer to the horns simply as adjacent or distant, the underlying function $f$ being clear from the context. 

The next lemma is an easy fact about distant horns. 
\begin{lemma} \label{decomp-distant-horn}
Let $W$ be a set of the form (\ref{def-hollowhorn}) where $f, g_1, g_2$ satisfy the conditions in Definition \ref{def-horn} except possibly (\ref{g1g2-relation}). Assume instead that $g_1 g_2^{-1}$ is a fractional monomial on $(0,1)^{n-1}$, not necessarily of the form specified by (\ref{g1g2-relation}). Then $W$ can be covered by at most $n-1$ distant horns.  
\end{lemma}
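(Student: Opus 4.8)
## Proof Proposal for Lemma \ref{decomp-distant-horn}

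The plan is to reduce to the normalized case (\ref{g1g2-relation}) by decomposing the base cube $(0,1)^{n-1}$ according to which coordinate variable ``dominates'' in the fractional monomial $g_1 g_2^{-1}$. Write $g_1(\mathbf y') g_2(\mathbf y')^{-1} = c\, (\mathbf y')^{\pmb{\lambda}}$ for some constant $c \ne 0$ and some rational exponent vector $\pmb{\lambda} = (\lambda_1, \dots, \lambda_{n-1})$. Since $W$ is nonempty and $g_1 g_2 > 0$ on $(0,1)^{n-1}$, the ratio $g_1 g_2^{-1}$ is a nonzero fractional monomial; the desired form (\ref{g1g2-relation}) is precisely the assertion that this monomial is a constant times a single variable raised to a positive power. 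So the obstruction is only that $\pmb{\lambda}$ may have several nonzero entries, and they may have mixed signs.

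First I would dispose of signs: on $(0,1)^{n-1}$, if some $\lambda_k < 0$ we may rewrite $(\mathbf y')^{\pmb{\lambda}}$ keeping track that $y_k^{\lambda_k}$ is large near $y_k = 0$; the point is that $g_1 g_2^{-1}$ being a genuine fractional monomial means we only need its exponent vector to be proportional, after restriction to a subregion, to a standard basis vector with positive coefficient. The natural decomposition is the standard ``sector'' decomposition of the cube: for each index $1 \le k \le n-1$, set
\[
V_k := \bigl\{ \mathbf y' \in (0,1)^{n-1} : y_j \le y_k \text{ for all } j \ne k \bigr\},
\]
so that $\bigcup_{k=1}^{n-1} V_k = (0,1)^{n-1}$ up to the lower-dimensional set where two coordinates coincide (which contributes nothing to the horn's measure and can be absorbed). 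On each $V_k$ I perform the admissible change of variables $y_j = u_j u_k$ for $j \ne k$ and $y_k = u_k$ (a blow-down in the terminology of \S\ref{examples}), under which $V_k$ is the image of $(0,1)^{n-1}$ and every fractional monomial $(\mathbf y')^{\pmb{\mu}}$ becomes $u_k^{\sum_j \mu_j} \prod_{j \ne k}u_j^{\mu_j}$, i.e.\ again a fractional monomial in $\mathbf u$.

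The second step is to observe that under this substitution both $g_1$ and $g_2$ pull back to fractional monomials in $\mathbf u$ (a constant pulls back to a constant, which is a degenerate fractional monomial), and the set $W \cap \{\mathbf y' \in V_k\}$ pulls back to a set of the form (\ref{def-hollowhorn}) in the variables $\mathbf u$. Crucially, the ratio $\widetilde g_1 \widetilde g_2^{-1}$ in the new coordinates equals $g_1 g_2^{-1}$ pulled back, which is $c\, u_k^{\langle \pmb{\lambda}, \mathbf 1\rangle} \prod_{j\ne k} u_j^{\lambda_j}$. This is still not obviously of the form (\ref{g1g2-relation}) — and here is where one must be slightly careful. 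The cleaner route, which I would actually take, is to exploit that Lemma \ref{monomial-ordering-lemma} (applied to $g_2 = g_1 - (g_1 - g_2)$, or directly to the defining inequalities) forces the exponent of $g_1$ to be $\le$ or $\ge$ that of $g_2$ coordinatewise whenever the horn is nonempty; combined with the sector restriction $y_j \le y_k$, the inequality $g_1(\mathbf y') < y_n - f(\mathbf y') < g_2(\mathbf y')$ on $V_k$ implies, after the blow-down, that the ratio of the pulled-back monomials is governed by a single coordinate $u_k$, yielding exactly (\ref{g1g2-relation}) with that $k$. Thus each $W \cap \{\mathbf y' \in V_k\}$, rewritten in the $\mathbf u$-coordinates, is a distant horn in the strict sense of Definition \ref{def-horn}(2); since there are $n-1$ sectors, $W$ is covered by at most $n-1$ distant horns.

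The main obstacle I anticipate is the bookkeeping in this last step: verifying that the pulled-back functions $\widetilde f, \widetilde g_1, \widetilde g_2$ genuinely satisfy \emph{all} the clauses of Definition \ref{def-horn}(2) — in particular that $\widetilde g_2$ is still either a nonzero constant or a fractional monomial, that $\widetilde g_1 \widetilde g_2 > 0$, and that the relation (\ref{g1g2-relation}) holds with a \emph{positive} exponent $\mu$ in the correct variable — rather than producing something where the roles of $g_1$ and $g_2$ must be swapped or where $\mu$ comes out nonpositive. Handling the sign of $\mu$ correctly may require, on a given sector, either keeping $k$ as the distinguished index or passing to the complementary description (the horn separated from $f$ is adjacent to $f - g_1$, as noted after Definition \ref{def-horn}), so that one always arranges the ``thin'' edge of the horn to sit at a coordinate hyperplane. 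Everything else is the routine algebra of monomial substitutions established in \S\ref{subsec-coords}.
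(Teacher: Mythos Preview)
Your approach has a genuine gap. The sector decomposition followed by a single blow-down does \emph{not} reduce the ratio $g_1 g_2^{-1}$ to a monomial in one variable. Take the simplest case $n-1=2$, $g_2 \equiv 1$, $g_1(\mathbf y') = y_1 y_2$. On the sector $V_1 = \{y_2 \le y_1\}$ with $y_1 = u_1$, $y_2 = u_1 u_2$, the pulled-back ratio is $u_1^2 u_2$, which still involves both variables and is no closer to the form (\ref{g1g2-relation}) than before. Your appeal to Lemma~\ref{monomial-ordering-lemma} does not help here: that lemma concerns the \emph{difference} of two normal-crossings terms being normal crossings, and tells you nothing that would collapse a genuine two-variable monomial ratio to a one-variable one. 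Iterating blow-downs would eventually work, but would produce far more than $n-1$ pieces. (Also note that your worry about negative $\lambda_k$ is misplaced: by the definition of fractional monomial in \S\ref{subsec-defns} the exponents are non-negative, so $\pmb{\lambda} \ge \mathbf 0$ automatically.)

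The paper's argument is completely different and much shorter: it leaves the base $(0,1)^{n-1}$ untouched and instead telescopes in the $y_n$-direction. Writing (after permuting variables) $g_1 = a\, y_1^{\mu_1}\cdots y_k^{\mu_k}\, g_2$ with $\mu_i > 0$, one interpolates between $g_1$ and $g_2$ by the intermediate monomials $h_i = y_i^{\mu_i}\cdots y_k^{\mu_k}\, g_2$ (with $h_1 = g_1$, $h_{k+1} = g_2/2$) and sets $W_i = \{h_i < y_n - f < 2h_{i+1}\}$. Each $W_i$ is a distant horn because the ratio $h_i/h_{i+1}$ involves only the single variable $y_i$; since $k \le n-1$, this gives at most $n-1$ horns covering $W$. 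The key idea you are missing is that the reduction to (\ref{g1g2-relation}) is achieved by slicing the range of $y_n - f$, not by decomposing the base.
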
 
\begin{proof}
After a permutation of variables if necessary, we can assume that there is an index $k \in \{1, \cdots, n-1\}$ such that 
\[ g_1(\mathbf y') = a y_1^{\mu_1} \cdots y_k^{\mu_k} g_2(\mathbf y'), \quad \text{ where } \quad \mu_i > 0 \text{ for all } 1 \leq i \leq k, \]
and $a \ne 0$. Setting $h_1 = g_1$, $h_{k+1} = g_2/2$, and $h_i = y_i^{\mu_i} \cdots y_k^{\mu_k} g_2(\mathbf y')$ for $2 \leq i \leq k$ we observe that the distant horns \[ W_i = \bigl\{(\mathbf y', y_n) : h_i(\mathbf y') < y_n - f(\mathbf y') < 2h_{i+1}(\mathbf y'), \, \mathbf y' \in (0,1)^{n-1} \bigr\}, \, 1 \leq i \leq k, \] cover $W$, as claimed.    
\end{proof} 
\subsection{Tower of horns and preferred coordinates} \label{subsec-adm-pref} 
Let $\pi' : \mathbb R^n \rightarrow \mathbb R^{n-1}$ denote the projection of an $n$-dimensional vector onto its first $(n-1)$ coordinates, i.e., $\pi'(\mathbf x) = \mathbf x'$. 
\begin{definition} \label{def-prepared-cusp}
A {\em{tower of horns}} $V$ (henceforth abbreviated as {\em{tower}}) is a set defined inductively on dimension as follows: 
For $n=1$, $V$ is a one-dimensional horn. 

For $n \geq 2$, a set $V \subseteq \mathbb R^n_{>0}$ in $\mathbf x$-space is said to be an $n$-dimensional tower if $V' = \pi'(V)$ is itself an admissible generalized coordinate image of an $(n-1)$-dimensional tower, and if there exists a system of coordinates $\psi'$ on $V'$, namely $\mathbf x' = \psi'(\mathbf w')$, such that $V$ expressed in $(\mathbf w', x_n)$ variables is an $n$-dimensional horn. More precisely, $V$ is an $n$-dimensional tower if there exist 
\begin{itemize}
\item an $(n-1)$-dimensional tower $\widehat{V} \subseteq (0,1)^{n-1}$, 
\item a set $V' = \Psi'(\widehat{V}) \subseteq \mathbb R^{n-1}$ that is the image of $\widehat{V}$ under an admissible generalized coordinate transformation $\Psi'$, and 
\item an admissible system of coordinates $\psi':(0,1)^{n-1} \rightarrow V'$ on $V'$ (not necessarily related to $\Psi'$), 
\end{itemize} 
such that $V$ takes one of the following two forms:
\begin{align} V &= \left\{(\mathbf x', x_n) : 0 < \kappa \left(x_n - f(\mathbf x') \right) < g(\mathbf x'), \; \mathbf x' \in V' \right\} \quad \text{ or } \label{cusp1} \\ V &= \left\{ (\mathbf x', x_n) : g_1(\mathbf x') < x_n - f(\mathbf x') < g_2(\mathbf x'), \; \mathbf x' \in V'  \right\}. \label{cusp2} \end{align}
Here $\kappa \in \{ \pm 1 \}$ and the set 
\begin{equation} W = \{(\mathbf w', x_n) : \mathbf x' = \psi'(\mathbf w'), \; (\mathbf x', x_n) \in V,\, \mathbf w' \in (0,1)^{n-1} \} \label{cusp-W}\end{equation}
is an adjacent (respectively distant) horn in $\mathbb R^n$ if $V$ is given by (\ref{cusp1}) (respectively (\ref{cusp2})). The functions $f$, $g$, $g_1$ and $g_2$ defined on $V' \subseteq \mathbb R^{n-1}$ will be referred to as the defining functions of the tower.  
\end{definition} 

In the remainder of the section, we will attach to every tower $V$ a special class of admissible coordinatizations that respects the structure of $V$ and which will be critical in the monomialization of the roots. Fix a vector $\mathbf r' = (r_1, \cdots, r_{n-1})$, $r_j > 0$, $1 \leq j \leq n-1$. We will construct a system of coordinates on $V$ for every such $\mathbf r'$.

Suppose first that the tower $V$ is of the form (\ref{cusp1}) so that $W$ defined in (\ref{cusp-W}) is an adjacent horn. In view of (\ref{def-solidhorn}) and Definition \ref{def-prepared-cusp}, $\psi':(0,1)^{n-1} \rightarrow V'$ is an admissible coordinate transformation such that $f \circ \psi'$ is a fractional power series and $g \circ \psi'$ is fractional normal crossings on $(0,1)^{n-1}$. Since the class of fractional power series and the class of fractional normal crossings are both preserved under power transformations, we deduce that $f \circ \psi' \circ \Phi'_{\mathbf r'}$ is a fractional power series and $g \circ \psi' \circ \Phi'_{\mathbf r'}$ is fractional normal crossings as well. Here $\Phi'_{\mathbf r'}$ is defined as in (\ref{power-tf}), but in $(n-1)$ dimensions. Let $\pmb{\kappa}' = {\pmb{\kappa}}'(\mathbf r') = (\kappa_1, \cdots, \kappa_{n-1}) \geq \mathbf 0$ be the multi-exponent such that
\[ g \circ \psi' \circ \Phi'_{\mathbf r'}(\mathbf w') = (\text{unit}) (\mathbf w')^{\pmb{\kappa}'}, \quad \pmb{\kappa}' > 0.  \] We define a coordinate transformation $\varphi$ on $V$ as $\varphi = \varphi_1 \circ \varphi_2 \circ \varphi_3 \circ \varphi_4$, where 
\begin{equation} \label{solid-cusp-coord}
\begin{aligned} \varphi_1^{-1} &: \mathbf x \mapsto \mathbf w = ( \bigl(\psi' \circ \Phi'_{\mathbf r'} \bigr)^{-1}(\mathbf x'), x_n), \\ \varphi_2^{-1} &: \mathbf w \mapsto \mathbf u = \left(w_1, w_2, \cdots, w_{n-1}, w_n - f \circ \psi' \circ \Phi'_{\mathbf r'}(\mathbf w') \right), \\ \varphi_3^{-1} &: \mathbf u \mapsto \mathbf v = \bigl( u_1, \cdots, u_{n-1}, \frac{u_n}{g \circ \psi' \circ \Phi'_{\mathbf r'}(\mathbf u')} \bigr), \\ \varphi_4^{-1}&: \mathbf v \mapsto \mathbf y =   (v_1^{\kappa_1+r_1}, \cdots, v_{n-1}^{\kappa_{n-1}+r_{n-1}}, v_n). \end{aligned} \end{equation} 
We ask the reader to verify that $\varphi$ is a vector-valued fractional power series on $(0,1)^n$. We also observe that $\varphi_1$ is admissible, $\varphi_2$ is a shift, $\varphi_4$ is a power transformation and $\varphi_3$ is a composition of power transformations, generalized blow-downs and scalings by units. Further
\begin{align*}
d\mathbf x = (\text{unit})\, \mathbf w'^{\mathbf r' - \mathbf 1'} d\mathbf w &= (\text{unit}) \, \mathbf u'^{\mathbf r' - \mathbf 1'} d\mathbf u = (\text{unit}) (\mathbf v')^{\pmb{\kappa}'+{\mathbf r'} - \mathbf 1'} d\mathbf v, \\ \text{ and } d\mathbf y &=  \Bigl[ \prod_{j=1}^{n-1}(\kappa_j + r_j)\Bigr] (\mathbf v')^{\pmb{\kappa}'+{\mathbf r'}}\, d \mathbf v, 
\end{align*}   
verifying that $\varphi$ is indeed an admissible system of coordinates on $V$.  
These coordinate changes were designed so that in the new coordinates an adjacent horn takes the form
\[
V=\Bigl\{0<\kappa y_n<1,\, \mathbf y'\in  \bigl[\psi'\circ \Phi'_{\mathbf r'} \circ \Phi'_{\mathbf 1/(\pmb{\kappa}' + \mathbf r')}\bigr]^{-1}(V')\Bigr\}.
\]

Suppose next that $V$ is of the form (\ref{cusp2}), so that $W$ defined in (\ref{cusp-W}) is a distant horn. In view of the definition (\ref{def-hollowhorn}), this implies that $f \circ \psi'$ (and hence $f \circ \psi' \circ \Phi'_{\mathbf r'}$) is a fractional power series, $g_1 \circ \psi' \circ \Phi'_{\mathbf r'}$ is a fractional monomial, and $g_2 \circ \psi' \circ \Phi'_{\mathbf r'}$ is either a nonzero constant or a fractional monomial of the same sign as $g_1 \circ \psi' \circ \Phi'_{\mathbf r'}$. Let  
\[ g_2 \circ \psi' \circ \Phi'_{\mathbf r'}(\mathbf w') = a_2 \, (\mathbf w')^{\pmb{\kappa}'}, \quad \pmb{\kappa}' \geq 0, \, a_2 \in \mathbb R \setminus \{0\}. \] By a permutation of the variables $\mathbf w'$ if necessary, we may assume that  
 \[ g_1 \circ \psi' \circ \Phi'_{\mathbf r'}(\mathbf w') = a w_{n-1}^{\mu} \, \bigl[g_2 \circ \psi' \circ \Phi'_{\mathbf r'}(\mathbf w') \bigr]. \]
The system of coordinates $\varphi$ on $V$ is now defined by $\varphi = \varphi_1 \circ \varphi_2 \circ \varphi_3 \circ \varphi_4$, where $\varphi_1$ and $\varphi_2$ are the same as in (\ref{solid-cusp-coord}), while 
\begin{equation} \label{hollow-cusp-coord}
\begin{aligned}
\varphi_3^{-1} &: \mathbf u \mapsto \mathbf v = \left(u_1, \cdots, u_{n-2}, \frac{g_1 \circ \psi' \circ \Phi'_{\mathbf r'}(\mathbf u')}{u_n}, \frac{u_n}{g_2 \circ \psi' \circ \Phi'_{\mathbf r'}(\mathbf u')} \right), \\ 
\varphi_4^{-1} &: \mathbf v \mapsto \mathbf y = \left( v_1^{\kappa_1+r_1}, \cdots, v_{n-2}^{\kappa_{n-2}+r_{n-2}}, v_{n-1}^{\frac{\kappa_{n-1}+r_{n-1}}{\mu}}, v_n^{\frac{\kappa_{n-1}+r_{n-1}}{\mu} + 1}\right).  
\end{aligned} \end{equation} 
Thus $\varphi$ is a vector-valued fractional power series, and each $\varphi_i$, $i= 1, 2,3,4$ is again a composition of the elementary transformations described in \S \ref{examples}. Combining the Jacobians of the various components, we find that 
\begin{align*} d\mathbf x &= (\text{unit}) \, \mathbf w'^{\mathbf r' - \mathbf 1'}d\mathbf w = (\text{unit}) \, \mathbf u'^{\mathbf r' - \mathbf 1'}  d\mathbf u, \\ &= \frac{a_2}{a^{(\kappa_{n-1}+ r_{n-1})/\mu} \mu} v_1^{\kappa_1 + r_1-1} \cdots v_{n-2}^{\kappa_{n-2} + r_{n-2}-1} v_{n-1}^{\frac{\kappa_{n-1}+r_{n-1}}{\mu}-1} v_n^{\frac{\kappa_{n-1}+r_{n-1}}{\mu}} d\mathbf v \\ &= \frac{a_2}{a^{(\kappa_{n-1}+r_{n-1})/\mu} \mu} \Bigl[\Bigl( \frac{\kappa_{n-1}+r_{n-1}}{\mu}\Bigr) \Bigl(\frac{\kappa_{n-1}+r_{n-1}}{\mu} + 1 \Bigr) \prod_{j=1}^{n-2} (\kappa_j+r_j) \Bigr]^{-1} \, d\mathbf y, \end{align*}
so that the Jacobian of $\varphi$ is indeed a unit, and $\varphi$ is an admissible coordinate system on $V$ as claimed. In the revised coordinates, the distant horn takes the form 
\[ V = \Bigl\{ 0 < \kappa v_{n-1} < 1, 0 < \kappa v_n < 1, \; \psi' \circ \Phi_{\mathbf r'} \bigl(v_1, \cdots, v_{n-2}, \left(a^{-1} v_{n-1}v_{n} \right)^{\frac{1}{\mu}} \bigr) \in V' \Bigr\}. \]
\begin{definition}
Let $V$ be an $n$-dimensional tower. A choice of coordinates $\varphi$ on $V$ of the form described above will be called a {\em{preferred system of coordinates}} for $V$. The pair $(V,\varphi)$ will be referred to as a {\em{prepared tower}}. 
\end{definition} 
Any preferred system of coordinates on a tower has a nice property that we record here for future use.  
\begin{lemma} \label{lemma-fnc}
Let $(V, \varphi)$ be an $n$-dimensional prepared tower. 
If $h$ is a function on $V$ that is fractional normal crossings in the variables $\mathbf u = (\mathbf u', u_n)$, where \[\mathbf u' = \psi'^{-1}(\mathbf x') \quad \text{ and } \quad u_n = x_n - f(\mathbf x') \] are as in (\ref{solid-cusp-coord}), then $h$ converts to fractional normal crossings in any of the preferred system of coordinates. 
\end{lemma}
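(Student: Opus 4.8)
The plan is to trace the function $h$ through the successive elementary coordinate changes $\varphi = \varphi_1\circ\varphi_2\circ\varphi_3\circ\varphi_4$ defined in (\ref{solid-cusp-coord}) (and (\ref{hollow-cusp-coord}) in the distant case), keeping track of which among the classes ``fractional power series'', ``unit'', ``fractional monomial'', ``fractional normal crossings'' each transformation preserves, and verifying that a fractional normal crossings function remains fractional normal crossings throughout. The key observation is that, by hypothesis, $h$ is already fractional normal crossings in the variables $\mathbf u = (\mathbf u', u_n)$ with $\mathbf u' = \psi'^{-1}(\mathbf x')$ and $u_n = x_n - f(\mathbf x')$; these are precisely the variables reached after applying $\varphi_1^{-1}$ (composed with the power change $\Phi'_{\mathbf r'}$, which only re-scales the exponents of a fractional monomial and preserves units) and $\varphi_2^{-1}$ (a shift, whose effect on the last variable is exactly $x_n\mapsto x_n - f$). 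So it suffices to show that $h$, written as $(\text{unit})\cdot\mathbf u^{\pmb\lambda}$ for some rational multi-exponent $\pmb\lambda\geq\mathbf 0$, stays fractional normal crossings after $\varphi_3^{-1}$ and $\varphi_4^{-1}$.

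First I would handle $\varphi_4^{-1}$, which is a power transformation $\mathbf v\mapsto\mathbf y$ with each $y_j$ a positive rational power of the corresponding $v_j$; since the class of fractional monomials and the class of units are both stable under power changes of variables (each variable is replaced by a positive power of itself), $(\text{unit})\cdot\mathbf v^{\pmb\lambda'}$ pulls back to $(\text{unit})\cdot\mathbf y^{\pmb\lambda''}$ with $\pmb\lambda''\geq\mathbf 0$, so fractional normal crossings is preserved. Next, $\varphi_3^{-1}$ in the adjacent case divides $u_n$ by the fractional normal crossings function $g\circ\psi'\circ\Phi'_{\mathbf r'}(\mathbf u')$ and leaves $u_1,\dots,u_{n-1}$ fixed; substituting $u_n = v_n\cdot g\circ\psi'\circ\Phi'_{\mathbf r'}(\mathbf v')$ into $(\text{unit})\cdot\mathbf u^{\pmb\lambda}$ produces $(\text{unit})\cdot(\text{fractional normal crossings})^{\lambda_n}\cdot\mathbf v^{\pmb\lambda}$, which is again fractional normal crossings because the product of two fractional normal crossings functions and the power of a fractional normal crossings function are fractional normal crossings (the exponent $\lambda_n\geq 0$ is rational, and the unit part — being bounded away from zero on $\overline V$ — raised to a rational power is again a unit on the relevant region). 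In the distant-horn case, $\varphi_3^{-1}$ replaces $u_{n-1}$ and $u_n$ by the ratios displayed in (\ref{hollow-cusp-coord}) involving the fractional monomials $g_1\circ\psi'\circ\Phi'_{\mathbf r'}$ and $g_2\circ\psi'\circ\Phi'_{\mathbf r'}$; the same bookkeeping applies, using that $g_1, g_2$ are fractional monomials so $u_{n-1}$ and $u_n$ each become $(\text{fractional monomial})\cdot v_j^{\pm 1}$ type expressions, and the new exponents on $v_{n-1}, v_n$ that arise can be checked to remain non-negative because $\pmb\lambda\geq\mathbf 0$ and the substitution is engineered to stay inside the positive orthant.

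The step I expect to require the most care is verifying the non-negativity of the resulting multi-exponents after $\varphi_3^{-1}$ in the distant case, since there $u_n$ appears in the denominator of $v_{n-1}$, so a naive substitution could introduce negative powers. The resolution is that one must substitute consistently from the inverse relations $u_n = v_n\cdot g_2\circ\psi'\circ\Phi'_{\mathbf r'}(\mathbf u')$ and $u_{n-1}$-dependence of $g_1/g_2$ on $\mathbf u'$, and then use Lemma \ref{monomial-ordering-lemma}-style monomial bookkeeping together with the defining relation (\ref{g1g2-relation}) — which forces $g_1 = a\,u_{n-1}^{\mu}g_2$ — to see that the troublesome $1/u_n$ cancels and only non-negative rational powers of the $v_j$ survive. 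Once this is checked, the composition $\varphi_3^{-1}\circ\varphi_4^{-1}$ carries a fractional normal crossings function to a fractional normal crossings function, and since $\varphi_1^{-1}$ composed with $\Phi'_{\mathbf r'}$ and $\varphi_2^{-1}$ were already accounted for in reaching the variables $\mathbf u$, we conclude that $h$ is fractional normal crossings in the preferred coordinates $\mathbf y$ as well, which is the assertion of the lemma.
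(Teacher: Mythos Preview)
Your proposal is correct and follows the same approach as the paper: express each $u_i$ in terms of $\mathbf v$ as a fractional normal crossings (adjacent case) or fractional monomial (distant case), then observe that the power transformation $\varphi_4$ preserves this structure. The paper handles the distant-horn case more directly than you do, simply writing down the explicit inverse relations $u_i = v_i$ for $i\leq n-2$, $u_{n-1} = (a^{-1}v_{n-1}v_n)^{1/\mu}$ (from $v_{n-1}v_n = g_1/g_2 = a\,u_{n-1}^{\mu}$), and $u_n = g_2(\mathbf u')\,v_n$, and observing these are fractional monomials in $\mathbf v$ with non-negative exponents --- so your concern about negative powers and the appeal to Lemma~\ref{monomial-ordering-lemma} are unnecessary.
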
 
\begin{proof}
If $V$ is such that $W$ given in (\ref{cusp-W}) is an adjacent horn, the change of variables $\varphi_3$ in (\ref{solid-cusp-coord}) implies that \[u_i = v_i \; \text{ for } 1 \leq i \leq n-1, \quad \text{ while } \quad u_n = g \circ \psi'(\mathbf v') v_n.\] 
Thus each entry of $\mathbf u$ is fractional normal crossings in $\mathbf v$. We now point out two properties of units that are easy to verify:
\begin{itemize} 
\item A fractional power series unit continues to be one after being raised to an arbitrary power.
\item A fractional power series unit in $\mathbf u$, when evaluated at $\mathbf u = \varphi_3(\mathbf v)$ transforms to a fractional power series unit in $\mathbf v$.  
\end{itemize} 
These two facts imply that $h$ is fractional normal crossings in the variables $\mathbf v$. Since the preferred coordinate system $\mathbf y$ is obtained from $\mathbf v$ by a power transformation the result follows.

If $W$ is a distant horn, then by (\ref{hollow-cusp-coord}), \[u_i = v_i \; \text{ for } 1 \leq i \leq n-2, \quad u_{n-1} = (a^{-1}v_{n-1}v_n)^{1/\mu}, \quad u_n = \left(g_2 \circ \psi'\circ\Phi_{{\mathbf r'}}(\mathbf u')\right) v_n. \] Here each entry of $\mathbf u$ is a fractional monomial in terms of $\mathbf v$, and the conclusion is even easier to deduce than the previous case.  
\end{proof} 
\subsection{Constructibility and Complexity}
We conclude this section with two definitions concerning the structural complexity of sets and functions. In the next section, they will be used to establish the degree of computational effectiveness of the various steps of the resolution algorithm.      
\begin{definition}
Let $F$ be a real-analytic function with the property that $F = (\text{unit})G$ in a neighborhood of the origin in $\mathbb R^{n+1}$, where \[G(\mathbf x, x_{n+1}) = x_{n+1}^d + \sum_{\nu=1}^{d} c_{\nu}(\mathbf x) x_{n+1}^{d-\nu} \] is a Weierstrass polynomial in $x_{n+1}$ with real-analytic coefficients $\{c_{\nu}(\mathbf x): 1 \leq \nu \leq d \}$. An $n$-variate real-analytic  function $\Lambda_{n}(\mathbf x)$ is said to be {\em{constructible (with respect to $F$)}} if there exists a $d$-variate polynomial $Q$ (possibly depending on $F$) such that $\Lambda_n(\mathbf x) = Q(c_1(\mathbf x), \cdots, c_{d}(\mathbf x))$.

The definition extends inductively to functions in $\leq n$ variables. For $k < n$, let $\Lambda_k$ and $\Lambda_{k+1}$ be real-analytic functions in $k$ and $(k+1)$ variables respectively. If $\Lambda_{k}$ is constructible with respect to $\Lambda_{k+1}$ which in turn is constructible with respect to $F$, we say that $\Lambda_k$ is constructible with respect to $F$.  
\end{definition} 
Any of the coefficients $c_{\nu}(\mathbf x)$ is of course constructible with respect to $F$. So is the discriminant $\Delta_G$ if $G$ is irreducible. On the other hand, a root of $F$ or the root of a function that is constructible with respect to $F$ is not in general constructible.  
\begin{definition}
Given an $(n+1)$-variate real-analytic function $F$, a tower of dimension $\leq (n+1)$ will be called {\em{constructible (with respect to $F$)}} if its defining functions are constructible with respect to $F$.  

A $k$-dimensional tower ($k \leq n$) that is not necessarily constructible is said to be of {\em{low resolution complexity relative to $F$}} if there exists a $k$-variate constructible function $\Lambda_k$ such that the defining functions of the tower can be completely specified in terms of the roots of $\Lambda_k$. In other words, the description of a tower of low resolution complexity only requires knowledge of a constructible function of $k < n+1$ variables. However, note that the defining functions of the tower need not be constructible with respect to $\Lambda_k$. 
\end{definition} 
For example, let $F$ be an irreducible Weierstrass polynomial. Suppose that $\rho_1, \rho_2$ are two roots of the constructible function $\Lambda_{n} = \Delta_F$ such that $\rho_1$, $\rho_2$ are fractional power series and $\rho_2 - \rho_1$ is non-negative fractional normal crossings. Consider the $n$-dimensional tower $V$ given by (\ref{cusp1}) where $f = \rho_1$, $g = \rho_2 - \rho_1$, $V'=(0,1)^{n-1}$, $\psi' =$ identity. Then $V$ is of low resolution complexity relative to $F$.

\section{A resolution of singularities algorithm} \label{sec-resolution}
The goal of this section is to prove the following theorem and its corollary. 
\subsection{The main results}
\begin{theorem} \label{mainthm-resolution}
Let $F$ be a (possibly complex-valued) real-analytic function defined on a small open neighborhood of the origin in $\mathbb R^{n+1}$, $F(\mathbf 0, 0) = 0$. By an orthogonal linear coordinate transformation if necessary, we may ensure $F(\mathbf 0, x_{n+1}) \not\equiv 0$. Then there exist 
\begin{enumerate}[1.]
\item a small constant $\epsilon > 0$;
\item a non-negative integer $\beta_{n+1}$; 
\item a real-analytic unit $u_0$ on $(-\epsilon, \epsilon)^{n+1}$; 
\item an auxiliary real-analytic function $\Lambda : (-\epsilon, \epsilon)^n \rightarrow \mathbb C$ that is constructible with respect to $F$; and
\item a finite collection $\{(V, \varphi_V) : V \in \mathcal V \}$ of $n$-dimensional prepared towers that are in general non-constructible with respect to $\Lambda$ but are of low resolution complexity relative to $F$, 
\end{enumerate}
which satisfy the following properties:
\begin{enumerate}[(a)]
\item The set $(-\epsilon, \epsilon)^n \setminus \bigcup \{V : V \in \mathcal V \}$ has dimension $< n$. \label{parta} 
\item On each $V \in \mathcal V$, the auxiliary function $\Lambda$ is fractional normal crossings in the new coordinates $\mathbf y = \varphi_V^{-1}(\mathbf x)$. In other words, $\Lambda \circ \varphi_V$ is fractional normal crossings on $(0,1)^n$. \label{partb}
\item On each $V \in \mathcal V$, the roots of $F$ as well as their differences are fractional normal crossings in $\mathbf y$. Specifically, there exist \label{partc} 
\begin{itemize} 
\item a positive integer $N_V$ and a finite totally ordered collection of non-negative multi-indices $\{ \pmb{\gamma}_i(V); 1 \leq i \leq N_V \} \subseteq \mathbb Q^n$,  \[\pmb{\gamma}_i(V) \leq \pmb{\gamma}_{i+1}(V)  \text{ for all } 1 \leq i \leq N_V, \] 
\item units $\{u_i : 1 \leq i \leq N_V \}$ on $(0,1)^n$  
\end{itemize}
such that 
\begin{itemize} 
\item for any (i,j) with $i \ne j$, $1 \leq i, j \leq N_V$, the difference $u_i(\mathbf y) \mathbf y^{\pmb{\gamma}_i} - u_j(\mathbf y) \mathbf y^{\pmb{\gamma}_j}$ is either identically zero or fractional normal crossings on $(0,1)^n$, and 
\item the function $F$ admits the factorization 
\begin{equation} \frac{F(\mathbf x, x_{n+1})}{u_0(\mathbf x, x_{n+1})} =  x_{n+1}^{\beta_{n+1}} \prod_{i = 1}^{N_V} \left( x_{n+1} - u_i(\mathbf y) \mathbf y^{\pmb{\gamma}_i} \right) \label{F-factid} \end{equation}
for $\mathbf y \in (0,1)^n$, $\mathbf x = \varphi_V(\mathbf y) \in V$, $(\mathbf x, x_{n+1}) \in V \times (-\epsilon , \epsilon)$.
\end{itemize}   
\item The Newton polyhedron NP$(F;\Phi_V)$ is defined by a monotone edge path, where $\Phi_V :  (0,1)^n \times (-\epsilon, \epsilon) \rightarrow V \times (- \epsilon, \epsilon)$ denotes the coordinate transformation \label{partd} 
\[ (\mathbf y, y_{n+1}) = \Phi_V^{-1}(\mathbf x, x_{n+1}) = (\varphi_V^{-1}(\mathbf x), x_{n+1}). \]
\end{enumerate}  
\end{theorem}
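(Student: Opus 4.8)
The plan is to establish Theorem \ref{mainthm-resolution} by induction on $n$, carrying along Proposition \ref{lemma-block2} --- the assertion that a resolution of the roots of a function automatically yields a monomialization of the function itself together with a monotone edge path description of its Newton polyhedron --- as an immediate companion at each level. The base case $n=1$ is essentially classical. Weierstrass preparation (Theorem \ref{W-prep}) writes $F=u_0G$ with $G$ a Weierstrass polynomial in $x_2$ of degree $d$ whose coefficients $c_\nu$ are univariate real-analytic functions vanishing at $0$; each nonzero such $c_\nu$, as well as the discriminant of the reduced part of $G$ and the relevant product of constant terms, is automatically of the form $x_1^k\times(\text{unit})$, so the Jung--Abhyankar theorem (Corollary \ref{thm-JA}, Lemma \ref{JA-cor}) applies with no preparatory work and produces the roots of $F$ as fractional normal crossings in $x_1$ with fractional normal crossings (or vanishing) differences; Lemma \ref{monomial-ordering-lemma} then makes the collection of root exponents totally ordered, the tower is $(0,\epsilon)$, and conclusions (a)--(d) follow, with (d) coming from Lemma \ref{mep-lemma3}.

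For the inductive step, assume Theorem \ref{mainthm-resolution}, and hence Proposition \ref{lemma-block2}, for dimension $n-1$. Given the $(n+1)$-variate $F$, apply Weierstrass preparation after the orthogonal change that guarantees $F(\mathbf 0,x_{n+1})\not\equiv 0$, obtaining $F=u_0G$ with $G=x_{n+1}^{\beta_{n+1}}\prod_{\ell=1}^L G_\ell^{m_\ell}$, the $G_\ell$ distinct irreducible Weierstrass polynomials in $x_{n+1}$ not divisible by $x_{n+1}$; set $P=\prod_\ell G_\ell$. By Lemma \ref{disc-red-lemma}, $\Delta_P\not\equiv 0$, and since each $G_\ell$ is irreducible and distinct from $x_{n+1}$ the product of constant terms $c(\mathbf x)=\prod_\ell[G_\ell(\mathbf x,0)]^{m_\ell}$ is likewise nonzero. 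Let $\Lambda$ be the $n$-variate constructible function of \S\S \ref{Construction-Lambda} --- a suitable multiple of the discriminant of $F$ --- chosen so that on any region where $\Lambda$ is fractional normal crossings, so are both $\Delta_P$ and $c$. After a further orthogonal change among $x_1,\dots,x_n$ if necessary (which does not disturb the Weierstrass structure of $G$ in $x_{n+1}$), apply Proposition \ref{lemma-block2} at level $n-1$ to $\Lambda$: this produces $\epsilon>0$ and a finite family $\mathcal V$ of $n$-dimensional prepared towers $(V,\varphi_V)$, of low resolution complexity relative to $F$ since their defining functions are determined by the roots of the constructible function $\Lambda$, whose union covers $(-\epsilon,\epsilon)^n$ up to a set of dimension $<n$, and on each of which $\Lambda\circ\varphi_V$ --- hence $\Delta_P\circ\varphi_V$ and $c\circ\varphi_V$ --- is fractional normal crossings on $(0,1)^n$. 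This already gives conclusions (a) and (b) and the structural description of $\mathcal V$.

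It remains to analyze the roots of $F$ on a fixed $V\in\mathcal V$. After a power substitution clearing the denominators of $\varphi_V$, the polynomial $G/x_{n+1}^{\beta_{n+1}}=\prod_\ell G_\ell^{m_\ell}$ becomes, in the resulting coordinates, a Weierstrass polynomial in $x_{n+1}$ with coefficients real-analytic on a neighborhood of $[0,1]^n$, while $\Delta_P$ and $c$ become honest normal crossings there; Lemma \ref{JA-cor} then supplies a further power substitution $\Phi_s$ and roots $r_1,\dots,r_{N_V}$ that are normal crossings, with pairwise differences that are normal crossings or identically zero. Substituting back by the inverse power map re-expresses the factorization in the coordinates $\mathbf x=\varphi_V(\mathbf y)$: the roots become fractional normal crossings $u_i(\mathbf y)\mathbf y^{\pmb{\gamma}_i}$, $\pmb{\gamma}_i\in\mathbb Q^n$, $\pmb{\gamma}_i\geq\mathbf 0$, and by Lemma \ref{monomial-ordering-lemma} applied to the nonvanishing differences, $\{\pmb{\gamma}_i\}$ is totally ordered. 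Restoring the factor $x_{n+1}^{\beta_{n+1}}$ gives the factorization (\ref{F-factid}) and conclusion (c). Finally, (\ref{F-factid}) puts $F/u_0$ in the form (\ref{formula-F}), so by Lemma \ref{mep-lemma3} its Newton polyhedron in the $(\mathbf y,y_{n+1})$-coordinates is defined by a monotone edge path; since multiplication by the unit $u_0$ leaves the Newton polyhedron unchanged, $\mathrm{NP}(F;\Phi_V)$ is also defined by a monotone edge path, which is conclusion (d).

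The main obstacle is the construction packaged inside Proposition \ref{lemma-block2} and the tower machinery of \S \ref{sec-resolution-preliminaries} on which it rests. Turning a resolution of the roots of an $n$-variate $\Lambda$ with respect to $x_n$ into an actual monomialization of $\Lambda$ forces one to subdivide each root-resolved region according to the position of $x_n$ relative to consecutive roots, producing the adjacent and distant horns of Definition \ref{def-horn}, and then to equip the resulting towers with admissible preferred coordinate systems --- built from the elementary transformations of \S\S \ref{examples} and glued by the composition Lemmas \ref{lemma-gencoords-comp} and \ref{lemma-coords-comp2} --- whose Jacobians are units, all while absorbing the power substitutions coming from Jung--Abhyankar into the power-transformation components of those coordinates. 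Verifying that this remains a finite process, that the towers stay of low resolution complexity (their defining data expressible through the roots of the single constructible $\Lambda$, with no fresh resolution required), that one application of Jung--Abhyankar to $G$ then suffices with no further iteration, and that the repeated orthogonal changes of coordinates are compatible with the orthant-by-orthant tower decomposition, is where essentially all the work lies; the remaining ingredients of the present theorem (Weierstrass factorization, the nonvanishing of $\Delta_P$ and $c$, the monotone edge path computation) are routine given the preliminaries of \S \ref{sec-preliminaries} and \S \ref{sec-newton}.
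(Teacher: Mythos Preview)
Your inductive architecture is the right one and matches the paper, but there is a genuine gap in the second paragraph: you invoke Proposition \ref{lemma-block2} on $\Lambda$ without verifying its hypothesis (ii). The induction hypothesis (Theorem \ref{mainthm-resolution} applied to the $n$-variate $\Lambda$) yields, on each $(n-1)$-dimensional prepared tower $(U',\Psi')$, only the conclusions of part (\ref{partc}): the roots $\lambda_i$ of $\Lambda$ and their pairwise differences are fractional normal crossings. Proposition \ref{lemma-block2} requires in addition that $\text{Re}(\lambda_i)$, the differences $\lambda_i-\text{Re}(\lambda_j)$, and $\text{Re}(\lambda_i)-\text{Re}(\lambda_j)$ are all either identically zero or fractional normal crossings. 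Nothing in the induction hypothesis furnishes this, and without it the decomposition into horns in the proof of Proposition \ref{lemma-block2} (specifically the reduction in \S\ref{subsec-proof-lemma-block2} via the shift (\ref{changeofvar-reduction2})) breaks down.

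The paper closes this gap with an extra step you have omitted. On each tower $U'$ one forms an auxiliary monic polynomial $\Xi_0(\mathbf u',x_n)$ whose roots are the $\lambda_i$, their real parts $\text{Re}(\lambda_i)$, \emph{and} the coordinate functions $u_1,\dots,u_{n-1}$ (existence via Lemma \ref{lemma-integral-closure}); one then passes to integer-power coordinates $\mathbf u'=\Phi_R'(\mathbf w')$ so that $\Xi_0(\Phi_R'(\mathbf w'),x_n)$ is real-analytic, and applies the induction hypothesis a \emph{second} time to this $n$-variate function. The resulting finer towers $(W',\eta_{W'})$ now carry coordinates in which all roots of $\Xi_0$ and their differences are fractional normal crossings, so hypothesis (ii) of Proposition \ref{lemma-block2} is finally available for $\Lambda$. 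Including the coordinate functions among the roots of $\Xi_0$ is not cosmetic: it guarantees that each $w_j$ is fractional normal crossings in the new variables, which is exactly what is needed (via Lemma \ref{lemma-coords-comp2}) to correct the Jacobian of the composite $\Psi'\circ\Phi_R'\circ\eta_{W'}$ by a final power map and obtain a genuine coordinate system $\varphi'$ rather than merely a generalized one. Only after this second pass does Proposition \ref{lemma-block2} apply to $\Lambda$ and produce the $n$-dimensional towers $(V,\varphi_V)$; Proposition \ref{lemma-block1} then finishes the job for $F$ as you describe.
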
 

We will see in \S \ref{sec-proof-mainthm1} that a factorization (\ref{F-factid}) alone, even with ordered leading exponents, does not suffice to specify the critical integrability index. One requires a finer analysis of the roots of $F$, where the real parts of the roots also have the fractional normal crossings structure and are ``well-ordered'', amongst themselves and with relation to the actual roots. That this can be done, essentially with the same methodology, is the content of the next corollary.      
\begin{corollary}\label{corollary-mainthm-resolution} 
Let $F$ be an $(n+1)$ variate (not necessarily analytic) function, $U \subseteq \mathbb R^n$ an open set with $\mathbf 0 \in \overline{U}$ and $\Psi : (0,1)^n \rightarrow U$ a coordinate transformation on $U$ such that 
\[ F(\Psi(\mathbf u), x_{n+1}) = (\text{unit}) x_{n+1}^{\beta_{n+1}} \prod_{i=1}^{N} \left(x_{n+1} - \varrho_i\left(\mathbf u\right)\right), \]
where $\{ \varrho_i : 1 \leq i \leq N \}$ is a collection of absolutely and uniformly convergent fractional power series on $(-1,1)^n$. Then there exists a finite collection of set-coordinate pairs $\{(V, \varphi) : V \in \mathcal V \}$ with the following properties: 
\begin{enumerate}[(a)]
\item The set $U \setminus \bigcup \{V : V \in \mathcal V \}$ has dimension $< n$.
\item Each set $V$ is the admissible generalized coordinate image of an $n$-dimensional prepared tower. 
\item $\varphi : (0,1)^n \rightarrow V$ is an admissible system of coordinates on $V$ such that each of the quantities \label{corollary-partc}
\[ \{ \varrho_i \}, \{\text{Re}(\varrho_i) \}, \{ \varrho_i - \varrho_j, i \ne j \}, \{ \varrho_i - \text{Re}(\varrho_j) \}, \{ \text{Re}(\varrho_i) - \text{Re}(\varrho_j) : i \ne j \} \] 
is either identically zero or fractional normal crossings in these coordinates. 
\end{enumerate}    
\end{corollary}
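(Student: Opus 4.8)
\emph{Proof plan.} The crucial point is that the real parts $\text{Re}(\varrho_i)$ are not new analytic objects: writing $\varrho_i(\mathbf u)=\sum_{\pmb{\kappa}}a_{\pmb{\kappa}}\mathbf u^{\pmb{\kappa}}$, one has $\text{Re}(\varrho_i)(\mathbf u)=\tfrac12\sum_{\pmb{\kappa}}(a_{\pmb{\kappa}}+\overline{a_{\pmb{\kappa}}})\mathbf u^{\pmb{\kappa}}$, which is again an absolutely and uniformly convergent fractional power series on $(-1,1)^n$, with the same exponent set, and real-valued on the positive orthant. Hence every entry of the list in (\ref{corollary-partc}) is a difference of such series, and is therefore either identically zero or a genuine fractional power series. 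The plan is to feed the resolution of Theorem \ref{mainthm-resolution} an auxiliary $(n+1)$-variate real-analytic function that has all the $\varrho_i$ \emph{and} all the $\text{Re}(\varrho_i)$ simultaneously among its $x_{n+1}$-roots, so that the theorem's control of the roots and of their pairwise differences delivers precisely the list (\ref{corollary-partc}).

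First I would pick an integer $q$ divisible by the integer attached to the fractional power series $\Psi$ and large enough that each $g_i:=\varrho_i\circ\Phi_q$ is real-analytic on a neighborhood of $[0,1]^n$ (possible by the convergence hypothesis); then $\Psi\circ\Phi_q$ is an admissible generalized coordinate transformation of $(0,1)^n$ onto $U$ with real-analytic entries, and $\text{Re}(\varrho_i)\circ\Phi_q=\text{Re}(g_i)$. Set $G(\mathbf v,x_{n+1}):=x_{n+1}^{\beta_{n+1}}\prod_{i=1}^{N}\bigl(x_{n+1}-g_i(\mathbf v)\bigr)\prod_{i=1}^{N}\bigl(x_{n+1}-\text{Re}(g_i)(\mathbf v)\bigr)$; its non-leading coefficients are elementary symmetric functions of $\{g_i\}\cup\{\text{Re}(g_i)\}$, hence real-analytic near $\mathbf 0$, $G$ is monic in $x_{n+1}$ (so no preliminary orthogonal change is needed), and $G(\mathbf 0,0)=0$ since $\Psi(\mathbf 0)=\mathbf 0$ and $F(\mathbf 0,0)=0$ force $x_{n+1}=0$ to be a root of $x_{n+1}^{\beta_{n+1}}\prod(x_{n+1}-\varrho_i(\mathbf 0))$. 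Applying Theorem \ref{mainthm-resolution} to $G$ --- and using once more that the $\varrho_i$, hence the coefficients of $G$, converge on the full cube, so that the resolution may be arranged to cover $(0,1)^n$ in every orthant up to a set of dimension $<n$ --- one obtains prepared towers $(V_0,\varphi_{V_0})$ on which, by part (c) of that theorem (whose factorization exhibits $\{g_i\circ\varphi_{V_0}\}\cup\{\text{Re}(g_i)\circ\varphi_{V_0}\}$ as the multiset of $x_{n+1}$-roots of $G$), each $g_i\circ\varphi_{V_0}$, each $\text{Re}(g_i)\circ\varphi_{V_0}$ and every pairwise difference of these $2N$ functions is either identically zero or fractional normal crossings on $(0,1)^n$.

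To land back in $U$, on each tower I would take $\varphi:=\Psi\circ\Phi_q\circ\varphi_{V_0}\circ\Phi_{\mathbf s}$, where $\Phi_{\mathbf s}$ is a power transformation inserted on the right so as to restore a unit Jacobian; that this composition is well-defined, admissible, and has unit Jacobian is supplied by Lemmas \ref{lemma-gencoords-comp} and \ref{lemma-coords-comp2}, exactly as in the proof of Theorem \ref{mainthm-resolution}. Then $V:=\Psi(\Phi_q(V_0))$ is the admissible generalized coordinate image (under $\Psi\circ\Phi_q$) of the prepared tower $V_0$, which gives (b). With $\mathbf x=\varphi(\mathbf y)$, the roots of $F$ in these coordinates are $g_i\circ\varphi_{V_0}\circ\Phi_{\mathbf s}$ and their real parts are $\text{Re}(g_i)\circ\varphi_{V_0}\circ\Phi_{\mathbf s}$ (the substitution $\Phi_{\mathbf s}$ is real-valued and preserves both ``fractional normal crossings'' and the ``identically zero'' alternative), so the whole list (\ref{corollary-partc}) has the asserted structure, which gives (c). Finally (a) holds because the $V_0$ cover the cube up to dimension $<n$ and $\Psi\circ\Phi_q:(0,1)^n\to U$ is a $C^1$ bijection whose Jacobian vanishes only on a lower-dimensional set.

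The conceptual step --- enlarging the collection of roots by the real parts and invoking Theorem \ref{mainthm-resolution} --- is light. I expect the main obstacle to be the coordinate bookkeeping of the previous paragraph: verifying that the maps assembled from $\Psi$, the power map $\Phi_q$ (whose Jacobian is degenerate), the preferred coordinates $\varphi_{V_0}$ and the compensating power transformation $\Phi_{\mathbf s}$ are genuine \emph{coordinate} systems (unit Jacobian, not merely generalized ones) on sets that are admissible generalized coordinate images of prepared towers, and that the covering of $U$ up to a set of dimension $<n$ is not destroyed by these compositions. This is precisely the type of accounting carried out, via the composition lemmas of \S\ref{sec-resolution-preliminaries}, inside the proof of Theorem \ref{mainthm-resolution} itself, so it should go through with the same tools.
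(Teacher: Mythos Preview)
Your approach is essentially the paper's: build an auxiliary $(n+1)$-variate polynomial whose $x_{n+1}$-roots contain both the $\varrho_i$ and the $\text{Re}(\varrho_i)$, apply Theorem \ref{mainthm-resolution} to it, then push back to $U$ through $\Psi$ and a power map, correcting the Jacobian with a terminal power transformation. There is, however, one concrete omission in your construction of $G$ that breaks the Jacobian step.

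For Lemma \ref{lemma-coords-comp2} to supply the correcting $\Phi_{\mathbf s}$, the Jacobian of $\Psi\circ\Phi_q\circ\varphi_{V_0}$ must already be of the form $(\text{unit})\prod_j t_j^{a_j}$ in the tower variable $\mathbf t$. Tracing Jacobians gives $d\mathbf x=(\text{unit})\bigl(\prod_j w_j^{\,q-1}\bigr)\,d\mathbf t$ with $\mathbf w=\varphi_{V_0}(\mathbf t)$; but Theorem \ref{mainthm-resolution} applied to your $G$ monomializes only the roots of $G$ and their differences, and the bare coordinate functions $w_j$ are not among those roots. There is therefore no reason for $w_j\circ\varphi_{V_0}$ to be fractional normal crossings, and the hypothesis of Lemma \ref{lemma-coords-comp2} is unverified. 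The paper (and the proof of Theorem \ref{mainthm-resolution} you invoke) handles this by also throwing the coordinate functions $u_1,\ldots,u_n$ into the root set of the auxiliary polynomial $F_0$; then each $w_j^q$ appears as a root of $F_0(\Phi_q(\cdot),x_{n+1})$, is forced to be fractional normal crossings after the resolution, and the Jacobian acquires the monomial-times-unit form. Add the factors $\prod_{j=1}^n(x_{n+1}-v_j)$ to your $G$ and your argument then matches the paper's line for line.
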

\subsection{Construction of $\Lambda$} \label{Construction-Lambda}
Since every real-analytic function in a neighborhood of the origin in $\mathbb R^{n+1}$ is the restriction of a holomorphic function near the origin in $\mathbb C^{n+1}$, we invoke the Weierstrass preparation theorem (Theorem \ref{W-prep} after a nonsingular linear change of coordinates and a shrinking of domain if necessary) to find a nonvanishing holomorphic function $u_0$ such that $F$ is of the form 
\begin{equation} \frac{F(\mathbf x, x_{n+1})}{u_0(\mathbf x, x_{n+1})} = G(\mathbf x, x_{n+1}) = x_{n+1}^d + \sum_{\nu=1}^{d} c_{\nu}(\mathbf x) x_{n+1}^{d- \nu},   \label{W-poly}\end{equation} 
where the coefficients $c_\nu$ are holomorphic in $\mathbf x$, with $c_\nu(\mathbf 0) = 0$. Without loss of generality (after a scaling in each coordinate if needed), we may assume that (\ref{W-poly}) holds for $(\mathbf x, x_{n+1}) \in (-2,2)^{n+1}$ and that the coefficients $c_{\nu}$ are bounded on $(-2,2)^n$. By the unique factorization (\ref{polynomial-factors}) mentioned in \S \ref{subsec-factor} and Lemma \ref{lemma-Wpoly-factor}, any such $G$ can be represented in the form  
\begin{equation} \begin{aligned} G(\mathbf x, x_{n+1}) &= x_{n+1}^{\beta_{n+1}} \prod_{\ell=1}^{L} \left[G_\ell(\mathbf x, x_{n+1}) \right]^{m_\ell}, \text{ with } \\ c(\mathbf x) &:= \prod_{\ell=1}^{L} \left[G_\ell(\mathbf x, 0) \right]^{m_\ell} \not\equiv 0, \end{aligned} \label{cGl}\end{equation} 
where $\beta_{n+1}$ is a non-negative integer, the functions $G_\ell$ are distinct irreducible Weierstrass polynomials in $x_{n+1}$, and the exponents $m_\ell$ are positive integers. By Lemma \ref{disc-red-lemma}, the discriminant of $P = G_1 G_2 \cdots G_L$ (considering $P$ as a polynomial in $x_{n+1}$ with coefficients depending on $\mathbf x$) is not an identically vanishing function of $\mathbf x$. The auxiliary function $\Lambda$ used in Theorem \ref{mainthm-resolution} is defined as follows, \begin{equation} \Lambda(\mathbf x) := c(\mathbf x) \Delta_P(\mathbf x) \not\equiv 0.  \label{def-Lambda}  \end{equation}    
It is clear that $\Lambda$ is real-analytic and constructible with respect to $F$.  

\subsection{Ingredients of the proof} 
As indicated in the introduction, the proof of Theorem \ref{mainthm-resolution} uses induction on dimension and is based iteratively on two main principles; namely, 
\begin{itemize}
\item a factorization of the form (\ref{F-factid}) for $\Lambda$ essentially implies that $\Lambda$ can be monomialized, after decomposition of $(-\epsilon, \epsilon)^{n}$ into a finite number of subsets and assigning suitable coordinates to each subset,   
\item  monomialization of $\Lambda$ implies (\ref{F-factid}) for $F$. 
\end{itemize}
The precise statements of these steps constitute the main propositions in this subsection. See Proposition \ref{lemma-block2} for the first step and Proposition \ref{lemma-block1} for the second.   
\begin{proposition}\label{lemma-block1}
Let $F$, $G$ and $\Lambda$ be as in (\ref{W-poly}), (\ref{cGl}) and (\ref{def-Lambda}). Let $V \subseteq (0,1)^n$ be an open set whose closure contains the origin.
 Suppose that $\sigma :(0,1)^n \rightarrow V$ is a generalized coordinate transformation on $V$ such that 
the function $\mathbf y \mapsto \Lambda \circ \sigma(\mathbf y)$ is fractional normal crossings on $(0,1)^n$.  

Then there exist exponents $\{\pmb{\gamma}_i : 1 \leq i \leq d \}$ with $\mathbf 0 < \pmb{\gamma}_i \leq \pmb{\gamma}_{i+1}$ and units $\{ u_i : 1 \leq i \leq N \}$ on $(0,1)^n$ such that 
\begin{equation} \begin{aligned} G(\sigma(\mathbf y), x_{n+1}) &= \frac{F \left( \sigma(\mathbf y), x_{n+1}\right)}{u_0 \left( \sigma(\mathbf y), x_{n+1}\right)} \\ &=  x_{n+1}^{\beta_{n+1}} \prod_{i=1}^{N} \left(x_{n+1} - r_i(\mathbf y) \right), \quad r_i(\mathbf y) = u_i(\mathbf y) \mathbf y^{\pmb{\gamma}_{i}}.  \end{aligned}  \label{F-sigma-factor}   \end{equation}  
Further all the differences $\{r_i - r_{i'} : i \ne i' \}$ are either identically zero or fractional normal crossings on $(0,1)^n$, and the Newton polyhedron of $F(\sigma(\mathbf y), x_{n+1})$ in the coordinates $(\mathbf y, x_{n+1})$ is defined by a monotone edge path. 
\end{proposition}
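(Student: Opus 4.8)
The plan is to deduce everything from the Jung--Abhyankar package already assembled, in particular Lemma \ref{JA-cor}, after rewriting the single hypothesis on $\Lambda\circ\sigma$ as the two \emph{separate} normal crossings conditions that lemma demands. First I would peel off the power map hidden inside $\sigma$: since every entry of $\sigma$ is a fractional power series on $(0,1)^n$, there is an integer $N_0$ and a vector-valued real-analytic map $\tau$ on an open set containing $[0,1]^n$, with $\tau(\mathbf 0)=\mathbf 0$, such that $\sigma=\tau\circ\Phi_{1/N_0}$ on $(0,1)^n$. Because $\sigma$ is a bijection onto the open set $V$, $\Lambda\circ\sigma\not\equiv 0$, hence $\Lambda\circ\tau\not\equiv 0$. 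Writing the hypothesis as $\Lambda\circ\sigma(\mathbf y)=b(\mathbf y)\,\mathbf y^{\pmb{\gamma}}$ with $b$ a unit and $\pmb{\gamma}\in\mathbb Q^n_{\ge 0}$, and substituting $\mathbf y=\Phi_{N_0}(\mathbf w)$, gives $\Lambda\circ\tau(\mathbf w)=b(\Phi_{N_0}(\mathbf w))\,\mathbf w^{N_0\pmb{\gamma}}$; since $\Lambda\circ\tau$ is real-analytic, uniqueness of the multivariate Puiseux representation forces $N_0\pmb{\gamma}\in\mathbb Z^n_{\ge 0}$ and identifies $\Lambda\circ\tau$ as normal crossings near $\mathbf 0$. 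Using $\Lambda=c\,\Delta_P$ and unique factorization in the ring $\mathcal R_n$ of convergent power series (in which the coordinate functions are the only primes dividing $\mathbf w^{N_0\pmb{\gamma}}$), I then conclude that $c\circ\tau$ and $\Delta_P\circ\tau$, both $\not\equiv 0$, are \emph{each} normal crossings near the origin.

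Next I would apply Lemma \ref{JA-cor} to the polynomial obtained from $G$ by composing only the $\mathbf x$-slot with $\tau$. Set $\tilde G_\ell:=G_\ell\circ(\tau\times\mathrm{id})$, $\tilde P:=\prod_\ell\tilde G_\ell=P\circ(\tau\times\mathrm{id})$, and $\tilde G^\sharp:=\prod_\ell\tilde G_\ell^{m_\ell}=\bigl(G/x_{n+1}^{\beta_{n+1}}\bigr)\circ(\tau\times\mathrm{id})$. Shrinking the domain, these are real-analytic with bounded coefficients on a neighborhood $U$ of $[0,1]^n$; each $\tilde G_\ell$ is again a Weierstrass polynomial (its non-leading coefficients are $c^{(\ell)}_\nu\circ\tau$, which vanish at $\mathbf 0$), and the $\tilde G_\ell$ remain pairwise distinct, since $\tilde G_\ell=\tilde G_{\ell'}$ would force $G_\ell$ and $G_{\ell'}$ to agree on the open set $V=\tau((0,1)^n)$ and hence identically. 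Because the discriminant and the quantity $c$ are polynomial functions of the coefficients, $\Delta_{\tilde P}=\Delta_P\circ\tau$ and $\prod_\ell[\tilde G_\ell(\cdot,0)]^{m_\ell}=c\circ\tau$, both $\not\equiv 0$ and normal crossings by the previous step. Lemma \ref{JA-cor} then yields an integer $s$ and functions $\hat r_1,\dots,\hat r_N$ ($N=d-\beta_{n+1}$), real-analytic near $[0,1]^n$, with $\tilde G^\sharp(\Phi_s(\mathbf t),x_{n+1})=\prod_{i=1}^{N}\bigl(x_{n+1}-\hat r_i(\mathbf t)\bigr)$ on $(0,1)^n$, where each $\hat r_i$ and each nonzero difference $\hat r_i-\hat r_{i'}$ is normal crossings on $(0,1)^n$.

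Then I would push everything back to $\sigma$. Since $\tau\circ\Phi_s=\sigma\circ\Phi_{N_0 s}$, substituting $\mathbf t=\Phi_{1/(N_0 s)}(\mathbf y)$ and restoring the factor $x_{n+1}^{\beta_{n+1}}$ gives $G(\sigma(\mathbf y),x_{n+1})=F(\sigma(\mathbf y),x_{n+1})/u_0(\sigma(\mathbf y),x_{n+1})=x_{n+1}^{\beta_{n+1}}\prod_{i=1}^{N}\bigl(x_{n+1}-r_i(\mathbf y)\bigr)$ with $r_i:=\hat r_i\circ\Phi_{1/(N_0 s)}$, so $r_i(\mathbf y)=u_i(\mathbf y)\,\mathbf y^{\pmb{\gamma}_i}$ for a unit $u_i$ and $\pmb{\gamma}_i=\pmb{\alpha}_i/(N_0 s)\in\mathbb Q^n_{\ge 0}$ (where $\hat r_i=(\mathrm{unit})\,\mathbf t^{\pmb{\alpha}_i}$), and each nonzero $r_i-r_{i'}=(\hat r_i-\hat r_{i'})\circ\Phi_{1/(N_0 s)}$ is fractional normal crossings. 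Since $\tilde G^\sharp(\mathbf 0,x_{n+1})=x_{n+1}^{N}$, every $\hat r_i$ vanishes at the origin, so $\pmb{\alpha}_i\ne\mathbf 0$ and hence $\mathbf 0<\pmb{\gamma}_i$. Feeding each relation $u_i\mathbf y^{\pmb{\gamma}_i}-u_{i'}\mathbf y^{\pmb{\gamma}_{i'}}=(\mathrm{unit})\,\mathbf y^{\pmb{\delta}}$ into Lemma \ref{monomial-ordering-lemma} (after a power substitution to clear denominators) shows that any two of the exponents are comparable; the finite set $\{\pmb{\gamma}_i\}$ is therefore totally ordered, and after relabeling the factors we may assume $\pmb{\gamma}_i\le\pmb{\gamma}_{i+1}$. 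For the Newton polyhedron, Lemma \ref{mep-lemma3} applied to $G(\sigma(\mathbf y),x_{n+1})$ (with trivial $\mathbf y$-monomial factor) shows $\mathrm{NP}(G(\sigma(\mathbf y),x_{n+1}))$ is defined by a monotone edge path; finally $F(\sigma(\mathbf y),x_{n+1})=u_0(\sigma(\mathbf y),x_{n+1})\,G(\sigma(\mathbf y),x_{n+1})$ with $u_0$ a unit in all $n+1$ variables, and multiplication by such a unit leaves the Newton polyhedron unchanged (every monomial of the product lies above a monomial of $G(\sigma(\mathbf y),x_{n+1})$, so by Lemma \ref{lemma-prelim1} $\mathrm{NP}$ cannot grow, while each vertex of $\mathrm{NP}(G(\sigma(\mathbf y),x_{n+1}))$ retains a nonzero coefficient, so it cannot shrink), which gives the last assertion.

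The hard part will be the first step: turning the one hypothesis ``$\Lambda\circ\sigma$ is fractional normal crossings'' into the \emph{separate} normal crossings of $c\circ\tau$ and $\Delta_P\circ\tau$ at the origin, since that pair is exactly the input of Lemma \ref{JA-cor}. This forces one to extract the analytic part $\tau$ of the merely generalized coordinate transformation $\sigma$ (whose Jacobian need not be a unit), to check that composition with $\tau$ preserves $\not\equiv 0$, the Weierstrass structure, and the distinctness of the $G_\ell$, and to invoke unique factorization in $\mathcal R_n$. Everything after that is bookkeeping around the power substitution $\Phi_{1/(N_0 s)}$ and the already-established Lemmas \ref{JA-cor}, \ref{monomial-ordering-lemma}, and \ref{mep-lemma3}.
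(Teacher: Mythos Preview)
Your proof is correct and follows essentially the same route as the paper: peel off a power map so that $\sigma\circ\Phi_R$ (your $\tau$) is real-analytic, observe that $\Lambda\circ\tau$ being normal crossings forces each factor $c\circ\tau$ and $\Delta_P\circ\tau$ to be normal crossings, apply Lemma~\ref{JA-cor}, then undo the power map and invoke Lemmas~\ref{monomial-ordering-lemma} and~\ref{mep-lemma3}. You are in fact slightly more explicit than the paper in two places (the unique-factorization argument splitting $\Lambda\circ\tau$ into its two factors, and the verification that multiplication by the unit $u_0$ leaves the Newton polyhedron unchanged), but these are elaborations of steps the paper takes for granted rather than a different approach.
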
 
\begin{proof}
The fact that the roots of $F$ and their differences are fractional normal crossings in $\mathbf y$ is essentially a consequence of the Jung-Abhyankar theorem applied to polynomials with fractional power series coefficients, but we furnish the details. 
Let $\Phi_R$ be the power map defined in (\ref{power-map}), with $R$ chosen sufficiently large so that $\sigma \circ \Phi_R$ is a vector-valued real-analytic function whose power series expansion converges absolutely and uniformly on an open parallelepiped containing $[-1,1]^n$ centered at the origin. Let us keep in mind that $\Phi_R([0,1]^n) = [0,1]^n$ and $\sigma \circ \Phi_R((0,1)^n) = \sigma((0,1)^n) = V$. Set  
\begin{align*} \tilde{F}(\mathbf y, x_{n+1}) &:= F \left(\sigma \circ \Phi_R (\mathbf y), x_{n+1}\right), \\ \tilde{G}(\mathbf y, x_{n+1}) &:= G \left(\sigma \circ \Phi_R (\mathbf y), x_{n+1}\right), \\ \tilde{G}_{\ell}(\mathbf y, x_{n+1}) &:= G_{\ell} \left(\sigma \circ \Phi_R(\mathbf y), x_{n+1} \right), \end{align*}
so that $\tilde{F}$ is real-analytic and $\tilde{G}$, $\tilde{G}_{\ell}$ are Weierstrass polynomials defined near the origin in $\mathbb R^{n+1}$.  
We will show that $\tilde{G}$ satisfies the assumptions and hence the conclusions of Lemma \ref{JA-cor}.

Since $F = (\text{unit})G$ on $(-2,2)^{n+1}$ by (\ref{W-poly}), we can find an open neighborhood $U$ of the origin in $\mathbb R^n$ containing $[0,1]^n$ such that $\sigma \circ \Phi_R(U) \subseteq (-2,2)^n$ and $\tilde{F} = (\text{unit}) \tilde{G}$ on $U \times (-2, 2)$. Further the factorization of $G$ in (\ref{cGl}) implies that $\tilde{G}$ factorizes as 
\[ \tilde{G}(\mathbf y, x_{n+1}) = x_{n+1}^{\beta_{n+1}} \prod_{\ell=1}^{L} \left[\tilde{G}_{\ell}(\mathbf y, x_{n+1}) \right]^{m_{\ell}} \quad \text{ on } \quad U \times \mathbb R. \]   
In addition, we define 
\begin{align*}  \tilde{c}(\mathbf y) &:= \prod_{\ell=1}^{L} \left[ \tilde{G}_{\ell}(\mathbf y, 0)\right]^{m_{\ell}}, \\ \text{ so that } \quad c(\mathbf y) &=  \prod_{\ell=1}^{L} \bigl[ {G}_{\ell}(\sigma \circ \Phi_R(\mathbf y), 0)\bigr]^{m_{\ell}} = c(\sigma \circ \Phi_R(\mathbf y)). \end{align*} Since $c \not\equiv 0$, we observe that $\tilde{c}$ is a nontrivial real-analytic function on $U$. 

As in Lemma \ref{JA-cor}, we set $\tilde{P} := \tilde{G}_1 \cdots \tilde{G}_{L}$ so that $\tilde{P}$ is yet another polynomial with real-analytic coefficients on $U$. Further $\tilde{P}(\mathbf y, x_{n+1}) = P\left(\sigma \circ \Phi_R\left(\mathbf y\right), x_{n+1}\right) $ has the same degree as $P$ as a polynomial in $x_{n+1}$, with coefficients equal to the corresponding coefficients of $P$ evaluated at $\sigma \circ \Phi_R(\mathbf y)$. Since the discriminant is a fixed polynomial of the coefficients, it follows that  
\[ \Delta_{\tilde{P}}(\mathbf y) = \Delta_{P} \left( \sigma \circ \Phi_R(\mathbf y) \right). \]
We recall that $\Delta_{P} \not\equiv 0$, hence conclude $\Delta_{\tilde{P}}$ is $\not\equiv 0$ and is real-analytic on $U$.

Finally we set \begin{equation*} \tilde{\Lambda}(\mathbf y) := \tilde{c}(\mathbf y) \Delta_{\tilde{P}}(\mathbf y) = \Lambda (\sigma \circ \Phi_R(\mathbf y)). \end{equation*} By our hypothesis on $\Lambda$ and for $R$ chosen sufficiently large, the function $\tilde{\Lambda}$ is normal crossings in $\mathbf y$, therefore so are $\tilde{c}$ and $\Delta_{\tilde{P}}$. 
Thus we have verified all the hypotheses of Lemma \ref{JA-cor} for $\tilde{G}$. By the lemma therefore, all the nontrivial roots of $\tilde{G}$ as well as their differences are fractional normal crossings in $\mathbf y$ on $(0,1)^n$, and hence (\ref{F-sigma-factor}) holds with $G$ replaced by $\widetilde{G}$. Replacing $\mathbf y$ by $\Phi_{1/R}(\mathbf y)$ and observing that the class of fractional normal crossings on $(0,1)^n$ is preserved under $\Phi_{1/R}$, we arrive at the relation (\ref{F-sigma-factor}), establishing en route that $r_i - r_j$ is either identically zero or fractional normal crossings for all $i \ne j$.

To show that the exponents $\{ \pmb{\gamma}_i \}$ form a totally ordered set, we invoke Lemma \ref{monomial-ordering-lemma}. Since we have just now shown that for any $i \ne j$ with $r_i \not\equiv r_j$ \[ r_i(\mathbf y) - r_j(\mathbf y) = u_i(\mathbf y) \mathbf y^{\pmb{\gamma}_i} - u_j(\mathbf y) \mathbf y^{\pmb{\gamma}_j} \]
is fractional normal crossings, the lemma implies that either $\pmb{\gamma}_i \leq \pmb{\gamma}_j$ or $\pmb{\gamma}_j \leq \pmb{\gamma}_i$. The statement concerning the Newton polyhedron follows from Lemma \ref{mep-lemma3}.      
\end{proof}

Note that the conclusions of Theorem \ref{mainthm-resolution} and Proposition \ref{lemma-block1} yield the same statements in terms of the factorization of $F$. If we wish to use the proposition to prove the theorem, we need to decompose an open neighborhood of the origin in $\mathbb R^n$ into a finite number of $n$-dimensional prepared towers and reduce $\Lambda$ to fractional normal crossings on each tower in the preferred coordinates. An inductive argument given in \S\S \ref{subsec-mainthm-resolution-proof} shows that $\Lambda$ admits a factorization of the form (\ref{F-factid}), in dimension $n$ of course. The following proposition partially bridges the gap between these two statements. Namely, we show that if $F$ obeys a more refined version of the factorization (\ref{F-factid}), then $F$ can be expressed as fractional normal crossings in carefully chosen coordinates and after suitable decompositions. In the sequel, we will apply this result with $F$ replaced by $\Lambda$.      
\begin{proposition}\label{lemma-block2}
Let $V$ be an admissible generalized coordinate image of an $n$-dimensional tower. Suppose that $F$ is a function (not necessarily real-analytic) admitting the factorization (\ref{F-factid}) on the set $V$ equipped with a system of coordinates $\varphi$, where 
\begin{enumerate}[(i)]
\item the roots of $F$ satisfy all the conditions specified in part (\ref{partc}) of Theorem \ref{mainthm-resolution}; namely each of the roots and their differences is either identically zero or fractional normal crossings in the variables $\mathbf y = \varphi^{-1}(\mathbf x) \in (0,1)^n$, $\mathbf x \in V$. \label{part1}
\item Additionally, assume that each of the functions in $\{ \text{Re}(u_i(\mathbf y)) \mathbf y^{\pmb{\gamma}_i}:1 \leq i \leq N_V \}$ and \[\bigl\{ u_i(\mathbf y) \mathbf y^{\pmb{\gamma}_i} - \text{Re}(u_j(\mathbf y)) \mathbf y^{\pmb{\gamma}_j}, \text{Re}(u_i(\mathbf y)) \mathbf y^{\pmb{\gamma}_i} - \text{Re}(u_j(\mathbf y)) \mathbf y^{\pmb{\gamma}_j}  : 1 \leq i, j \leq N_V \bigr\} \] is either identically zero or fractional normal crossings on $(0,1)^n$. \label{part2}  
\end{enumerate}
Then there is a decomposition of $V \times (-1, 1)$ (excluding possibly a subset of dimension $< (n+1)$) into a finite number of $(n+1)$-dimensional prepared towers on each of which $F$ is fractional normal crossings in any preferred system of coordinates.  
\end{proposition}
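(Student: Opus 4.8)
\emph{Plan of proof.} Since $\{x_{n+1}=0\}$ is lower dimensional and $x_{n+1}\mapsto-x_{n+1}$ is a symmetry, it will suffice to decompose $V\times(0,\epsilon)$. I would pass to the variables $\mathbf y=\varphi^{-1}(\mathbf x)\in(0,1)^n$, in which $F/u_0=x_{n+1}^{\beta_{n+1}}\prod_{i=1}^{N_V}(x_{n+1}-r_i(\mathbf y))$ with $r_i=u_i\mathbf y^{\pmb{\gamma}_i}$, $\pmb{\gamma}_1\le\cdots\le\pmb{\gamma}_{N_V}$, and set $\rho_i=\operatorname{Re}(r_i)$, $\iota_i=\operatorname{Im}(r_i)$. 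Because $\rho_i,\iota_i$ are real valued, hypotheses (\ref{part1})--(\ref{part2}) force each of $\rho_i,\iota_i,\rho_i-\rho_j,\iota_i-\iota_j$ to be either identically zero or a \emph{real} fractional normal crossings, hence of constant sign, on $(0,1)^n$; also, since $u_i$ is a unit, for each $i$ at least one of $\operatorname{Re}(u_i),\operatorname{Im}(u_i)$ is itself a unit, so the monomial parts of $|\rho_i|$ and $|\iota_i|$ are comparable. The first step is a preliminary reduction: subdivide $(0,1)^n$ along the zero sets of the finitely many real-analytic functions that occur (these sets are lower dimensional and are discarded) and, by finitely many blow-downs and power changes as in \S\ref{examples}, arrange — using Lemma \ref{monomial-ordering-lemma} repeatedly — that the monomial parts of all the ``critical scales'' $\{|\rho_i|\}\cup\{|\iota_i|\}\cup\{|\rho_i-\rho_j|\}\cup\{|r_i|\}$ are pairwise comparable, while the fractional normal crossings structure of the roots is preserved (cf.\ Lemmas \ref{lemma-gencoords-comp}, \ref{lemma-coords-comp2} and \ref{lemma-fnc}).

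The main argument is an induction on $N_V$; when $N_V=0$ we have $F/u_0=x_{n+1}^{\beta_{n+1}}$ and there is nothing to prove. For $N_V\ge1$, I would slice $\{0<x_{n+1}<\epsilon\}$, for each $\mathbf y$, by the ordered \emph{monomial parts} $\mathbf y^{\pmb{\kappa}}$ of the critical scales: with constants $0<c\ll1\ll C$, the \emph{low} slab $\{0<x_{n+1}<c\,\mathbf y^{\pmb{\gamma}_{N_V}}\}$ (note $\mathbf y^{\pmb{\gamma}_{N_V}}\asymp\min_i|r_i|$), the \emph{high} slab $\{C\,\mathbf y^{\pmb{\gamma}_1}<x_{n+1}<\epsilon\}$, the \emph{gaps} $\{C\,\mathbf y^{\pmb{\kappa}}<x_{n+1}<c\,\mathbf y^{\pmb{\kappa}'}\}$ lying strictly between two consecutive critical monomials, and the \emph{collars} $\{c\,\mathbf y^{\pmb{\kappa}}<x_{n+1}<C\,\mathbf y^{\pmb{\kappa}}\}$ about each critical monomial. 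In the coordinates $\varphi$ these slabs and gaps are $(n+1)$-dimensional horns over $V$ adjacent to, respectively distant from, $f=0$ in the sense of (\ref{def-solidhorn})--(\ref{def-hollowhorn}) — using Lemma \ref{decomp-distant-horn} to reduce each gap to horns of the required form — and hence, by the recipe of \S\ref{subsec-adm-pref}, prepared towers with explicit preferred coordinates $(\mathbf y,x_{n+1})\mapsto(\mathbf z,z_{n+1})$ in which $x_{n+1}$ equals the horn width times $z_{n+1}$. On any such slab or gap and for each $i$, exactly one of $x_{n+1}$, $|\rho_i|$, $|\iota_i|$ dominates the other terms in $x_{n+1}-r_i=(x_{n+1}-\rho_i)-i\iota_i$; factoring out that dominant term writes $x_{n+1}-r_i$ as a fractional monomial times $\bigl(1\mp(\text{modulus}<1)\bigr)$, or, when $-i\iota_i$ dominates, times $\bigl(1+i\cdot(\text{real, bounded})\bigr)$, and in both cases one obtains a fractional power series bounded below in modulus — using $|1+it|\ge1$ for real $t$, and that quotients and non-negative powers of fractional normal crossings are again fractional normal crossings. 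Thus $F/u_0$ is fractional normal crossings in $(\mathbf z,z_{n+1})$ on every slab and gap.

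The remaining pieces are the collars. A collar about a scale whose monomial part is that of some $|\iota_i|$ is handled as above, since there $|x_{n+1}-r_i|^2=(x_{n+1}-\rho_i)^2+\iota_i^2\asymp\iota_i^2$. A collar about the monomial part of $|\rho_i|$ for a root with $\rho_i\not\equiv0$ is where the shift is used: $\rho_i$ is a fractional power series, so $\widetilde x_{n+1}=x_{n+1}-\rho_i(\mathbf y)$ is an admissible shift, and after it the roots become $r_j-\rho_i$, which by hypothesis (\ref{part2}) again satisfy (\ref{part1})--(\ref{part2}). On the sub-collar where $|\widetilde x_{n+1}|$ is smaller than a fixed fractional monomial below $\min_{j:\rho_j\ne\rho_i}|\rho_j-\rho_i|$, the factors with $\rho_j\ne\rho_i$ are fractional normal crossings (dominant term $-(\rho_j-\rho_i)$ or $-i\iota_j$, after at most one further gap/collar subdivision), so $F$ equals their product times $\widetilde x_{n+1}^{\,\beta_{n+1}}\prod_{j:\rho_j=\rho_i}(\widetilde x_{n+1}-(r_j-\rho_i))$, to which the induction hypothesis applies: its roots $r_j-\rho_i=i\iota_j$ have vanishing real part and are at most $N_V$ in number, strictly fewer than $N_V$ unless every $\rho_j=\rho_i$ — in that degenerate case all the shifted roots are purely imaginary and the dominant-term argument finishes with no further recursion. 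The complementary part of the collar decomposes into gaps and collars relative to the scales $|r_j-\rho_i|$ and is again handled non-recursively. Assembling the finitely many pieces, and recording in each case the horn data together with the shift $\rho_i$ inside the defining function of the tower as in (\ref{cusp1})--(\ref{cusp2}), yields the required finite family of $(n+1)$-dimensional prepared towers, with preferred coordinate systems provided by \S\ref{subsec-adm-pref}.

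The hard part will be organizational rather than a single estimate. One must (a) set up the preliminary subdivision so that the monomial parts of all critical scales become pairwise comparable at once; (b) verify that after a shift $x_{n+1}\mapsto x_{n+1}-\rho_i(\mathbf y)$ each remaining piece is still a genuine prepared tower whose projection is an admissible generalized coordinate image of a subset of $V$, and that the recursion is well founded because every recursive call strictly lowers $N_V$ once the far-away roots have been peeled off; and (c) check, on each piece, that the dominant-term factorization is an \emph{honest} fractional power series unit in the preferred coordinates — the one genuinely delicate point being the case in which a purely imaginary part $-i\iota_i$ dominates, where $|1+it|\ge1$ is invoked.
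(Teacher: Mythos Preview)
Your strategy---slice by monomial scales, handle far regions by a dominant-term factorization, shift by $\rho_i$ on the near regions, reduce to purely imaginary roots---is essentially the paper's, and it works. But you over-engineer it in two places. First, the preliminary blow-down reduction is unnecessary: hypotheses (i)--(ii) already force, via Lemma~\ref{monomial-ordering-lemma}, the leading exponents of all of $\{r_i\},\{\rho_i\},\{r_i-\rho_j\},\{\rho_i-\rho_j\}$ to be totally ordered, so the scales you need are comparable from the start. Second, your induction on $N_V$ collapses to a single step: after shifting by any $\rho_i$, the roots with $\rho_j=\rho_i$ become purely imaginary, which (as you note) is handled directly without recursion. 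The paper organizes this more cleanly by splitting the roots at the outset into $\mathbb I^*=\{i:\operatorname{Im}(u_i)\text{ is a unit}\}$ and its complement (for $i\notin\mathbb I^*$ one checks $\operatorname{Re}(u_i)$ must be a unit, so $\rho_i$ has leading exponent $\pmb\gamma_i$). It then decomposes $V\times(-1,1)$ only along the ordered distinct values of $\operatorname{Re}(r_i)$, $i\notin\mathbb I^*$, into the regions (\ref{region-main1})--(\ref{region8}); a single shift by one of these real parts reduces each region to type (\ref{region-main1}) or (\ref{region-main2}), on which the factors with $i\in\mathcal I$ satisfy $|x_{n+1}-r_i|\asymp\operatorname{Re}(r_i)$ directly, while the remaining product (over $\mathcal J\cup\mathbb I^*$) obeys Corollary~\ref{corollary-imroots} and hence Lemma~\ref{imaginary-roots}. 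No recursion is needed.

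A few minor slips: the claim that $\iota_i-\iota_j$ is FNC is not in the hypotheses (and you never use it); after shifting, $x_{n+1}^{\beta_{n+1}}=(\widetilde x_{n+1}+\rho_i)^{\beta_{n+1}}\asymp\rho_i^{\beta_{n+1}}$ on the sub-collar and belongs with the peeled-off FNC factors, not written as $\widetilde x_{n+1}^{\,\beta_{n+1}}$; and on a collar about $|\iota_i|$ your estimate $|x_{n+1}-r_i|\asymp|\iota_i|$ fails when $|\rho_i|\gg|\iota_i|$---the factor is then $\asymp|\rho_i|$, still FNC, so the conclusion survives.
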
 
We will first present the proof of Proposition \ref{lemma-block2} in two special cases which already capture the main ideas (see Lemmas \ref{imaginary-roots} and \ref{real-roots} below). The proof in the general case involves additional technicalities and is given in \S\S \ref{subsec-proof-lemma-block2}. In order to state the lemmas we need to set up some notation highlighting the finer structures of the roots in (\ref{F-factid}). Let us fix the set $V \subseteq \mathbb R^n_{>0}$ and the coordinate system $\varphi$ on $V$ for which (\ref{F-factid}) holds. By a slight abuse of notation, we rewrite (\ref{F-factid}) as  
\begin{equation} F(\mathbf x, x_{n+1}) = u_0(\mathbf x, x_{n+1}) x_{n+1}^{\beta_{n+1}} \prod_{i=1}^{M} \left( x_{n+1} - r_i(\mathbf y) \right)^{n_i}, \; r_i(\mathbf y) = u_i(\mathbf y) \mathbf y^{\pmb{\gamma}_i} \label{F-factid2} \end{equation} 
where each $n_i \geq 1$ is an integer with $n_1 + n_2 + \cdots + n_M = N_V$, and the roots $r_i$ are distinct. Let us denote by $\{\pmb{\alpha}_\ell : 1 \leq \ell \leq L \}$ the distinct elements of the set of exponents $\{ \pmb{\gamma}_i : 1 \leq i \leq M\}$, which we order as follows, 
\begin{equation} \pmb{\alpha}_1 < \pmb{\alpha}_2 < \cdots < \pmb{\alpha}_L. \label{alpha-ordering} \end{equation}
We also define 
\[ \mathcal L_{\ell} := \{i : \pmb{\gamma}_i = \pmb{\alpha}_{\ell}, 1 \leq i \leq M \}. \] 
Thus if $i, j \in \mathcal L_{\ell}$ for some $\ell$, $i \ne j$, then $u_i \not\equiv u_j$; specifically, $u_i - u_j$ is either a unit or fractional normal crossings. The nature of the leading order coefficients of the units $u_i$ dictate the cases considered in the next two lemmas. Specifically, Lemma \ref{imaginary-roots} and its corollaries deal with situations where none of the factors of $F$ have any cancellation. In Lemma \ref{imaginary-roots}, this is achieved by assuming that the dominant components of the roots are all purely imaginary. In Corollary \ref{corollary-imroots}, the lack of cancellation is due to the fact that the dominant parts of the roots are real but do not have the same sign as $x_{n+1}$. The other end of the spectrum, namely where all the roots are real and there could be cancellation, is handled in Lemma \ref{real-roots}.  
\begin{lemma} \label{imaginary-roots}
Let $V$ and $\varphi$ be as in Proposition \ref{lemma-block2}. Assume that $F$ obeys the factorization (\ref{F-factid2}) with the ordering (\ref{alpha-ordering}) of exponents and that Im$(u_i)$ is a unit on $(0,1)^n$ for every $1 \leq i \leq M$. Then the conclusion of Proposition \ref{lemma-block2} holds.  
\end{lemma}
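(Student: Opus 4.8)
The plan is to decompose $V\times(-1,1)$ according to the size of the last coordinate relative to the monomials built from the leading exponents $\pmb\alpha_1<\cdots<\pmb\alpha_L$, and then to check that on each resulting piece every factor in (\ref{F-factid2}) becomes a fractional monomial times a unit, so that $F$ itself does. Work in the resolved coordinates, i.e.\ on $(0,1)^n\times(-1,1)$; discard the lower-dimensional hyperplane $\{x_{n+1}=0\}$ and fix a sign $\kappa\in\{\pm1\}$. Since $\mathbf y\in(0,1)^n$ and $\pmb\alpha_1<\cdots<\pmb\alpha_L$, one has $\mathbf y^{\pmb\alpha_1}\ge\cdots\ge\mathbf y^{\pmb\alpha_L}$, so, choosing a constant $C$ larger than every $\sup_{(0,1)^n}|u_i|$, the slabs $R_\ell=\{2C\mathbf y^{\pmb\alpha_{\ell+1}}<\kappa x_{n+1}<2C\mathbf y^{\pmb\alpha_\ell}\}$ for $1\le\ell\le L-1$, together with $R_0=\{2C\mathbf y^{\pmb\alpha_1}<\kappa x_{n+1}<1\}$ and $R_L=\{0<\kappa x_{n+1}<2C\mathbf y^{\pmb\alpha_L}\}$, cover $(0,1)^n\times\{0<\kappa x_{n+1}<1\}$ up to a set of dimension $<n+1$ (the graphs $\{\kappa x_{n+1}=2C\mathbf y^{\pmb\alpha_\ell}\}$). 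Each $R_\ell$, $1\le\ell\le L-1$, is a distant horn over the $n$-dimensional tower $(0,1)^n$ (with $f\equiv0$, $g_1=2C\mathbf y^{\pmb\alpha_{\ell+1}}$, $g_2=2C\mathbf y^{\pmb\alpha_\ell}$); $R_0$ is a distant horn with constant $g_2$; and $R_L$ is an adjacent horn. Applying Lemma \ref{decomp-distant-horn} finitely many times we may assume the ratio $g_1g_2^{-1}$ of each distant horn is a single-variable fractional monomial, so that every piece is an $(n+1)$-dimensional prepared tower once equipped with a preferred system of coordinates.

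The heart of the argument is the factorwise analysis on a fixed piece, and this is exactly where the hypothesis ``$\mathrm{Im}(u_i)$ is a unit'' is used. Pass to a preferred coordinate system as in (\ref{solid-cusp-coord}) or (\ref{hollow-cusp-coord}): there $\kappa x_{n+1}$ becomes a fractional monomial times a unit, and, crucially, $1/(\kappa x_{n+1})$ is expressed in terms of the factors $g_1,g_2$ of the horn, so that the quotients appearing below acquire \emph{non-negative} exponent vectors — this is where the total ordering of the $\pmb\alpha_\ell$ (which comes from hypothesis (\ref{part1}) via Lemma \ref{monomial-ordering-lemma}) enters. Concretely, on $R_\ell$ we have $\kappa x_{n+1}\sim\mathbf y^{\pmb\alpha_\ell}$ near the top and $\sim\mathbf y^{\pmb\alpha_{\ell+1}}$ near the bottom. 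For a root $r_i=u_i\mathbf y^{\pmb\gamma_i}$ with $\pmb\gamma_i\le\pmb\alpha_\ell$ we factor out $\mathbf y^{\pmb\gamma_i}$ and write $x_{n+1}-r_i=\mathbf y^{\pmb\gamma_i}\bigl(x_{n+1}\mathbf y^{-\pmb\gamma_i}-u_i\bigr)$; here $x_{n+1}\mathbf y^{-\pmb\gamma_i}$ is real and, in the preferred coordinates, equals $\mathbf y^{\pmb\alpha_\ell-\pmb\gamma_i}$ (a genuine monomial, since $\pmb\gamma_i\le\pmb\alpha_\ell$) times a fractional power series, so $|x_{n+1}\mathbf y^{-\pmb\gamma_i}-u_i|\ge|\mathrm{Im}(u_i)|\ge c>0$ and the parenthesis is a unit. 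For $\pmb\gamma_i>\pmb\alpha_\ell$, hence $\pmb\gamma_i\ge\pmb\alpha_{\ell+1}$, we factor out $\kappa x_{n+1}$ and write $x_{n+1}-r_i=(\kappa x_{n+1})\bigl(\kappa-u_i\mathbf y^{\pmb\gamma_i}(\kappa x_{n+1})^{-1}\bigr)$; now $\mathbf y^{\pmb\gamma_i}(\kappa x_{n+1})^{-1}$ equals $\mathbf y^{\pmb\gamma_i-\pmb\alpha_{\ell+1}}$ times a fractional power series of modulus $\le1$, so $|u_i\mathbf y^{\pmb\gamma_i}(\kappa x_{n+1})^{-1}|\le|u_i|/(2C)\le\tfrac12$ and the parenthesis is again a unit. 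The same two computations handle $R_0$ (only the lower scale occurs) and $R_L$ (only the upper scale occurs). Multiplying the factors, together with $x_{n+1}^{\beta_{n+1}}=(\kappa x_{n+1})^{\beta_{n+1}}$ and the Weierstrass unit $u_0(\mathbf x,x_{n+1})$ (which, being a convergent power series composed with fractional power series, is still a fractional power series unit), shows $F$ is fractional normal crossings in the chosen preferred coordinates.

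Finally, since any two preferred coordinate systems on a given tower differ by a power transformation — only the auxiliary vector $\mathbf r'$ and the terminal power map $\varphi_4$ in (\ref{solid-cusp-coord}) and (\ref{hollow-cusp-coord}) depend on the choice — and since the class of fractional normal crossings is preserved under power transformations, $F$ is fractional normal crossings in \emph{every} preferred system on each of the towers $R_\ell$. This gives the conclusion of Proposition \ref{lemma-block2} in the case at hand. The one part I expect to be fiddly is not the analytic core — that a unit imaginary part bounds each factor away from zero is immediate — but the bookkeeping of the horn/tower structure: verifying that the $R_\ell$ really are (prepared) towers over $(0,1)^n$ after the repeated applications of Lemma \ref{decomp-distant-horn}, and checking that all the exponent vectors produced by the preferred-coordinate substitutions are non-negative so that one never leaves the category of fractional power series.
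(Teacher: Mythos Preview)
Your proof is correct and follows essentially the same approach as the paper: decompose $(0,1)^n\times(-1,1)$ into slabs where $|x_{n+1}|$ is comparable to successive monomials $\mathbf y^{\pmb\alpha_\ell}$, identify each slab as a tower via Lemma~\ref{decomp-distant-horn}, and on each piece factor $x_{n+1}-r_i$ by extracting the dominant term (either $r_i$ or $x_{n+1}$, depending on whether $\pmb\gamma_i\le\pmb\alpha_\ell$ or $\pmb\gamma_i\ge\pmb\alpha_{\ell+1}$), using the unit lower bound on $|\mathrm{Im}(u_i)|$ in the former case and the size estimate $|r_i/x_{n+1}|\le 1/2$ in the latter. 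The paper uses an overlapping cover (with constants $p_\ell<q_\ell$) rather than your partition, and factors out $r_i$ rather than $\mathbf y^{\pmb\gamma_i}$, but since $u_i$ is a unit these are cosmetically different versions of the same computation.

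One small correction to your last paragraph: it is not literally true that two preferred coordinate systems on a given tower differ by a power transformation (changing $\mathbf r'$ alters $\varphi_1$ through $\varphi_4$ in a more entangled way). The paper instead shows that the monomial part of $F$ is fractional normal crossings in the intermediate $(\mathbf y,x_{n+1})$ coordinates and then invokes Lemma~\ref{lemma-fnc} to pass to \emph{any} preferred system, while the unit parts are verified directly to be fractional power series units in preferred coordinates. Your argument would be cleaner if you routed through Lemma~\ref{lemma-fnc} in the same way rather than asserting the power-transformation claim.
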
   
\begin{proof} 
We will need to use the two constants $C_0$ and $c_0$ defined as follows,  
\begin{align*}  C_0 &:= \sup \{ |u_i(\mathbf y)| : \mathbf y \in (0,1)^n, 1 \leq i \leq M \} > 0, \\ c_0 &:= \inf \{\left| \text{Im}(u_i(\mathbf y)) \right| : \mathbf y \in (0,1)^n, 1 \leq i \leq M \} > 0.  \end{align*}
Fix any choice of positive constants $\{p_{\ell} : 0 \leq \ell \leq L-1\}$ and $\{q_{\ell} : 1 \leq \ell \leq L \}$ with the property that $q_{\ell} > p_{\ell-1} > C_0$ for all $1 \leq \ell \leq L-1$. Define sets $\widetilde{V}_{\ell, \kappa}$, $0 \leq \ell \leq L$, $\kappa \in \{ \pm 1\}$, where 
\begin{equation} \label{Vtilde-lkappa}
\begin{aligned}
\widetilde{V}_{L, \kappa} &:= \left\{(\mathbf x, x_{n+1}) : 0 < \kappa x_{n+1} <  q_{L} \mathbf y^{\pmb{\alpha}_L} \right\}, \\
\widetilde{V}_{\ell, \kappa} &:= \left\{(\mathbf x, x_{n+1}) : p_{\ell} \mathbf y^{\pmb{\alpha}_{\ell+1}} < \kappa x_{n+1} <  q_{\ell} \mathbf y^{\pmb{\alpha}_\ell}  \right\}, \quad 1 \leq \ell \leq L-1, \\
\widetilde{V}_{0, \kappa}  &:= \left\{(\mathbf x, x_{n+1}) : p_0 \mathbf y^{\pmb{\alpha}_1} < \kappa x_{n+1} < 1  \right\}, \text{ so that } \end{aligned} \end{equation}  
\[\bigcup_{\begin{subarray}{c} 0 \leq \ell \leq L \\ \kappa = \pm 1 \end{subarray}} \widetilde{V}_{\ell, \kappa} = \bigl[V \times (-1, 1) \bigr] \setminus \{ x_{n+1} = 0 \}. \]  
We observe that $\widetilde{V}_{L, \kappa}$ is itself a tower which converts to a horn adjacent to the constant zero function when expressed in the variables $(\mathbf y, x_{n+1})$. This need not be the case for $\widetilde{V}_{\ell, \kappa}$ with $\ell \leq L-1$. However, the ordering (\ref{alpha-ordering}) implies, by Lemma \ref{decomp-distant-horn}, that each set $\{ \widetilde{V}_{\ell, \kappa} : 0 \leq \ell \leq L-1\}$ can be decomposed as a finite union of $(n+1)$-dimensional towers, each of which converts to a horn separated from 0 in the $(\mathbf y, x_{n+1})$ variables. We intend to show that on each of these towers, $F$ is fractional normal crossings when expressed in {\em{any}} preferred coordinate system.

Fix $1 \leq \ell \leq L-1$. Fix a tower arising from $\widetilde{V}_{\ell, \kappa}$ and a preferred system of coordinates on this tower. Recall from (\ref{F-factid2}) that $F$ is the product of a real-analytic unit $u_0$ with factors of the form $(x_{n+1} - r_i(\mathbf y))$. Since $u_0$ converts to a fractional power series unit in any of the preferred coordinates, it suffices to show that each of the factors $(x_{n+1} - r_i(\mathbf y))$ transforms to fractional normal crossings. More precisely, we write 
\[ x_{n+1} - r_i(\mathbf y) = \begin{cases} r_i(\mathbf y) \left[ \frac{x_{n+1}}{r_i(\mathbf y)} - 1 \right] &\text{ if } i \in \mathcal L_k, k \leq \ell,\\  x_{n+1} \left[ 1 - \frac{r_i(\mathbf y)}{x_{n+1}} \right] &\text{ if } i \in \mathcal L_k, k > \ell. \end{cases}   \] 
By the construction of preferred coordinates in \S \ref{subsec-adm-pref}, 
the functions \begin{equation} \left\{\frac{x_{n+1}}{r_i(\mathbf y)} - 1 : i \in \mathcal L_k, k \leq \ell \right\} \quad \text{ and } \quad \left\{1 - \frac{r_i(\mathbf y)}{x_{n+1}} : i \in \mathcal L_k, k > \ell \right\} \label{show-units} \end{equation}  are fractional power series in the preferred system of coordinates. We will show momentarily that they are units. Assuming this for now and combining this with (\ref{F-factid2}) we arrive at the expression
\begin{align*} 
F(\mathbf x, x_{n+1}) & = u_0(\mathbf x, x_{n+1}) \Biggl( x_{n+1}^{\beta_{n+1}} \prod_{\begin{subarray}{c} i \in \mathcal L_k \\ k \leq \ell\end{subarray}} r_i(\mathbf y) \left[ \frac{x_{n+1}}{r_i(\mathbf y)} - 1 \Biggr] \right) \Biggl( \prod_{\begin{subarray}{c} i \in \mathcal L_k \\ k > \ell  \end{subarray}} x_{n+1} \left[ 1 - \frac{r_i(\mathbf y)}{x_{n+1}} \right] \Biggr) \\ 
&= u(\mathbf x, x_{n+1})\Bigl[ x_{n+1}^{\beta_{n+1}} \Bigl( \prod_{\begin{subarray}{c} i \in \mathcal L_k \\ k \leq \ell \end{subarray}} r_i(\mathbf y) \Bigr) \Bigl(\prod_{\begin{subarray}{c} i \in \mathcal L_k \\ k > \ell \end{subarray}} x_{n+1} \Bigr) \Bigr].
\end{align*} 
The expression within the box above is fractional normal crossings in $(\mathbf y, x_{n+1})$, hence by Lemma \ref{lemma-fnc} it continues to be so in any set of preferred coordinates. The function $u(\mathbf x, x_{n+1})$ is by assumption a fractional power series unit in preferred coordinates, which concludes the proof of the lemma for $1 \leq \ell \leq L-1$. The endpoint cases $\ell = 0$ and $\ell = L$ are easier and left to the reader.  

It remains to show that the functions in (\ref{show-units}) are units, i.e., bounded from above and below by positive constants in absolute value. Accordingly we estimate on $\widetilde{V}_{\ell, \kappa}$ and for $i \in \mathcal L_k$, 
\[ \begin{aligned} |x_{n+1} - r_i(\mathbf y)| &\leq |x_{n+1}| + C_0 |\mathbf y^{\pmb{\alpha}_k}| \\ &\leq \begin{cases} q_{\ell} |\mathbf y^{\pmb{\alpha}_{\ell}}| + C_0 |\mathbf y^{\pmb{\alpha}_{k}}| \leq (q_{\ell} + 2C_0) |\mathbf y^{\pmb{\alpha}_k}| &\text{ if } k \leq \ell \\  |x_{n+1}| + C_0 |\mathbf y^{\pmb{\alpha}_{\ell+1}}|\leq  \left(1 + \frac{C_0}{p_{\ell}} \right) |x_{n+1}| &\text{ if } k > \ell.  \end{cases} \end{aligned} \]   
On the other hand, 
\[ \begin{aligned}  |x_{n+1} - r_i(\mathbf y)| &\geq \frac{1}{2} \Bigl[ \bigl|x_{n+1} - \text{Re}(r_i(\mathbf y)) \bigr| + \bigl| \text{Im}(r_i(\mathbf y)) \bigr| \Bigr] \\ & \geq \left\{ \begin{aligned} &\frac{1}{2}\bigl| \text{Im}(r_i(\mathbf y)) \bigr| = \frac{1}{2} |\text{Im}(u_i(\mathbf y))| \mathbf y^{\pmb{\alpha}_k} &\text{ if } k \leq \ell, \\ &\frac{1}{2}\bigl|x_{n+1} - \text{Re}(r_i(\mathbf y)) \bigr| \geq \frac{1}{2} \left[ |x_{n+1}| - C_0 |\mathbf y^{\pmb{\alpha}_{\ell+1}}| \right]   &\text{ if } k > \ell,   \end{aligned} \right\} \\ &\geq \left\{ \begin{aligned} &\frac{c_0}{2} |\mathbf y^{\pmb{\alpha}_k}| &\text{ if } k \leq \ell, \\ &  \frac{1}{2} \left(1 - \frac{C_0}{p_{\ell}} \right) |x_{n+1}|  &\text{ if } k > \ell.  \end{aligned} \right\} \end{aligned}  \] 
Thus 
\begin{align*}
\frac{c_0}{2C_0} \leq &\left|\frac{x_{n+1}}{r_i(\mathbf y)} - 1 \right| \leq \frac{q_{\ell} + 2C_0}{c_0} \text{ for } k \leq \ell, \text{ and } \\ 
\frac{1}{2} \left(1 - \frac{C_0}{p_{\ell}} \right) \leq &\left| 1 - \frac{r_i(\mathbf y)}{x_{n+1}}\right| \leq  \left( 1 + \frac{C_0}{p_{\ell}}\right) \text{ for } k > \ell, 
\end{align*} 
which justifies the claim. 
\end{proof}
The following corollary is an analogue of Lemma \ref{imaginary-roots} when the region to be decomposed is a certain subset of $V \times (-1,1)$. 
\begin{corollary} \label{corollary-imroots-0}
Let $V$ and $\varphi$ be as in Proposition \ref{lemma-block2}. Suppose that $F$ obeys the factorization (\ref{F-factid2}) with the ordering (\ref{alpha-ordering}) on an $(n+1)$-dimensional tower $\widetilde{V}_{\kappa} \subseteq V \times (-2,2)$ of the form 
\begin{equation} \widetilde{V}_{\kappa} = \left\{(\mathbf x, x_{n+1}) : 0 < \kappa x_{n+1} < g(\mathbf y), \; \mathbf x = \varphi(\mathbf y) \in V \right\}, \; \kappa = \pm 1 \label{V-tilde-kappa}\end{equation} 
where the roots $r_i$ satisfy the hypotheses of Lemma \ref{imaginary-roots} and $g(\mathbf y) = (\text{unit}) \mathbf y^{\pmb{\beta}}$ is fractional normal crossings on $(0,1)^n$ such that $\{ \pmb{\beta}, \pmb{\alpha}_{1}, \cdots, \pmb{\alpha}_L \}$ is an ordered set of exponents. Then $\widetilde{V}_{\kappa}$ can be decomposed into a finite number of prepared towers on each of which $F$ is fractional normal crossings in any system of preferred coordinates.   
\end{corollary}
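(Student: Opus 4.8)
The plan is to follow the proof of Lemma \ref{imaginary-roots} almost verbatim, the only modification being that the outermost of the concentric regions used to slice up the tower is now capped by the function $g$ rather than by the constant $1$. Using that $\{\pmb{\beta},\pmb{\alpha}_1,\dots,\pmb{\alpha}_L\}$ is totally ordered, let $\ell_0$ be the number of indices $\ell$ with $\pmb{\alpha}_\ell<\pmb{\beta}$, so that $\pmb{\alpha}_{\ell_0}<\pmb{\beta}\le\pmb{\alpha}_{\ell_0+1}$ (with the conventions $\pmb{\alpha}_0:=\mathbf 0$, and $\ell_0=0$, resp.\ $\ell_0=L$, in the extreme cases). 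On $\widetilde V_\kappa$ one has $0<\kappa x_{n+1}<g(\mathbf y)=(\text{unit})\,\mathbf y^{\pmb{\beta}}$, so every root with $\pmb{\gamma}_i<\pmb{\beta}$ automatically dominates $x_{n+1}$: since $\text{Im}(u_i)$ is a unit,
\[ |x_{n+1}-r_i(\mathbf y)|\ \sim\ |r_i(\mathbf y)|\ \sim\ \mathbf y^{\pmb{\gamma}_i}\qquad\text{throughout }\widetilde V_\kappa, \]
the lower bound coming from $|x_{n+1}-r_i|\ge|\text{Im}(r_i)|$ and the upper one from $|x_{n+1}|<g(\mathbf y)\lesssim\mathbf y^{\pmb{\beta}}\le\mathbf y^{\pmb{\gamma}_i}$; hence $x_{n+1}-r_i=-r_i\,(1-x_{n+1}/r_i)$ with $1-x_{n+1}/r_i$ a unit. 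Thus only the roots sitting at levels $\pmb{\alpha}_{\ell_0+1},\dots,\pmb{\alpha}_L$ can compete with $x_{n+1}$.

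Now, exactly as in (\ref{Vtilde-lkappa}), pick constants $q_\ell>p_{\ell-1}>C_0$ and form the regions $\{0<\kappa x_{n+1}<q_L\mathbf y^{\pmb{\alpha}_L}\}$, the annuli $\{p_\ell\mathbf y^{\pmb{\alpha}_{\ell+1}}<\kappa x_{n+1}<q_\ell\mathbf y^{\pmb{\alpha}_\ell}\}$ for $\ell_0+1\le\ell\le L-1$, and the outermost region $\{p_{\ell_0}\mathbf y^{\pmb{\alpha}_{\ell_0+1}}<\kappa x_{n+1}<g(\mathbf y)\}$; intersecting each with $\widetilde V_\kappa$ yields a finite cover of $\widetilde V_\kappa$ up to a subset of dimension $<n+1$. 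Each such piece, after a further application of Lemma \ref{decomp-distant-horn} — now carried out with the additional ceiling $g$, which is admissible since $g$ is fractional normal crossings and its exponent lies in the total order — is a finite union of $(n+1)$-dimensional towers which, in the $(\mathbf y,x_{n+1})$ variables, convert to adjacent or distant horns. Fixing a preferred system of coordinates on one such tower, we split each factor $x_{n+1}-r_i$ as $r_i\,(x_{n+1}/r_i-1)$ when $\pmb{\gamma}_i$ is $\le$ the level defining the tower (in particular whenever $\pmb{\gamma}_i<\pmb{\beta}$) and as $x_{n+1}\,(1-r_i/x_{n+1})$ otherwise. The estimate above together with those in the proof of Lemma \ref{imaginary-roots} — all resting on the lower bound $|x_{n+1}-r_i|\ge|\text{Im}(r_i)|$ — show that every bracket is a unit, while the construction of preferred coordinates in \S \ref{subsec-adm-pref} and Lemma \ref{lemma-fnc} show that every bracket is a fractional power series in these coordinates. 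Factoring the brackets out, $F$ becomes a fractional power series unit times an explicit monomial-type expression, hence fractional normal crossings in any preferred system, as required.

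The hard part is the truncation at $g$: one has to verify that the capped pieces $(\text{slice})\cap\widetilde V_\kappa$ can genuinely be reorganized, using only the elementary transformations of \S \ref{subsec-adm-pref}, into finitely many prepared distant (and, for the innermost one, adjacent) horns, and that the unit estimates survive on the boundary region where $\kappa x_{n+1}$ is comparable to $g(\mathbf y)$ — which, in the borderline case $\pmb{\beta}=\pmb{\alpha}_{\ell_0+1}$, is simultaneously comparable to the roots at that level. This is exactly where the hypothesis that $\text{Im}(u_i)$ is a unit is decisive: it gives $|x_{n+1}-r_i|\sim|r_i|$ on all of $\widetilde V_\kappa$ whenever $\pmb{\gamma}_i\le\pmb{\beta}$, with no need for $x_{n+1}$ to be small compared with $r_i$, so the relevant brackets stay units all the way up to the ceiling $g$. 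The endpoint indices $\ell=\ell_0,L$ and the extreme cases $\ell_0\in\{0,L\}$ are handled exactly as in Lemma \ref{imaginary-roots}.
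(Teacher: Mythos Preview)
Your overall strategy --- slice $\widetilde V_\kappa$ into concentric pieces indexed by the levels $\pmb{\alpha}_\ell$ with $\pmb{\alpha}_\ell\ge\pmb{\beta}$, and reuse the unit estimates from Lemma~\ref{imaginary-roots} --- is exactly the paper's. The gap is in your treatment of the outermost piece, the region $\{p_{\ell_0}\mathbf y^{\pmb{\alpha}_{\ell_0+1}}<\kappa x_{n+1}<g(\mathbf y)\}$ abutting the ceiling.

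Two related problems arise there. First, your appeal to Lemma~\ref{decomp-distant-horn} is not legitimate: that lemma requires the bounding functions $g_1,g_2$ to be fractional \emph{monomials} (or $g_2$ constant), whereas $g(\mathbf y)=(\text{unit})\,\mathbf y^{\pmb\beta}$ carries a genuine unit factor, so the region does not decompose into distant horns as stated. Second, even if you recast this boundary piece as an \emph{adjacent} horn (say, adjacent to $p_{\ell_0}\mathbf y^{\pmb{\alpha}_{\ell_0+1}}$), your factorization $x_{n+1}-r_i=x_{n+1}\bigl(1-r_i/x_{n+1}\bigr)$ for the roots with $\pmb\gamma_i>\pmb\beta$ breaks down: in adjacent-horn preferred coordinates one has $x_{n+1}=f+(g-f)v_{n+1}$, which is \emph{not} fractional normal crossings (the quantity $x_{n+1}/\mathbf y^{\pmb\beta}$ vanishes at the origin of the $(\mathbf v,v_{n+1})$ chart), so the product you write down is a unit times something that is not of monomial type. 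Your size estimate $|1-r_i/x_{n+1}|\asymp 1$ is correct, but the algebraic structure required for ``fractional normal crossings in preferred coordinates'' fails.

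The paper repairs this by explicitly isolating the boundary layer $V_\kappa^\ast=\{2d_0\mathbf y^{\pmb\beta}<\kappa x_{n+1}<g(\mathbf y)\}$ as an adjacent horn (this works because $g-2d_0\mathbf y^{\pmb\beta}$ \emph{is} fractional normal crossings, the constant $d_0$ having been chosen as a quarter of the infimum of the unit in $g$), and on $V_\kappa^\ast$ uses a \emph{different} factorization: for roots with $\pmb\gamma_i\ge\pmb\beta$ it shows directly that $(x_{n+1}-r_i(\mathbf y))/\mathbf y^{\pmb\beta}$ is a unit. Since $\mathbf y^{\pmb\beta}$ is patently fractional normal crossings in adjacent-horn coordinates (Lemma~\ref{lemma-fnc}), this gives the desired structure. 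Verifying the lower bound $|x_{n+1}-r_i|\gtrsim\mathbf y^{\pmb\beta}$ uniformly on $V_\kappa^\ast$ for $\pmb\gamma_i>\pmb\beta$ requires a small case split (whether $C_0\mathbf y^{\pmb\alpha_\ell-\pmb\beta}<d_0$ or not), using $|x_{n+1}|-|\text{Re}(r_i)|$ in one case and $|\text{Im}(r_i)|$ in the other; this is the content of the estimate the paper labels (\ref{est-aboveL_0}), and it is the step your argument is missing.
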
 
\begin{proof}
Since the combined set of exponents for $r_i$ and $g$ is ordered, there exists a unique index $L_0 \in \{1, \cdots, L\}$ such that \[ \pmb{\alpha}_{\ell} < \pmb{\beta} \text{ for } \ell < L_0, \quad \text{ and } \quad \pmb{\alpha}_{\ell} \geq \pmb{\beta} \text{ for } \ell \geq L_0. \]
We also define positive constants $d_0$ and $D_0$ as follows, \begin{align*} 4d_0 &:= \inf\{|g(\mathbf y)\mathbf y^{-\pmb{\beta}}| : \mathbf y \in (0,1)^n \}, \\ D_0 &:= \sup\{|g(\mathbf y)\mathbf y^{-\pmb{\beta}}| : \mathbf y \in (0,1)^n \}. \end{align*} 

Let us recall the definition of the sets $\widetilde{V}_{\ell, \kappa}$ given in (\ref{Vtilde-lkappa}). Setting \[p_{\ell} = 2C_0 \text{ for } \ell \geq L_0, \quad q_{\ell} = \begin{cases} 3C_0 &\text{ if } \ell \geq L_0+1, \\ 3d_0 &\text{ if } \ell = L_0, \; \pmb{\alpha}_{L_0} = \pmb{\beta}, \\ 3C_0 &\text{ if } \ell = L_0, \;\pmb{\alpha}_{L_0} > \pmb{\beta}, \end{cases} \] we find that the collection $\{ \widetilde{V}_{\ell, \kappa} : \ell \geq L_0 \}$ combined with the two sets $\{ V_{\kappa}^{\sharp}, V_{\kappa}^{\ast}  \}$ form a finite cover for $\widetilde{V}_{\kappa}$, where \begin{align*}
V_{\kappa}^{\sharp} &:= \begin{cases}\left\{(\mathbf x, x_{n+1}) : 2C_0 \mathbf y^{\pmb{\alpha}_{L_0}}< \kappa x_{n+1} < 3d_0 \mathbf y^{\pmb{\beta}}   \right\} &\text{ if } \pmb{\alpha}_{L_0} > \pmb{\beta}, \\ \emptyset &\text{ if } \pmb{\alpha}_{L_0} = \pmb{\beta},  \end{cases} \\
V_{\kappa}^{\ast} &:= \left\{(\mathbf x, x_{n+1}) : 2d_0 \mathbf y^{\pmb{\beta}} < \kappa x_{n+1} < g(\mathbf y) \right\}. 
\end{align*} 
The analysis in Lemma \ref{imaginary-roots} can be repeated verbatim to show that each of the sets $\{\widetilde{V}_{\ell, \kappa} : \ell \geq L_0\}$ and $V_{\kappa}^{\sharp}$ permit a decomposition into a finite number of $(n+1)$-dimensional prepared towers on which $F$ can be reduced to fractional normal crossings for any choice of preferred coordinates. It therefore remains to consider the set $V_{\kappa}^{\ast}$. 

Since $(g(\mathbf y) - 2d_0 \mathbf y^{\pmb{\beta}})$ is fractional normal crossings, we may view $V_{\kappa}^{\ast}$ as a tower which in $(\mathbf y, x_{n+1})$ coordinates is a horn adjacent to $2d_0 \mathbf y^{\pmb{\beta}}$. Let us recall from \S \ref{subsec-adm-pref} the construction of preferred coordinates for such a tower. Setting \begin{equation} \label{y-v} y_i = v_i \text{ for } 1 \leq i \leq n, \quad \text{ and } \quad x_{n+1} = 2d_0 \mathbf y^{\pmb{\beta}} + \left(g(\mathbf y) - 2d_0 \mathbf y^{\pmb{\beta}} \right) v_{n+1},  \end{equation} it suffices to show, as in the proof of Lemma \ref{imaginary-roots}, that each of the factors $(x_{n+1} - r_i(\mathbf y))$ converts to fractional normal crossings in the coordinates $(\mathbf v, v_{n+1})$. In view of Lemma \ref{lemma-fnc}, it is enough to establish that each of these factors is a fractional monomial in $\mathbf y$, multiplied with a bounded nonvanishing function that takes the form of a fractional power series in the variables $(\mathbf v, v_{n+1})$. More specifically, we will show that the functions 
\begin{equation} \left\{ \frac{x_{n+1}}{r_i(\mathbf y)}-1 : i \in \mathcal L_{\ell}, \ell \leq L_0-1 \right\} \quad \text{and} \quad \left\{ \frac{x_{n+1} - r_i(\mathbf y)}{\mathbf y^{\pmb{\beta}}} : i \in \mathcal L_{\ell}, \ell \geq L_0 \right\} \label{show-units2} \end{equation}  
are units in $(\mathbf v, v_{n+1})$. That these are fractional power series in $(\mathbf v, v_{n+1})$ is an easy consequence of (\ref{y-v}), so we concentrate on the bounds.

On $V_{\kappa}^{\ast}$ and for $i \in \mathcal L_{\ell}$ with $\ell \leq L_0-1$, 
\begin{equation} \label{est-belowL_0}
\begin{aligned}
|x_{n+1} - r_i(\mathbf y)| &\leq 2d_0 \mathbf y^{\pmb{\beta}} + C_0 \mathbf y^{\pmb{\alpha}_{\ell}} \leq c_0^{-1}(C_0 + 2d_0)|r_i(\mathbf y)|, \text{ whereas } \\
|x_{n+1} - r_i(\mathbf y)| &\geq |\text{Im}(r_i)(\mathbf y)| \geq \frac{c_0}{2} \mathbf y^{\pmb{\alpha}_{\ell}} \geq \frac{c_0}{2C_0} |r_i(\mathbf y)|.  
\end{aligned} \end{equation} 
On the other hand, for $i \in \mathcal L_{\ell}$ and $\ell \geq L_0$, 
\begin{equation} \label{est-aboveL_0}
\begin{aligned}
|x_{n+1} - r_i(\mathbf y)| &\leq |x_{n+1}| + |r_i(\mathbf y)| \leq (D_0 + C_0) \mathbf y^{\pmb{\beta}}, \text{ while } \\
|x_{n+1} - r_i(\mathbf y)| &\geq \frac{1}{2} \left[ |x_{n+1} - \text{Re}(r_i(\mathbf y))| + |\text{Im}(r_i(\mathbf y))| \right] \\ 
&\geq \left\{ \begin{aligned} &\frac{1}{2} \left( |x_{n+1}| - |\text{Re}(r_i(\mathbf y))| \right)  &\text{ if } \mathbf y^{\pmb{\alpha}_{\ell} - \pmb{\beta}} C_0 < d_0, \\ &\frac{1}{2} |\text{Im}(r_i(\mathbf y))| &\text{ if } \mathbf y^{\pmb{\alpha}_{\ell} - \pmb{\beta}} C_0 \geq d_0  \end{aligned} \right\} \\ 
&\geq \left\{ \begin{aligned} &\frac{1}{2} \left( 2d_0 \mathbf y^{\pmb{\beta}} - C_0 \mathbf y^{\pmb{\alpha}_{\ell}}\right) &\text{ if } \mathbf y^{\pmb{\alpha}_{\ell} - \pmb{\beta}} C_0 < d_0, \\ &\frac{c_0}{2} \mathbf y^{\pmb{\alpha}_{\ell}} &\text{ if } \mathbf y^{\pmb{\alpha}_{\ell} - \pmb{\beta}} C_0 \geq d_0 \end{aligned} \right\} \\
& \geq \min \left( \frac{d_0}{2}, \frac{c_0 d_0}{2C_0} \right) \mathbf y^{\pmb{\beta}}.
\end{aligned} 
\end{equation} 
Combining the estimates in (\ref{est-belowL_0}) and (\ref{est-aboveL_0}) we find that the functions in (\ref{show-units2}) are indeed units, which completes the proof of the corollary.  
\end{proof} 
\begin{corollary} \label{corollary-imroots} 
Let $V$ and $\varphi$ be as in Proposition \ref{lemma-block2}. Let $F$ obey a factorization of the form (\ref{F-factid2}) with the ordering (\ref{alpha-ordering}), where for each $1 \leq i \leq M$, 
\begin{itemize}
\item either Im$(u_i)$ is a unit, 
\item or Re$(u_i)$ is negative (respectively positive).
\end{itemize}   
Then there exists a decomposition of $V \times (0,1)$ (respectively $V \times (-1,0)$) into a finite number of $(n+1)$-dimensional prepared towers on each of which $F$ is fractional normal crossings in the preferred systems of coordinates. 

The same conclusion holds if in the preceding sentence $V \times (0,1)$ (respectively $V \times (-1,0)$) is replaced by the $(n+1)$-dimensional tower $\widetilde{V}_1$ (respectively $\widetilde{V}_{-1}$) given in (\ref{V-tilde-kappa}), where $g(\mathbf y) = (\text{unit}) \mathbf y^{\pmb{\beta}}$ is fractional normal crossings such that $\{\pmb{\beta}, \pmb{\alpha}_1, \cdots, \pmb{\alpha}_L \}$ is ordered.
\end{corollary}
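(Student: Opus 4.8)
The plan is to deduce Corollary~\ref{corollary-imroots} from Lemma~\ref{imaginary-roots} and Corollary~\ref{corollary-imroots-0}, the point being that the purely imaginary hypothesis on the $u_i$ entered those proofs only through a lower bound for $|x_{n+1}-r_i(\mathbf y)|$ of a particular shape --- of order $\mathbf y^{\pmb{\alpha}_k}$ when the monomial $\mathbf y^{\pmb{\alpha}_k}$ dominates $x_{n+1}$, and of order $|x_{n+1}|$ when $x_{n+1}$ dominates --- and that the same bound survives, for a different reason, when $\mathrm{Re}(u_i)$ has the sign opposite to $x_{n+1}$. I work throughout on $V\times(0,1)$, so $x_{n+1}>0$; the case $V\times(-1,0)$ with $\mathrm{Re}(u_i)$ positive is obtained by replacing $x_{n+1}$ by $-x_{n+1}$.

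First I would set $C_0=\sup_i\sup_{(0,1)^n}|u_i|$ and take $c_0>0$ to be a common lower bound for $\inf_{(0,1)^n}|\mathrm{Im}(u_i)|$ over the indices of the first type and for $\inf_{(0,1)^n}|u_i|$ over the indices of the second type. Then I reuse, with $\kappa=+1$, the dyadic-in-$x_{n+1}$ decomposition~(\ref{Vtilde-lkappa}): with $q_\ell>p_{\ell-1}>C_0$, the sets $\{\widetilde V_{\ell,+1}:0\le\ell\le L\}$ cover $V\times(0,1)$ up to a set of dimension $<n+1$, and by~(\ref{alpha-ordering}) together with Lemma~\ref{decomp-distant-horn} each $\widetilde V_{\ell,+1}$ decomposes into finitely many $(n+1)$-dimensional towers that become horns in $(\mathbf y,x_{n+1})$ --- adjacent to $0$ when $\ell=L$, separated from $0$ otherwise. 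Fixing one such tower, an arbitrary preferred coordinate system on it (so that it becomes a prepared tower), and an index $i\in\mathcal L_k$, it suffices as in Lemma~\ref{imaginary-roots} to show that $x_{n+1}/r_i(\mathbf y)-1$ (if $k\le\ell$) or $1-r_i(\mathbf y)/x_{n+1}$ (if $k>\ell$) is a unit in these coordinates; each is a fractional power series there by the construction of \S\ref{subsec-adm-pref}, and once it is a unit, the factorization~(\ref{F-factid2}), Lemma~\ref{lemma-fnc} and the fact that $u_0$ is a fractional power series unit complete the argument exactly as in Lemma~\ref{imaginary-roots}.

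The only inequality requiring comment is the lower bound for $|x_{n+1}-r_i(\mathbf y)|$. The upper bounds are unchanged, and for $k>\ell$ one has $|x_{n+1}-r_i|\ge x_{n+1}-|u_i|\mathbf y^{\pmb{\alpha}_k}\ge(1-C_0/p_\ell)\,x_{n+1}$, irrespective of the nature of $u_i$. For $k\le\ell$ I would split by type: if $\mathrm{Im}(u_i)$ is a unit, $|x_{n+1}-r_i|\ge|\mathrm{Im}(r_i)|\ge\frac{c_0}{2}\mathbf y^{\pmb{\alpha}_k}$ as before; if $\mathrm{Re}(u_i)<0$, then since $x_{n+1}>0$ we have $x_{n+1}-\mathrm{Re}(r_i(\mathbf y))=x_{n+1}+|\mathrm{Re}(u_i(\mathbf y))|\mathbf y^{\pmb{\alpha}_k}\ge|\mathrm{Re}(r_i(\mathbf y))|$, whence $2|x_{n+1}-r_i|\ge|x_{n+1}-\mathrm{Re}(r_i)|+|\mathrm{Im}(r_i)|\ge(|\mathrm{Re}(u_i)|+|\mathrm{Im}(u_i)|)\mathbf y^{\pmb{\alpha}_k}\ge|u_i|\mathbf y^{\pmb{\alpha}_k}\ge c_0\mathbf y^{\pmb{\alpha}_k}$. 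In either case the bound is of the shape demanded by the proof of Lemma~\ref{imaginary-roots}, which therefore goes through; the endpoint cases $\ell=0,L$ are easier, as there.

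For the last assertion --- $\widetilde V_1$ of~(\ref{V-tilde-kappa}) with $g=(\text{unit})\mathbf y^{\pmb{\beta}}$ and $\{\pmb{\beta},\pmb{\alpha}_1,\dots,\pmb{\alpha}_L\}$ ordered --- I would run the proof of Corollary~\ref{corollary-imroots-0} with $\kappa=+1$: pick $L_0$ with $\pmb{\alpha}_\ell<\pmb{\beta}$ for $\ell<L_0$ and $\pmb{\alpha}_\ell\ge\pmb{\beta}$ for $\ell\ge L_0$, cover $\widetilde V_1$ by $\{\widetilde V_{\ell,+1}:\ell\ge L_0\}$, $V_{+1}^\sharp$ and the adjacent piece $V_{+1}^\ast$, and rerun the unit estimates~(\ref{show-units2}). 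Only the lower bounds for $|x_{n+1}-r_i|$ need attention: for indices with $\mathrm{Im}(u_i)$ a unit they are~(\ref{est-belowL_0}) and~(\ref{est-aboveL_0}) verbatim, and for indices with $\mathrm{Re}(u_i)<0$ one again has $|x_{n+1}-r_i|\ge x_{n+1}-\mathrm{Re}(r_i)\ge|\mathrm{Re}(r_i)|$ on the relevant region, so combining with $|\mathrm{Im}(r_i)|$ as above (and using $x_{n+1}>2d_0\mathbf y^{\pmb{\beta}}$ on $V_{+1}^\ast$) produces bounds of the same two shapes. I do not expect a real obstacle here: the conceptual content is entirely in Lemma~\ref{imaginary-roots} and Corollary~\ref{corollary-imroots-0}, and the only modest bookkeeping is to choose one constant $c_0$ serving both families of factors and to verify that the case distinctions inside Corollary~\ref{corollary-imroots-0} stay valid --- or become vacuous --- for the factors of the second type.
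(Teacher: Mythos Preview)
Your proof is correct but follows a different path from the paper's. The paper does not re-run the inequalities of Lemma~\ref{imaginary-roots}; instead it builds an auxiliary function $H$ that \emph{literally} satisfies the hypotheses of that lemma, and shows $F/H$ is a unit. Concretely, it sets $\mathbb I^{\ast}=\{i:\mathrm{Im}(u_i)\text{ is a unit}\}$, and for $i\notin\mathbb I^{\ast}$ replaces the root $r_i$ by the purely imaginary $\varrho_i$ with $\mathrm{Im}(\varrho_i)=\mathrm{Re}(r_i)$; since $x_{n+1}>0$ and $\mathrm{Re}(r_i)<0$, one has $|x_{n+1}-r_i|^2=(x_{n+1}+|\mathrm{Re}(r_i)|)^2+(\mathrm{Im}(r_i))^2$ and $|x_{n+1}-\varrho_i|^2=x_{n+1}^2+(\mathrm{Re}(r_i))^2$, whence $1\le|x_{n+1}-r_i|/|x_{n+1}-\varrho_i|\le(C_1^2+2)^{1/2}$ with $C_1=\sup|\mathrm{Im}(r_i)/\mathrm{Re}(r_i)|$. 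Then Lemma~\ref{imaginary-roots} (or Corollary~\ref{corollary-imroots-0} for the $\widetilde V_1$ case) is applied directly to $H$, and since $F/H$ is a fractional power series unit in preferred coordinates, $F$ inherits the normal-crossings structure.

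Your approach --- locating exactly where the imaginary hypothesis enters the estimates of Lemma~\ref{imaginary-roots} and supplying an alternative lower bound $|x_{n+1}-r_i|\ge\frac{c_0}{2}\mathbf y^{\pmb\alpha_k}$ from the sign condition --- is equally valid and arguably more transparent about the mechanism. The paper's route is cleaner as a logical reduction (no re-examination of the earlier proof is needed, only a two-line comparison of $|x_{n+1}-r_i|$ with $|x_{n+1}-\varrho_i|$), and it makes the $\widetilde V_1$ statement immediate without redoing the case analysis of Corollary~\ref{corollary-imroots-0}. Your route avoids introducing the artificial $\varrho_i$ and shows directly that the tower decomposition and preferred coordinates are the same as in the imaginary case.
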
 
\begin{proof}
Let us concentrate on obtaining a cover for $V \times (0,1)$, the argument for $V \times (-1,0)$ being identical. We will show that even though the imaginary parts of the functions $u_i$ need no longer be units, the $(n+1)$-dimensional prepared towers obtained from the domains $\widetilde{V}_{\ell, +1}$ defined in (\ref{Vtilde-lkappa}) continue to suffice. Since $F$ is a fractional power series in any set of preferred coordinates on these towers, the desired conclusion will follow from Lemma \ref{imaginary-roots} provided we show the following:  there exists a function $H$ satisfying the hypotheses of this lemma for which $F/H$ is a bounded function that is also bounded away from zero
on $V \times (0,1)$.  

Define the index set $\mathbb I^{\ast} := \{i : \text{Im}(u_i) \text{ is a unit} \}$. By the hypothesis, Re$(u_i)$ is negative and a unit for $i \not\in \mathbb I^{\ast}$. The last condition means that 
\begin{equation} C_1 := \sup \left\{\left| \frac{\text{Im}(r_i(\mathbf y))}{\text{Re}(r_i(\mathbf y))} \right| : \mathbf y \in (0,1)^n,\, i \not\in \mathbb I^{\ast} \right\} < \infty. \label{def-C1} \end{equation}  
We also define for $i \not\in \mathbb I^{\ast}$ a (purely imaginary) function $\varrho_i$ such that \[ \text{Re}(\varrho_i) \equiv 0 \quad \text{ and } \quad \text{Im}(\varrho_i) = \text{Re}(r_i). \]
For such $i$ and for $x_{n+1} > 0$,     
\begin{align*}
|x_{n+1} - r_i(\mathbf y)|^2 &= |x_{n+1} - \text{Re}(r_i(\mathbf y))|^2 + |\text{Im}(r_i(\mathbf y))|^2 \\ &= \bigl(x_{n+1} + \bigl| \text{Re}(r_i(\mathbf y))\bigr| \bigr)^2 + |\text{Im}(r_i(\mathbf y))|^2, \text{ so that } \\ 
|x_{n+1} - r_i(\mathbf y))| &\geq |x_{n+1} - \varrho_i(\mathbf y)|, \text{ and } \\  |x_{n+1} - r_i(\mathbf y))|^2  &\leq 2\Bigl[x_{n+1}^2 + \bigl(\text{Re}(r_i(\mathbf y)) \bigr)^2 \Bigr] + |\text{Im}(r_i(\mathbf y))|^2 \\ &\leq (C_1^2 +2) |x_{n+1} - \varrho_i(\mathbf y)|^2. 
\end{align*} 
It follows from the calculations above that 
\begin{align*} 1 \leq \left|  F(\mathbf x, x_{n+1}) \bigl[H(\mathbf x, x_{n+1}) \bigr]^{-1} \right| \leq (C_1^2 +2)^{\frac{N_V}{2}}, \text{ where } \\  
H(\mathbf x, x_{n+1}) = \Bigl[ \prod_{i \in \mathbb I^{\ast}} \bigl(x_{n+1} - r_i(\mathbf y) \bigr) \Bigr] \Bigl[ \prod_{i \not\in \mathbb I^{\ast}} \bigl(x_{n+1} - \varrho_i(\mathbf y) \bigr) \Bigr]
\end{align*}
satisfies the hypotheses of Lemma \ref{imaginary-roots}. The first conclusion of the corollary is now a consequence of this lemma applied to $H$. The statement for $\widetilde{V}_1$ follows from Corollary \ref{corollary-imroots-0} applied to $H$.   
\end{proof} 
\begin{lemma} \label{real-roots} 
Let $V$ and $\varphi$ be as in Proposition \ref{lemma-block2}. Suppose that $F$ obeys the factorization (\ref{F-factid2}), where all the roots $r_i$ are real fractional normal crossings whose differences are also fractional normal crossings. Then the conclusion of Proposition \ref{lemma-block2} holds.   
\end{lemma}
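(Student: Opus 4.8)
The plan is to mimic the structure of the proof of Lemma~\ref{imaginary-roots}, partitioning $V \times (-1,1)$ into annular regions according to which leading exponent $\pmb{\alpha}_\ell$ dominates $x_{n+1}$, but now handling the extra subtlety that the real roots can collide with $x_{n+1}$, so the factor $(x_{n+1}-r_i(\mathbf y))$ can genuinely vanish on the region and need not be a unit times a monomial. The key observation is that for the factors $r_i$ with $i \in \mathcal L_k$, $k < \ell$ (leading exponent strictly larger than the scale of $x_{n+1}$), the factor $x_{n+1}-r_i(\mathbf y)$ behaves like $-r_i(\mathbf y)$ times a unit, and for $i \in \mathcal L_k$ with $k > \ell$ it behaves like $x_{n+1}$ times a unit, exactly as in Lemma~\ref{imaginary-roots}; the only problematic factors are those with $i \in \mathcal L_\ell$, i.e. the finitely many roots whose leading monomial is $\mathbf y^{\pmb{\alpha}_\ell}$, the same scale as $x_{n+1}$ on that region.

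For those problematic factors I would make a further change of variables, substituting $x_{n+1} = t\,\mathbf y^{\pmb{\alpha}_\ell}$ (more precisely, introducing on the distant horn pieces a coordinate that records the ratio $x_{n+1}/\mathbf y^{\pmb{\alpha}_\ell}$), so that $x_{n+1}-r_i(\mathbf y) = \mathbf y^{\pmb{\alpha}_\ell}\bigl(t - u_i(\mathbf y)\bigr)$. After this reduction the monomial prefactor $\mathbf y^{\pmb{\alpha}_\ell}$ is pulled out of every factor indexed by $\mathcal L_\ell$, and what remains is a product $\prod_{i \in \mathcal L_\ell}(t-u_i(\mathbf y))^{n_i}$ of a \emph{one-dimensional} family (in $t$) of differences of units. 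Now I would invoke the one-dimensional resolution philosophy on $t$: the differences $u_i - u_j$ for $i,j \in \mathcal L_\ell$ are, by hypothesis~(i) of Proposition~\ref{lemma-block2}, either identically zero or fractional normal crossings, hence their leading exponents are totally ordered (Lemma~\ref{monomial-ordering-lemma}), so one can stratify the $t$-interval into finitely many adjacent/distant horns relative to the real values $u_i(\mathbf 0)$ on which each $t-u_i(\mathbf y)$ becomes a monomial in the new variables times a unit. This is exactly the kind of subdivision carried out in \S\ref{subsec-adm-pref}, now applied in the $(\mathbf y, t)$ variables.

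The step I expect to be the main obstacle is verifying that the horn decomposition in the $t$-variable, together with the horn/tower structure already present in the $\mathbf y$-variables (since $V$ is a coordinate image of a tower), assembles into genuine $(n+1)$-dimensional \emph{prepared towers} in the sense of Definition~\ref{def-prepared-cusp} — one has to check that the defining functions of the $t$-strata, after composing with the preferred coordinates on $V$, are themselves fractional power series / fractional normal crossings on $(0,1)^n$, so that the whole object is an admissible tower and not merely an abstract product of intervals. This is where hypothesis~(ii) of Proposition~\ref{lemma-block2}, about $\mathrm{Re}(u_i)\mathbf y^{\pmb{\gamma}_i}$ and their differences, will be needed: in the real-roots case $\mathrm{Re}(u_i) = u_i$, so (ii) reduces to (i) and is automatic, but the ordering it guarantees among the $u_i$ is precisely what lets the $t$-strata be defined consistently. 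Once the tower structure is confirmed, Lemma~\ref{lemma-fnc} transfers the fractional normal crossings property to \emph{any} preferred coordinate system, and multiplying back the monomial prefactors $\mathbf y^{\pmb{\alpha}_\ell}$ (one for each factor, raised to $n_i$) together with $x_{n+1}^{\beta_{n+1}}$ and the unit $u_0$ yields the desired fractional normal crossings form of $F$, completing the proof.
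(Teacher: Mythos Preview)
Your approach is different from the paper's, and while the broad idea could be made to work, it is more complicated and has a gap as written.

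The paper does \emph{not} partition $V\times(-1,1)$ by the scale annuli $\widetilde V_{\ell,\kappa}$ of Lemma~\ref{imaginary-roots}. Instead it exploits the real ordering of the roots directly: since each $r_i-r_j$ is fractional normal crossings of fixed sign, the roots are globally ordered on $(0,1)^n$, say $r_{j_{M_2}}<\cdots<r_{j_1}<0<r_{i_1}<\cdots<r_{i_{M_1}}$. The paper then slices $V\times(-1,1)$ by the graphs of the $r_i$ (regions (\ref{region-main1})--(\ref{region8})), and on each slice performs a single shift $y_{n+1}=x_{n+1}-r_{i_k}(\mathbf y)$. After the shift the new variable $y_{n+1}$ lies between $0$ and the nearest root, so every positive root dominates $y_{n+1}$ and $|y_{n+1}-r_i|\sim r_i$; the remaining (negative) roots never meet $y_{n+1}$ and are dispatched by Corollary~\ref{corollary-imroots}. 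No recursion is needed.

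Your scheme, by contrast, after passing to $t=x_{n+1}/\mathbf y^{\pmb\alpha_\ell}$, still has to resolve $\prod_{i\in\mathcal L_\ell}(t-u_i(\mathbf y))^{n_i}$. This is not a one-dimensional problem: the targets $u_i(\mathbf y)$ move with $\mathbf y$, and stratifying ``relative to the real values $u_i(\mathbf 0)$'' fails precisely when two units share the same value at the origin (which is allowed---their difference $u_i-u_j$ is then fractional normal crossings vanishing at $\mathbf 0$). To separate such roots you would have to shift by one of the $u_i$ and repeat the whole argument on the shifted roots $u_j-u_i$, i.e.\ run an induction on $|\mathcal L_\ell|$. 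You have not set up that induction, and the appeal to \S\ref{subsec-adm-pref} does not supply it: that section builds preferred coordinates on a single tower, not a decomposition keyed to several colliding root functions. The paper's root-graph partition sidesteps this entirely, trading your scale-then-recurse structure for a single layer of shifts followed by a direct appeal to the already-proved no-cancellation case.
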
 
\begin{proof}
Since the roots are real and the differences of the roots are fractional normal crossings on $(0,1)^n$, the set of roots is pointwise strictly ordered and each root is of a fixed sign on $(0,1)^n$. Namely there exist subsets of distinct indices $\mathcal I = \{ i_1,, i_2, \cdots i_{M_1}\}$ and $\mathcal J = \{ j_1, \cdots, j_{M_2} \}$, $M_1 + M_2 = M$ such that 
\begin{align*} 
\{1, 2 \cdots, M \} &= \mathcal I \cup \mathcal J , \text{ and } \\  
r_{j_{M_2}} < r_{j_{M_2-1}} < \cdots < r_{j_1} <  &0  < r_{i_1} < \cdots < r_{i_{M_1}} \; \text{ on } (0,1)^n. 
\end{align*}  
Set $i_0 = j_0 = 0$, $r_0 \equiv 0$ and consider the regions
\begin{align}
&\bigl\{ (\mathbf x, x_{n+1}) : 0 < x_{n+1} < \frac{r_{i_1}(\mathbf y)}{2}, \, \mathbf x = \varphi(\mathbf y) \in V \bigr\}, \label{region-main1}  \\ 
&\bigl\{ (\mathbf x, x_{n+1}) : \frac{r_{j_1}(\mathbf y)}{2} < x_{n+1} < 0, \, \mathbf x = \varphi(\mathbf y) \in V  \bigr\}, \label{region-main2} \\ 
&\bigl\{(\mathbf x, x_{n+1}) : r_{i_k}(\mathbf y) < x_{n+1} < \frac{1}{2}(r_{i_k} + r_{i_{k+1}})(\mathbf y), \, \mathbf x \in V \bigr\}, \; 1 \leq k \leq M_1-1, \label{region3} \\ 
& \bigl\{(\mathbf x, x_{n+1}) : \frac{1}{2}(r_{i_k} + r_{i_{k-1}})(\mathbf y) < x_{n+1} < r_{i_{k}}(\mathbf y), \, \mathbf x \in V \bigr\}, \; 1 \leq k \leq M_1, \label{region4} \\ 
&\bigl\{(\mathbf x, x_{n+1}) : \frac{1}{2}(r_{j_k} + r_{j_{k+1}})(\mathbf y)  < x_{n+1} < r_{j_k}(\mathbf y), \, \mathbf y \in V \bigr\}, \; 1 \leq k \leq M_2-1, \label{region5} \\
&\bigl\{(\mathbf x, x_{n+1}) : r_{j_{k}}(\mathbf y)  < x_{n+1} < \frac{1}{2}(r_{j_k} + r_{j_{k-1}})(\mathbf y), \, \mathbf y \in V \bigr\}, \; 1 \leq k \leq M_2, \label{region6} \\
&\bigl\{ (\mathbf x, x_{n+1}) : r_{i_{M_1}}(\mathbf y) < x_{n+1} < 1, \, \mathbf y \in V  \bigr\}, \label{region7} \\
&\bigl\{ (\mathbf x, x_{n+1}) : -1  < x_{n+1} < r_{j_{M_2}}(\mathbf y), \, \mathbf y \in V  \bigr\} \label{region8},
\end{align} 
whose union covers $V \times (-1,1)$, except possibly a set of dimension $<n$. We claim that each of these regions is an $(n+1)$-dimensional tower. Indeed, it is clear that in the coordinates $(\mathbf y, x_{n+1})$, the regions (\ref{region-main1}) and (\ref{region-main2}) are horns adjacent to zero, whereas (\ref{region3}) and (\ref{region4}) are horns adjacent to $r_{i_k}$. Further, the projection of any of domains above onto the first $n$ coordinates is $V$, an admissible generalized coordinate image of an $n$-dimensional tower, which justifies the claim.

Next we argue that (\ref{region3}) can be treated the same way as (\ref{region-main1}), and (\ref{region4}) the same way as (\ref{region-main2}). Indeed by a change of variable 
\begin{equation}(\mathbf y, x_{n+1}) \mapsto (\mathbf y, y_{n+1}) \quad \text{ where } \quad y_{n+1} = x_{n+1} - r_{i_k}(\mathbf y), \label{changeofvar-reduction} \end{equation} we find that the towers (\ref{region3}) and (\ref{region4}) reduce to the form (\ref{region-main1}) and (\ref{region-main2}) respectively, and that $F$, by virtue of our assumption, transforms to a function which continues to admit a factorization of the form (\ref{F-factid2}) in these new coordinates. The regions (\ref{region5})--(\ref{region8}) can also be reduced to either (\ref{region-main1}) or (\ref{region-main2}) by similar shift transformations. In order to complete the proof of the lemma it therefore suffices to cover each of the regions (\ref{region-main1}) and (\ref{region-main2}) by a finite number of prepared towers on each of which $F$ is fractional normal crossings in any preferred system of coordinates.  

With this in mind, we estimate each factor in (\ref{F-factid2}) as follows. 
On the region (\ref{region-main1}) and for $i \in \mathcal I$, 
\begin{equation} 
\frac{r_i(\mathbf y)}{2} \leq r_i(\mathbf y) - x_{n+1} = \bigl| x_{n+1} - r_i(\mathbf y) \bigr| 
\leq \frac{r_{i_1}(\mathbf y)}{2} + r_i(\mathbf y) \leq \frac{3}{2} r_i(\mathbf y), 
\label{realroots-est1} \end{equation}
 while for $i \in \mathcal J$ we have \begin{equation} r_i < 0 \quad \text{ and hence } \quad
 |x_{n+1} - r_i(\mathbf y)| = x_{n+1} - r_i(\mathbf y). \label{realroots-est2}  \end{equation} 
In view of (\ref{F-factid2}), (\ref{realroots-est1}) and (\ref{realroots-est2}), we define a function $H = H_1 H_2$, where
\[ H_1(\mathbf x, x_{n+1}) = \prod_{i \in \mathcal I} r_i(\mathbf y),\quad H_2(\mathbf x, x_{n+1}) = x_{n+1}^{\beta_{n+1}}  \Bigl[ \prod_{i \in \mathcal J} \bigl( x_{n+1} - r_i(\mathbf y \bigr) \Bigr]. \]
We observe that $H_2$ satisfies the hypothesis and hence the conclusion of Corollary \ref{corollary-imroots}. Thus the tower (\ref{region-main1}) can be decomposed into a finite number of prepared towers on each of which $H_2$ is expressible as fractional normal crossings in the preferred coordinates. Since $H_1$ is already fractional normal crossings in $\mathbf y$, by Lemma \ref{lemma-fnc} it continues to be so in the preferred coordinates on each such prepared tower. Thus the desired conclusion of the lemma holds for the function $H$. By (\ref{realroots-est1}) and (\ref{realroots-est2}), $F H^{-1}$ is a unit in the preferred coordinates, completing the proof for (\ref{region-main1}). A similar calculation can be verified on the region (\ref{region-main2}) and is left to the reader. 
\end{proof} 
\subsection{Proof of Proposition \ref{lemma-block2}} \label{subsec-proof-lemma-block2} 
The general proof is essentially a combination of the techniques used to prove Lemmas \ref{imaginary-roots} and \ref{real-roots}. Assume that $F$ obeys the factorization (\ref{F-factid2}), with the hypotheses that the sets $\mathcal A_1, \mathcal A_2$ and the difference sets $\mathcal A_i - \mathcal A_j$, $i, j = 1,2$ all consist of fractional normal crossings and possibly the constant zero function, where $\mathcal A_1 = \{r_i : 1 \leq i \leq M \}$ and $\mathcal A_2 = \{\text{Re}(r_i) : 1 \leq i \leq M \}$. Following the proof of Corollary \ref{corollary-imroots} we define the index set
\[ \mathbb I^{\ast} = \left\{1 \leq i \leq M : \text{Im}(u_i) \text{ is a unit on } (0,1)^n \right\}, \]
and the constant $C_1$ as in (\ref{def-C1}). 
For $i \not\in \mathbb I^{\ast}$, we know that Re$(r_i) \not\equiv 0$, since otherwise Im$(u_i) = u_i$ would be a unit. Further Re$(r_i)$ is fractional normal crossings, so it does not change sign on $(0,1)^n$. Accordingly as in Lemma \ref{real-roots}, we decompose $\{1, \cdots, M \} \setminus \mathbb I^{\ast} = \mathcal I \cup \mathcal J$, where Re$(r_i) > 0$ for $i \in \mathcal I$ and Re$(r_i) < 0$ for $i \in \mathcal J$. Since the nontrivial elements of $\mathcal A_2 - \mathcal A_2$ are fractional normal crossings, we can order the distinct elements in $\{ \text{Re}(r_i) : i \not\in \mathbb I^{\ast} \}$. Thus there are index sets $\mathcal I' = \{ i_1, \cdots, i_{M_1} \} \subseteq \mathcal I$, $\mathcal J' = \{j_1, \cdots, j_{M_2} \} \subseteq \mathcal J$, such that
\[ \begin{aligned}  \{\text{Re}(r_i&): i \in \mathcal I \} = \{\text{Re}(r_i) : i \in \mathcal I' \}, \\ \{\text{Re}(r_j&): j \in \mathcal J \} = \{\text{Re}(r_j) : j \in \mathcal J' \}, \\ \text{Re}(r_{i}) &\not\equiv \text{Re}(r_{i'}) \text{ for } i, i' \in \mathcal I', i \ne i', \\ \text{Re}(r_{j}) &\not\equiv \text{Re}(r_{j'}) \text{ for } j, j' \in \mathcal J', j \ne j', \text{ and } \\  \text{Re}(r_{j_{M_2}}) < &\cdots < \text{Re}(r_{j_1}) < 0 < \text{Re}(r_{i_1}) < \cdots < \text{Re}(r_{i_{M_1}}). \end{aligned} \]   
We also consider as in Lemma \ref{real-roots} the regions (\ref{region-main1})--(\ref{region8}), but with $r_i$ replaced by Re$(r_i)$. We observe that by part (\ref{part2}) of the hypotheses of the proposition, the change of variables \begin{equation} \label{changeofvar-reduction2} (\mathbf y, x_{n+1}) \mapsto (\mathbf y, y_{n+1}), \quad y_{n+1} = x_{n+1} - \text{Re}(r_{i_k})(\mathbf y) \end{equation}  converts $F$ to a form that continues to obey the assumptions of Proposition \ref{lemma-block2}, while mapping (\ref{region3}) and (\ref{region4}) to regions of the form (\ref{region-main1}) and (\ref{region-main2}) respectively. Similar arguments hold for the other regions as well. We therefore reduce to considering only the regions (\ref{region-main1}) and (\ref{region-main2}).

On (\ref{region-main1}) and for $i \in \mathcal I$, the estimate (\ref{realroots-est1}) can be used to obtain    
\[ \frac{\text{Re}(r_{i}(\mathbf y))}{2} \leq |x_{n+1} - \text{Re}(r_{i})(\mathbf y)| \leq \frac{3}{2} \text{Re}(r_i(\mathbf y)), \] 
so that 
\begin{equation} 
\begin{aligned} 
|x_{n+1} - r_i(\mathbf y)|^2 &= |x_{n+1} - \text{Re}(r_i(\mathbf y))|^2 + |\text{Im}(r_i(\mathbf y))|^2 \\ &\geq  |x_{n+1} - \text{Re}(r_i(\mathbf y))|^2 \geq \Bigl[\frac{\text{Re}(r_{i}(\mathbf y))}{2} \Bigr]^2. \end{aligned} 
\label{general-est1} \end{equation} 
On the other hand, in conjunction with (\ref{def-C1}), we also arrive at the estimate
\begin{equation} \begin{aligned} |x_{n+1} - r_i(\mathbf y)|^2 &= |x_{n+1} - \text{Re}(r_i(\mathbf y))|^2 + |\text{Im}(r_i(\mathbf y))|^2 \\ &\leq \left(\frac{9}{4} + C_1^2 \right) \bigl[ \text{Re}(r_i(\mathbf y)) \bigr]^2. \end{aligned} \label{general-est2} \end{equation}
Define a function $H = H_1 H_2$, where 
\[ H_1(\mathbf x, x_{n+1}) = \prod_{i \in \mathcal I} \text{Re}(r_i(\mathbf y)),\quad H_2(\mathbf x, x_{n+1}) = x_{n+1}^{\beta_{n+1}}  \Bigl[ \prod_{i \in \mathcal J \cup \mathbb I^{\ast}} \bigl( x_{n+1} - r_i(\mathbf y \bigr) \Bigr]. \]  
The estimates (\ref{general-est1}) and (\ref{general-est2}) imply that $FH^{-1}$ is a unit when expressed in any set of preferred coordinates. Since $H_1$ is fractional normal crossings by assumption (ii) and $H_2$ satisfies the hypotheses of Corollary \ref{corollary-imroots} on the tower (\ref{region-main1}), we are done.  

\subsection{Proof of Theorem \ref{mainthm-resolution}} \label{subsec-mainthm-resolution-proof}
The proof is by induction on $n$. For the initializing step $n=1$, 
the properties listed in parts (\ref{parta})--(\ref{partd}) of Theorem \ref{mainthm-resolution} may be verified directly from the Newton-Puiseux lemma involving the Puiseux factorization of a bivariate real-analytic function. 

For the inductive step, we assume that the conclusions of the theorem hold for all real-analytic functions in $\leq n$ variables. Let $F$ be a real-analytic function near the origin in $\mathbb R^{n+1}$, for which we construct the auxiliary function $\Lambda$ as in \S \ref{Construction-Lambda}. Since $\Lambda$ is an $n$-variate real-analytic function, we apply the induction hypothesis on $\Lambda$, after a nonsingular linear coordinate transformation in $\mathbb R^n$ if necessary so as to ensure $\Lambda(\mathbf 0', x_n) \not\equiv 0$. By the previous step of the induction, there exists a small constant $\epsilon > 0$ and a finite collection of $(n-1)$-dimensional prepared towers whose union covers $(-\epsilon, \epsilon)^{n-1}$ except possibly a lower dimensional subset, and on each of which $\Lambda$ admits a factorization of the form (\ref{F-factid}) in preferred coordinates. Fix such a tower $U'$, and let $\Psi'$ be the system of preferred coordinates on $U'$. We write 
\[ \Lambda(\mathbf x', x_n) = (\text{unit}) \prod_{i=1}^{N'} \bigl(x_n - \lambda_i(\mathbf u') \bigr), \quad \mathbf x'= \Psi'(\mathbf u'), \; \mathbf u' \in (0,1)^{n-1}, \]
where the roots $\lambda_i$ of $\Lambda$ are fractional normal crossings, and hence Re$(\lambda_i)$ are fractional power series. Let $\Xi_0(\mathbf u', x_n)$ be a monic polynomial in $x_n$ with real-analytic coefficients in $\mathbf u'$ such that $\{ \lambda_i \}$, $\{ \text{Re}(\lambda_i) \}$ and the coordinate functions $\{ u_1, \cdots, u_{n-1} \}$ are roots of $\Xi_0$. The existence of $\Xi_0$ is ensured by Lemma \ref{lemma-integral-closure}. 

Since $\Psi'$ is a vector-valued fractional power series by definition, there exists a large integer $R$ such that $\Psi' \circ \Phi'_R$ is a vector-valued regular power series, where $\Phi_R'$ is the power map defined in (\ref{power-map}) but in $(n-1)$ dimensions. Set $\mathbf u' = \Phi_R'(\mathbf w')$. It is important to note that $\Psi' \circ \Phi_R'(\mathbf w')$ is no longer a coordinate system on the tower, though it is a generalized one, since 
\begin{equation} d\mathbf x' = (\text{unit}) d\mathbf u' = (\text{unit}) \mathbf w'^{R-1} d\mathbf w'. \label{Jac-chase}\end{equation}  
By the induction hypothesis again but now applied to the $n$-variate real-analytic function $(\mathbf w', x_n) \mapsto \Xi_0(\Phi_R'(\mathbf w'), x_n)$, we can decompose the orthant $(0,1)^{n-1}$ in $\mathbf w'$-space into a finite number of prepared towers $\{(W', \eta_{W'}) : W' \in \mathcal W' \}$ such that 
\[ \Xi_0(\Phi_R'(\mathbf w'), x_n) = \prod_{i} \bigl(x_n - \xi_i(\mathbf t') \bigr), \quad \mathbf w' = \eta_{W'}(\mathbf t') \in W', \; \mathbf t' \in (0,1)^{n-1}, \] 
where the roots $\xi_i$ and their differences are either trivial or fractional normal crossings. 

Since the collection $\{ \xi_i \}$ contains the monomials $\{ w_j^R : 1 \leq j \leq n-1 \}$ expressed in the new coordinates $\mathbf t'$, we conclude that the coordinate functions $w_j$ are fractional normal crossings in these new variables. Thus there exist $r_j > 0$, $1 \leq j \leq n-1$ such that 
\[ \mathbf w'^{R-1} = (\text{unit}) \prod_{j=1}^{n-1} t_j^{r_j- 1}. \] Since $\eta_{W'}$ is a coordinate system on $W'$ by our induction assumption, $d\mathbf w'= (\text{unit}) d \mathbf t'$, hence (\ref{Jac-chase}) implies that \[ d\mathbf x' = (\text{unit}) \prod_{j=1}^{n-1} t_j^{r_j- 1} d\mathbf t'. \]
Finally, setting $\mathbf y' = \Phi'_{\mathbf r'}(\mathbf t') = (t_1^{r_1}, \cdots, t_{n-1}^{r_{n-1}})$ we find that \begin{equation} d\mathbf x' = (\text{unit}) d\mathbf y'. \label{coord-check} \end{equation} Recalling that $\Psi' \circ \Phi_R'$ is a regular power series, $\eta_{W'}$ is a fractional power series and $\Phi'_{\mathbf r'}$ is a power transformation, we conclude (see Lemma \ref{lemma-coords-comp2}) that $\varphi' = \Psi' \circ \Phi_R' \circ \eta_{W'} \circ (\Phi'_{\mathbf r'})^{-1}$ is indeed a vector-valued fractional power series, hence in view of (\ref{coord-check}) an admissible coordinate system onto its image.       

Further, the collection $\{ \xi_i \}$ contains all roots of $\Lambda$ as well as their real parts, therefore we find that the hypotheses of Proposition \ref{lemma-block2} are satisfied with $n$, $F$, $V$ and $\varphi$ replaced by $n-1$, $\Lambda$, $V' = \Psi'\circ \Phi_R'(W')$ and $\varphi' =  \Psi' \circ \Phi_R' \circ \eta_{W'} \circ (\Phi_{\mathbf r'}')^{-1}$ respectively. It follows from Proposition \ref{lemma-block2} therefore that $V' \times (-1,1)$ can be decomposed into a finite number of $n$-dimensional prepared towers $\{(V, \varphi_V) : V \in \mathcal V \}$ on each of which $\Lambda$ is fractional normal crossings in any preferred system of coordinates. The construction given in the proof of Proposition \ref{lemma-block2} shows that the defining functions of these towers depend on the roots of $\Lambda$ but not on the roots of $F$. Thus the towers are of low resolution complexity with respect to $F$ but in general non-constructible with respect to $\Lambda$. On each such $V$ the hypotheses of Proposition \ref{lemma-block1} hold with $\sigma$ replaced by $\varphi_V$. Invoking this proposition yields the conclusions (\ref{parta})--(\ref{partd}) of Theorem \ref{mainthm-resolution} in dimension $(n+1)$, completing the induction.   
\qed

\subsection{Proof of Corollary \ref{corollary-mainthm-resolution}}
The proof is essentially a repetition of an argument given in the proof of Theorem \ref{mainthm-resolution}, so we only sketch the details. Fix $U$ and let $\{\varrho_i \}$ be the fractional power series roots of $F$ expressed in the coordinates $\mathbf u$, where $\mathbf x = \Psi(\mathbf u)$. Since Re$(\varrho_i)$ is a fractional power series for each $i$, there exists by Lemma \ref{lemma-integral-closure} a monic polynomial $F_0(\mathbf u, x_{n+1})$ in $x_{n+1}$ with real-analytic coefficients in $\mathbf u$ whose roots are $\{ \varrho_i \}$, $\{ \text{Re}(\varrho_i) \}$ and the coordinate functions $\{ u_1, \cdots, u_n \}$. 

Let $R$ be a large positive integer such that $\varphi \circ \Phi_R$ is a regular power series, and set $\mathbf u = \Phi_R(\mathbf w)$. By Theorem \ref{mainthm-resolution} applied to $F_0(\Phi_R(\mathbf w), x_{n+1})$, we decompose $(0,1)^n$ into a finite number of $n$-dimensional prepared towers $\{(W, \eta_W) : W \in \mathcal W \}$ such that 
\[ F_0(\Phi_R(\mathbf w), x_{n+1}) = \prod_{i} (x_{n+1} - \omega_i(\mathbf t)), \quad \mathbf w = \eta_W(\mathbf t) \in W, \; \mathbf t \in (0,1)^n \]
where the roots $\{ \omega_i \}$ and their differences are fractional normal crossings in $\mathbf t$. Since \[ \mathcal A = \{ \varrho_i \circ \Phi_R \circ \eta_W, \;\text{Re}(\varrho_i) \circ \Phi_R \circ \eta_W  \} \subseteq \{ \omega_i \}, \] every nontrivial element of $\mathcal A$ and $\mathcal A - \mathcal A$ must be fractional normal crossings. By setting $\mathbf y = \Phi_{\mathbf r}(\mathbf t) = (t_1^{r_1}, \cdots, t_n^{r_n})$, where $\mathbf r$ is a vector of positive entries judiciously chosen so that $d \mathbf x = (\text{unit}) d\mathbf y$, we find that the desired conclusion is satisfied with \begin{equation} V = \Psi \circ \Phi_R(W) \subseteq U \qquad \text{ and } \qquad \varphi = \Psi \circ \Phi_R \circ \eta_W \circ (\Phi_{\mathbf r})^{-1}.  \label{set-coord-pair}\end{equation}      
\qed

{\bf{Remark: }} Summarizing the discussion in this section, we obtain the following statement. Given a real-analytic function $F$ of $(n+1)$ variables with $F(\mathbf 0, 0) = 0$, one can obtain a decomposition of $(-\epsilon, \epsilon)^{n}$ into finitely many sets $\{V\}$ and construct for each set an associated coordinate system $\varphi$ such that the roots of $F(\varphi(\mathbf y), x_{n+1})$ have the fine structure stated in part (\ref{corollary-partc}) of Corollary \ref{corollary-mainthm-resolution}. This involves a two-step procedure. 
\begin{itemize}
\item In the first step, one monomializes $\Lambda = \Lambda_F$, obtaining a decomposition of $(- \epsilon, \epsilon)^{n}$ into a finite number of prepared towers $\{ U \in \mathcal U\}$ such that the conclusions of Theorem \ref{mainthm-resolution}, in particular the factorization (\ref{F-factid}), hold for $F$ on each $U$ and in preferred coordinates. 
\item In the second step, each tower $U$ gives rise to an $(n+1)$-variate real-analytic function $F_0 = F_0(U)$. Monomializing $H_U(\mathbf u) := \Lambda_{F_0(U)}(\mathbf u)$ (i.e. a second application of Theorem \ref{mainthm-resolution}, on $F_0(U)$) leads to a finer decomposition of $U$ and the desired representation of $F$. The sets $\{V : V \in \mathcal V(U) \}$ in the final decomposition are explicitly determined, as seen in the proofs of Corollary \ref{corollary-mainthm-resolution} and Proposition \ref{lemma-block2}, and of low resolution complexity with respect to $F$ and $F_0(U)$. 
\end{itemize}
Note that if $(V, \varphi)$ is a set-coordinate pair obtained at the end of this process (as in (\ref{set-coord-pair})), and if $r$ is any real root of $F(\varphi(\mathbf y), x_{n+1})$ with respect to $x_{n+1}$, then the coordinate system $\Phi = \Phi(\varphi, V, r)$ lies in $\mathcal C$, where $\Phi$ and $\mathcal C$ are as in the statement of Theorem \ref{main-thm1}. In \S \ref{sec-proof-mainthm1}, the construction above will be used to obtain the finite collection $\mathcal C^{\ast}$ of coordinate systems mentioned in this theorem. See also remark \ref{remark-Cstar} following Theorem \ref{main-thm1}.

\section{Examples} \label{sec-example}
We pause briefly to elucidate the resolution of singularities algorithm described in the previous section in the context of a few concrete functions $F$.  

\subsection{Example 1} Let \[ F(x_1, x_2, x_3) = x_3^2 + a x_2^{2N} x_3 + b x_1^{4M}, \]
where $a$, $b$ are positive constants, $M, N$ are positive integers. Then in the notation of (\ref{cGl}) and (\ref{def-Lambda}), 
\[ c(\mathbf x) = 4 b x_1^{4M}, \quad \Delta_P(\mathbf x) = (a x_2^{2N})^2 - 4b x_1^{4M}, \] 
so that 
\[ \Lambda(\mathbf x) = a^2 b x_1^{4M} \left(x_2^{4N} - \frac{4 b}{a^2} x_1^{4M} \right). \]    
We aim to verify the conclusions of Theorem \ref{mainthm-resolution} for this $F$. Namely, we will introduce a finite collection of local coordinates on $(x_1, x_2)$ space in which $\Lambda$ is fractional normal crossings, then establish that the roots of $F$ in $x_3$ as well as their differences are fractional normal crossings in these coordinates. We will also specify the Newton {\atxt polyhedra} of $F$ in these coordinates and compute the Newton  {\atxt exponents}.  

We decompose the positive orthant in $(x_1, x_2)$ space into four regions. Defining $c$ by the identity $c^{4N} = 4b/a^2$, we set    
\begin{align*}
V_1 &= \left\{(x_1, x_2) : 0 < x_2 < \frac{c}{2} x_1^{\frac{M}{N}} \right\},  \\
V_2 &= \left\{(x_1, x_2) : \frac{c}{2} x_1^{\frac{M}{N}} < x_2 < c x_1^{\frac{M}{N}} \right\}, \\
V_3 &= \left\{(x_1, x_2) : c x_1^{\frac{M}{N}} < x_2 < \frac{3c}{2} x_1^{\frac{M}{N}} \right\}, \\
V_4 &= \left\{(x_1, x_2) : \frac{3c}{2} x_1^{\frac{M}{N}} < x_2 < 1 \right\}. \\ 
\end{align*} 
We describe the choice of coordinates on the domains $V_1$, $V_3$ and $V_4$, the treatment for $V_2$ being similar to $V_3$.

On $V_1$, the discriminant of $F$ is negative, so there are two non-real roots that are complex conjugates of each other. Making the admissible change of variables 
\[ (x_1, x_2) = \varphi_{V_1}(y_1, y_2) \qquad \text{ where } \qquad x_1= y_1^{\frac{N}{M+N}}, x_2 = \frac{c}{2} y_1^{\frac{M}{M+N}} y_2, \]
we note that the hypotheses of Proposition \ref{lemma-block1} are met, and that the two roots $r_1$, $r_2$ of $F$ in these coordinates are 
\begin{align*}
x_3 &= -\frac{a}{2} x_2^{2N} \pm \frac{1}{2} \sqrt{a^2 x_2^{4N} - 4b x_1^{4M}} \\ &= - \frac{\sqrt{b}}{2^{2N}} y_1^{\frac{2MN}{M+N}} y_2^{2N} \pm i \sqrt{b} y_1^{\frac{2MN}{M+N}} \sqrt{1 - \frac{y_2^{4N}}{2^{4N}}} \\ &= \pm i \sqrt{b} y_1^{\frac{2MN}{M+N}} \left[ \sqrt{1 - \frac{y_2^{4N}}{2^{4N}}} \pm i \frac{y_2^{2N}}{2^{2N}} \right]. 
\end{align*}
Thus $r_1$, $r_2$ and $r_1-r_2$ are fractional normal crossings in the coordinates $(y_1, y_2)$, as claimed in Proposition \ref{lemma-block1} and Theorem \ref{mainthm-resolution}. In fact, if $\mathcal A = \{ r_i, \text{Re}(r_i) :  i=1,2 \}$, then every element of $\mathcal A$ and $\mathcal A - \mathcal A$ is either identically zero or fractional normal crossings. Thus the coordinate transformation $\Phi_{V_1}$ given by $\Phi_{V_1}(\mathbf y, y_{n+1}) = (\varphi_{V_1}(y_1, y_2), x_{3})$ lies in the coordinate class $\mathcal C(F)$ defined in the statement of Theorem \ref{main-thm1}.  Moreover, the Newton polyhedron of $F$ in the system of coordinates $\Phi$ is given by the monotone edge path connecting the points $(0,0,2)$ and $(4MN/(M+N), 0, 0)$. Thus $\delta_0(F;\Phi_{V_1}) = \frac{1}{2} + \frac{1}{4N} + \frac{1}{4M}$. 
   
On $V_3$, both roots are real and distinct. In the coordinates \[ (x_1, x_2) = \varphi_{V_2}(y_1, y_2), \qquad x_1 = y_1^{\frac{N}{M+N}}, x_2 = c y_1^{\frac{M}{M+N}} \left(1 + \frac{3y_2}{2} \right)  \] 
these roots take the form 
\[ x_3 = \sqrt{b} y_1^{\frac{2MN}{M+N}} \left[ - \left( 1 + \frac{3y_2}{2}\right)^{2N} \pm \sqrt{\left( 1 + \frac{3y_2}{2}\right)^{4N} - 1} \right], \]
which are again fractional normal crossings. Further, if $\Phi_{V_3}(\mathbf y) = (\varphi_{V_2}(y_1, y_2), x_3)$, then $\Phi_{V_3} \in \mathcal C(F)$, and the Newton diagram and Newton {\atxt exponent}  of $F$ under $\Phi_{V_3}$ remain the same as for $V_1$. 

On $V_4$, the roots continue to be real and distinct. We set  
\[ (x_1, x_2) = \varphi_{V_4}(y_1, y_2) \quad \text{ where } \quad x_1 = \left( \frac{2}{3c} \right)^{\frac{N}{M}} y_1 y_2^{\frac{N}{M+N}}, \qquad x_2 = y_2^{\frac{M}{M+N}}. \]
The roots now are 
\begin{align*} r_1 &= -\frac{a}{2} y_2^{\frac{2MN}{M+N}} \left[1 + \sqrt{1 - \left( \frac{2}{3}\right)^{4N} y_1^{4M}} \right], \\
r_2 &= -\frac{\frac{a}{2} \left( \frac{2}{3} \right)^{4N} }{1 + \sqrt{1 - \left( \frac{2}{3}\right)^{4N} y_1^{4M} }} y_1^{4M} y_2^{\frac{2MN}{M+N}},  
\end{align*}   
which are again fractional normal crossings but with distinct leading exponents, unlike the previous two cases. Set as before $\Phi_{V_4}(y_1, y_2, y_3) = (\varphi_{V_4}(y_1, y_2), x_3)$, so that $\Phi_{V_4} \in \mathcal C(F)$. The Newton diagram of $F$ in coordinates $\Phi_{V_4}$ is defined by the monotone edge path obtained by joining the points $(0,0,2)$, $(0, 2MN/(M+N), 1)$ and $(4M, 4MN/(M+N), 0)$. Even though the monotone edge path now consists of two distinct segments, a straightforward calculation shows that $\delta_0(F;\Phi_{V_4}) = \frac{1}{2} + \frac{1}{4N} + \frac{1}{4M}$, as before.   

\subsection{Example 2} The previous example had the simplified feature that the coordinate changes $\varphi_{V_i}$ which converted the roots $\{r_i\}$ and their differences $\{ r_i - r_j \}$ into fractional normal crossings had the same effect on $\{ \text{Re}(r_i) \}$ and $\{ \text{Re}(r_i) - \text{Re}(r_j)\}$ as well. This need not always be the case, and a finer decomposition of the sets $V_i$ as specified by Corollary \ref{corollary-mainthm-resolution} may in general be necessary to meet these additional restrictions, as the following example shows. Let
\[F(x_1, x_2, x_3) = (x_3-x_1)(x_3-x_1 + x_1^2 x_2 - x_1 x_2^2 + i x_1 x_2),\]
so that the two roots $r_1, r_2$ satisfy 
\[ r_1 = x_1, \quad r_2 = x_1 - x_1^2 x_2 + x_1 x_2^2 - i x_1 x_2, \quad r_2 - r_1 = x_1^2 x_2 - x_1 x_2^2 + i x_1 x_2. \] 
Each of the functions above is fractional normal crossings, but 
\[ \text{Re}(r_2) - \text{Re}(r_1) = x_1x_2^2 - x_1^2 x_2 \]
is not. Thus the ambient coordinate system $(x_1, x_2)$ satisfies the conclusion of Theorem \ref{mainthm-resolution} but does not lie in $\mathcal C(F)$.  

However, by decomposing the positive orthant into sets $\{ U_i : 1 \leq i \leq 4 \}$ equipped with coordinates $\{ \varphi_{U_i} : 1 \leq i \leq 4 \}$, where 
\begin{align*}
U_1 &= \Bigl\{(x_1, x_2) : 0 < x_2 < \frac{1}{2} x_1 \Bigr\}, \quad \varphi_{U_1}(y_1, y_2) =  \Bigl(\sqrt{y_1}, \frac{1}{2} y_2 \sqrt{y_1} \Bigr), \\
U_2 &= \Bigl\{(x_1, x_2) : \frac{1}{2}x_1 < x_2 < x_1 \Bigr\}, \quad \varphi_{U_2}(y_1, y_2) = \Bigl(\sqrt{y_1}, \sqrt{y_1} (1 - \frac{y_2}{2})  \Bigr), \\
U_3 &= \Bigl\{(x_1, x_2) : x_1 < x_2 < \frac{3}{2}x_1 \Bigr\}, \quad \varphi_{U_3}(y_1, y_2) = \Bigl(\sqrt{y_1}, \sqrt{y_1} \bigl(1 + \frac{3y_2}{2} \bigr) \Bigr), \\
U_4 &= \Bigl\{(x_1, x_2) : \frac{3}{2}x_1 < x_2 < 1 \Bigr\}, \quad \varphi_{U_4}(y_1, y_2) = \Bigl( \frac{2}{3}y_1 \sqrt{y_2}, \sqrt{y_2} \Bigr),    
\end{align*}
we note that the conclusions of Corollary \ref{corollary-mainthm-resolution} hold in these new coordinates. Therefore 
\[ \Phi_{U_i}(y_1, y_2, y_3) = (\varphi_{U_i}(y_1, y_2), x_3) \in \mathcal C(F), \quad 1 \leq i \leq 4. \] 
   
\section{Proof of Theorems \ref{main-thm1} and \ref{main-thm2}} \label{sec-proof-mainthm1} 
The main results in this section are the following two propositions. The first gives an algebraic expression for the Newton {\atxt exponent}  of a fractional power series with a factorization of the form (\ref{F-factid2}). The second proposition uses this algebraic representation to compute the critical integrability index of a model class of functions whose roots are fractional power series. Together with Theorem \ref{mainthm-resolution}, they supply the essential ingredients of the proof of Theorem \ref{main-thm1}, given at the end of this section. 
\begin{proposition}
Suppose that $V \subseteq (0,1)^n$ and that $\varphi$ is a coordinate system on $V$ such that the roots of $F(\varphi(\mathbf y), x_{n+1})$ with respect to $x_{n+1}$ are fractional normal crossings and their differences are also fractional normal crossings. In particular, assume that $F$ admits the factorization (\ref{F-factid}) and hence (\ref{F-factid2}), with the exponents $\{ \pmb{\alpha}_{\ell} = (\alpha_{\ell}(1), \cdots, \alpha_{\ell}(n)): 1 \leq \ell \leq L\}$ ordered as in (\ref{alpha-ordering}). If $\Phi$ is the coordinate transformation given by $(\mathbf x, x_{n+1}) = \Phi(\mathbf y, y_{n+1}) = (\varphi(\mathbf y), y_{n+1})$, then 
\[ \delta_0(F;\Phi) = \min \left[ \frac{\alpha_{\ell}(j) + 1}{A_{\ell}(j) + \alpha_{\ell}(j) B_{\ell}} : 1 \leq j \leq n, 1 \leq \ell \leq L \right],  \] 
where   
\begin{equation}
\mathbf A_{\ell} = \sum_{k \leq \ell} \pmb{\alpha}_k \sum_{i \in \mathcal L_k} n_i = (A_{\ell}(1), \cdots, A_{\ell}(n)), \qquad  B_{\ell} = \sum_{k > \ell} \sum_{i \in \mathcal L_k} n_i + \beta_{n+1} \label{AlBl-new}
\end{equation}
are as in (\ref{AlBl-old}). \label{prop-step1-thm1}
\end{proposition}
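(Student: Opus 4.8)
\textbf{Proof plan for Proposition \ref{prop-step1-thm1}.}

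The plan is to compute the Newton polyhedron of $F$ in the coordinates $\Phi$ explicitly, using the factorization (\ref{F-factid2}), and then extract the Newton distance $d_0(F;\Phi)$ from its geometry. The starting point is Lemma \ref{mep-lemma3}: since the roots of $F\circ\Phi$ are fractional monomials $u_i(\mathbf y)\mathbf y^{\pmb{\gamma}_i}$ with totally ordered exponents, the Newton polyhedron $\text{NP}(F;\Phi)$ is defined by a monotone edge path whose vertices are precisely the points $\{(\mathbf A_\ell, B_\ell) : 0 \leq \ell \leq L\}$, with $\mathbf A_\ell, B_\ell$ as in (\ref{AlBl-new}) (this is exactly the content of the proof of Lemma \ref{mep-lemma3}, adapted to the present situation where the root $r_i$ appears with multiplicity $n_i$, so that the role of $\pmb{\beta}$ there is played by $\mathbf 0$, and $\beta_{n+1}$ is as here). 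So the first step is to record that $\text{NP}(F;\Phi) = \text{convex hull}\bigcup_{\ell=0}^L[(\mathbf A_\ell, B_\ell) + \mathbb R^{n+1}_{\geq 0}]$, with the chain joining the $(\mathbf A_\ell, B_\ell)$ being a monotone edge path.

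The second step is to invoke Lemma \ref{lemma-nd=gen}: because $\text{NP}(F;\Phi)$ is defined by a monotone edge path, its Newton exponent equals the generalized Newton exponent, i.e., the smallest of the $n$ projected Newton exponents $\delta_j$. So it suffices to compute each $\delta_j = \max\{t : (t^{-1}, t^{-1}) \in \pi_j(\text{NP}(F;\Phi))\}$. The key observation is that $\pi_j(\text{NP}(F;\Phi))$ is the two-dimensional Newton polyhedron determined by the points $\{(A_\ell(j), B_\ell) : 0 \leq \ell \leq L\}$ (projecting onto the $j$th $\mathbf x$-coordinate and the $x_{n+1}$-coordinate), which again forms a monotone edge path in $\mathbb R^2$ since $A_\ell(j)$ is nondecreasing in $\ell$ and $B_\ell$ is strictly decreasing. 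Then $\delta_j^{-1}$ is the value $d$ such that $(d,d)$ lies on this two-dimensional boundary chain. Finding this amounts to: for each edge of the chain, i.e., each segment joining $(A_{\ell-1}(j), B_{\ell-1})$ to $(A_\ell(j), B_\ell)$, compute where the diagonal $\{(d,d)\}$ meets the line through these points, and take the appropriate value. A direct computation with the edge between $(A_{\ell-1}(j), B_{\ell-1})$ and $(A_\ell(j), B_\ell)$ — using that $A_\ell(j) = A_{\ell-1}(j) + \alpha_\ell(j)\sum_{i\in\mathcal L_\ell} n_i$ and $B_{\ell-1} = B_\ell + \sum_{i\in\mathcal L_\ell} n_i$, so the edge has "slope" related to $\alpha_\ell(j)$ — should yield that the diagonal crosses the supporting line of that edge at the point with common coordinate $\bigl(A_\ell(j) + \alpha_\ell(j) B_\ell\bigr)/\bigl(\alpha_\ell(j)+1\bigr)$. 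Taking reciprocals and minimizing over $\ell$ and $j$ then gives the stated formula for $\delta_0(F;\Phi)$.

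The main obstacle I anticipate is the bookkeeping in the two-dimensional computation: one must be careful that the quantity $\bigl(A_\ell(j) + \alpha_\ell(j) B_\ell\bigr)/\bigl(\alpha_\ell(j) + 1\bigr)$ is genuinely realized as $d_0$ (i.e., the diagonal actually passes through $\text{NP}$, not just through the supporting line of an edge extended beyond the edge), and that taking the minimum over all $(\ell, j)$ correctly picks out the Newton distance rather than overcounting. This is where Lemma \ref{mep-lemma2} and Lemma \ref{lemma-prelim1} enter: the monotone-edge-path structure guarantees that the intersection of $\text{NP}(F;\Phi)$ with any horizontal hyperplane is a translated orthant, so the pointwise minimum of the per-edge values is attained on $\text{NP}$ and equals $d_0(F;\Phi)$. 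A secondary (but routine) point is checking the degenerate cases — e.g., when $\alpha_\ell(j) = 0$ for some $j$, or when $L = 1$, or when $\beta_{n+1} = 0$ — to confirm the formula still reads correctly; these follow by the same argument with minor care. Once $d_0(F;\Phi)$ is identified with $\max_j\min_\ell \bigl(A_\ell(j)+\alpha_\ell(j)B_\ell\bigr)/\bigl(\alpha_\ell(j)+1\bigr)$, passing to $\delta_0 = d_0^{-1}$ converts the outer $\max$-$\min$ into a single $\min$ over $(\ell, j)$, giving precisely the claimed expression.
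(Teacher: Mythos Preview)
Your plan is essentially identical to the paper's proof: invoke Lemma \ref{mep-lemma3} to see that $\text{NP}(F;\Phi)$ is defined by the monotone edge path through the $(\mathbf A_\ell, B_\ell)$, invoke Lemma \ref{lemma-nd=gen} to reduce to the projected Newton exponents $\delta_j$, and then compute $\delta_j$ by intersecting the diagonal with each projected edge. One small slip in your final paragraph: the two-dimensional Newton distance $d_j$ is $\max_\ell \bigl(A_\ell(j)+\alpha_\ell(j)B_\ell\bigr)/\bigl(\alpha_\ell(j)+1\bigr)$ (the diagonal must clear every supporting line), so $d_0 = \max_j \max_\ell(\cdots)$, not $\max_j\min_\ell$; after inverting this becomes the single $\min_{j,\ell}$ you want, and your stated conclusion is correct.
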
 
\begin{proof}
By Lemma \ref{mep-lemma3}, NP$(F;\Phi)$ is defined by the monotone edge path joining the points $\{(\mathbf A_{\ell}, B_{\ell}) : 1 \leq \ell \leq L \}$. By Lemma \ref{lemma-nd=gen}, $\delta_0(F, \Phi)$ equals the generalized Newton {\atxt exponent}  of $F$ in coordinates $\Phi$. But a straightforward calculation shows that the diagonal in $\mathbb R^2$ intersects the line joining $(A_{\ell-1}(j), B_{\ell-1})$ and $(A_{\ell}(j), B_{\ell})$ at a point each of whose coordinates is $(A_{\ell}(j) + \alpha_{\ell}(j)B_{\ell})/(\alpha_{\ell}(j)+1)$. Thus the $j$th projected Newton {\atxt exponent}  $\delta_j(F, \Phi)$ is given by 
\[ \delta_j(F;\Phi) = \min \left[ \frac{\alpha_{\ell}(j) + 1}{A_{\ell}(j) + \alpha_{\ell}(j) B_{\ell}} : 1 \leq \ell \leq L \right], \]
completing the proof.   
\end{proof}
\begin{proposition}
Suppose that $G^{\ast}$ is a polynomial in $x_{n+1}$ with fractional power series coefficients, defined on $(-1,1)^{n} \times \mathbb R$ and admitting the factorization
\begin{equation} G^{\ast}(\mathbf x, x_{n+1}) =  \prod_{i=1}^{M} \left( x_{n+1} - \varrho_i(\mathbf x) \right)^{n_i}, \label{fps-factid} \end{equation}  
where the roots $\{ \varrho_i \}$ are distinct fractional power series. Let $\mathcal C(G^{\ast})$ be the class of coordinates defined as in the statement of Theorem \ref{main-thm1}.   
Then 
\begin{equation} \int_{(-1,1)^{n+1}} |G^{\ast}(\mathbf x, x_{n+1})|^{- \delta} d\mathbf x \, d\mathbf x_{n+1} < \infty \label{locint-condition} \end{equation} 
if and only if 
\begin{equation} \label{if-implication} \delta < \inf \left\{ \delta_0(G^{\ast}, \Phi) : \Phi \in \mathcal C(G^{\ast}) \right\}. \end{equation}    
In fact there exists a finite subcollection $\mathcal C_0(G^{\ast}) \subseteq \mathcal C(G^{\ast})$ such that the infimum above equals $\min \{ \delta_0(G^{\ast}; \Phi) : \Phi \in \mathcal C_0(G^{\ast}) \}$. \label{prop-step2-thm1}
\end{proposition}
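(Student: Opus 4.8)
The plan is to deduce both assertions from the resolution algorithm of \S\ref{sec-resolution}. Working orthant by orthant in $\mathbf x$, apply Corollary~\ref{corollary-mainthm-resolution} to $G^{\ast}$ (with $\Psi$ the identity and the $\varrho_i$ repeated according to multiplicity): this produces a finite family of set--coordinate pairs $(V,\varphi)$ whose union covers $(-1,1)^n$ off a set of dimension $<n$, each $\varphi:(0,1)^n\to V$ an admissible coordinate system (so $\det D\varphi$ is a unit) in which $\varrho_i\circ\varphi$, $\mathrm{Re}(\varrho_i)\circ\varphi$ and all their pairwise differences are identically zero or fractional normal crossings. For each such $(V,\varphi)$ the hypotheses of Proposition~\ref{lemma-block2} hold for $G^{\ast}$, so $V\times(-1,1)$ decomposes, off a lower-dimensional set, into finitely many $(n+1)$-dimensional prepared towers on each of which $G^{\ast}$ is fractional normal crossings in any preferred system of coordinates. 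Since a preferred coordinate system has unit Jacobian and carries a prepared tower onto $(0,1)^{n+1}$, for each tower $\widetilde V$ with preferred coordinates $\Psi_{\widetilde V}$ in which $G^{\ast}$ equals a unit times the monomial $\mathbf z^{\pmb\kappa(\widetilde V)}z_{n+1}^{\kappa_{n+1}(\widetilde V)}$, one has $\int_{\widetilde V}|G^{\ast}|^{-\delta}\,d\mathbf x\,dx_{n+1}\asymp\int_{(0,1)^{n+1}}|\mathbf z^{\pmb\kappa(\widetilde V)}z_{n+1}^{\kappa_{n+1}(\widetilde V)}|^{-\delta}\,d\mathbf z\,dz_{n+1}$ (with $\asymp$ up to positive constants), which is finite exactly when $\delta<\delta_0(G^{\ast};\Psi_{\widetilde V})=\big(\max_j(\pmb\kappa(\widetilde V),\kappa_{n+1}(\widetilde V))_j\big)^{-1}$. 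Summing over the finitely many towers shows that \eqref{locint-condition} holds if and only if $\delta<\mu^{\ast}:=\min_{\widetilde V}\delta_0(G^{\ast};\Psi_{\widetilde V})$, so it suffices to prove $\mu^{\ast}=\inf\{\delta_0(G^{\ast};\Phi):\Phi\in\mathcal C(G^{\ast})\}$ with the infimum attained along a finite subfamily.

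For $\mu^{\ast}\le\inf_{\mathcal C(G^{\ast})}\delta_0$, fix $\Phi=\Phi(\varphi,V,r)\in\mathcal C(G^{\ast})$ and $\delta<\mu^{\ast}$. Restricting the finite integral in \eqref{locint-condition} to $\Phi\big((0,1)^n\times(-\epsilon',\epsilon')\big)$ — which lies in $(-1,1)^{n+1}$ once $\epsilon'$ is small, since $r(\mathbf 0)=0$ and $V$ sits in a small cube — and changing variables by $\Phi$, whose Jacobian is $\det D\varphi$ and hence a unit, yields that $|G^{\ast}\circ\Phi|^{-\delta}$ is integrable near the origin in the new variables. By property~(d) of $\mathcal C(G^{\ast})$ the roots of $G^{\ast}\circ\Phi$ are fractional normal crossings, so $G^{\ast}\circ\Phi=\prod_i\big(y_{n+1}-(\varrho_i\circ\varphi-r)\big)^{n_i}$ is a fractional power series in $(\mathbf y,y_{n+1})$; hence, as in the proof of Theorem~\ref{prop-cii-nd-ndim}, $\delta\le\delta_0(G^{\ast}\circ\Phi)=\delta_0(G^{\ast};\Phi)$. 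Letting $\delta\uparrow\mu^{\ast}$ gives $\mu^{\ast}\le\delta_0(G^{\ast};\Phi)$ for every $\Phi\in\mathcal C(G^{\ast})$, hence $\mu^{\ast}\le\inf_{\mathcal C(G^{\ast})}\delta_0$.

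The reverse inequality is the crux, and I would prove it by stripping the resolution down to its first nonlinear step. For each $(V,\varphi)$ from Corollary~\ref{corollary-mainthm-resolution} and each root $\varrho_i$, set $\Phi_{V,i}(\mathbf y,y_{n+1}):=\big(\varphi(\mathbf y),\,y_{n+1}+\mathrm{Re}(\varrho_{i}\circ\varphi)(\mathbf y)\big)$. Each $\Phi_{V,i}$ has the form \eqref{coordclass-def}; parts (a)--(c) hold because $\varphi$ is one of the admissible coordinate systems built in \S\ref{sec-resolution}, and part (d) holds because Corollary~\ref{corollary-mainthm-resolution}(c) already lists $\varrho_j\circ\varphi-\mathrm{Re}(\varrho_i\circ\varphi)$ and $\mathrm{Re}(\varrho_j\circ\varphi)-\mathrm{Re}(\varrho_i\circ\varphi)$ among the fractional normal crossings, so $\mathcal C_0(G^{\ast}):=\{\Phi_{V,i}\}$ is a finite subfamily of $\mathcal C(G^{\ast})$. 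By Lemma~\ref{mep-lemma3} the Newton polyhedron of $G^{\ast}\circ\Phi_{V,i}$ is defined by a monotone edge path through the points $(\mathbf A_\ell,B_\ell)$ of \eqref{AlBl-new}, and Proposition~\ref{prop-step1-thm1} computes $\delta_0(G^{\ast};\Phi_{V,i})$ from this path; on the other hand, the $(n+1)$-dimensional prepared towers constructed over $V$ in Proposition~\ref{lemma-block2} from the cluster of roots centered at $\mathrm{Re}(\varrho_i\circ\varphi)$ resolve $G^{\ast}\circ\Phi_{V,i}$, and after this shift $G^{\ast}$ equals, on the tower straddling a given edge, a unit times the monomial $y_{n+1}^{B_\ell}\mathbf y^{\mathbf A_\ell}$. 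Computing the integrability of this monomial over the corresponding adjacent or distant horn — a routine two-dimensional calculation done one coordinate $y_j$ at a time, which reproduces the formula of Proposition~\ref{prop-step1-thm1} — one finds that the minimum over these towers of their critical exponents equals $\delta_0(G^{\ast};\Phi_{V,i})$. Taking the minimum over all $V$ and $i$, and using that every tower in the global decomposition arises from some cluster, gives $\mu^{\ast}=\min_{\Phi\in\mathcal C_0(G^{\ast})}\delta_0(G^{\ast};\Phi)\ge\inf_{\mathcal C(G^{\ast})}\delta_0$; combined with the second paragraph this forces $\mu^{\ast}=\inf_{\mathcal C(G^{\ast})}\delta_0=\min_{\mathcal C_0(G^{\ast})}\delta_0$, which is the assertion. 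The genuine obstacle is that last matching step: verifying that the toric preferred-coordinate changes of \S\ref{subsec-adm-pref} convert $G^{\ast}\circ\Phi_{V,i}$ on each horn into a monomial whose reciprocal-largest-exponent is precisely the relevant term of the Newton exponent $\delta_0(G^{\ast};\Phi_{V,i})$ — everything else is bookkeeping around Corollary~\ref{corollary-mainthm-resolution}, Proposition~\ref{lemma-block2} and Theorem~\ref{prop-cii-nd-ndim}.
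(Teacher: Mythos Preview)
Your approach and the paper's follow the same skeleton --- invoke Corollary~\ref{corollary-mainthm-resolution} to get the pairs $(V,\varphi)$, shift by the real parts $\text{Re}(\varrho_i\circ\varphi)$, and then read off the critical exponent from the monomial-type integrals on the horns indexed by $\ell$. The construction of $\mathcal C_0(G^\ast)$ is also the same (the paper restricts to $i\in\mathcal I\cup\mathcal J$ together with $r\equiv 0$, but your larger family works equally well for the minimum).

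Two differences are worth noting. First, your ``only if'' argument (paragraph~2) via Theorem~\ref{prop-cii-nd-ndim} is a legitimate and cleaner shortcut: the paper instead repeats the explicit lower bound $|F^\ast|\le C^N\mathbf y^{\mathbf A_\ell}|y_{n+1}|^{B_\ell}$ on each $\widetilde W_{\ell,\kappa}$ and integrates, obtaining the strict inequality $\delta<\delta_0(G^\ast;\Phi)$ directly. Second, for the ``if'' direction the paper does \emph{not} pass to preferred coordinates on the $(n+1)$-dimensional towers; it works entirely in the ambient $(\mathbf y,y_{n+1})$ variables, bounding $|G^\ast|$ from below by $C^{-1}\mathbf y^{\mathbf A_\ell}x_{n+1}^{B_\ell}$ on each $\widetilde V_\ell$ and computing $\int_{\widetilde V_\ell}\mathbf y^{-\mathbf A_\ell\delta}x_{n+1}^{-B_\ell\delta}$ by hand. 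This is precisely your ``matching step,'' and it is the content --- calling it a routine two-dimensional calculation is fair in hindsight, but tracking the exponents through the preferred-coordinate changes $\varphi_3,\varphi_4$ of (\ref{solid-cusp-coord})--(\ref{hollow-cusp-coord}) to arrive at the formula of Proposition~\ref{prop-step1-thm1} is no shorter than the paper's direct estimation, and somewhat more opaque. Your route would succeed, but the paper's ambient-coordinate computation is how you would actually carry it out.
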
 
\begin{proof}
The proof has certain similarities with the bivariate case, specifically with parts of the analysis carried out in \cite{PhSt97}, \cite{PhSteStu99}. We will need to use the sets (i.e., unions of towers) described in \S \ref{sec-resolution} in the proof of Proposition \ref{lemma-block2}. However, the preferred coordinates on the towers will be irrelevant for the present analysis. We will primarily work with the ambient coordinate system $\varphi$ of that proposition, which in the present context will arise from $\Phi(\varphi, V, r) \in \mathcal C(G^{\ast})$.   

Suppose that (\ref{locint-condition}) holds for some $\delta > 0$. We intend to show that $\delta < \delta_0(G^{\ast}, \Phi)$ for any $\Phi = \Phi(\varphi, V, r) \in \mathcal C(G^{\ast})$. Fix such a coordinate system $\Phi$, and set $F^{\ast} := G^{\ast} \circ \Phi$. The convergence of the integral in (\ref{locint-condition}) implies that \begin{equation} \int_{(0,1)^n \times (-\epsilon,\epsilon)} |F^{\ast}(\mathbf y, y_{n+1})|^{-\delta} d\mathbf y \, dy_{n+1} < \infty \label{locint-condition-implication} \end{equation} for some small $\epsilon > 0$. Without loss of generality, by scaling if necessary we may assume that $\epsilon = 1$. It follows from (\ref{fps-factid}) and the definition of $\mathcal C(G^{\ast})$ that the function $F^{\ast}$ which is given by
\[ F^{\ast}(\mathbf y, y_{n+1}) =   \prod_{i=1}^{M} \bigl(y_{n+1} - r_i(\mathbf y) \bigr)^{n_i}, \quad  r_i(\mathbf y) = \varrho_i(\varphi(\mathbf y)) - r(\mathbf y) \] admits a factorization of the form (\ref{F-factid2}) with the roots $\{r_i\}$ obeying the hypotheses of Proposition \ref{lemma-block2}. Denoting by $\{ \pmb{\alpha}_{\ell} \}$ the distinct leading exponents of the roots $\{r_i \}$ and assuming the ordering (\ref{alpha-ordering}), we can therefore define sets $\widetilde{W}_{\ell, \kappa}$ similar to (\ref{Vtilde-lkappa}), namely,
\[ \widetilde{W}_{\ell, \kappa} = \{(\mathbf y, y_{n+1}) :  \frac{1}{2}\mathbf y^{\pmb{\alpha}_{\ell+1}} < \kappa y_{n+1} <  \mathbf y^{\pmb{\alpha}_{\ell}}, \; \mathbf y \in (0,1)^n \},  \; 1 \leq \ell \leq L-1, \]
with appropriate modifications at the endpoints $\ell = 0, L$. Then $\widetilde{W}_{\ell, \kappa} \subseteq (0,1)^n \times (-1,1)$ for every $0 \leq \ell \leq L$ and $\kappa = \pm 1$. The finiteness of the integral in (\ref{locint-condition-implication}) then leads to the estimate  
\begin{multline} \int_{\widetilde{W}_{\ell, \kappa}} \left| F^{\ast}(\mathbf y, y_{n+1})\right|^{-\delta} d\mathbf y \, dy_{n+1} \\ \leq \int_{(0,1)^n \times (- 1, 1)} |F^{\ast}(\mathbf y, y_{n+1})|^{-\delta} \, d\mathbf y \, dy_{n+1} < \infty. \label{finiteness} \end{multline} 
But on $\widetilde{W}_{\ell, \kappa}$, there is a constant $C > 0$ such that
\begin{align*} 
|y_{n+1} - r_i(\mathbf y)| &\leq |y_{n+1}| + |r_i(\mathbf y)| \\  
&\leq \begin{cases}  C \mathbf y^{\pmb{\alpha}_{k}} &\text{ if } k \leq \ell, \; i \in \mathcal L_k, \\ C |y_{n+1}| &\text{ if } k \geq \ell + 1, \; i \in \mathcal L_k, \end{cases}
\end{align*}  
which implies 
\begin{equation} |F^{\ast}(\mathbf y, y_{n+1})| \leq C^N \mathbf y^{\mathbf A_{\ell}} |y_{n+1}|^{B_{\ell}}, \quad N = n_1 + \cdots + n_M. \label{Fstar-upper} \end{equation} 
It follows from (\ref{finiteness}) and (\ref{Fstar-upper}) that 
\[ \int_{\widetilde{W}_{\ell, \kappa}} \mathbf y^{-\mathbf A_{\ell} \delta} |y_{n+1}|^{-B_\ell \delta} \, d\mathbf y \, dy_{n+1} \leq (C)^{N \delta} \int_{\widetilde{W}_{\ell, \kappa}} \left| F^{\ast}(\mathbf y, y_{n+1})\right|^{-\delta} d\mathbf y \, dy_{n+1}  < \infty.  \]
Thus
\begin{align*}
\infty > \int_{\widetilde{W}_{\ell, \kappa}} \mathbf y^{-\mathbf A_{\ell} \delta} & |y_{n+1}|^{-B_\ell \delta} \, d\mathbf y \, dy_{n+1} \\ &= \int_{(0, 1)^n} \mathbf y^{-\mathbf A_{\ell} \delta} \int_{\frac{1}{2}\mathbf y^{\pmb{\alpha}_{\ell+1}}}^{\mathbf y^{\pmb{\alpha}_{\ell}}} |y_{n+1}|^{-B_{\ell} \delta} dy_{n+1} \, d\mathbf y \\  & \geq  \int_{(0, 1)^n}    \mathbf y^{-\mathbf A_{\ell} \delta} \int_{ \frac{1}{2}\mathbf y^{\pmb{\alpha}_{\ell}}}^{ \mathbf y^{\pmb{\alpha}_{\ell}}} |x_{n+1}|^{-B_{\ell} \delta} dy_{n+1} \, d\mathbf y \\
&\geq \left\{ \begin{aligned} &C^{-1}\int_{(0, 1)^n} \mathbf y^{-\mathbf A_{\ell} \delta} \mathbf y^{\pmb{\alpha}_{\ell}(1 - B_{\ell} \delta)} \, d\mathbf y &\text{ if } 1 - B_{\ell} \delta \ne 0, \\ 
& C^{-1}\int_{(0, 1)^n} \mathbf y^{-\mathbf A_{\ell} \delta} \ln \left(\frac{1}{|\mathbf y|} \right) \, d\mathbf y &\text{ if } 1 - B_{\ell} \delta = 0. \end{aligned} \right\}  
\end{align*}  
The convergence of the integrals in the last line shows that for all $1 \leq \ell \leq L$, 
\[ \delta < \left\{\begin{aligned} & \min \left\{\frac{\alpha_{\ell}(j)+1}{A_{\ell}(j) + \alpha_{\ell}(j)B_{\ell}} : 1 \leq j \leq n \right\} &\text{ if } \delta \ne \frac{1}{B_{\ell}}, \\ 
& \min \left\{\frac{1}{A_{\ell}(j)} : 1 \leq j \leq n \right\} &\text{ if } \delta = \frac{1}{B_{\ell}}.  \end{aligned} \right\} \]
The second situation only occurs if for all $1 \leq j \leq n$, 
\[ \frac{1}{B_{\ell}} < \frac{1}{A_{\ell}(j)} \quad \text{ in which case } \quad \frac{1}{B_{\ell}} < \frac{1 + \alpha}{A_{\ell}(j)+\alpha B_{\ell}} \text{ for any } \alpha > 0. \] 
Combining both cases above and taking the minimum over all $1 \leq \ell \leq L$, we obtain  
\[  \delta < \min \left[\frac{\alpha_{\ell}(j)+1}{A_{\ell}(j) + \alpha_{\ell}(j)B_{\ell}} : 1 \leq j \leq n, 1 \leq \ell \leq L \right]. \] 
In view of Proposition \ref{prop-step1-thm1}, this implies $\delta < \delta_0(F^{\ast}) = \delta_0(G^{\ast};\Phi)$, completing the ``only if'' part of the desired conclusion.  

For the reverse implication, we fix $\delta$ satisfying (\ref{if-implication}). Since $G^{\ast}$ admits the representation (\ref{fps-factid}), there exists by Corollary \ref{corollary-mainthm-resolution} a finite collection of set-coordinate pairs $\{(V, \varphi_V) : V \in \mathcal V \}$ that monomialize the roots of $G^{\ast}$ in the sense of that corollary. Since $\{ V \in \mathcal V \}$ is a finite cover of $(-1,1)^n$ (up to a set of measure 0), it suffices to show that for every $V \in \mathcal V$, 
\begin{equation} \label{only-if-implication}
\int_{V \times (-1,1)} \bigl| G^{\ast}(\mathbf x, x_{n+1})\bigr|^{-\delta} \, d\mathbf x \, dx_{n+1} < \infty.
\end{equation}   

In order to prove (\ref{only-if-implication}), we set $r_i(\mathbf y) = \varrho_i(\varphi_V(\mathbf y))$, so that the hypotheses of Proposition \ref{lemma-block2} hold with $(F, V, \varphi)$ in that proposition replaced by $(G^{\ast}, V, \varphi_V)$. We need a classification of the roots $r_i$ based on their leading exponents and coefficients, and hence define the index sets $\mathbb I^{\ast}$, $\mathcal I$, $\mathcal I'$, $\mathcal J$ and $\mathcal J'$ as in the proof of Proposition \ref{lemma-block2}. We will also need to use the regions (\ref{region-main1})--(\ref{region8}) with $r_i$ replaced by Re$(r_i)$. As already mentioned in the proof of the proposition, these regions cover $V \times (-1, 1)$. Our goal is to show that $|G^{\ast}|^{-\delta}$ is integrable on each of these regions. 

As before, we invoke the change of coordinates (\ref{changeofvar-reduction2}), which belongs to the coordinate class $\mathcal C(G^{\ast})$ to convert the regions (\ref{region3})--(\ref{region8}) to regions of the form (\ref{region-main1}) or (\ref{region-main2}). Since the analysis on these last two domains are very similar, we restrict attention to the region given by (\ref{region-main1}) (which we henceforth denote by $\widetilde{W}$), where we proceed to estimate $G^{\ast}$ from below.  

On $\widetilde{W}$, we use the estimates derived in Proposition \ref{lemma-block2} to conclude that 
\[ G^{\ast} = (\text{unit}) \left[\prod_{i \in \mathcal I} \text{Re}(r_i(\mathbf y))^{n_i} \right] x_{n+1}^{\beta_{n+1}} \left[ \prod_{i \in \mathcal J \cup \mathbb I^{\ast}} \left( x_{n+1} - r_i(\mathbf y) \right)^{n_i} \right]. \]
Let $\ell_0$ be the unique index in $\{1, 2, \cdots, L \}$ such that $r_{i_1} \in \mathcal L_{\ell_0}$. If $i \in \mathcal I$, then $r_{i_1} \leq r_i$, from which it follows that $i \in \mathcal L_k$ for some $k \leq \ell_0$. Define 
\[ \mathbb I_0 := \left\{ i \in \mathcal J \cup \mathbb I^{\ast} : i \in \mathcal L_{\ell}, \ell > \ell_0 \right\}, \qquad \mathbb I_0^c = (\mathcal J \cup \mathbb I^{\ast}) \setminus \mathbb I_0. \]
Then for $i \in \mathbb I_0^{c}$ and $(\mathbf x, x_{n+1})$ in $\widetilde{W}$,
\begin{align*}  |x_{n+1} - r_i(\mathbf y)| &\geq \begin{cases} \left| \text{Im}(r_i(\mathbf y))\right| &\text{ if } i \in \mathbb I_0^c \cap \mathbb I^{\ast}, \\ |x_{n+1}| + |\text{Re}(r_i(\mathbf y))| &\text{ if } i \in \mathbb I_0^c \cap \mathcal J \end{cases}  \\ &\geq C^{-1}|r_i(\mathbf y)|,  \end{align*} 
so that $G^{\ast}$ admits the following lower bound on $\widetilde{W}$:  
\begin{equation} \label{lowerF} |G^{\ast}| \geq C^{-1} x_{n+1}^{\beta_{n+1}} \Biggl[ \prod_{i \in \mathcal I \cup \mathbb I_0^c} |r_i(\mathbf y)|^{n_i} \Biggr] \Biggl[ \prod_{i \in \mathbb I_0} |x_{n+1} - r_i(\mathbf y) |^{n_i} \Biggr]. \end{equation}
It remains to estimate the ``non-cancelling'' factors in the above expression corresponding to $i \in \mathbb I_0$. For this we cover $\widetilde{W}$ with sets of the form  
\begin{align*}
\widetilde{V}_{\ell} &= \left\{(\mathbf x, x_{n+1}) \in \widetilde{W} : p_{\ell} \mathbf y^{\pmb{\alpha}_{\ell+1}} < x_{n+1} < q_{\ell} \mathbf y^{\pmb{\alpha}_{\ell}} \right\}, \quad \ell_0 \leq \ell \leq L-1, \\ 
\widetilde{V}_L &= \left\{(\mathbf x, x_{n+1}) \in \widetilde{W} : 0 < x_{n+1} < q_L \mathbf y^{\pmb{\alpha}_L} \right\}, 
\end{align*}        
where the constants $p_{\ell}, q_{\ell}$ are chosen as in Lemma \ref{imaginary-roots}. Then the same kind of estimation as in Lemma \ref{imaginary-roots} and Corollary \ref{corollary-imroots} yields that on $\widetilde{V}_{\ell}$ and for $i \in \mathbb I_0$,  
\begin{align*} |x_{n+1} - r_i(\mathbf y)| &\geq C^{-1}\max \left(|x_{n+1}|, |r_i(\mathbf y)| \right) \\ &\geq C^{-1} \begin{cases} |x_{n+1}| &\text{ if } i \in \mathcal L_k, \, k \geq \ell + 1, \\ |r_i(\mathbf y)| &\text{ if } i \in \mathcal L_k, \, k \leq \ell. \end{cases} \end{align*}  
In conjunction with (\ref{lowerF}) this implies that on $\widetilde{V}_{\ell}$, 
\begin{align*} |G^{\ast}| &\geq C^{-1} x_{n+1}^{\beta_{n+1}}\Bigl[ \prod_{i \in \mathcal I \cup \mathbb I_0^c} |r_i(\mathbf y)|^{n_i} \Bigr] \Biggl[ \prod_{\begin{subarray}{c}i \in \mathbb I_0 \cap \mathcal L_k \\ k \leq \ell  \end{subarray}} |r_i(\mathbf y)|^{n_i} \Biggr] \Biggl[ \prod_{\begin{subarray}{c}i \in \mathbb I_0 \cap \mathcal L_k \\ k \geq \ell + 1 \end{subarray}} x_{n+1}^{n_i} \Biggr] \\ &\geq C^{-1} \mathbf y^{\mathbf A_{\ell}} x_{n+1}^{B_\ell}, \end{align*} 
where the last step follows from the fact that for $\ell \geq \ell_0$, 
\begin{align*} \bigcup_{k \geq \ell+1} \bigl(\mathbb I_0 \cap \mathcal L_k \bigr) &= \{i : i \in \mathcal L_k, \, k \geq \ell + 1 \} \quad  \text{ and } \\ \Bigl[\bigcup_{k \leq \ell} \bigl( \mathbb I_0 \cap \mathcal L_k  \bigr) \Bigr] \bigcup (\mathcal I \cup \mathbb I_0^c) &= \left\{i : i \in \mathcal L_k, k \leq \ell \right\}.\end{align*}
Therefore 
\begin{align*} \int_{\widetilde{V}_{\ell}} |G^{\ast}|^{-\delta} \, d\mathbf x \, dx_{n+1} &\leq \int_{(0,1)^n} \mathbf y^{-\mathbf A_{\ell} \delta} \int_{C^{-1}\mathbf y^{\pmb{\alpha}_{\ell + 1}}}^{C \mathbf y^{\pmb{\alpha}_{\ell}}} x_{n+1}^{-B_{\ell} \delta} \, dx_{n+1} d\mathbf y \\  
&\leq \left\{ \begin{aligned} & C \int_{(0,1)^n} \mathbf y^{-\mathbf A_{\ell} \delta + \pmb{\alpha}_{\ell}(1 - B_{\ell} \delta)} \, d\mathbf y  &\text{ if } 1 - B_{\ell} \delta > 0, \\ &C \int_{(0,1)^n} \mathbf y^{-\mathbf A_{\ell} \delta + \pmb{\alpha}_{\ell + 1}(1 - B_{\ell} \delta)} \, d\mathbf y  &\text{ if } 1 - B_{\ell} \delta < 0, \\ &C \int_{(0,1)^n} \mathbf y^{-\mathbf A_{\ell} \delta} \, \ln \left( \frac{1}{|\mathbf y|} \right) d\mathbf y  &\text{ if } 1 - B_{\ell} \delta = 0.  \end{aligned}  \right\}
\end{align*} 
We consider the three cases separately. If our chosen $\delta$ satisfies $\delta < 1/B_{\ell}$, then the rightmost integral converges for \[\delta < \min_{1 \leq j \leq n}\frac{\alpha_{\ell}(j) + 1}{A_\ell(j) + \alpha_{\ell}(j) B_{\ell}}, \] and hence for any $\delta$ satisfying (\ref{if-implication}) in view of Proposition \ref{prop-step1-thm1}. In the second case, the easy identity \[\mathbf A_{\ell} + \pmb{\alpha}_{\ell+1}B_{\ell} = \mathbf A_{\ell+1} + \pmb{\alpha}_{\ell+1} B_{\ell+1} \] implies that the integral converges for  
\[ \delta < \min_{1 \leq j \leq n} \frac{\alpha_{\ell + 1}(j) + 1}{A_{\ell+1}(j) + \alpha_{\ell+1}(j) B_{\ell+1}},\] which again is true for any $\delta$ satisfying (\ref{if-implication}). Finally, if $\delta = 1/{B_{\ell}}$ satisfies (\ref{if-implication}), then for all $1 \leq j \leq n$ 
\[ \frac{1}{B_{\ell}} < \frac{1 + \alpha_{\ell}(j)}{A_{\ell}(j) + \alpha_{\ell}(j)B_{\ell}} \quad \text{ and hence } \quad A_{\ell}(j) < B_{\ell},  \]
which implies that the integral in the last case converges. This establishes the reverse implication in the first part of the proposition. 

To establish the last part of the proposition, we define the coordinate class 
\[ \mathcal C_0(G^{\ast};V) = \left\{ \Phi : \begin{aligned} &(\mathbf x, x_{n+1}) = \Phi(\mathbf y, y_{n+1}) = \bigl(\varphi_V(\mathbf y), y_{n+1} + \text{Re}(r(\mathbf y)) \bigr) \\ &\text{where $x_{n+1} = r(\mathbf y)$ is a root of $G^{\ast}(\varphi_V(\mathbf y), x_{n+1})$}, \\ & \text{either } r \equiv r_i \text{ for some } i \in \mathcal I \cup \mathcal J, \text{ or } r \equiv 0\end{aligned}  \right\}, \]
where $\{(V, \varphi_V) : V \in \mathcal V \}$ are the sets and coordinates produced by Corollary \ref{corollary-mainthm-resolution} for $G^{\ast}$. In light of this corollary, $\mathcal C_0(G^{\ast};V)$ is a subclass of $\mathcal C(G^{\ast})$. We also observe that the ``only if'' argument above in fact proves the following stronger statement; namely, $|G^{\ast}|^{-\delta}$ is integrable on $V \times (-1,1)$ for any $\delta < \min \{ \delta_0(G^{\ast};\Phi) : \Phi \in \mathcal C_0(G^{\ast};V) \}$. Setting \[ \mathcal C_0(G^{\ast}) = \bigcup \left\{ \mathcal C_0(G^{\ast};V) : V \in \mathcal V \right\},  \] we find that (\ref{locint-condition}) holds for any $\delta < \min \left\{ \delta_0(G^{\ast};\Phi) : \Phi \in \mathcal C_0(G^{\ast}) \right\}$. Hence by the first part of the proposition \[ \min \left\{\delta_0(G^{\ast};\Phi) : \Phi \in \mathcal C_0(G^{\ast}) \right\} \leq \inf \left\{\delta_0(G^{\ast};\Phi) : \Phi \in \mathcal C(G^{\ast}) \right\}.\] The converse inequality is of course immediate.       
\end{proof}

\subsection{Proof of Theorem \ref{main-thm1}}
Fix a real-analytic function $F$ defined near the origin in $\mathbb R^{n+1}$. Let $\epsilon_0 > 0$ be a small constant such that after an orthogonal linear change of variables if necessary, (\ref{W-poly}) holds on $(-\epsilon_0, \epsilon_0)^{n+1}$.   

Fix a coordinate system $\Phi = \Phi(\varphi, V, r) \in \mathcal C$. Then $\widetilde{F}(\mathbf y, y_{n+1}) = F \circ \Phi(\mathbf y, y_{n+1})$ has roots that are fractional normal crossings. It particular, $\widetilde{F}$ satisfies the hypothesis of Proposition \ref{prop-step2-thm1}. 
Let $\delta > 0$ be such that $|F|^{-\delta}$ is integrable in an open neighborhood $U$ of the origin in $\mathbb R^{n+1}$. We choose $\eta > 0$ sufficiently small so that $\Phi((0,\eta)^{n} \times (-\eta, \eta)) \subseteq U$. Then 
\begin{align*} \infty > \int_{U} \left|F(\mathbf x, x_{n+1}) \right|^{-\delta} & d\mathbf x dx_{n+1} \\ &\geq C^{-1} \int_{(0, \eta)^{n}\times (-\eta, \eta)} \left|F(\Phi(\mathbf y, y_{n+1})) \right|^{-\delta} d\mathbf y \, dy_{n+1} \\ &= C^{-1} \int_{(0, \eta)^{n} \times (-\eta, \eta)} \left|\widetilde{F}(\mathbf y, y_{n+1}) \right|^{-\delta} d\mathbf y \, dy_{n+1}\\ & \geq C_{\eta}^{-1}\int_{(0, 1)^{n} \times (-1,1)} \left|\widetilde{F}_{\eta}(\mathbf y, y_{n+1}) \right|^{-\delta} d\mathbf y \, dy_{n+1}, \end{align*}
where $\widetilde{F}_{\eta}(\mathbf y, y_{n+1}) = \widetilde{F}(\eta \mathbf y, \eta y_{n+1})$ continues to obey the hypotheses of Proposition \ref{prop-step2-thm1}. The proposition then implies $\delta < \delta_0(\widetilde{F}_{\eta}) = \delta_{0}(\widetilde{F}) = \delta_0(F; \Phi)$. Taking the supremum over all $\delta$ in (\ref{cii-def}) and infimum over all $\Phi \in \mathcal C(F)$, we arrive at the conclusion 
\[ \mu_0(F) \leq \inf \left\{ \delta_0(F;\Phi) : \Phi \in \mathcal C \right\}.  \]

For the converse inequality, we invoke Theorem \ref{mainthm-resolution}. Let $\{(V, \varphi_V) : V \in \mathcal V \}$ be the finite collection of $n$-dimensional prepared towers specified in that theorem, so that the following estimate holds: 
\begin{equation} \label{reduce-finite} 
\begin{aligned}
\int_{(-\epsilon_0, \epsilon_0)^{n+1}} |F|^{-\delta} d\mathbf x \, dx_{n+1} &= \sum_{V \in \mathcal V} \int_{V \times (-\epsilon_0, \epsilon_0)} \left| F \right|^{-\delta} \, d\mathbf x \, dx_{n+1} \\ & \leq C \sum_{V \in \mathcal V} \int_{(0,1)^n \times (-\epsilon_0, \epsilon_0)} \left| F_V \right|^{-\delta}\, d\mathbf y\, dx_{n+1}, 
\end{aligned} 
\end{equation} 
where $F_V = F \circ \Phi_V$ and $\Phi_V(\mathbf y, y_{n+1}) = (\varphi_V(\mathbf y), y_{n+1})$. In view of (\ref{F-factid}), we note that $F_V$ satisfies the hypothesis of Proposition \ref{prop-step2-thm1}. Recalling the definition of $\mathcal C_0(F_V)$ from this proposition, we define the coordinate class $\mathcal C^{\ast}$ as follows, 
\[ \mathcal C^{\ast} := \left\{\Phi : \Phi = \Phi_V \circ \Psi, \; \Psi \in \mathcal C_0(F_V), V \in \mathcal V \right\}. \] 
It is easy to see that $\mathcal C^{\ast} \subseteq \mathcal C$.  
Further, invoking Proposition \ref{prop-step2-thm1} we may deduce that the integrals occurring as summands in (\ref{reduce-finite}) converge for   
\[ \delta < \min \left\{\delta_0(F;\Phi) : \Phi \in \mathcal C^{\ast} \right\}. \]
Therefore 
\[ \mu_0(F) \geq \min \left\{ \delta_0(F;\Phi) : \Phi \in \mathcal C^{\ast} \right\} \geq \inf \left\{ \delta_0(F, \Phi) : \Phi \in \mathcal C \right\}. \] 
This proves the converse inequality and also establishes the second statement (\ref{coordclass-finite}) of Theorem \ref{main-thm1}. \qed

\subsection{Proof of Theorem \ref{main-thm2}}
For any $\Phi \in \mathcal C_{\infty}$, $F \circ \Phi$ is a fractional power series with $F \circ \Phi(\mathbf 0, 0) = 0$. Suppose that $\delta < \mu_0(F)$, hence there is an open neighborhood $U$ of the origin such that $|F|^{-\delta} \in L^1(U)$. Fix $\epsilon > 0$ sufficiently small so that $\Phi(-\epsilon, \epsilon)^{n+1} \subseteq U$. Since $\Phi$ has nonvanishing Jacobian, we find that \[ \int_{(-\epsilon, \epsilon)^{n+1}} |F \circ \Phi|^{-\delta} \leq C \int_U |F|^{-\delta} < \infty, \quad \text{ hence } \quad \delta \leq \mu_0(F \circ \Phi) \leq \delta_0(F \circ \Phi),\] where the last inequality follows from Theorem \ref{prop-cii-nd-ndim}. Taking supremum over $\delta$ and infimum over $\Phi \in \mathcal C_{\infty}$, we obtain that $\mu_0(F) \leq \inf \{ \delta_0(F;\Phi) ; \Phi \in \mathcal C_{\infty} \}$. In view of Theorem \ref{main-thm1} however, the converse inequality is obvious. The second conclusion of Theorem \ref{main-thm2} uses a very similar argument and is left to the reader. \qed

\section{Comparison with the bivariate case} \label{sec-bivariate-comparison} 
In view of Theorem \ref{main-thm1} and its bivariate analogue (Theorem 4, \cite{PhSteStu99}), it is natural to ask whether the class $\mathcal C(F)$ can in general be replaced by the smaller class of global analytic coordinate changes. As indicated in the introduction, a counterexample due to Varchenko \cite{Var76} provides a negative answer to this question. The following is a simpler version of this example that suffices to emphasize the necessity of the local coordinate changes.   

\subsection{A counterexample}  Let $F(x_1, x_2, x_3) = x_3^2 - (x_1^2 + x_2^2)$. It is easy to see that if $\Phi$ is any analytic coordinate change, then $\delta_0(F;\Phi) = \frac{3}{2}$. On the other hand, in the local coordinates 
\[ \varphi_V(y_1, y_2) = (\sqrt{y_1}, \sqrt{y_1}y_2) \quad \text{ on } \quad V = \{(x_1, x_2) : 0 < x_2 < x_1 \},  \]
the function $F$ takes the form \[ F(\varphi_V(y_1, y_2), x_3) = x_3^2 - y_1(1 + y_2^2). \]
Setting $\Phi_V(y_1, y_2, y_3) = (\varphi_V(y_1, y_2), y_3 + \sqrt{y_1(1+y_2^2)})$, we find that \[ F \circ \Phi_V(\mathbf y) = y_3^2 + 2 y_3 \sqrt{y_1(1+y_2^2)}, \]    
and that the Newton {\atxt polyhedron} of $F \circ \Phi_V$ is defined by the monotone edge path joining $(0,0,2)$ and $(\frac{1}{2}, 0, 1)$. In particular $\delta_0(F;\Phi_V) = 1 < \frac{3}{2}$. A straightforward calculation shows that in fact $\mu_0(F) = 1$.  

\subsection{Identification of an adapted coordinate system}
We devote the rest of this section to studying finer aspects of the critical integrability problem in $(n+1)$ dimensions, $n \geq 2$. We begin by recalling two definitions, one from the introduction and the other from \cite{Var76}, both pertinent to the bivariate situation. Let $\mathcal C_{\omega}$ denote the class of analytic coordinate transformations in $\mathbb R^2$ of the form (\ref{phong-stein-adapted}). Given a bivariate real-analytic function $F$, we say that an analytic coordinate system $\Phi \in \mathcal C_{\omega}$ is {\em{adapted}} to $F$ if $\delta_0(F, \Phi) = \mu_0(F)$. An attractive feature of two dimensions is that the Newton polyhedron NP$(F;\Phi)$ contains information as to whether $\Phi$ is adapted. More precisely, let $\kappa_{\ell}$ denote the cardinality of the largest subset of $\mathcal L_{\ell}$ consisting of all roots with the same leading coefficient. It is proved in \cite{PhSteStu99} that for a bivariate real-analytic function $F$ and for any $\Phi \in \mathcal C_{\omega}$, the inequality
\begin{equation}
\kappa_{\ell} \leq \delta_0(F;\Phi)^{-1} \label{non-mainface}
\end{equation}   
holds for every $\ell$ that corresponds to a non-main face of NP$(F;\Phi)$. (We recall that a main face of a Newton polyhedron in $\mathbb R^2$ is one that intersects the bisectrix $x_1 = x_2$.) Moreover, (\ref{non-mainface}) holds for a main face if and only if the ambient coordinate system is adapted.  

We investigate whether similar identifications of adapted coordinate systems based on the Newton polyhedron hold for $(n+1)$-variate functions $F$ with $n \geq 2$. First, a few minor modifications in the definitions are necessary in view of the structure of the Newton polyhedra and the local nature of the coordinates in $\mathcal C(F)$. 
\begin{definition}
\begin{enumerate}[1.]
\item Suppose that a Newton polyhedron NP$(F, \Phi)$ is defined by a monotone edge path consisting of the line segments $\Gamma_\ell$, where \begin{equation} \Gamma_\ell = \{ t (\pmb{\mu}_\ell, \nu_\ell) + (1-t) (\pmb{\mu}_{\ell+1}, \nu_{\ell+1}) : 0 \leq t \leq 1 \}, \quad 1 \leq \ell \leq L, \label{def-Gammaell} \end{equation} with $\pmb{\mu}_\ell < \pmb{\mu}_{\ell+1}$, $\nu_{\ell} > \nu_{\ell+1}$. An edge $\Gamma_\ell$ is said to be a {\em{main edge of NP$(F;\Phi)$}} if for some $1 \leq j \leq n$, 
\[ \pi_j \bigl( \Gamma_\ell \bigr)\cap \{ x_j = x_{n+1} \} = \bigl(\delta_0(F;\Phi)^{-1}, \delta_0(F;\Phi)^{-1}  \bigr). \] 
Recall that $\pi_j(\mathbf x, x_{n+1}) = (x_j, x_{n+1})$. 
\item We say that a coordinate system $\Phi = \Phi(V, \varphi, r) \in \mathcal C(F)$ is {\em{integrability-adapted to $F$}} if 
\[ \sup \Bigl\{ \delta : \int_V |F|^{-\delta} d\mathbf x < \infty \Bigr\} = \delta_0(F, \Phi). \]
\end{enumerate} 
\end{definition}

The same proof as in \cite{PhSteStu99} works essentially verbatim to show that (\ref{non-mainface}) continues to hold for any $\ell$ indexing a non-main edge to the right of a main edge. However, the relation need no longer be true on the left of a main edge, as the following example shows. Let 
\[ F(x_1, x_2, x_3) = (x_3^2 - x_1)(x_3^3 - x_2), \quad \text{ and } \quad V = \Bigl\{(x_1, x_2) : 0 < x_2 < \frac{1}{2} x_1^{\frac{3}{2}}\Bigr\}, \]
so that the change of variable $\Phi = \Phi(V, \varphi, r)$ lies in $\mathcal C(F)$, with $\varphi$ given by 
\[ \varphi(y_1, y_2) = \bigl(y_1^{\frac{2}{5}}, y_1^{\frac{3}{5}} y_2 \bigr) \quad \text{ and } \quad r \equiv 0. \]
It follows that NP$(F;\Phi)$ is defined by the monotone edge path joining $(0,0,5)$, $(\frac{2}{5},0,3)$ and $(1,1,0)$, and that $\delta_0(F;\Phi) = \frac{6}{5}$. The main edge of NP$(F;\Phi)$ is the line segment $\Gamma_2$ joining $(2/5, 0, 3)$ and $(1, 1, 0)$.  The non-main edge $\Gamma_1$ lying to the left of $\Gamma_2$ corresponds to $\mathcal L_1$, the collection of roots (in $y_3$) of $F \circ \Phi$ with leading exponent $(1/5, 0)$. Since the leading coefficients of such roots of $F$ are distinct, $\kappa_1 = 1 > \delta_{0}(F;\Phi)^{-1} = 5/6$.         

In spite of this departure from the bivariate situation, we are able to provide a sufficient condition on the Newton polyhedron that ensures that an ambient coordinate system is integrability-adapted. We need the following notation. Let $F \circ \Phi$ admit a factorization of the form (\ref{F-factid2}), so that the Newton polyhedron NP$(F;\Phi)$ is defined by a monotone edge path $\Gamma$ joining the points $\{(\mathbf A_{\ell}, B_{\ell}) : 0 \leq \ell \leq L \}$, with $(\mathbf A_{\ell}, B_{\ell})$ as in (\ref{AlBl-new}). In other words, $\Gamma$ is the union of the line segments $\Gamma_{\ell}$ joining $(\mathbf A_{\ell-1}, B_{\ell-1})$ and $(\mathbf A_{\ell}, B_{\ell})$. Let $\Gamma_{\ell}'$ denote the infinite line in $\mathbb R^{n+1}$ of which $\Gamma_{\ell}$ is a segment. We define $\delta_{\ell}(j;\Phi)$ as follows, 
\[ \bigl( \delta_{\ell}(j;\Phi)^{-1}, \delta_{\ell}(j;\Phi)^{-1} \bigr) := \pi_j \left(\Gamma_{\ell}' \right) \cap \{ x_j = x_{n+1}\}. \]
It follows from Lemma \ref{mep-lemma3} and Proposition \ref{prop-step1-thm1} that the direction vector of $\Gamma_{\ell}$ is $(\pmb{\alpha}_{\ell}, -1)$ and that  
\begin{equation} \label{ND-alt}\delta_{0}(F;\Phi) = \min \left\{\delta_{\ell}(j;\Phi) : 1 \leq \ell \leq L, \; 1 \leq j \leq n  \right\}.  \end{equation}    
\begin{proposition}
Suppose that $\Phi \in \mathcal C(F)$, and that $\delta_{\ell}(j;\Phi)$ is defined as above for NP$(F;\Phi)$. Assume that 
\begin{equation} \kappa_{\ell} \leq \min_{1 \leq j \leq n}\delta_\ell(j;\Phi)^{-1} \quad \text{ for every } 1 \leq \ell \leq L, \label{adapted-suff}\end{equation} 
where $\kappa_{\ell}$ is the largest cardinality of roots of $F \circ \Phi$ with leading exponent $\pmb{\alpha}_{\ell}$ sharing a common leading term. Then $\Phi$ is integrability-adapted to $F$.    
\end{proposition}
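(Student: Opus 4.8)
The plan is to reduce everything to a local integrability estimate on $V\times(-1,1)$ and then run the lower-bound argument from the proof of Proposition~\ref{prop-step2-thm1} in reverse, using the hypothesis \eqref{adapted-suff} in place of the stronger bound that was available there. First I would recall that by Proposition~\ref{prop-step1-thm1}, once $F\circ\Phi$ has a factorization of the form \eqref{F-factid2}, the Newton exponent $\delta_0(F;\Phi)$ is exactly $\min_{j,\ell}\delta_\ell(j;\Phi)$, so by Theorem~\ref{prop-cii-nd-ndim} applied to $F\circ\Phi$ we already have the ``one direction'' inequality
\[
\sup\Bigl\{\delta:\int_V |F|^{-\delta}\,d\mathbf x\,dx_{n+1}<\infty\Bigr\}\le \delta_0(F;\Phi).
\]
(Strictly, Theorem~\ref{prop-cii-nd-ndim} bounds $\mu_0$ of $F\circ\Phi$, and one transfers this to the integral over $V$ by the change of variables $\mathbf x=\varphi(\mathbf y)$ together with the fact that $\varphi$ has unit Jacobian; this is the same scaling bookkeeping used in the proof of Theorem~\ref{main-thm1}.) So the content of the proposition is the reverse inequality: under \eqref{adapted-suff}, $|F|^{-\delta}$ is integrable on $V\times(-1,1)$ for every $\delta<\delta_0(F;\Phi)$.

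Next I would decompose $V\times(-1,1)$, exactly as in the proof of Lemma~\ref{real-roots} and Proposition~\ref{lemma-block2}, into the regions \eqref{region-main1}--\eqref{region8} with $r_i$ replaced by $\mathrm{Re}(r_i)$, and reduce via the shift \eqref{changeofvar-reduction2} to the two model regions \eqref{region-main1} and \eqref{region-main2}; by symmetry it suffices to handle \eqref{region-main1}, call it $\widetilde W$. On $\widetilde W$ one has the lower bound \eqref{lowerF} for $|G^\ast|$ (here with $G^\ast=F\circ\Phi$), and then one subdivides $\widetilde W$ into the slabs $\widetilde V_\ell$, $\ell_0\le\ell\le L$, on which the ``non-cancelling'' factors indexed by $\mathbb I_0$ are comparable to $\max(|x_{n+1}|,|r_i(\mathbf y)|)$, yielding on $\widetilde V_\ell$ the bound $|G^\ast|\gtrsim \mathbf y^{\mathbf A_\ell}|x_{n+1}|^{B_\ell}$ — this is verbatim the computation already carried out in Proposition~\ref{prop-step2-thm1}. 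Integrating in $x_{n+1}$ over $\frac{1}{2}\mathbf y^{\pmb\alpha_{\ell+1}}\le |x_{n+1}|\le \mathbf y^{\pmb\alpha_\ell}$ (up to constants) leaves, in each of the three cases $1-B_\ell\delta\gtrless 0$ and $=0$, an integral of the form $\int_{(0,1)^n}\mathbf y^{\mathbf e_\ell}\,d\mathbf y$ (possibly with a logarithm), convergent precisely when $\delta<\min_j \delta_\ell(j;\Phi)$ (respectively, in the logarithmic case, $A_\ell(j)<B_\ell$ for all $j$), which holds for all $\delta<\delta_0(F;\Phi)$ by \eqref{ND-alt}. Summing over the finitely many $\ell$, over $\widetilde W$ and its mirror image, and over the finitely many regions \eqref{region-main1}--\eqref{region8} gives $\int_{V\times(-1,1)}|F|^{-\delta}<\infty$, hence $\delta_0(F;\Phi)\le\sup\{\delta:\int_V|F|^{-\delta}<\infty\}$, completing the proof.

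The one place where hypothesis \eqref{adapted-suff} actually enters — and which I expect to be the real obstacle — is justifying that the factors in \eqref{lowerF} corresponding to roots $r_i$ with \emph{identical leading exponent and identical leading term} (i.e.\ $i,j$ in the same $\mathcal L_\ell$ with $u_i(\mathbf 0)=u_j(\mathbf 0)$) do not destroy the lower bound. In the setting of Proposition~\ref{prop-step2-thm1} one had the full resolution from Corollary~\ref{corollary-mainthm-resolution}, so every difference $r_i-r_j$ was itself fractional normal crossings with a \emph{strictly larger} exponent, and the ``cancelling'' block was handled by passing to $\mathrm{Re}(r_i)$. Here $\Phi$ is an arbitrary member of $\mathcal C(F)$, so within a cluster of $\kappa_\ell$ roots sharing a leading term the factor $\prod (x_{n+1}-r_i(\mathbf y))$ can be as small as (a constant times) $(\text{next-order monomial})^{\kappa_\ell}$ along the locus where $x_{n+1}$ equals the common value, and one loses a factor $\mathbf y^{\pmb\alpha_\ell}$ for each of the $\kappa_\ell$ cancellations relative to the naive bound $\mathbf y^{\mathbf A_\ell}x_{n+1}^{B_\ell}$. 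The key estimate is that on the relevant slab this loss is dominated by the gain $x_{n+1}^{\kappa_\ell}\gtrsim \mathbf y^{\kappa_\ell\pmb\alpha_\ell}$ available there, so that one still obtains $|F|\gtrsim \mathbf y^{\mathbf A_\ell}x_{n+1}^{B_\ell}$ on $\widetilde V_\ell$ at the cost of replacing $B_\ell$ by $B_\ell$ and keeping track of an extra region $\{|x_{n+1}-\text{common value}|\lesssim \mathbf y^{\pmb\alpha_\ell+\text{something}}\}$; the condition $\kappa_\ell\le\min_j\delta_\ell(j;\Phi)^{-1}$ is exactly what makes the integral over that extra region converge for all $\delta<\delta_0(F;\Phi)$. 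Making this accounting precise — identifying which slab each cancellation locus falls into and checking the exponent inequality $\kappa_\ell\delta< $ (the relevant linear functional of $\delta$) reduces to \eqref{adapted-suff} via \eqref{ND-alt} — is the technical heart of the argument; the rest is the now-routine bookkeeping of Proposition~\ref{lemma-block2} and Proposition~\ref{prop-step2-thm1}.
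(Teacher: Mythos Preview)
Your overall strategy is sound and mirrors the paper's setup, but there is a genuine gap in the second paragraph. After the shift \eqref{changeofvar-reduction2} by $\mathrm{Re}(r_{i_0})$, the slabs $\widetilde V_k$ and the quantities $\mathbf A_k, B_k$ that govern the integral are computed from the leading exponents $\overline{\pmb\alpha}_k$ of the \emph{shifted} roots $r_i - \mathrm{Re}(r_{i_0})$, not from the original $\pmb\alpha_k$. So the convergence condition the slab estimate actually yields is $\delta < \min_j \delta_k(j;\Psi)$ for the shifted coordinate $\Psi$, not $\delta < \min_j \delta_k(j;\Phi)$. You have silently replaced $\Psi$ by $\Phi$ when you write ``convergent precisely when $\delta<\min_j\delta_\ell(j;\Phi)$''. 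Your final paragraph senses the issue but the ``extra region'' and ``loss/gain'' language never closes the gap; the missing step is not an integral estimate but a combinatorial comparison of Newton data.

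What is actually needed --- and this is where \eqref{adapted-suff} enters precisely --- is the inequality $\delta_0(F;\Psi)\ge\delta_0(F;\Phi)$ for every shift $\Psi(\mathbf y,y_{n+1})=(\mathbf y,\,y_{n+1}+\mathrm{Re}(r_{i_0})(\mathbf y))$. The paper separates this out cleanly: by Proposition~\ref{prop-step2-thm1} the integrability threshold on $V$ equals $\min\{\delta_0(F;\Psi):\Psi\in\mathcal C_0(F)\}$, so it suffices to compare edge paths. If $i_0\in\mathcal L_\ell$, the first $\ell-1$ edges of $\mathrm{NP}(F;\Psi)$ coincide with those of $\mathrm{NP}(F;\Phi)$, while at most $\kappa_\ell$ roots (those sharing the leading term of $\mathrm{Re}(r_{i_0})$) have their exponent pushed strictly beyond $\pmb\alpha_\ell$, so $\overline m_{\ell+1}+\cdots+\overline m_R+\overline\gamma_0\le\kappa_\ell$. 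One then writes $\delta_{k}(j;\Psi)=f(\overline\alpha_{k}(j))$ for the M\"obius function
\[
f(\alpha)=\frac{1+\alpha}{A_{\ell-1}(j)+\overline m_\ell\,\alpha_\ell(j)+\alpha\bigl(\overline m_{\ell+1}+\cdots+\overline m_R+\overline\gamma_0\bigr)},
\]
observes that $f(\alpha_\ell(j))=\delta_\ell(j;\Phi)$ and $f(\infty)\ge\kappa_\ell^{-1}$, and uses \eqref{adapted-suff} in the form $\delta_\ell(j;\Phi)\le\kappa_\ell^{-1}$ to conclude that $f$ is increasing, hence $\delta_k(j;\Psi)\ge\delta_\ell(j;\Phi)\ge\delta_0(F;\Phi)$ for all $k\ge\ell$. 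This monotonicity argument is the heart of the proof; once you have it, the rest is immediate from Proposition~\ref{prop-step2-thm1}, and there is no need to reopen the integral estimates.
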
 
\begin{proof}
Without loss of generality, replacing $F \circ \Phi$ by $F$ if necessary, we may assume that $V = (0,1)^n$, $\Phi = $ identity, and that $F$ admits the factorization (\ref{F-factid2}) with $\mathbf y = \mathbf x$ and with the ordering (\ref{alpha-ordering}), where every element of $\mathcal A = \{r_i, \text{Re}(r_i) : 1 \leq i \leq M \}$ and $\mathcal A - \mathcal A$ is either trivial or fractional normal crossings. In view of these properties, we find that $F$ satisfies the hypotheses of Proposition \ref{prop-step2-thm1}, and the finite subcollection $\mathcal C_0(F)$ defined in this proposition takes the form 
\[ \mathcal C_0(F) = \{\Psi : (\mathbf x, x_{n+1}) = \Psi(\mathbf y, y_{n+1}) = (\mathbf y, y_{n+1} + \text{Re}(r_i(\mathbf y))), i \in \mathcal I \cup \mathcal J \}. \]     
By Proposition \ref{prop-step2-thm1}, we know that 
\[ \int_{(0,1)^n \times (-1,1)} |F|^{-\delta} \, d\mathbf x \, dx_{n+1} \quad \text{ if and only if } \quad \delta < \min \left\{ \delta_0(F;\Psi) : \Psi \in \mathcal C_0(F) \right\}. \] 
Thus it suffices to show that \begin{equation} \label{toshow-revised} \delta_0(F;\Psi) \geq \delta_0(F) \quad \text{ for every } \Psi \in \mathcal C_0(F). \end{equation}  

Fix $\Psi_0 \in \mathcal C_0(F)$ so that $\Psi_0(\mathbf y, y_{n+1}) = (\mathbf y, y_{n+1} + \text{Re}(r_{i_0}(\mathbf y)))$ for some $i_0 \in \mathcal I \cup \mathcal J$. Let $\ell$ be the unique index such that $i_0 \in \mathcal L_{\ell}$, and \[ {\overline{\pmb{\alpha}}}_1 < {\overline{\pmb{\alpha}}}_2 < \cdots < {\overline{\pmb{\alpha}}}_R \]
the collection of distinct multi-indices that occur as leading exponents of the roots of $F \circ \Psi$. We will compare NP$(F;\Psi)$ with NP$(F) =$ NP$(F;\Phi)$, and show that  \begin{equation}  R \geq \ell, \quad  \begin{cases} \delta_{k}(j ; \Psi) = \delta_{k}(j; \Phi) &\text{ for } 1 \leq k < \ell, \\  \delta_{k}(j ; \Psi) \geq \delta_{\ell}(j; \Phi) &\text{ for } \ell \leq k \leq R. \end{cases} \label{toshow-revised2}\end{equation} 
In view of (\ref{ND-alt}), this establishes (\ref{toshow-revised}). 

We set about proving (\ref{toshow-revised2}). Since $i_0 \in \mathcal I \cup \mathcal J$, the fractional power series $r_{i_0}$ and Re$(r_{i_0})$ share the same leading exponent. We set $m_k$ (respectively $\overline{m}_k$) to be the number of roots of $F$ (respectively $F \circ \Psi$) with leading exponent $\pmb{\alpha}_k$ (respectively $\overline{\pmb{\alpha}}_k$), counting multiplicity. Further let $\overline{\gamma}_0 \geq 0$ be the multiplicity of the constant function $y_{n+1} \equiv 0$ as a root of $F \circ \Psi$. Thus $\overline{\gamma}_0 > 0$ if and only if $r_{i_0}$ is a real root of $F$. Since the roots of $F \circ \Psi$ are $\{ r_i - \text{Re}(r_{i_0}) : 1 \leq i \leq M \}$, it easily follows that 
\[ \overline{\pmb{\alpha}}_k = \pmb{\alpha}_k \text{ and } m_k = \overline{m}_k \text{ for } k < \ell, \quad \text{ so that } \quad \overline{m}_{\ell} + \cdots + \overline{m}_R + \overline{\gamma}_0 = B_{\ell-1} > 0.   \]
Thus $R \geq \ell$ and $\delta_k(j; \Psi) = \delta_k(j)$ for $1 \leq k < \ell$, $1 \leq j \leq n$, proving the first two claims in (\ref{toshow-revised2}). Further $\overline{\pmb{\alpha}}_{\ell} = \pmb{\alpha}_{\ell}$ if $\ell < L$, and $\overline{\pmb{\alpha}}_{\ell} \geq \pmb{\alpha}_{\ell}$ if $\ell = L$. 

Let us assume first that $\ell < L$. The last assertion in the preceding paragraph then implies that $\delta_{\ell}(j; \Psi) = \delta_{\ell}(j)$. The possibly new exponents $\{ \overline{\pmb{\alpha}}_{\ell+1}, \cdots, \overline{\pmb{\alpha}}_{R}\}$ originate from the roots $r_i$ that share the same leading coefficient as Re$(r_{i_0})$, and hence 
\[ \overline{m}_{\ell+1} + \cdots + \overline{m}_R + \overline{\gamma}_0 \leq \kappa_{\ell}. \]   
Now an easy calculation shows that
\begin{align*}  \delta_{\ell+1}(j;\Psi) &= \frac{1 + \overline{\alpha}_{\ell + 1}(j)}{A_{\ell-1}(j) + \overline{m}_{\ell} \alpha_{\ell}(j) + \overline{\alpha}_{\ell + 1}(j)(\overline{m}_{\ell+1} + \cdots + \overline{m}_R + \overline{\gamma}_0)}  \\  &= f(\overline{\alpha}_{\ell+1}(j)), \quad \text{ where } \\ 
f(\alpha) &:= \frac{1 + \alpha}{A_{\ell-1}(j) + \overline{m}_{\ell} \alpha_{\ell}(j) + \alpha (\overline{m}_{\ell+1} + \cdots + \overline{m}_R + \overline{\gamma}_0)}, \quad \alpha > 0.  
\end{align*}  
We observe that $f(\alpha_{\ell}(j)) = \delta_{\ell}(j)$ and that $f$ is a monotone function. Moreover by (\ref{adapted-suff}), 
\[ f(\alpha_{\ell}(j)) = \delta_{\ell}(j) \leq \kappa_{\ell}^{-1} \leq \frac{1}{\overline{m}_{\ell+1} + \cdots + \overline{m}_R+ \overline{\gamma}_0} = f(\infty), \] 
so that $f$ is in fact monotone increasing. Thus  
\[ \delta_{\ell}(j) = f(\alpha_{\ell}(j)) \leq f(\overline{\alpha}_{\ell+1}(j)) = \delta_{\ell+1}(j;\Psi) \leq f(\infty). \] 
A similar inductive argument based on $k \geq \ell+1$ shows that 
\[\delta_0(F;\Phi) \leq \delta_{\ell}(j) \leq \delta_{\ell+1}(j;\Psi) \leq \cdots \leq \delta_k(j;\Psi) \quad \text{ for all } k \geq \ell+1, \]
completing the proof of (\ref{toshow-revised2}). The treatment of the case $\ell = L$ involves minor modifications to the above argument and is left to the reader. 
\end{proof}

\section{Appendix: Proof of Theorem \ref{thm-JA-complex}} \label{sec-appendix} 
The essential ingredients of the proof are already in \cite{Suss90}, \cite{Par01}. Namely, we first define the root functions of $F$ as locally holomorphic functions on a dense subset of $\prod_{j=1}^{n} U_j$ and use the discriminantal condition (\ref{disc-norm-cross}) to extend them globally. 

\paragraph{Local root functions.}
For each $1 \leq j \leq n$, we define 
\[ \mathbb H_j = \left\{\mathbf z \in \mathbb C^n : z_j = 0 \right\} \quad \text{ and set } \quad \mathbb H = \bigcup_{j=1}^{n} \mathbb H_j. \]
Since $\mathbb H_j$ is of real codimension 2, $\mathcal O \setminus \mathbb H$ is an open connected dense subset of $\mathcal O$. If $\widehat{\mathbf z} \in \mathcal O \setminus \mathbb H$, then the discriminant condition (\ref{disc-norm-cross}) implies that $\Delta_F(\widehat{\mathbf z}) \ne 0$, so $F(\widehat{\mathbf z}, z_{n+1})$ considered as a polynomial in $z_{n+1}$ has $d$ distinct complex roots depending on $\widehat{\mathbf z}$. Let us denote these by $w_1(\widehat{\mathbf z}), \cdots, w_d(\widehat{\mathbf z})$. Since by (\ref{disc-norm-cross})
\[ \frac{\partial F}{\partial z_{n+1}}(\widehat{\mathbf z}, w_i(\widehat{\mathbf z})) \ne 0 \quad \text{ for } \widehat{\mathbf z} \in \mathcal O \setminus \mathbb H, \; 1 \leq i \leq d, \]     
the implicit function theorem implies that there exist some open neighborhood $\mathcal O_{\widehat{\mathbf z}}$ of $\widehat{\mathbf z}$ contained in $\mathcal O \setminus \mathbb H$ and $d$ holomorphic functions $\mathbf z \mapsto w_i(\mathbf z)$ on $\mathcal O_{\widehat{\mathbf z}}$ such that 
\[ F(\mathbf z, w_i(\mathbf z)) = 0 \quad \text{ and } \quad w_i(\mathbf z) \ne w_j(\mathbf z) \text{ for } z \in \mathcal O_{\widehat{\mathbf z}}. \] 

\paragraph{Definition of $s$.} The labeling of the set of roots on each $\mathcal O_{\widehat{\mathbf z}}$ is at the moment completely arbitrary and quite possibly globally inconsistent on $\mathcal O \setminus \mathbb H$. We now set about trying to arrange them globally. The following observation is a key step in this process. 

Let $\gamma:[0,1] \rightarrow \mathcal O \setminus \mathbb H$ be a continuous curve such that $\gamma(0) = \gamma(1) = \widehat{\mathbf z}$. We claim that $\gamma$ induces a permutation $\pi_{\gamma}$ of the roots, as follows. For each $1 \leq i \leq d$, one can define continuous functions $\zeta_i[\gamma]:[0,1] \rightarrow \mathbb C$ such that 
\begin{equation} \zeta_i[\gamma](0) = w_i(\widehat{\mathbf z}) \quad \text{ and }  \quad F(\gamma(t), \zeta_i[\gamma](t)) =0. \label{def-zetai} \end{equation} 
The condition $\Delta_F \ne 0$ on $\mathcal O \setminus \mathbb H$ implies that the functions $\zeta_i$ are uniquely defined. Each number $\zeta_i(1)$ is a root of $F(\widehat{\mathbf z}, \cdot)$, and hence of the form \[\zeta_i(1) = w_{\pi_{\gamma}(i)}(\widehat{\mathbf z}) \text{ for some permutation $\pi_{\gamma}$ of $\{ 1, 2, \cdots, d \}$.} \]
We leave the reader to verify that $\gamma \mapsto \pi_{\gamma}$ is a group homomorphism between the fundamental group of $\mathcal O$ and the permutation group $\mathbb S_d$.   
We choose $s$ to be an integer such that 
\begin{equation} \pi_{\gamma}^s = \underbrace{\pi_{\gamma} \circ \cdots \circ \pi_{\gamma}}_{s \text{ times}} = \text{identity} \quad \text{ for all } \widehat{\mathbf z} \in \mathcal O \setminus \mathbb H \text{ and all curves $\gamma$.} \label{choice-s} \end{equation}  Clearly $s=2$ for $d=1$ and $s = d! = \text{order of the permutation group } \mathbb S_d$ for $d \geq 2$ will suffice.    

\paragraph{Local lifting.} In this step we transfer the local root functions from $\mathcal O$ to $\prod_{j=1}^{n} U_j$, where $\{ U_j \}$ is the collection of simply connected open neighborhoods of the origin described in the statement of the theorem. Given an arbitrary $\widehat{\pmb{\xi}} \in \prod_{j=1}^{n} U_j$, we set $\widehat{\mathbf z} = \Phi_s(\widehat{\pmb{\xi}})$. Recalling that $\{ w_i : 1 \leq i \leq d \}$ is a set of holomorphic root functions of $F$ on $\mathcal O_{\widehat{\mathbf z}}$, we set 
\[ W_i = w_i \circ \Phi_s \quad \text{ on } \Phi_s^{-1}(\mathcal O_{\widehat{\mathbf z}}), \quad \text{ and } \quad \mathcal W_{\widehat{\pmb{\xi}}} = \{W_1, \cdots, W_d \}, \]
so that $\mathcal W_{\widehat{\pmb{\xi}}}$ is a collection of $d$ locally holomorphic functions on $\Phi_s^{-1}(\mathcal O_{\widehat{\mathbf z}})$ satisfying
\begin{equation} F\bigl(\Phi_s(\pmb{\xi}), W_i(\pmb{\xi}) \bigr) = 0 \quad \text{ for } \pmb{\xi} \in \Phi_s^{-1}(\mathcal O_{\widehat{\mathbf z}}). \label{local-lifting} \end{equation}        

\paragraph{Global definition of the roots}
We will now define the global root functions $b_i$ on $\prod_{j=1}^{n}U_j$. Fix a base point $\pmb{\xi}^{\ast} \in \prod_{j=1}^{n} U_j \setminus \mathbb H$ and fix a labeling of the roots in $\mathcal W_{\pmb{\xi}^{\ast}} = \{b_1, \cdots, b_d \}$ that will be used for the remainder of the proof. For any other point $\widehat{\pmb{\xi}} \in \prod_{j=1}^{n} U_j \setminus \mathbb H$ fixed but arbitrary we choose a continuous curve
\begin{equation} \Gamma: [0,1] \rightarrow \prod_{j=1}^{n}U_j \setminus \mathbb H \text{ such that } \Gamma(0) = \pmb{\xi}^{\ast}, \; \Gamma(1) = \widehat{\pmb{\xi}}. \label{def-Gamma-curve} \end{equation} For every $1 \leq i \leq d$, we define a function $\eta_i[\Gamma]:[0,1] \rightarrow \mathbb C$ by setting $\eta_i[\Gamma](0) = b_i(\pmb{\xi}^{\ast})$ and extending $\eta_i[\Gamma]$ to $[0,1]$ by analytic continuation so that 
\begin{equation} F(\Phi_s(\Gamma(t)), \eta_i[\Gamma](t)) = 0, \quad 0\leq t \leq 1. \label{lifted-root} \end{equation} 
We claim that 
\begin{equation}
(\eta_1[\Gamma], \cdots, \eta_d[\Gamma])(1) \text{ is independent of $\Gamma$ satisfying (\ref{def-Gamma-curve}).} \label{JA-claim}
\end{equation}  

Assuming the claim for the moment, we can define a global function \[b_i:\prod_{j=1}^{n} U_j \setminus \mathbb H \rightarrow \mathbb C \text{  by setting }  b_i(\widehat{\pmb{\xi}}) = \eta_i[\Gamma](1). \] The claim implies that each $b_i$ is well-defined on $\prod_{j=1}^{n}U_j \setminus \mathbb H$. It is holomorphic on $\prod_{j=1}^{n} U_j \setminus \mathbb H$, since by (\ref{lifted-root}) and (\ref{local-lifting}) every $b_i$ agrees on $\Phi_s^{-1}(\mathcal O_{\widehat{\mathbf z}})$ with $w_r \circ \Phi_s$ for some $1 \leq r \leq d$. Since the coefficients $c_\nu$ of the monic polynomial $F$ are bounded, so are the roots $b_i$. Therefore $\{b_i : 1 \leq i \leq d \}$ extend as holomorphic functions on all of $\prod_{j=1}^{n}U_j$ which is the main conclusion of Theorem \ref{thm-JA-complex}. Further the product of $b_i(\pmb{\xi}) - b_j(\pmb{\xi})$ for $i \ne j$ is the discriminant $\Delta_F \circ \Phi_s(\pmb{\xi})$ which is normal crossings, thereby implying that each of the differences $b_i - b_j$ is normal crossings as well.   


\paragraph{Proof of (\ref{JA-claim})} It therefore remains to prove the claim (\ref{JA-claim}). By the monodromy theorem, (\ref{JA-claim}) is already known to hold for curves $\Gamma$ and $\Gamma'$ that obey (\ref{def-Gamma-curve}) and are fixed endpoint homotopic to each other. We therefore only need to restrict attention to curves that are non-homotopic.

By a standard reduction, it suffices to choose $\widehat{\pmb{\xi}} = \pmb{\xi}^{\ast}$ and prove the following statement instead. Let $\Gamma^{\ast}:[0,1] \rightarrow \prod_{j=1}^{n} U_j \setminus \mathbb H$ be any closed curve not homotopic to zero, with $\Gamma^{\ast}(0) = \Gamma^{\ast}(1) = \pmb{\xi}^{\ast}$. Let $\tau_{\Gamma^{\ast}}$ denote the permutation of roots of $F(\pmb{\xi}^{\ast}, \cdot)$ induced by $\Gamma^{\ast}$. Namely if $\eta_i[\Gamma^{\ast}]$ is defined using (\ref{lifted-root}), then 
\[ \eta_i[\Gamma^{\ast}](1) = \eta_{j}[\Gamma^{\ast}](0) \quad \text{ for } j = \tau_{\Gamma^\ast}(i). \]
Our goal is to show that \begin{equation} \tau_{\Gamma^{\ast}} = \text{ identity } \label{JA-revised-goal} \end{equation} for all closed curves $\Gamma^{\ast}$ originating at $\pmb{\xi}^{\ast}$ and not homotopic to zero.       

We observe two facts. First, as we have seen in an earlier part of the proof, $\Gamma^{\ast} \mapsto \tau_{\Gamma^{\ast}}$ is a group homomorphism from the fundamental group of $\prod_{j=1}^{n} U_j \setminus \mathbb H$ to the permutation group $\mathbb S_d$. Second, the fundamental group of $\prod_{j=1}^{n} U_j \setminus \mathbb H$ is isomorphic to that of $\prod_{j=1}^{n} (U_j \setminus \{0\})$, which in turn is isomorphic to $\mathbb Z^n$. A set of generators of this fundamental group is given by $\{ \lambda_1, \cdots, \lambda_n \}$, with
\[ \lambda_j(t) = \bigl\{ \bigl(\xi_1^{\ast}, \cdots, \xi_{j-1}^{\ast}, \epsilon_j e^{2 \pi i t}, \xi_{j+1}^{\ast}, \cdots, \xi_n^{\ast} \bigr) : 0 \leq t \leq 1 \bigr\}, \quad 1 \leq j \leq n, \] 
where $\epsilon_j > 0$ is chosen sufficiently small so that $\{|z_j| = \epsilon_j \} \subseteq U_j \setminus \{0\}$. It therefore suffices to verify (\ref{JA-revised-goal}) only for $\Gamma^{\ast} \in \{ \lambda_1, \cdots, \lambda_n \}$.  

To see this, fix $1 \leq j \leq n$. We recall that (\ref{lifted-root}) holds for $\Gamma = \lambda_j$, so comparing with (\ref{def-zetai}) we arrive at the conclusion 
\[ \eta_i[\lambda_j] = \zeta_i[\Phi_s \circ \lambda_j] \quad \text{ and hence } \quad \tau_{\lambda_j}= \pi_{\Phi_s \circ \lambda_j}. \]
But 
\begin{align*} 
\Phi_s \circ \lambda_j(t) &= \bigl\{(z_1^{\ast}, \cdots, z_{j-1}^{\ast}, \epsilon_j^{s} e^{2 \pi i s t}, z_{j+1}^{\ast}, \cdots, z_n^{\ast} ) : 0 \leq t \leq 1 \bigr\}  \\
&= \underbrace{\gamma_j + \cdots + \gamma_j}_{s \text{ times}}, \quad \text{ where } \mathbf z^{\ast} = \Phi_s(\pmb{\xi}^{\ast}), \text{ and } \\ 
\gamma_j &= \bigl\{(z_1^{\ast}, \cdots, z_{j-1}^{\ast}, \epsilon_j^{s} e^{2 \pi i t}, z_{j+1}^{\ast}, \cdots, z_n^{\ast} ) : 0 \leq t \leq 1 \bigr\} \text{ is a curve in $\mathcal O$.}
\end{align*}
But then by (\ref{choice-s}), $\tau_{\lambda_j} = \pi_{\gamma_j}^s =$ identity, as claimed.  

\section{Appendix: Proof of Theorem \ref{prop-cii-nd-ndim}} \label{sec-appendix2}
Without loss of generality and for notational simplicity, it is convenient for this proof to work in dimension $n$ (as opposed to $(n+1)$ as in the rest of the paper). Accordingly, we set \[ F(\mathbf x) = \sum_{\begin{subarray}{c}\pmb{\kappa} > \mathbf 0 \\ \pmb{\kappa} \in \mathbb Z^n/N \end{subarray}} a_{\pmb{\kappa}} \mathbf x^{\pmb{\kappa}},  \]
where the fractional power series converges absolutely and uniformly in a neighborhood $U_0 \supseteq (-1,1)^n$ of the origin in $\mathbb R^n$. In view of the discussion in \S \ref{sec-intro} on sublevel set growth estimates, it is known that $\nu_0(F) = \mu_0(F)$, so it suffices to show that 
\begin{equation} \label{nu-delta} \nu_0(F) \leq \delta_0(F). \end{equation}   
We will prove (\ref{nu-delta}) by ignoring all ``cancellation'' in $F$, i.e., by estimating $F$ pointwise from above by a non-negative function with an isolated zero at the origin. The main ingredients of the proof are certain tools in convex geometry used in \cite{Nagel-Pramanik09} \cite{Nagel-Pramanik-preprint} in the study of monomial polyhedra and polyhedral cones. We point to the relevant parts of these references for results that may be applied verbatim. 

The main steps of the proof are contained in the following sequence of lemmas.  

\begin{lemma} \label{lemma-defI} There exists a finite set of multi-indices $\mathcal I \subseteq \{\pmb{\kappa} : a_{\pmb{\kappa}} \ne 0 \}$ such that 
\[ \{ \pmb{\kappa} : a_{\pmb{\kappa}} \ne 0 \} \subseteq \bigcup_{\pmb{\kappa} \in \mathcal I} \bigl[\pmb{\kappa} + \mathbb R^n_{\geq 0} \bigr]. \] 
\end{lemma}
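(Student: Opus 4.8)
The statement is purely combinatorial and does not use the convergence of the series; it is a restatement of Dickson's Lemma (equivalently, the Noetherianity of the polynomial ring $\mathbb C[x_1,\dots,x_n]$ applied to monomial ideals). The plan is as follows. Since every $\pmb{\kappa}$ with $a_{\pmb{\kappa}} \ne 0$ lies in $\mathbb Z^n/N$ and is $\geq \mathbf 0$, the rescaled set
\[ E := \bigl\{ N\pmb{\kappa} : a_{\pmb{\kappa}} \ne 0 \bigr\} \]
is a subset of $\mathbb Z^n_{\geq 0}$. First I would pass from $E$ to the monomial ideal $\mathfrak a \subseteq \mathbb C[x_1,\dots,x_n]$ generated by $\{ \mathbf x^{\mathbf m} : \mathbf m \in E \}$.

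By the Hilbert Basis Theorem, $\mathfrak a$ is finitely generated, and since $\mathfrak a$ is a monomial ideal one may take the generators to be monomials, say $\mathbf x^{\mathbf m_1}, \dots, \mathbf x^{\mathbf m_r}$. A standard fact about monomial ideals is that a monomial $\mathbf x^{\mathbf m}$ lies in $\mathfrak a$ if and only if $\mathbf m_t \leq \mathbf m$ (in the coordinatewise order of \eqref{order1}) for some $1 \leq t \leq r$. Moreover, each generator $\mathbf x^{\mathbf m_t}$ can be chosen so that $\mathbf m_t$ is a minimal element of $E$ with respect to this order; in particular $\mathbf m_t \in E$. Setting $\mathcal I := \{ \mathbf m_t/N : 1 \leq t \leq r \}$, we then have $\mathcal I \subseteq \{ \pmb{\kappa} : a_{\pmb{\kappa}} \ne 0 \}$, and for any $\pmb{\kappa}$ with $a_{\pmb{\kappa}} \ne 0$ the monomial $\mathbf x^{N\pmb{\kappa}} \in \mathfrak a$ forces $\mathbf m_t \leq N\pmb{\kappa}$ for some $t$, i.e. $\mathbf m_t/N \leq \pmb{\kappa}$, which is exactly the assertion $\pmb{\kappa} \in (\mathbf m_t/N) + \mathbb R^n_{\geq 0}$. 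Taking the union over $\pmb{\kappa}$ gives the claimed containment $\{ \pmb{\kappa} : a_{\pmb{\kappa}} \ne 0 \} \subseteq \bigcup_{\pmb{\kappa} \in \mathcal I} [\pmb{\kappa} + \mathbb R^n_{\geq 0}]$.

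Alternatively, and perhaps more self-containedly, I would invoke Dickson's Lemma directly: any subset of $\mathbb Z^n_{\geq 0}$ has only finitely many minimal elements with respect to the partial order \eqref{order1}, and every element of the set dominates one of them. Applying this to $E$ and dividing the finitely many minimal elements by $N$ produces $\mathcal I$ immediately. There is no real obstacle here — the only point requiring a sentence of care is that the chosen index set can be taken \emph{inside} $\{ \pmb{\kappa} : a_{\pmb{\kappa}} \ne 0 \}$ rather than merely in $\mathbb R^n_{\geq 0}$, which is automatic because the minimal elements of a set always belong to the set.
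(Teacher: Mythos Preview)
Your proof is correct. The paper takes a different route: rather than invoking Dickson's Lemma directly, it partitions the exponent set $\{\pmb{\kappa} : a_{\pmb{\kappa}} \ne 0\}$ according to the support $S = \{j : \kappa_j \ne 0\}$, and for each nonempty $S \subseteq \{1,\dots,n\}$ applies Proposition~3.3 of \cite{Nagel-Pramanik-preprint} (with $\mathbf q = -\mathbf 1$, $\mathcal P_S = \{\mathbf e_j : j \in S\}$, and $\mathfrak N_S$ the rescaled exponents supported exactly on $S$) to extract a finite subset $\mathfrak M_S \subseteq \mathfrak N_S$; the union over $S$ gives $\mathcal I$. Your argument is more elementary and self-contained: it avoids both the external citation and the stratification by support, and it makes transparent that the only content here is the well-ordering of $\mathbb Z^n_{\geq 0}$ under the componentwise partial order. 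The paper's approach presumably reuses machinery already developed in \cite{Nagel-Pramanik-preprint} for other purposes, but for this isolated lemma your route via Dickson's Lemma (or the monomial-ideal formulation of Hilbert's Basis Theorem) is cleaner and requires no case analysis.
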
 
\begin{proof}
This is a consequence of Proposition 3.3 in \cite{Nagel-Pramanik-preprint}. More precisely, for every non-empty subset $S \subseteq \{1, 2, \cdots, n \}$ we apply this proposition with $\mathbf q = -\mathbf 1$, and with $\mathcal P$ and $\mathfrak N$ replaced by $\mathcal P_{S}$ and $\mathfrak N_{S}$ respectively, where $\mathcal P_{S} = \{\mathbf e_j : j \in S \}$, and   
\[ \mathfrak N_S = \left\{\pmb{\kappa}N : a_{\pmb{\kappa}} \ne 0, \pmb{\kappa} = (\kappa_1, \cdots, \kappa_n), \kappa_j \ne 0 \text{ if and only if } j \in S \right\} \subseteq \mathbb Z^n. \]  
Here $\{\mathbf e_1, \cdots, \mathbf e_n \}$ denotes the canonical orthonormal basis in $\mathbb R^n$. If $\mathfrak M_S \subseteq \mathfrak N_S$ is the finite subset whose existence is guaranteed by that proposition, then \[ \mathcal I = \bigcup_{S} \{\pmb{\kappa} : \pmb{\kappa}N \in \mathfrak M_S \} \] satisfies the conclusion of the lemma.   
\end{proof} 
\begin{corollary} \label{corollary-defI} 
Let $\mathcal I = \{\mathbf p_1, \cdots, \mathbf p_d \} $ be as in Lemma \ref{lemma-defI}. Then 
\begin{enumerate}[(a)]
\item there exists a constant $C > 0$ such that 
\begin{equation} |F(\mathbf x)| \leq \frac{C}{2} \sum_{j=1}^{d} |\mathbf x^{\mathbf p_j}|, \quad \mathbf x \in U_0. \label{ptwise-upper-bound} \end{equation}  
\item The following set containment holds: $\mathcal E_{\epsilon}(F) \supseteq E_{\epsilon}$, where \begin{equation} E_{\epsilon} := \left\{ \mathbf x \in U_0 : |\mathbf x^{\mathbf p_j}| \leq \frac{\epsilon}{Cd}, j=1, 2, \cdots, d \right\} \label{Eep} \end{equation}
and $\mathcal E_{\epsilon}(F)$ is the sublevel set defined in (\ref{def-nu0}).  
\end{enumerate} 
\end{corollary}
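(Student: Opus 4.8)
The plan is to prove the corollary as a direct and essentially formal consequence of Lemma~\ref{lemma-defI}, with the only genuine content being the uniform control of the coefficient sums. I would begin by recalling from Lemma~\ref{lemma-defI} that $\{\pmb{\kappa}:a_{\pmb{\kappa}}\ne 0\}\subseteq\bigcup_{j=1}^{d}[\mathbf p_j+\mathbb R^n_{\geq 0}]$, so that every monomial $\mathbf x^{\pmb{\kappa}}$ appearing in $F$ can be written as $\mathbf x^{\mathbf p_j}\mathbf x^{\pmb{\tau}}$ with $\pmb{\tau}\geq\mathbf 0$ and $j=j(\pmb{\kappa})$. For $\mathbf x\in U_0$, since $U_0$ is a neighborhood of the origin, after shrinking $U_0$ slightly (harmlessly, as all the indices $\nu_0,\mu_0,\delta_0$ are defined via infima/suprema over neighborhoods) we may assume $|\mathbf x^{\pmb{\tau}}|\leq 1$ for $\pmb{\tau}\geq\mathbf 0$ on $U_0$. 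Grouping the absolutely convergent series of $F$ according to which translated orthant $\mathbf p_j+\mathbb R^n_{\geq 0}$ the exponent falls in, I would estimate
\[
|F(\mathbf x)|\leq\sum_{\pmb{\kappa}}|a_{\pmb{\kappa}}|\,|\mathbf x^{\pmb{\kappa}}|\leq\sum_{j=1}^{d}|\mathbf x^{\mathbf p_j}|\sum_{\pmb{\tau}\geq\mathbf 0}|a_{\mathbf p_j+\pmb{\tau}}|\,|\mathbf x^{\pmb{\tau}}|\leq\Bigl(\max_j\sum_{\pmb{\tau}\geq\mathbf 0}|a_{\mathbf p_j+\pmb{\tau}}|\Bigr)\sum_{j=1}^{d}|\mathbf x^{\mathbf p_j}|,
\]
which gives part~(a) with $C:=2\max_j\sum_{\pmb{\tau}\geq\mathbf 0}|a_{\mathbf p_j+\pmb{\tau}}|$, finite because the Puiseux series of $F$ converges absolutely on a neighborhood of the closed unit polydisc. (Here one assigns each nonzero exponent to a single $j$ to avoid double counting; this only helps the estimate.)

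For part~(b), I would argue pointwise: if $\mathbf x\in E_{\epsilon}$, then $|\mathbf x^{\mathbf p_j}|\leq\epsilon/(Cd)$ for every $j$, so by the bound in part~(a),
\[
|F(\mathbf x)|\leq\frac{C}{2}\sum_{j=1}^{d}|\mathbf x^{\mathbf p_j}|\leq\frac{C}{2}\cdot d\cdot\frac{\epsilon}{Cd}=\frac{\epsilon}{2}<\epsilon,
\]
hence $\mathbf x\in\mathcal E_{\epsilon}(F)$. This establishes $E_{\epsilon}\subseteq\mathcal E_{\epsilon}(F)$.

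I do not anticipate a serious obstacle in the corollary itself; the substantive work is already packaged in Lemma~\ref{lemma-defI} (which in turn rests on the convex-geometric Proposition~3.3 of \cite{Nagel-Pramanik-preprint}). The only point requiring a little care is the bookkeeping in the coefficient estimate — ensuring the double sum over $j$ and $\pmb{\tau}$ genuinely dominates $\sum_{\pmb{\kappa}}|a_{\pmb{\kappa}}||\mathbf x^{\pmb{\kappa}}|$ without losing the finiteness of $C$, which is why one fixes, for each exponent $\pmb{\kappa}$ with $a_{\pmb{\kappa}}\ne 0$, a \emph{single} index $j(\pmb{\kappa})$ with $\pmb{\kappa}\in\mathbf p_{j(\pmb{\kappa})}+\mathbb R^n_{\geq 0}$. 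With $C$ so defined, parts~(a) and~(b) follow as above, and this corollary then feeds directly into the proof of \eqref{nu-delta}: the set $E_{\epsilon}$ is a ``monomial polyhedron'' whose volume is computable (again via the tools of \cite{Nagel-Pramanik09}), and its growth rate as $\epsilon\to 0$ will be shown to be governed by $\delta_0(F)$, yielding $\nu_0(F)\leq\delta_0(F)$.
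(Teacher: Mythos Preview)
Your proposal is correct and follows essentially the same approach as the paper: factor out $\mathbf x^{\mathbf p_j}$ from the terms whose exponents lie in $\mathbf p_j+\mathbb R^n_{\ge 0}$, observe that what remains is bounded by absolute/uniform convergence, and deduce (b) from (a) by the obvious pointwise estimate. The paper's version is marginally cleaner in that it avoids shrinking $U_0$ --- it simply notes that each factored series $F_{\mathbf p_j}(\mathbf x)=\sum_{\pmb\tau\ge\mathbf 0}a_{\mathbf p_j+\pmb\tau}\mathbf x^{\pmb\tau}$ is itself absolutely and uniformly convergent on $U_0$ and hence bounded there, which yields (a) on the full $U_0$ as stated rather than on a shrunken neighborhood.
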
 
\begin{proof}
It follows from Lemma \ref{lemma-defI} that $F(\mathbf x) = \sum_{\pmb{\kappa}\in \mathcal I} \mathbf x^{\pmb{\kappa}} F_{\pmb{\kappa}}(\mathbf x)$, where each $F_{\pmb{\kappa}}$ is a fractional power series converging absolutely and uniformly on $U_0$, and hence is bounded therein. The result (\ref{ptwise-upper-bound}) follows. The second conclusion is an easy consequence of the first and is left to the reader.   
\end{proof}
A critical step in the proof of Theorem \ref{prop-cii-nd-ndim} is the following estimate. 
\begin{proposition} \label{prop-Eep-sizebound} 
Let $E_{\epsilon}$ be as in (\ref{Eep}), $0 < \epsilon \leq 1$. Then there exists a constant $C$ depending of $F$ but independent of $\epsilon$ such that 
\begin{equation} \label{Eep-sizebound} |E_{\epsilon}| \geq C^{-1} \epsilon^{\delta_0(F)}. \end{equation}  
\end{proposition}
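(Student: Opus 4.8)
The plan is to estimate the volume of the set $E_\epsilon$ defined in (\ref{Eep}) by reducing the problem to a purely combinatorial/convex-geometric statement about the lattice polyhedron $\text{NP}(F)$. First I would recall that $\delta_0(F) = d_0(F)^{-1}$ where $d_0(F) = \min\{d : (d,\dots,d) \in \text{NP}(F)\}$, and that by Lemma \ref{lemma-defI} the Newton polyhedron $\text{NP}(F)$ equals the convex hull of $\bigcup_{j=1}^d [\mathbf p_j + \mathbb R^n_{\geq 0}]$. Taking logarithms, the set $E_\epsilon$ becomes (up to the region $U_0$, which contains a fixed neighborhood of the origin and hence imposes no constraint once $\epsilon$ is small) the set of $\mathbf x$ with $|x_i| = e^{-t_i}$ for $\mathbf t$ in the ``dual'' region $\{\mathbf t \in \mathbb R^n_{\geq 0} : \langle \mathbf p_j, \mathbf t\rangle \geq \log(Cd/\epsilon) \text{ for all } j\}$. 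Writing $T = \log(Cd/\epsilon)$, so $T \sim \delta_0(F)^{-1}\cdot(-\log\epsilon)$ up to an additive constant, the volume is
\begin{equation}
|E_\epsilon| = 2^n \int_{\{\mathbf t \geq \mathbf 0 \,:\, \langle \mathbf p_j, \mathbf t \rangle \geq T,\; 1 \leq j \leq d\}} e^{-(t_1 + \cdots + t_n)} \, d\mathbf t.
\end{equation}

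Next I would exploit homogeneity. Scaling $\mathbf t = T \mathbf s$ shows that the domain of integration becomes $T \cdot \mathcal D$, where $\mathcal D := \{\mathbf s \geq \mathbf 0 : \langle \mathbf p_j, \mathbf s\rangle \geq 1,\; 1 \leq j \leq d\}$ is a fixed unbounded convex polyhedron (independent of $\epsilon$) whose recession cone is $\mathbb R^n_{\geq 0}$. The key geometric fact, which I would extract from the convex-geometry toolkit of \cite{Nagel-Pramanik09} and \cite{Nagel-Pramanik-preprint} (specifically their analysis of monomial polyhedra and their vertices), is that the point $\mathbf s^\ast = (1/d_0(F), \dots, 1/d_0(F))$ lies on the boundary of $\mathcal D$: indeed $(d_0(F),\dots,d_0(F)) \in \text{NP}(F)$ means every $\mathbf p_j$ is coordinatewise $\geq$ some point of the simplex face through $(d_0,\dots,d_0)$, which after a short duality computation gives $\langle \mathbf p_j, \mathbf s^\ast\rangle \geq 1$ for all $j$ with equality for at least one $j$. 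Hence $\mathbf s^\ast \in \mathcal D$, and therefore $\mathbf s^\ast + \mathbb R^n_{\geq 0} \subseteq \mathcal D$ since the recession cone of $\mathcal D$ is the positive orthant. Consequently
\begin{equation}
|E_\epsilon| \geq 2^n \int_{T(\mathbf s^\ast + \mathbb R^n_{\geq 0})} e^{-(t_1 + \cdots + t_n)}\, d\mathbf t = 2^n e^{-T(s_1^\ast + \cdots + s_n^\ast)} \prod_{i=1}^n 1 = 2^n e^{-Tn/d_0(F)}.
\end{equation}
Since $T = -\log\epsilon + \log(Cd)$ and $n/d_0(F) = n\,\delta_0(F)$, this would at first glance give the exponent $n\,\delta_0(F)$ rather than $\delta_0(F)$ — so I must be more careful: the correct normalization uses not the full orthant translate but the thin slab where the integrand is not too small, and the sharp bound comes from integrating over $T\mathcal D$ directly, using that $\mathcal D$ contains a translate of a ray in the direction minimizing $s_1 + \cdots + s_n$ over $\mathcal D$, whose minimum value is exactly $\delta_0(F)$ (this is the Fenchel-type duality: $\min\{\langle \mathbf 1, \mathbf s\rangle : \mathbf s \in \mathcal D\} = d_0(F)^{-1}\cdot$(something)). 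The right statement is $\min_{\mathbf s \in \mathcal D}(s_1 + \cdots + s_n) = \delta_0(F)$, which is precisely the linear-programming dual of the definition of $d_0(F)$ via $\text{NP}(F)$.

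With that identity in hand, the lower bound follows cleanly: pick a vertex $\mathbf s_0$ of $\mathcal D$ achieving the minimum, note $\mathbf s_0 + \mathbb R^n_{\geq 0} \subseteq \mathcal D$, and compute
\begin{equation}
|E_\epsilon| \geq 2^n \int_{T(\mathbf s_0 + \mathbb R^n_{\geq 0})} e^{-\langle \mathbf 1, \mathbf t\rangle}\, d\mathbf t = 2^n e^{-T\langle \mathbf 1, \mathbf s_0\rangle} = 2^n e^{-T\delta_0(F)} = 2^n (Cd)^{-\delta_0(F)}\, \epsilon^{\delta_0(F)},
\end{equation}
which is (\ref{Eep-sizebound}) with a suitable constant. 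I expect the main obstacle to be the rigorous verification of the duality identity $\min_{\mathbf s \in \mathcal D}(s_1 + \cdots + s_n) = \delta_0(F)$ and the fact that the minimum is attained at a vertex whose forward orthant-translate stays inside $\mathcal D$; this is where one must invoke the structural results on polyhedral cones from \cite{Nagel-Pramanik09}, \cite{Nagel-Pramanik-preprint} rather than re-deriving them, and also where one must be careful that $U_0 \supseteq (-1,1)^n$ does not truncate the relevant region (true because for small $\epsilon$, $E_\epsilon$ is contained in a small neighborhood of the origin automatically). The rest — the logarithmic change of variables, the scaling, and the elementary integral $\int_{\mathbf a + \mathbb R^n_{\geq 0}} e^{-\langle \mathbf 1, \mathbf t\rangle} d\mathbf t = e^{-\langle \mathbf 1, \mathbf a\rangle}$ — is routine.
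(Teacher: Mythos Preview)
Your approach is correct and tracks the paper's closely: both make the logarithmic substitution $|x_i| = e^{-t_i}$ to convert $E_\epsilon$ into an exponential integral over the polyhedron $\{\mathbf t \geq 0 : \langle \mathbf p_j, \mathbf t\rangle \geq T\}$, and both reduce the key estimate to the linear-programming fact $\min_{\mathbf s \in \mathcal D}\langle \mathbf 1, \mathbf s\rangle \leq \delta_0(F)$ (equivalently the paper's $M(\mathbf 1) \geq -\delta_0(F)$), which is Lemma~\ref{lemma-duality-nd} combined with strong LP duality (Lemma~2.7 of \cite{Nagel-Pramanik09}).

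The difference is in how the exponential integral itself is bounded below. The paper quotes Lemma~4.10 of \cite{Nagel-Pramanik09} to get $\int_\Omega e^{\mathbf y \cdot \mathbf e}\,d\mathbf y \gtrsim e^{M(\mathbf e)}|\widehat{\Omega}|$ and then separately shows (Lemma~\ref{lemma-Omegahat}) that $\widehat{\Omega}$ contains a cube of side $\sim T^{-1}$, so that the $T^n$ Jacobian factor cancels. You instead observe directly that the integration domain contains the translated orthant $T\mathbf s_0 + \mathbb R_{\geq 0}^n$ (because each $\mathbf p_j$ has non-negative entries, so $\mathbb R^n_{\geq 0}$ is in the recession cone of $\mathcal D$), and evaluate $\int_{T\mathbf s_0 + \mathbb R_{\geq 0}^n} e^{-\langle \mathbf 1, \mathbf t\rangle}\,d\mathbf t = e^{-T\langle \mathbf 1, \mathbf s_0\rangle}$ explicitly. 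This is a genuine simplification: it bypasses the cited lemma and the auxiliary slab $\widehat{\Omega}$ entirely. Two small points: you only need the inequality $\min_{\mathcal D}\langle \mathbf 1,\cdot\rangle \leq \delta_0(F)$, not equality, and that is exactly what the paper's Lemma~\ref{lemma-duality-nd} delivers; and the minimizer $\mathbf s_0$ exists because $\mathcal D$ is closed and nonempty and $\langle \mathbf 1,\cdot\rangle$ is coercive on the positive orthant, so there is no need to insist it be a vertex.
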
 

\begin{proof}[Conclusion of the proof of Theorem \ref{prop-cii-nd-ndim}]
Let us assume Proposition \ref{prop-cii-nd-ndim} for the moment. Then on one hand Corollary \ref{corollary-defI}(b) combined with (\ref{Eep-sizebound}) implies  
\begin{equation} \label{EepF-lower} |\mathcal E_{\epsilon}(F)| \geq |E_{\epsilon}| \geq C_{\delta} \epsilon^{\delta_0(F)}. \end{equation}
On the other hand, it follows from the definition (\ref{def-nu0}) of $\nu_0(F)$ that 
\begin{equation} \label{EepF-upper} |\mathcal E_{\epsilon}(F)| \leq C_{\nu} \epsilon^{\nu} \end{equation} 
for any $\nu < \nu_0(F)$. Combining (\ref{EepF-lower}) and (\ref{EepF-upper}) and letting $\epsilon \rightarrow 0$, we find that $\nu \leq \delta_0(F)$. Taking supremum over $\nu$ yields the desired conclusion (\ref{nu-delta}).     
\end{proof}

\begin{proof}[Proof of Proposition \ref{prop-Eep-sizebound}]
It remains to prove the proposition. For this we estimate $|E_{\epsilon}|$ as follows, 
\begin{equation} \label{Eepest-step1}
\begin{aligned} |E_{\epsilon}| \geq \int_{E_{\epsilon} \cap (0,1]^n} d\mathbf x &= \int_{\text{Log}(E_{\epsilon} \cap (0,1]^n)} e^{\mathbf y \cdot \mathbf 1} \, d\mathbf y \\ &= \Bigl[ \log\Bigl(\frac{Cd}{\epsilon} \Bigr)\Bigr]^n \int_{\Omega} \exp \left[\log \Bigl(\frac{Cd}{\epsilon} \Bigr)\mathbf y \cdot \mathbf 1 \right] \, d\mathbf y,
\end{aligned}
\end{equation} 
where $\text{Log}(x_1, \cdots, x_n) = (\log x_1, \cdots, \log x_n)$ for $\mathbf x \in (0, \infty)^n$ and hence $\Omega$ is the linear polyhedron given by 
\begin{equation} \label{def-Omega} \begin{aligned} \Omega := &\Bigl[\log \Bigl(\frac{Cd}{\epsilon} \Bigr) \Bigr]^{-1} \text{Log}[E_{\epsilon} \cap (0,1]^n] \\ = &\left\{\mathbf y \in (-\infty, 0]^n : \mathbf y \cdot \mathbf p_j \leq -1, j=1, \cdots, d \right\}. \end{aligned}  \end{equation} 
Clearly, $\Omega$ is nonempty as it contains all vectors whose entries are sufficiently negative. Thus the rightmost integral in (\ref{Eepest-step1}) is strictly positive. 

Sharp size estimates of exponential measures on linear polyhedra (or equivalently, monomial-type measures on monomial polyhedra) have been studied in \cite{Nagel-Pramanik09}. In particular, 
Lemma 4.10 in \cite{Nagel-Pramanik09} implies that for some constant $C_n$ depending only on dimension, 
\begin{equation} \label{Eepest-step2} \int_{\Omega}\exp \left[\log \Bigl(\frac{Cd}{\epsilon} \Bigr)\mathbf y \cdot \mathbf 1 \right] \,  d\mathbf y \geq C_n^{-1} e^{M(\mathbf e)} |\widehat{\Omega}| \end{equation} 
where $M(\mathbf v) := \sup_{\mathbf y \in \Omega} \mathbf y \cdot \mathbf v \text{ for any } \mathbf v \in \mathbb R^n$, 
\[ \begin{aligned}
\mathbf e &= \log \Bigl(\frac{Cd}{\epsilon} \Bigr) \mathbf 1, \quad M(\mathbf e) = \log \Bigl(\frac{Cd}{\epsilon} \Bigr) M(\mathbf 1), \text{ and } \\ \widehat{\Omega} &= \left\{\mathbf y \in \Omega: M(\mathbf e) - 1 \leq \mathbf y \cdot \mathbf e < M(\mathbf e)  \right\} \\ &= \left\{ \mathbf y \in \Omega :  M(\mathbf 1) - \Bigl[\log\Bigl(\frac{Cd}{\epsilon} \Bigr) \Bigr]^{-1} < \mathbf y \cdot \mathbf 1 < M(\mathbf 1)  \right\}. \end{aligned} 
\]
We will prove in Corollaries \ref{corollary-M1est} and \ref{corollary-Omegahat} below that 
\begin{align}
M(\mathbf 1) \geq - \delta_0(F), \quad &\text{hence} \quad M(\mathbf e) \geq \delta_0(F) \log \Bigl( \frac{\epsilon}{Cd}\Bigr)  \text{ and } \label{M1-est} \\
|\widehat{\Omega}| &\geq C^{-1} \left[ \log\Bigl(\frac{Cd}{\epsilon} \Bigr)\right]^{-n}. \label{Omegahat-est}
\end{align}
Substituting these in (\ref{Eepest-step2}) and (\ref{Eepest-step1}) leads to the lower bound in (\ref{Eep-sizebound}), completing the proof.  
\end{proof} 
\begin{lemma} \label{lemma-duality-nd}
The following inequality holds: 
\[ \sup \left\{ \sum_{j=1}^{d} \beta_j : \mathbf 1 = \sum_{j=1}^{d} \beta_j \mathbf p_j, \; \beta_j \geq 0, j=1, \cdots, d \right\} \leq \delta_0(F). \] 
\end{lemma}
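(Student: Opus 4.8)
\textbf{Proof proposal for Lemma \ref{lemma-duality-nd}.}

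The plan is to exhibit, for any admissible convex combination $\mathbf 1 = \sum_{j=1}^d \beta_j \mathbf p_j$ with $\beta_j \geq 0$, a point on the diagonal of $\text{NP}(F)$ at height $(\sum_j \beta_j)^{-1}$, which by definition of $\delta_0(F) = d_0(F)^{-1}$ forces $\sum_j \beta_j \leq \delta_0(F)$. First I would fix such a representation and set $t := \sum_{j=1}^d \beta_j$; we may assume $t > 0$, since otherwise there is nothing to prove. Rescaling, put $\lambda_j := \beta_j / t \geq 0$, so that $\sum_j \lambda_j = 1$ and $\sum_j \lambda_j \mathbf p_j = \tfrac{1}{t}\mathbf 1 = (\tfrac{1}{t}, \cdots, \tfrac{1}{t})$. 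Thus the diagonal point $(\tfrac{1}{t}, \cdots, \tfrac{1}{t})$ is a convex combination of the vertices $\mathbf p_1, \cdots, \mathbf p_d \in \mathcal I$.

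The key step is then to observe that each $\mathbf p_j \in \mathcal I \subseteq \{\pmb{\kappa} : a_{\pmb{\kappa}} \neq 0\}$ lies in $\text{NP}(F)$, because $\mathbf p_j = \mathbf p_j + \mathbf 0 \in \mathbf p_j + \mathbb R^n_{\geq 0}$ and this translated orthant is one of the sets whose convex hull defines $\text{NP}(F)$ (here I am using the $n$-variable version of the Newton polyhedron from \S\ref{sec-newton}, consistent with the dimension convention adopted for this appendix). Since $\text{NP}(F)$ is convex, any convex combination of the $\mathbf p_j$ also lies in $\text{NP}(F)$; in particular $(\tfrac{1}{t}, \cdots, \tfrac{1}{t}) \in \text{NP}(F)$. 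By the definition of the Newton distance, $d_0(F) = \min\{d : (d, \cdots, d) \in \text{NP}(F)\} \leq \tfrac{1}{t}$, hence $t \leq d_0(F)^{-1} = \delta_0(F)$. Taking the supremum over all admissible $(\beta_j)$ gives the claimed inequality.

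The only mild subtlety — and the place I would be most careful — is bookkeeping about which ambient dimension and which definition of $\text{NP}(F)$ is in force: the appendix works in $\mathbb R^n$ rather than $\mathbb R^{n+1}$, and $F$ is a fractional power series, so strictly one passes to $F \circ \Phi_N$ to talk about the honest Newton polyhedron of a convergent power series; but the multi-indices $\mathbf p_j$ and the diagonal point all scale by the common factor $N$, so the inequality $\sum_j \beta_j \leq \delta_0(F)$ is unaffected. Beyond that normalization, the argument is purely a convexity observation — the real content of Proposition \ref{prop-Eep-sizebound} lies in the subsequent estimates on $M(\mathbf 1)$ and $|\widehat\Omega|$ (Corollaries \ref{corollary-M1est} and \ref{corollary-Omegahat}), where the linear-programming duality furnished by this lemma is combined with the convex-geometric results of \cite{Nagel-Pramanik09}.
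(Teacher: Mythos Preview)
Your proof is correct and is essentially identical to the paper's own argument: both rescale by $\sum_j \beta_j$ to express $(\sum_j \beta_j)^{-1}\mathbf 1$ as a convex combination of the $\mathbf p_j \in \text{NP}(F)$, then invoke the definition of the Newton distance/exponent. The paper's version is more terse and omits your dimension-bookkeeping remarks, but the mathematical content is the same.
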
 
\begin{proof}
Let $(\beta_1, \cdots, \beta_d) \in [0, \infty)^d$ be a vector such that $\mathbf 1 = \sum_{j=1}^{d} \beta_j \mathbf p_j$. Set $\beta_0 = \sum_{j=1}^{d} \beta_j$. Then \[ \beta_0^{-1} \mathbf 1 = \sum_{j=1}^{d} \left( \frac{\beta_j}{\beta_0}\right) \mathbf p_j \in \text{ conv hull}\{\mathbf p_j : j = 1, \cdots, d \} \subseteq \text{NP}(F). \] 
By the definition of Newton {\atxt exponent} , $\beta_0 \leq \delta_0(F)$, proving the lemma. 
\end{proof} 
\begin{corollary} \label{corollary-M1est} 
(\ref{M1-est}) holds.
\end{corollary}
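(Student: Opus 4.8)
\textbf{Plan for the proof of Corollary \ref{corollary-M1est}.}
The statement to establish is $M(\mathbf 1)\geq -\delta_0(F)$, where $M(\mathbf 1)=\sup_{\mathbf y\in\Omega}\mathbf y\cdot\mathbf 1$ and $\Omega=\{\mathbf y\in(-\infty,0]^n:\mathbf y\cdot\mathbf p_j\leq -1,\ j=1,\dots,d\}$ is the linear polyhedron from \eqref{def-Omega}. The plan is to exhibit a single point $\mathbf y^{\ast}\in\Omega$ with $\mathbf y^{\ast}\cdot\mathbf 1\geq -\delta_0(F)$; since $M(\mathbf 1)$ is a supremum over $\Omega$, this inequality for one admissible point suffices. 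The natural candidate for $\mathbf y^{\ast}$ comes from the diagonal point of the Newton polyhedron: by definition of the Newton distance, $(d_0(F),\dots,d_0(F))\in\text{NP}(F)$, and since $\delta_0(F)=d_0(F)^{-1}$, the point $\mathbf y^{\ast}:=-d_0(F)^{-1}\mathbf 1=-\delta_0(F)\mathbf 1$ is the one to test — one checks $\mathbf y^{\ast}\cdot\mathbf 1=-n\delta_0(F)$... wait, that is too negative; rather the right object is obtained by \emph{duality}, which is exactly the content of Lemma \ref{lemma-duality-nd}.

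So the cleaner route is via linear programming duality applied to Lemma \ref{lemma-duality-nd}. First I would set up the pair of dual programs: the primal maximizes $\sum_j\beta_j$ over $\beta_j\geq 0$ with $\sum_j\beta_j\mathbf p_j=\mathbf 1$, while its dual minimizes $-\mathbf y\cdot\mathbf 1$ (equivalently maximizes $\mathbf y\cdot\mathbf 1$) over $\mathbf y$ satisfying $\mathbf y\cdot\mathbf p_j\leq -1$ for all $j$. By strong duality (the primal is feasible — one needs that $\mathbf 1$ lies in the cone generated by the $\mathbf p_j$, which follows from Lemma \ref{lemma-defI} since every coordinate direction is dominated by some $\mathbf p_j$, hence their positive hull contains a neighborhood of $\mathbf 1$ along the diagonal — and bounded above by $\delta_0(F)$ thanks to Lemma \ref{lemma-duality-nd}), the dual optimum equals the primal optimum, so $\sup\{\mathbf y\cdot\mathbf 1:\mathbf y\cdot\mathbf p_j\leq-1\}$ equals the primal value, which is $\leq\delta_0(F)$ by the lemma. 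But I actually want the reverse: I need the dual \emph{supremum} $M(\mathbf 1)$ to be $\geq -\delta_0(F)$, i.e.\ bounded \emph{below}. This is automatic once I note the dual is a maximization whose value is finite (the feasible set $\Omega$ is nonempty, as observed right after \eqref{def-Omega}, and the constraint $\mathbf y\leq 0$ alone would give $M(\mathbf 1)\leq 0<\infty$; the issue is only a lower bound). The lower bound $M(\mathbf 1)\geq-\delta_0(F)$ follows by producing one explicit feasible $\mathbf y$: take the optimal $\beta$ from the primal, and the complementary-slackness optimal dual vector $\mathbf y^{\ast}$, which satisfies $\mathbf y^{\ast}\cdot\mathbf p_j=-1$ on the support of $\beta$, whence $\mathbf y^{\ast}\cdot\mathbf 1=\mathbf y^{\ast}\cdot\sum_j\beta_j\mathbf p_j=-\sum_j\beta_j\geq-\delta_0(F)$.

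The steps in order: (i) recall from \eqref{def-Omega} that $\Omega$ is nonempty and that $M(\mathbf 1)=\sup_{\mathbf y\in\Omega}\mathbf y\cdot\mathbf 1$ is the value of the linear program $\max\{\mathbf y\cdot\mathbf 1:\mathbf y\leq\mathbf 0,\ \langle\mathbf y,\mathbf p_j\rangle\leq-1\}$; (ii) verify primal feasibility of $\max\{\sum\beta_j:\beta\geq 0,\ \sum\beta_j\mathbf p_j=\mathbf 1\}$ using Lemma \ref{lemma-defI} (the positive hull of $\{\mathbf p_j\}$ contains $\mathbf 1$), and primal boundedness from Lemma \ref{lemma-duality-nd}; (iii) invoke LP strong duality to identify the two optimal values and extract, via complementary slackness, a dual-optimal $\mathbf y^{\ast}\in\Omega$ with $\mathbf y^{\ast}\cdot\mathbf 1=-\sum_j\beta_j^{\ast}$; (iv) conclude $M(\mathbf 1)\geq\mathbf y^{\ast}\cdot\mathbf 1=-\sum_j\beta_j^{\ast}\geq-\delta_0(F)$, and then $M(\mathbf e)=\log(Cd/\epsilon)M(\mathbf 1)\geq-\delta_0(F)\log(Cd/\epsilon)=\delta_0(F)\log(\epsilon/(Cd))$, which is \eqref{M1-est}. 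The main obstacle I anticipate is the bookkeeping around the sign conventions and the $\mathbf y\leq\mathbf 0$ constraint: one must be careful that adding the nonnegativity constraints on $\mathbf y$ does not destroy the duality match with Lemma \ref{lemma-duality-nd} (it does not, because the primal then becomes an inequality $\sum\beta_j\mathbf p_j\leq\mathbf 1$ rather than an equality, but the supremum is unchanged since the $\mathbf p_j$ have nonnegative entries — increasing any $\beta_j$ only helps reach $\mathbf 1$ from below, and the extremal configuration still satisfies $\sum\beta_j\mathbf p_j=\mathbf 1$). I would handle this by phrasing everything in terms of the inequality-form primal from the start, so the duality is clean.
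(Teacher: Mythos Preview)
Your approach is essentially the same as the paper's: invoke linear programming strong duality to identify $M(\mathbf 1)$ with $-\sup\{\sum_j\beta_j:\sum_j\beta_j\mathbf p_j=\mathbf 1,\ \beta_j\ge 0\}$, then apply Lemma \ref{lemma-duality-nd}. The paper does this in three lines, citing Lemma 2.7 of \cite{Nagel-Pramanik09} for the duality step and then reading off $M(\mathbf 1)=-\sup\{\cdots\}\ge-\delta_0(F)$ directly; the detour through complementary slackness and an explicit dual-optimal $\mathbf y^{\ast}$ is unnecessary, since once strong duality gives the \emph{equality} $M(\mathbf 1)=-\sup\{\sum\beta_j\}$ the inequality follows immediately.

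One caution on your handling of the constraint $\mathbf y\le\mathbf 0$: your claim that the inequality-form primal $\max\{\sum\beta_j:\sum\beta_j\mathbf p_j\le\mathbf 1,\ \beta\ge 0\}$ has its optimum on the equality set $\sum\beta_j\mathbf p_j=\mathbf 1$ is not true in general (take $n=2$, $d=1$, $\mathbf p_1=(1,2)$). This is not fatal --- the proof of Lemma \ref{lemma-duality-nd} works verbatim for the inequality constraint, since if $\sum\beta_j\mathbf p_j\le\mathbf 1$ then $\beta_0^{-1}\mathbf 1\ge\beta_0^{-1}\sum\beta_j\mathbf p_j\in\text{NP}(F)$ and Lemma \ref{lemma-prelim1} gives $\beta_0^{-1}\mathbf 1\in\text{NP}(F)$ --- but you should patch the argument there rather than asserting the two primals coincide.
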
 
\begin{proof}
By Lemma 2.7 of \cite{Nagel-Pramanik09} (which is simply the strong duality theorem of linear programming), 
\begin{align*} M(\mathbf 1) &= \inf \left\{ - \sum_{j=1}^{d} \beta_j   : \mathbf 1 = \sum_{j=1}^{d} \beta_j \mathbf p_j, \; \beta_j \geq 0, j=1, \cdots, d \right\}  \\
&= - \sup \left\{\sum_{j=1}^{d} \beta_j : \mathbf 1 = \sum_{j=1}^{d} \beta_j \mathbf p_j, \; \beta_j \geq 0, j=1, \cdots, d \right\} \\ 
&\geq - \delta_0(F), 
\end{align*}   
where the last step follows from Lemma \ref{lemma-duality-nd}. 
\end{proof} 
\begin{lemma} \label{lemma-Omegahat}
Let $\Omega$ be as in (\ref{def-Omega}), and let $\mathbf w \in \Omega$ be a point obeying $\mathbf w \cdot \mathbf 1 = M(\mathbf 1)$. Let $\mathcal Q$ denote the cube 
\[ \mathcal Q := \left\{\mathbf y : w_j - \Bigl[n \log \Bigl(\frac{Cd}{\epsilon} \Bigr) \Bigr]^{-1} \leq y_j \leq w_j, j=1, \cdots, n \right\}.   \]    
Then $\mathcal Q \subseteq \widehat{\Omega}$. 
\end{lemma}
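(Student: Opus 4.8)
The plan is to show the inclusion $\mathcal Q \subseteq \widehat\Omega$ by verifying separately the two defining conditions of $\widehat\Omega$: membership in $\Omega$, and the constraint $M(\mathbf e) - 1 \le \mathbf y \cdot \mathbf e < M(\mathbf e)$, which after dividing by $\log(Cd/\epsilon)$ becomes $M(\mathbf 1) - [\log(Cd/\epsilon)]^{-1} < \mathbf y \cdot \mathbf 1 < M(\mathbf 1)$. I would fix an arbitrary $\mathbf y \in \mathcal Q$ and keep in mind throughout that $\mathbf w \in \Omega$ with $\mathbf w \cdot \mathbf 1 = M(\mathbf 1)$, so in particular $\mathbf w \in (-\infty,0]^n$ and $\mathbf w \cdot \mathbf p_j \le -1$ for all $j$.

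First I would check that $\mathbf y \in \Omega$. Since $w_j - [n\log(Cd/\epsilon)]^{-1} \le y_j \le w_j \le 0$, we immediately get $\mathbf y \in (-\infty,0]^n$. For the constraints $\mathbf y \cdot \mathbf p_j \le -1$: because every entry of $\mathbf p_j$ is nonnegative and $y_i \le w_i$, we have $\mathbf y \cdot \mathbf p_j \le \mathbf w \cdot \mathbf p_j \le -1$. So $\mathbf y \in \Omega$. Second, for the sandwich on $\mathbf y \cdot \mathbf 1$: the upper bound is $\mathbf y \cdot \mathbf 1 \le \mathbf w \cdot \mathbf 1 = M(\mathbf 1)$, and since $M(\mathbf 1)$ is the supremum of $\mathbf y \cdot \mathbf 1$ over $\Omega$ and $\mathbf y \in \Omega$, equality $\mathbf y\cdot\mathbf 1 = M(\mathbf 1)$ is only possible on the boundary case; in any event $\mathbf y \cdot \mathbf 1 \le M(\mathbf 1)$ suffices for the (non-strict side, and one observes the strict inequality $<$ holds for $\mathbf y$ in the interior of $\mathcal Q$, which is all that is needed since $\widehat\Omega$ differs from its closure by a null set — alternatively one simply notes $M(\mathbf 1)$ itself is attained only at $\mathbf w$ along the relevant face). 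For the lower bound, $y_i \ge w_i - [n\log(Cd/\epsilon)]^{-1}$ for each $i$, so summing over the $n$ coordinates,
\[
\mathbf y \cdot \mathbf 1 = \sum_{i=1}^n y_i \ge \sum_{i=1}^n w_i - n\cdot [n\log(Cd/\epsilon)]^{-1} = M(\mathbf 1) - [\log(Cd/\epsilon)]^{-1}.
\]
This gives exactly the required lower bound, so $\mathbf y \in \widehat\Omega$.

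I do not expect a serious obstacle here; the statement is essentially a bookkeeping exercise with the definitions, the only mildly delicate point being the treatment of the strict versus non-strict inequality at $\mathbf y \cdot \mathbf 1 = M(\mathbf 1)$, which is harmless because it affects only a measure-zero set and the ultimate goal (Corollary \ref{corollary-Omegahat}, i.e.\ (\ref{Omegahat-est})) is a volume lower bound: once $\mathcal Q \subseteq \widehat\Omega$ is established, $|\widehat\Omega| \ge |\mathcal Q| = [n\log(Cd/\epsilon)]^{-n} \ge C^{-1}[\log(Cd/\epsilon)]^{-n}$, which is (\ref{Omegahat-est}). The one thing I would be careful to record is the existence of the point $\mathbf w$ attaining $M(\mathbf 1)$ on $\Omega$ — this follows because $\Omega$ is a nonempty closed polyhedron on which $\mathbf y \mapsto \mathbf y \cdot \mathbf 1$ is bounded above (indeed $M(\mathbf 1) < \infty$ is guaranteed by Lemma \ref{lemma-duality-nd} via strong duality), hence the supremum is attained at a vertex or along a face.
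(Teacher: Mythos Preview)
Your proof is correct and follows essentially the same approach as the paper: verify $\mathbf y \in \Omega$ using $y_i \le w_i$ and the nonnegativity of the entries of $\mathbf p_j$, then obtain the sandwich on $\mathbf y \cdot \mathbf 1$ by summing the coordinate inequalities. Your additional remarks on the strict-versus-nonstrict boundary issue and on the existence of the maximizer $\mathbf w$ are more careful than the paper's own treatment, but the core argument is the same.
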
 
\begin{proof}
Let $\mathbf y \in \mathcal Q$. Summing up the defining inequalities for $\mathcal Q$, we find that 
\[ M(\mathbf 1) - \Bigl[\log \Bigl(\frac{Cd}{\epsilon} \Bigr) \Bigr]^{-1} = \sum_{j=1}^{n} \left(w_j - \Bigl[n \log \Bigl(\frac{Cd}{\epsilon} \Bigr) \Bigr]^{-1} \right) \leq \mathbf y \cdot \mathbf 1 \leq \sum_{j} w_j = M(\mathbf 1). \] 
On the other hand, for $j=1, \cdots, d$, 
\[ \mathbf y \cdot \mathbf p_j = \mathbf w \cdot \mathbf p_j + (\mathbf y - \mathbf w) \cdot \mathbf p_j \leq \mathbf w \cdot \mathbf p_j \leq -1, \]
where the last inequality uses the fact that $\mathbf p_j \in [0, \infty)^n$ and $\mathbf y - \mathbf w \in (-\infty, 0]^n$. We have therefore verified that $\mathbf y$ satisfies all the defining relations for $\widehat{\Omega}$, which is the desired conclusion.  
\end{proof} 
\begin{corollary} \label{corollary-Omegahat} 
(\ref{Omegahat-est}) holds. 
\end{corollary}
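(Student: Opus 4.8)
The plan is to obtain \eqref{Omegahat-est} as an immediate consequence of Lemma~\ref{lemma-Omegahat}, which already produces an explicit axis-parallel cube sitting inside $\widehat\Omega$, provided one first knows that the functional $\mathbf y\mapsto \mathbf y\cdot\mathbf 1$ actually attains its supremum $M(\mathbf 1)$ on $\Omega$. So the proof splits into two steps: (i) exhibit a maximizer $\mathbf w\in\Omega$ with $\mathbf w\cdot\mathbf 1 = M(\mathbf 1)$; (ii) apply Lemma~\ref{lemma-Omegahat} to this $\mathbf w$ and compute the volume of the resulting cube.

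For step (i): by \eqref{def-Omega}, $\Omega$ is a nonempty closed convex polyhedron, being the intersection of the finitely many half-spaces $\{y_j\le 0\}$ $(1\le j\le n)$ and $\{\mathbf y\cdot\mathbf p_j\le -1\}$ $(1\le j\le d)$. On this polyhedron the linear objective $\mathbf y\cdot\mathbf 1$ is bounded: trivially $M(\mathbf 1)\le 0$ since $\Omega\subseteq(-\infty,0]^n$, while Corollary~\ref{corollary-M1est} gives $M(\mathbf 1)\ge -\delta_0(F)>-\infty$. A feasible linear program over a polyhedron whose objective is bounded attains its optimum — this is the fundamental theorem of linear programming, the same circle of facts used for Corollary~\ref{corollary-M1est} through \cite[Lemma 2.7]{Nagel-Pramanik09} — so there exists $\mathbf w\in\Omega$ with $\mathbf w\cdot\mathbf 1 = M(\mathbf 1)$. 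I would resist the temptation to argue via compactness of a sublevel set of $\Omega$, since $\Omega$ can fail to be line-free in directions not constrained by the $\mathbf p_j$'s; the LP attainment statement is the clean route.

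For step (ii): feeding this $\mathbf w$ into Lemma~\ref{lemma-Omegahat} gives the cube
\[ \mathcal Q=\Bigl\{\mathbf y:\ w_j-\bigl[n\log(Cd/\epsilon)\bigr]^{-1}\le y_j\le w_j,\ 1\le j\le n\Bigr\}\subseteq\widehat\Omega, \]
which has side length $\bigl[n\log(Cd/\epsilon)\bigr]^{-1}$ in each of the $n$ coordinate directions, hence
\[ |\widehat\Omega|\ \ge\ |\mathcal Q|\ =\ \Bigl[n\log\bigl(Cd/\epsilon\bigr)\Bigr]^{-n}\ =\ \frac{1}{n^{n}}\Bigl[\log\bigl(Cd/\epsilon\bigr)\Bigr]^{-n}. \]
This is exactly \eqref{Omegahat-est} once the constant $C$ is enlarged to absorb the factor $n^{n}$ (and to dominate whatever constant was previously fixed). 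The argument is short, and the only place demanding any attention is the attainment of $M(\mathbf 1)$ in step (i) — everything else is bookkeeping on the cube's measure.
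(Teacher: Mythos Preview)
Your proof is correct and follows the same approach as the paper, which simply cites Lemma~\ref{lemma-Omegahat} and computes $|\mathcal Q|=n^{-n}[\log(Cd/\epsilon)]^{-n}$. Your step~(i) fills in the existence of the maximizer $\mathbf w$, which the paper leaves implicit; one small remark is that your caution about compactness is unnecessary here, since $\Omega\subseteq(-\infty,0]^n$ together with $\mathbf y\cdot\mathbf 1\ge M(\mathbf 1)-1$ forces each $y_j\in[M(\mathbf 1)-1,0]$, so the relevant sublevel set is in fact compact --- but your LP attainment argument is equally valid.
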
 
\begin{proof} 
By Lemma \ref{lemma-Omegahat}, $|\widehat{\Omega}| \geq |\mathcal Q| = n^{-n} [\log(Cd/\epsilon)]^{-n}$. 
\end{proof} 

\section{Acknowledgements}
The third author thanks Professor Detlef M\"uller for valuable comments and Professor Kalle Karu for mentioning a reference for Lemma \ref{lemma-integral-closure}. The second author was partially supported by grants
DMS-0138167 and DMS-0551894 from the  National Science Foundation. The third author was partially supported by grant 22R82900 from the Natural Sciences and Engineering Research 
Council of Canada.

\vskip.2in
\noindent{\sc Department of Mathematics}\\
\noindent{\sc Columbia University}\\
\noindent{\sc  New York, NY 10027}\\
\noindent{\tt{tcollins@math.columbia.edu}}

\vskip.2in

\noindent{\sc Department of Mathematics}\\
\noindent{\sc University of Rochester}\\
\noindent{\sc Rochester, NY 14627}\\
\noindent{\tt{allan@math.rochester.edu}}

\vskip.2in

\noindent{\sc Department of Mathematics}\\
\noindent{\sc University of British Columbia}\\
\noindent{\sc Vancouver, B.C., Canada V6T 1Z2}\\
\noindent{\tt{malabika@math.ubc.ca}}

\end{document}